\theoremstyle{plain}
\theoremstyle{definition}
\newtheorem{definition}{Definition}[section]
\theoremstyle{remark}
\newtheorem{remark}[definition]{Remark}
\theoremstyle{plain}
\newtheorem{theorem}[definition]{Theorem}
\newtheorem{lemma}[definition]{Lemma}
\newtheorem{corollary}[definition]{Corollary}
\newtheorem{proposition}[definition]{Proposition}
\begin{document}
\bibliographystyle{amsplain}
\relax

\def\AW[#1]^#2_#3{\ar@{->}@<.5ex>[#1]^{#2} \ar@{<-}@<-.5ex>[#1]_{#3}}
 \def\NAW[#1]{\ar@{->}@<.5ex>[#1] \ar@{<-}@<-.5ex>[#1]}

 \renewcommand{\arraystretch}{1}
 \renewcommand{\O}{\bigcirc}
 \newcommand{\OX}{\bigotimes}
 \newcommand{\OD}{\bigodot}
 \newcommand{\OV}{\O\llap{v\hspace{.6ex}}}
 \newcommand{\B}{\mbox{\Huge $\bullet$}}
  \newcommand{\D}{$\diamond$}

\xymatrixrowsep{1.4pc} \xymatrixcolsep{1.4pc}

\title{ Regular Kac-Moody superalgebras and\\ integrable highest weight modules}

\author{Crystal~Hoyt$^1$\footnote{Department of Mathematics, Weizmann Institute of Science, Rehovot 76100 Israel; Email: crystal.hoyt@weizmann.ac.il.}}

\footnotetext[1]{Supported in part by NSF grant DMS-0354321 at the University of California, Berkeley
and by ISF grant, no. 1142/07 at the Weizmann Institute of Science.}

\date{}
\maketitle

\begin{abstract}
We define regular Kac-Moody superalgebras and classify them using integrable modules.  We give conditions for irreducible highest weight modules of regular Kac-Moody superalgebras to be integrable.  This paper is a major part of the proof for the classification of finite-growth contragredient Lie superalgebras.
\end{abstract}

\noindent
Keywords: Lie superalgebra; integrable modules; odd reflections.

\setcounter{section}{-1}

\section{Introduction}
The results of this paper are a crucial part of the proof for the classification of contragredient Lie superalgebras with finite growth, and in particular, for the classification of finite-growth Kac-Moody superalgebras  \cite{H07,HS07}. Previously, such a classification was known only for contragredient Lie superalgebras with either symmetrizable Cartan matrices \cite{L86,L89}, or Cartan matrices with no zeros on the main diagonal, i.e. contragredient Lie superalgebras without simple isotropic roots \cite{K78}.  Several of the results of this paper are surveyed in \cite{S08}.

A contragredient Lie superalgebra $\mathfrak{g}(A)$ is a Lie superalgebra defined by a Cartan matrix $A$ \cite{K77,K90}.  A Lie superalgebra usually has more than one Cartan matrix. However, an odd reflection at a regular simple isotropic root allows one to move from one base to another (see Definitions~\ref{isotropicdef}, \ref{regulardef}) \cite{LSS86}.  An odd reflection yields a new Cartan matrix $A'$ such that $\mathfrak{g}(A')$ and $\mathfrak{g}(A)$ are isomorphic as Lie superalgebras.

A matrix which satisfies certain numerical conditions is called a generalized Cartan matrix (see Definition~\ref{def gcm}).  If $A$ is a generalized Cartan matrix then all simple isotropic roots are regular.
A contragredient Lie superalgebra $\mathfrak{g}(A)$ is said to be regular Kac-Moody if $A$ and any matrix $A'$, obtained by a sequence of odd reflections of $A$, are generalized Cartan matrices.  If $A$ is a generalized Cartan matrix and $\mathfrak{g}(A)$ has no simple isotropic roots, then $\mathfrak{g}(A)$ is regular Kac-Moody by definition.  Hence, we restrict our attention to regular Kac-Moody superalgebras which have a simple isotropic root.  Remarkably, there are only a finite number of such families.

It is shown in \cite{HS07} that if $\mathfrak{g}(A)$ is a finite-growth contragredient Lie superalgebra and the defining matrix $A$ has no zero rows, then simple root vectors of $\mathfrak{g}(A)$ act locally nilpotently on the adjoint module. This implies certain conditions on $A$ which are only slightly weaker than the conditions for the matrix to be a generalized Cartan matrix. For a finite-growth Lie superalgebra, these matrix conditions should still hold after odd reflections, which leads to the definition of a regular Kac-Moody superalgebra.  Remarkably, these superalgebras almost always have finite growth.  The exception is the family: $Q^{\pm}(m,n,t)$ with $m,n,t\in\mathbb{Z}_{\leq -1}$ and not all equal to $-1$.

By comparing the classification of regular Kac-Moody superalgebras given in this paper to the classification of
symmetrizable finite-growth contragredient Lie superalgebras in \cite{L86,L89} we obtain the following formulation of our classification theorem.

{\def\thedefinition{\ref{271}}\addtocounter{definition}{-1}
\begin{theorem}
If $A$ is a symmetrizable indecomposable matrix and the contragredient Lie superalgebra $\mathfrak{g}(A)$ has a simple
isotropic root, then $\mathfrak{g}(A)$ is a regular Kac-Moody superalgebra if and only if it has finite growth.

If $A$ is a non-symmetrizable indecomposable matrix and the contragredient Lie superalgebra $\mathfrak{g}(A)$ has a
simple isotropic root, then $\mathfrak{g}(A)$ is a regular Kac-Moody superalgebra if and only if it belongs to one of the
following three families: $q(n)^{(2)}$, $S(1,2,\alpha)$ with $\alpha\in\mathbb{C}\setminus\mathbb{Z}$, $Q^{\pm}(m,n,t)$ with $m,n,t\in\mathbb{Z}_{\leq -1}$.
\end{theorem}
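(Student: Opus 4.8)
The plan is to deduce this statement from the explicit classification of regular Kac-Moody superalgebras with a simple isotropic root obtained in this paper, by comparing that finite list of families against van de Leur's classification of symmetrizable finite-growth contragredient Lie superalgebras \cite{L86,L89}. Since both the property of possessing a simple isotropic root and the property of being regular Kac-Moody are invariant under the equivalence on Cartan matrices generated by odd reflections and permutations of the index set, I would work throughout with this equivalence and with the explicit representatives produced by the classification. Thus the theorem is a reformulation of the classification, and the task is to reorganize the enumerated families along the symmetrizable/non-symmetrizable divide.

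First I would partition the classified families according to whether the Cartan matrix is symmetrizable. For the non-symmetrizable part the statement is then immediate: reading off the classification shows that the only non-symmetrizable families are $q(n)^{(2)}$, $S(1,2,\alpha)$ with $\alpha\in\mathbb{C}\setminus\mathbb{Z}$, and $Q^{\pm}(m,n,t)$ with $m,n,t\in\mathbb{Z}_{\leq -1}$. The content here is already contained in the classification; the only additional checks are that each of these three families is genuinely non-symmetrizable (a direct computation on the Cartan matrix) and that each is genuinely regular Kac-Moody, i.e. that the generalized-Cartan-matrix conditions persist under every sequence of odd reflections, the latter being exactly how these families enter the classification.

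For the symmetrizable indecomposable case I would prove the two implications separately. For the forward direction, if $\mathfrak{g}(A)$ is symmetrizable and regular Kac-Moody with a simple isotropic root, then $\mathfrak{g}(A)$ appears in the classification, and since the only listed family that can fail to have finite growth is $Q^{\pm}(m,n,t)$, which is non-symmetrizable, every symmetrizable family on the list has finite growth. For the converse, if $A$ is symmetrizable, indecomposable, with a simple isotropic root and $\mathfrak{g}(A)$ of finite growth, then by van de Leur's classification $\mathfrak{g}(A)$ is one of his finite-growth families, and I would verify family by family that each such algebra is regular Kac-Moody, using the explicit root data to check that $A$ is a generalized Cartan matrix and that every odd reflection again yields a generalized Cartan matrix. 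Together these show that, in the symmetrizable indecomposable case with a simple isotropic root, regular Kac-Moody and finite growth are the same condition.

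The main obstacle is the converse implication in the symmetrizable case, where one must certify that odd reflections never destroy the generalized-Cartan-matrix conditions for van de Leur's finite-growth families; equivalently, one must confirm that the symmetrizable sublist of the classification coincides exactly with van de Leur's list, with no omitted or extraneous family. In practice this reduces to matching two explicit lists, so the genuine difficulty is inherited from the classification theorem itself — establishing completeness of the enumerated families and stability of the generalized-Cartan-matrix conditions under the full orbit of odd reflections — rather than from the comparison carried out here.
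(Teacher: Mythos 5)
Your proposal is correct and follows essentially the same route as the paper: the paper likewise obtains this theorem as a reformulation of its classification of regular Kac-Moody diagrams (the subfinite classification of Section~\ref{s1} together with Theorem~\ref{thm2}), compared against van de Leur's list of symmetrizable finite-growth contragredient Lie superalgebras. The only supplementary verifications the paper makes are exactly the ones you flag: that $Q^{\pm}(m,n,t)$ is non-symmetrizable and of infinite growth (Section~\ref{s2}), that $S(1,2,\alpha)$ is regular Kac-Moody precisely for $\alpha\notin\mathbb{Z}$, and that the symmetrizable diagrams on the list are of finite or affine type, hence of finite growth.
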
}

The non-symmetrizable contragredient Lie superalgebra $S(1,2,\alpha)$ appears in the list of conformal superalgebras \cite{KL89}.  It has finite growth, but is not regular Kac-Moody when $\alpha \in \mathbb{Z}$.  The non-symmetrizable regular Kac-Moody superalgebra $Q^{\pm}(m,n,t)$ was discovered during this classification (see Section~\ref{s2}).  If $m,n,t=-1$ then $Q^{\pm}(m,n,t)$ is just $q(3)^{2}$, which has finite growth.  Otherwise, $Q^{\pm}(m,n,t)$ has infinite growth, but is hyperbolic for small (absolute) values of $m$, $n$, and $t$.  An explicit realization of $Q^{\pm}(m,n,t)$ is still unknown and would be interesting.

For the proof of Theorem~\ref{271}, we classify the corresponding connected regular Kac-Moody diagrams (see Section~\ref{sec 1.3}) by using induction on the number of vertices (i.e. simple roots). A subdiagram of a regular Kac-Moody diagram is regular Kac-Moody, however if the subdiagram does not have an isotropic vertex then it is not part of the classification.  We work around this difficulty by using odd reflections.

We say that a regular Kac-Moody diagram is {\em subfinite} if it is connected, has an isotropic vertex, and satisfies the condition that all connected proper subdiagrams which have an isotropic vertex are of finite type.
In Section~\ref{s1}, we find all subfinite regular Kac-Moody diagrams by extending connected finite type diagrams which have an isotropic vertex. A diagram of a finite-dimensional or affine Kac-Moody superalgebra is subfinite regular Kac-Moody. In Section~\ref{s3}, we prove that every connected regular Kac-Moody diagram with an isotropic vertex is subfinite by using integrable modules and some explicit computations.

A highest weight module $V$ of a regular Kac-Moody superalgebra $\mathfrak{g}(A)$ is called integrable if for each real root $\alpha$ the element $X_{\alpha} \in \mathfrak{g}(A)_{\alpha}$ is locally nilpotent on $V$.  This is consistent with the original definition of integrable modules for affine Lie superalgebras. It is shown in \cite{KW01} that most
non-twisted affine Lie superalgebras do not have non-trivial irreducible integrable highest weight modules. The only
exceptions are $B(0,n)^{(1)}$, $A(0,m)^{(1)}$ and $C(n)^{(1)}$.  By the same method, one can show that the twisted affine Lie superalgebras, including $q(n)^{(2)}$, but excluding $A(0,2n-1)^{(2)}$, $A(0,2n)^{(4)}$ and $C(n)^{(2)}$, have only trivial irreducible integrable highest weight modules.

A regular Kac-Moody superalgebra is integrable under the adjoint action, hence, so are its submodules.  A non-trivial
extension of a regular Kac-Moody diagram $\Gamma$ yields a non-trivial integrable highest weight module of
$\mathfrak{g}(A_{\Gamma})$. Thus, if the Kac-Moody superalgebra corresponding to $\Gamma$ does not have non-trivial
irreducible integrable highest weight modules, then $\Gamma$ is not extendable. Thus, it remains to show that the
diagrams for $A(0,m)^{(1)}$, $C(n)^{(1)}$, $S(1,2,\alpha)$, and $Q^{\pm}(m,n,t)$ are not extendable, which we do by
direct computation.

Integrable irreducible highest weight modules for affine Lie superalgebras were described in \cite{KW01}. If $L(\lambda)$ is an irreducible highest weight module a regular Kac-Moody superalgebra which does not have an isotropic root, then $L(\lambda)$ is integrable if and only if $\lambda_i\in 2^{p(i)}\mathbb{Z}_{\geq 0}$.  We describe the integrable irreducible highest
weight modules for the remaining regular Kac-Moody superalgebras which have an isotropic root: $S(1,2,\alpha)$ with
$\alpha\in\mathbb{C}\setminus\mathbb{Z}$, and $Q^{\pm}(m,n,t)$ with $m,n,t\in\mathbb{Z}_{\leq -1}$ and not all equal
to $-1$. We show that these superalgebras have non-trivial irreducible integrable highest weight modules, and give
explicit conditions on the weights for an irreducible highest weight module to be integrable.\\

\textbf{Acknowledgement.}  The author would like to express her sincere gratitude to Professor Vera Serganova for helpful discussions and suggestions while supervising this dissertation work.

\section{Preliminaries}
\subsection{Contragredient Lie superalgebras}

Let $A$ be a $n\times n$ matrix over $\mathbb{C}$, $I=\{1,\ldots,n\}$ and $p:I \to {\mathbb Z}_{2} $ be a parity function.  Fix a vector space $\mathfrak{h}$ over $\mathbb{C}$ of dimension $2n-\operatorname{rk}(A)$.  Let $\alpha_{1},\dots ,\alpha_{n}\in{\mathfrak h}^{*}$ and $h_{1},\dots ,h_{n}\in{\mathfrak h}$ be linearly independent elements satisfying $\alpha_{j}\left(h_{i}\right)=a_{ij}$, where $a_{ij}$ is the $ij$-th entry of $A$.
Define a Lie superalgebra $\bar{\mathfrak{g}}(A)$ by generators $X_{1},\dots ,X_{n},Y_{1},\dots ,Y_{n}$ and $\mathfrak{h}$, and by relations
\begin{equation}
\left[h,X_{i}\right]=\alpha_{i}\left(h\right)X_{i}\text{,\hspace{.5cm} }\left[h,Y_{i}\right]=-\alpha_{i}\left(h\right)Y_{i}\text{,\hspace{.5cm}
}\left[X_{i},Y_{j}\right]=\delta_{ij}h_{i}, \label{equ1}\end{equation}
where the parity of $X_i$ and $Y_i$ is $p(i)$, and the elements of $\mathfrak{h}$ are even.

The {\em contragredient Lie superalgebra} given by the matrix $A$ is defined to be the quotient of $\bar{\mathfrak{g}}(A)$ by the unique maximal ideal that intersects $\mathfrak{h}$ trivially, and it is denoted by $\frak{g}(A)$.  We call $A$ the {\em Cartan matrix} of $\mathfrak{g}(A)$.  If $B=DA$ for some invertible diagonal matrix $D$, then $\mathfrak{g}(B)\cong\mathfrak{g}(A)$.  Hence, we may assume without loss of generality that $a_{ii}\in\{0,2\}$ for $i\in I$.

The matrix $A$ is said to be {\em symmetrizable} if there exists an invertible diagonal matrix $D$ such that $B=DA$ is a symmetric matrix, i.e. $b_{ij}=b_{ji}$ for all $i,j\in I$.  In this case, we also say that $\mathfrak{g}(A)$ is symmetrizable. A contragredient Lie superalgebra is symmetrizable if and only if there exists a nondegenerate invariant symmetric bilinear form.  Hence, symmetrizability does not depend on the choice of Cartan matrix.

The matrix $A$ is {\em indecomposable} if the the set $I$ can not be decomposed into the disjoint union of non-empty subsets $J,K$ such that $a_{j,k}=a_{k,j}=0$ whenever $j\in J$ and $k\in K$.  A proof of the following lemma can be found in \cite{HS07}.

\begin{lemma} \label{lm20} For any subset $ J\subset I $ the subalgebra $\mathfrak{a}_{J}$ in $\mathfrak{g}\left(A\right) $ generated by $ \mathfrak{h} $, $ X_{i} $ and $ Y_{i} $, with $ i\in J $, is isomorphic to $ \mathfrak{h}'\oplus{\mathfrak g}\left(A_{J}\right) $, where $ A_{J} $ is the submatrix of $A$
with coefficients $ \left(a_{ij}\right)_{i,j\in J} $ and $\mathfrak{h}' $ is a subspace of $\mathfrak{h} $. More
precisely, $\mathfrak{h}' $ is a maximal subspace in $ \cap_{i\in J} \operatorname{Ker} \alpha_{i} $ which trivially
intersects the span of $ h_{i} $, $ i\in J $.
\end{lemma}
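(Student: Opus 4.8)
The plan is to identify $\mathfrak{a}_J$ with $\mathfrak{g}(A_J)$ through the triangular decomposition and the defining maximal ideal. First I would split the Cartan subalgebra as $\mathfrak{h}=\mathfrak{h}'\oplus\mathfrak{h}''$, where $\mathfrak{h}'$ is the subspace in the statement and $\mathfrak{h}''$ is a complement containing $\mathfrak{h}_J:=\operatorname{span}\{h_i:i\in J\}$ (possible since $\mathfrak{h}'\cap\mathfrak{h}_J=0$). As $\mathfrak{h}'\subseteq\cap_{i\in J}\operatorname{Ker}\alpha_i$, each $h\in\mathfrak{h}'$ satisfies $[h,X_i]=[h,Y_i]=0$ for $i\in J$, so $\mathfrak{h}'$ is central in $\mathfrak{a}_J$. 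Using $\dim(\cap_{i\in J}\operatorname{Ker}\alpha_i)=\dim\mathfrak{h}-|J|$ and $\dim(\mathfrak{h}_J\cap\cap_{i\in J}\operatorname{Ker}\alpha_i)=|J|-\operatorname{rk}(A_J)$ (the left kernel of $A_J$), a dimension count gives $\dim\mathfrak{h}''=2|J|-\operatorname{rk}(A_J)$. Together with the linear independence of $\{h_i\}$ and of $\{\alpha_i|_{\mathfrak{h}''}\}$ (the latter because the $\alpha_i$, $i\in J$, vanish on $\mathfrak{h}'$), this shows $(\mathfrak{h}'',\{\alpha_i|_{\mathfrak{h}''}\},\{h_i\})_{i\in J}$ is a minimal realization of $A_J$. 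Hence $\mathfrak{g}(A_J)$ may be built on this data, and the assignment $X_i\mapsto X_i$, $Y_i\mapsto Y_i$ ($i\in J$), $\mathfrak{h}''\hookrightarrow\mathfrak{h}$ respects the defining relations and yields a homomorphism $\psi\colon\bar{\mathfrak{g}}(A_J)\to\mathfrak{g}(A)$ whose image is the subalgebra $\tilde{\mathfrak{g}}_J$ generated by $\mathfrak{h}''$ and the $X_i,Y_i$, $i\in J$.

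Next I would exploit the $\mathbb{Z}^n$-grading by multidegree ($\deg X_i=\epsilon_i$, $\deg Y_i=-\epsilon_i$). Writing $Q_J$ for the sublattice spanned by $\{\alpha_i:i\in J\}$, any root space $\mathfrak{g}(A)_\beta$ with $0\neq\beta\in Q_J$ is spanned by iterated brackets of the $X_i$ (for $\beta$ positive) or $Y_i$ (for $\beta$ negative) with $i\in J$, since no $X_k,Y_k$ with $k\notin J$ can contribute a multidegree supported on $J$. Consequently $\mathfrak{a}_J=\mathfrak{h}\oplus\bigoplus_{0\neq\beta\in Q_J}\mathfrak{g}(A)_\beta$, and since the degree-$0$ part of $\tilde{\mathfrak{g}}_J$ is exactly $\mathfrak{h}''$ one gets $\mathfrak{a}_J=\tilde{\mathfrak{g}}_J\oplus\mathfrak{h}'$ with $\mathfrak{h}'$ central. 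The lemma thus reduces to showing that $\psi$ induces an isomorphism $\mathfrak{g}(A_J)\xrightarrow{\ \sim\ }\tilde{\mathfrak{g}}_J$. Surjectivity is the spanning statement just noted. For injectivity, note $\ker\psi\cap\mathfrak{h}''=0$, so by maximality of the defining ideal $\mathfrak{r}_{A_J}$ of $\bar{\mathfrak{g}}(A_J)$ we have $\ker\psi\subseteq\mathfrak{r}_{A_J}$; it remains to prove the reverse inclusion $\mathfrak{r}_{A_J}\subseteq\ker\psi$. Splitting $\mathfrak{r}_{A_J}=\mathfrak{r}^-_{A_J}\oplus\mathfrak{r}^+_{A_J}$ along the triangular decomposition and identifying the free Lie superalgebra $\bar{\mathfrak{n}}^+_{A_J}$ on $\{X_i:i\in J\}$ with a subalgebra of $\bar{\mathfrak{n}}^+_A$, this becomes the inclusion $\mathfrak{r}^+_{A_J}\subseteq\mathfrak{r}^+_A$, the minus case following by the Chevalley involution.

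For the crux I would use the characterization of the maximal ideal by lowering operators: an $x\in\bar{\mathfrak{n}}^+_A$ of multidegree $\beta$ lies in $\mathfrak{r}^+_A$ if and only if $\operatorname{ad}(Y_{i_1})\cdots\operatorname{ad}(Y_{i_k})x$ has vanishing $\mathfrak{h}$-component for every sequence with $\alpha_{i_1}+\cdots+\alpha_{i_k}=\beta$, since only lowering operators can carry a positive-degree element down to degree $0$. Take $x\in\mathfrak{r}^+_{A_J}$, so $\beta\in Q_J^+$ and every index $i_j$ is forced to lie in $J$. A short induction shows that for $i\notin J$, $\operatorname{ad}(Y_i)$ annihilates $\bar{\mathfrak{n}}^+_{A_J}$, because $[Y_i,X_j]=\delta_{ij}h_i=0$ there; and for sequences drawn from $J$ the only relations used in computing $\operatorname{ad}(Y_{i_1})\cdots\operatorname{ad}(Y_{i_k})x$ are $[X_j,Y_i]=\delta_{ij}h_i$ and $[h,X_j]=\alpha_j(h)X_j$ with $i,j\in J$, which are literally those of $\bar{\mathfrak{g}}(A_J)$, all $\mathfrak{h}$-components landing in $\mathfrak{h}_J\subseteq\mathfrak{h}''$. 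Hence this bracket equals $\psi$ applied to the corresponding bracket in $\bar{\mathfrak{g}}(A_J)$, whose $\mathfrak{h}''$-component vanishes precisely because $x\in\mathfrak{r}^+_{A_J}$. Thus the $\mathfrak{h}$-component vanishes for every admissible sequence, giving $x\in\mathfrak{r}^+_A$ and the desired inclusion; assembling everything yields $\mathfrak{a}_J=\tilde{\mathfrak{g}}_J\oplus\mathfrak{h}'\cong\mathfrak{g}(A_J)\oplus\mathfrak{h}'$.

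I expect the main obstacle to be justifying the lowering-operator characterization of $\mathfrak{r}^+$, together with its super-graded sign bookkeeping: proving that membership in the maximal ideal is detected by testing only the $\mathfrak{h}$-components of iterated $\operatorname{ad}(Y_i)$-images requires the triangular decomposition and a PBW-type argument ruling out that insertions of raising operators or Cartan elements create new degree-$0$ contributions. Once this is in hand, the comparison of the two defining ideals is purely a matter of matching multidegrees supported on $J$, and the rest is grading bookkeeping.
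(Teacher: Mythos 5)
First, a caveat: this paper gives no proof of Lemma~\ref{lm20} at all --- it defers to \cite{HS07} --- so the comparison is with the standard argument used there, and your proposal is correct and follows essentially that route: split $\mathfrak{h}=\mathfrak{h}'\oplus\mathfrak{h}''$, check by the dimension count $\dim\mathfrak{h}''=2|J|-\operatorname{rk}(A_J)$ that $(\mathfrak{h}'',\{\alpha_i|_{\mathfrak{h}''}\},\{h_i\})_{i\in J}$ is a realization of $A_J$, use the root-lattice grading to get $\mathfrak{a}_J=\mathfrak{h}'\oplus\tilde{\mathfrak{g}}_J$ with $\mathfrak{h}'$ central, and reduce everything to the identification $\ker\psi=\mathfrak{r}_{A_J}$ of the two maximal ideals. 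The one step you assert rather than prove --- the pure-lowering characterization of $\mathfrak{r}^+_A$ --- is genuinely true, and your diagnosis of where the work lies is accurate; for the record, its delicate ``if'' direction survives the overshooting worry because a partial lowering sequence whose weight reaches height zero must either vanish or land in $\mathfrak{h}$ with partial sum exactly $\beta$, where the hypothesis kills it, after which the commutator induction on the number of inserted raising operators closes. But you can sidestep that pointwise characterization entirely: since $\mathfrak{r}^+_{A_J}$ is an ideal of all of $\bar{\mathfrak{g}}(A_J)$ contained in $\bar{\mathfrak{n}}^+_{A_J}$, its image $W$ in $\bar{\mathfrak{g}}(A)$ is stable under every $\operatorname{ad}(Y_k)$ --- for $k\in J$ because $\mathfrak{r}^+_{A_J}$ is an ideal, and for $k\notin J$ because $[Y_k,X_j]=0$ for all $j\in J$ makes $\operatorname{ad}(Y_k)$ annihilate $\bar{\mathfrak{n}}^+_{A_J}$ --- so after one PBW rearrangement $U(\bar{\mathfrak{g}}(A))W=U(\bar{\mathfrak{n}}^-)U(\mathfrak{h})U(\bar{\mathfrak{n}}^+)W$, the weight-zero part of the ideal generated by $W$ is spanned by lowerings of raised elements, and the induction on the raising count now has the trivial base case that lowerings of $W$ stay in $W\subseteq\bar{\mathfrak{n}}^+$ and hence never meet $\mathfrak{h}$. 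This ideal-stability variant buys you exactly the lemma you flagged as the main obstacle, at the cost of no extra machinery; with it, your proof is complete.
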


A superalgebra $\mathfrak{g}=\mathfrak{g}(A)$ has a natural $\mathbb{Z}$-grading $\mathfrak{g}=\oplus\mathfrak{g}_{m}$, called the {\em principal grading}, which is defined by  $\mathfrak{g}_{0}=\mathfrak{h} $ and $\mathfrak{g}_{1}=\mathfrak{g}_{\alpha_{1}}\oplus\dots \oplus\mathfrak{g}_{\alpha_{n}} $.  We say that ${\mathfrak g} $ is of {\em finite growth} if $ \dim\mathfrak{g}_{n} $ grows polynomially depending on $n$. This means that the Gelfand-Kirillov dimension of $\mathfrak{g}$ is finite.

We recall a result from \cite{HS07}.

\begin{theorem}[Hoyt, Serganova] \label{th1}\relax  Suppose $A$ is a matrix with no zero rows. If $\mathfrak{g}(A) $ has finite growth, then $\mbox{ad}_{X_{i}}$ are locally nilpotent for all $i\in I$.
\end{theorem}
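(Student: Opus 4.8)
The plan is to reduce the statement to a generating set and then to a rank-two growth estimate. Since $\operatorname{ad}_{X_i}$ is a (super)derivation of $\mathfrak{g}(A)$, the set of elements on which it acts locally nilpotently is a subalgebra: if $\operatorname{ad}_{X_i}^{p}x=0$ and $\operatorname{ad}_{X_i}^{q}y=0$, then the super-Leibniz rule expands $\operatorname{ad}_{X_i}^{p+q}[x,y]$ into terms each carrying a factor $\operatorname{ad}_{X_i}^{r}x$ with $r\geq p$ or $\operatorname{ad}_{X_i}^{s}y$ with $s\geq q$, hence it vanishes. Because $\mathfrak{g}(A)$ is generated by $\mathfrak{h}$ and the $X_k,Y_k$, it suffices to verify local nilpotency on these generators.

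Most generators are handled at once. For $h\in\mathfrak{h}$ we have $\operatorname{ad}_{X_i}h=-\alpha_i(h)X_i$; for $j\neq i$ the relation $[X_i,Y_j]=0$ gives $\operatorname{ad}_{X_i}Y_j=0$; and $\operatorname{ad}_{X_i}Y_i=h_i$ with $\operatorname{ad}_{X_i}h_i=-a_{ii}X_i$. Finally $\operatorname{ad}_{X_i}X_i=[X_i,X_i]$, which is zero when $X_i$ is even and satisfies $\operatorname{ad}_{X_i}[X_i,X_i]=[X_i,[X_i,X_i]]=0$ by the super Jacobi identity when $X_i$ is odd. Thus on $\mathfrak{h}$, on $Y_i$, on $Y_j$ ($j\neq i$), and on $X_i$ the operator $\operatorname{ad}_{X_i}$ is nilpotent with a uniformly bounded exponent, and the only substantive case is $\operatorname{ad}_{X_i}^{k}X_j$ for $j\neq i$, that is, the finiteness of the $\alpha_i$-string through $\alpha_j$.

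Next I would reduce to rank two. By Lemma~\ref{lm20} the subalgebra $\mathfrak{a}_{\{i,j\}}$ generated by $\mathfrak{h}$ and $X_i,Y_i,X_j,Y_j$ is $\mathfrak{h}'\oplus\mathfrak{g}(A_{\{i,j\}})$; it is graded for the principal grading, so $\dim(\mathfrak{a}_{\{i,j\}})_m\leq\dim\mathfrak{g}(A)_m$ and $\mathfrak{g}(A_{\{i,j\}})$ again has finite growth. Every $\operatorname{ad}_{X_i}^{k}X_j$ lies in this subalgebra, so the claim follows once it is established for the $2\times 2$ matrix $A_{\{i,j\}}$, which (after normalizing $a_{ii},a_{jj}\in\{0,2\}$) is the only case left. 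The no-zero-row hypothesis enters precisely here, to exclude the degenerate situation in which $\alpha_i$ is orthogonal to the whole root lattice and the string can be infinite without contributing to growth.

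The heart of the argument is the contrapositive rank-two claim: if $\operatorname{ad}_{X_i}^{k}X_j\neq 0$ for all $k\geq 0$, then $\dim\mathfrak{g}(A_{\{i,j\}})_m$ grows exponentially, contradicting finite growth. Writing $b=a_{ij}$ and $c=a_{ji}$, an infinite string means $b$ fails the condition $b\in\mathbb{Z}_{\leq 0}$ under which a Serre-type relation would truncate it, and I would then show that in every such case the positive part $\mathfrak{n}_+$ is essentially free on $X_i,X_j$: concretely, produce inside $\bigoplus_{p,q}\mathfrak{g}_{p\alpha_i+q\alpha_j}$ a free Lie-superalgebra on two generators, or equivalently exhibit a recursion for the root multiplicities whose solutions grow exponentially. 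This is the step I expect to be the main obstacle, for two reasons. The symmetrizable subcase can be controlled through the contravariant form together with the known exponential asymptotics for indefinite rank-two root multiplicities; but the \emph{non-symmetrizable} rank-two matrices, which arise exactly when precisely one of $b,c$ vanishes, and the genuinely super cases with $a_{ii}=0$ or $a_{jj}=0$, require a separate and explicit study of the defining maximal ideal and its low-degree relations. Organizing this case division and making the growth estimate uniform across all of them is where the real work lies; the reductions in the previous paragraphs are routine.
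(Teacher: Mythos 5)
A preliminary remark on the comparison you asked for: this paper does not prove Theorem~\ref{th1} at all. It is recalled verbatim from \cite{HS07} (``We recall a result from \cite{HS07}''), so the proof you are actually competing with lives in that reference, and its overall architecture is the one you describe: locally nilpotent vectors of the superderivation $\operatorname{ad}_{X_i}$ form a subalgebra by the super-Leibniz rule, the generators $\mathfrak{h}$, $X_i$, $Y_1,\dots,Y_n$ are disposed of exactly as you say, and the whole question is the finiteness of the strings $\operatorname{ad}_{X_i}^{k}X_j$. These reductions are correct, as is the observation that the principal gradings are compatible in Lemma~\ref{lm20}, so that finite growth is inherited by $\mathfrak{g}(A_{\{i,j\}})$.

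Beyond that point there are two genuine gaps. The first you acknowledge yourself: the contrapositive claim that an infinite string forces exponential growth is the \emph{entire} content of the theorem, and your proposal proves it in no case. Worse, the premise needs recalibrating: the string elements $\operatorname{ad}_{X_i}^{k}X_j$ span the weight spaces $k\alpha_i+\alpha_j$, each of dimension at most one (the multidegree-$(k,1)$ component of a free Lie superalgebra on two generators is one-dimensional), so an infinite string contributes bounded dimension per principal degree and is by itself perfectly compatible with finite growth. Everything therefore rests on the unproven assertion that mutual brackets of string elements survive the quotient by the maximal ideal (``$\mathfrak{n}_+$ is essentially free''), and that assertion is delicate exactly in the cases that matter: for $a_{ii}=0$ or non-symmetrizable $2\times 2$ matrices the contravariant-form and rank-two multiplicity asymptotics you invoke for the symmetrizable case (which presuppose $a_{ii}=2$ on the diagonal) are unavailable.

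The second gap is that the freeness assertion, and with it your reduction to $A_{\{i,j\}}$, is false as stated, because $A_{\{i,j\}}$ can acquire a zero row even when $A$ has none. Take $A'=\left(\begin{smallmatrix}2&b\\0&0\end{smallmatrix}\right)$, $p=(0,0)$, $b\notin\mathbb{Z}_{\leq 0}$. Then $[Y_1,\operatorname{ad}_{X_1}^{m}X_2]=-m(m-1+b)\operatorname{ad}_{X_1}^{m-1}X_2$ never vanishes, while $[Y_2,\operatorname{ad}_{X_1}^{m}X_2]=-\operatorname{ad}_{X_1}^{m}h_2=0$ for $m\geq 1$ since $[X_1,h_2]=-a_{21}X_1=0$; so the string is infinite. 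Yet the span $W$ of all brackets $[\operatorname{ad}_{X_1}^{p}X_2,\operatorname{ad}_{X_1}^{q}X_2]$ satisfies $[Y_2,W]=0$ (because $a_{21}=a_{22}=0$) and $[Y_1,W]\subseteq W$, hence generates an ideal meeting $\mathfrak{h}$ trivially and vanishes in $\mathfrak{g}(A')$: this $\mathfrak{g}(A')$ has \emph{linear} growth while $\operatorname{ad}_{X_1}$ is not locally nilpotent. Now embed $A'$ as the $\{1,2\}$-submatrix of a $3\times 3$ matrix with $a_{21}=a_{22}=0$ and $a_{23}\neq 0$, which has no zero rows: the string $\operatorname{ad}_{X_1}^{m}X_2$ is still infinite in $\mathfrak{g}(A)$ by the same computation, but no contradiction with finite growth can be manufactured inside $\mathfrak{g}(A_{\{1,2\}})$, since that subalgebra has finite growth no matter what; the infinite growth promised by Theorem~\ref{th1} can only be extracted using the third vertex. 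So your instinct about the role of the no-zero-rows hypothesis (``the string can be infinite without contributing to growth'') is right in spirit, but the degenerate configuration it excludes reappears \emph{inside} your rank-two reduction and cannot be excluded there; the violating configurations must be analyzed together with their ambient rows and columns, which is precisely the case analysis carried out in \cite{HS07} and deferred in your proposal. As it stands, this is a plan for a proof, not a proof.
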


A proof of the following lemma can be found in \cite{HS07}.

\begin{lemma} \label{lm23}\relax  Let $ {\mathfrak g}\left(A\right) $ be a contragredient Lie superalgebra.
Then $\mbox{ad}_{X_{i}}$ are locally nilpotent for all $i\in I$ if and only if $A$ satisfies the following conditions, (after rescaling the rows of $A$ such that $a_{ii}\in\{0,2\}$ for all $i\in I$):
\begin{enumerate}
\item if $ a_{i i}=0 $ and $p(i)=0$, then $a_{ij}=0$ for all $j\in I$;
\item if $ a_{i i}=2 $, then $ a_{ij}\in2^{p\left(i\right)}{\mathbb Z}_{\leq 0} $ for all $j\in I$;
\item if $ a_{ij}=0 $ and $ a_{ji}\not=0 $, then $ a_{i i}=0 $.
\end{enumerate}
\end{lemma}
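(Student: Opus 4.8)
The plan is to prove both implications by reducing local nilpotency of $\operatorname{ad}_{X_i}$ first to its action on the generators and then to a rank-at-most-two computation. Since $\operatorname{ad}_{X_i}$ is a superderivation and $\mathfrak g(A)$ is generated by $\mathfrak h$ and the $X_j,Y_j$, the super-Leibniz rule shows that the set of elements on which $\operatorname{ad}_{X_i}$ acts locally nilpotently is a subalgebra, so it suffices to check the generators. On $\mathfrak h$ and on $Y_j$ ($j\neq i$) this is immediate, and on $Y_i$ one uses $\operatorname{ad}_{X_i}^2 Y_i=-a_{ii}X_i$ together with $[X_i,[X_i,X_i]]=0$ (a consequence of the super-Jacobi identity) to get nilpotency of order at most $4$, independently of the conditions. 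Everything thus reduces to the action of $\operatorname{ad}_{X_i}$ on $X_j$, $j\neq i$. Writing $v_k=\operatorname{ad}_{X_i}^k X_j$, Lemma~\ref{lm20} lets me carry out this computation inside $\mathfrak g(A_{\{i,j\}})$, since $v_k$ lies in the root space of $k\alpha_i+\alpha_j$. Two computations drive the argument. First, $[Y_l,v_k]=0$ for $l\neq i,j$, while $[Y_j,v_k]$ is, up to sign, $\operatorname{ad}_{X_i}^k h_j$, which vanishes for $k=0$ and for all $k\geq 3$ and is a nonzero multiple of $a_{ji}$ for $k=1,2$; this is the only place $a_{ji}$ enters. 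Second, $[Y_i,v_k]$ obeys a recursion governed by $\langle X_i,Y_i,h_i\rangle$: for $a_{ii}=2$, $p(i)=0$ (an $\mathfrak{sl}_2$) one gets $[Y_i,v_k]=-k(a_{ij}+k-1)v_{k-1}$; for $a_{ii}=2$, $p(i)=1$ (an $\mathfrak{osp}(1|2)$) a parity-dependent two-step recursion lets the relevant coefficient vanish only at an index $1-a_{ij}$ with $a_{ij}$ even; and for $a_{ii}=0$, $p(i)=0$ one gets $[Y_i,v_k]=-k\,a_{ij}\,v_{k-1}$. I will also use the principle (P): a homogeneous element $v$ of positive principal degree with $[Y_l,v]=0$ for all $l$ is zero, because then $[\mathfrak n_-,v]=0$ by Jacobi, so the ideal generated by $v$ is $U(\mathfrak n_+)v\subseteq\mathfrak n_+$, meets $\mathfrak h$ trivially, and hence vanishes by the maximality in the definition of $\mathfrak g(A)$.

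For the forward implication I assume a condition fails and produce an infinite string. If $a_{ii}=2$ but $a_{ij}\notin 2^{p(i)}\mathbb Z_{\leq 0}$, the scalar coefficients in the recursion for $[Y_i,v_k]$ never vanish, so $v_{k-1}\neq 0$ forces $[Y_i,v_k]\neq 0$ and hence $v_k\neq 0$; starting from $v_0=X_j\neq 0$ this gives $v_k\neq 0$ for all $k$, contradicting local nilpotency and yielding condition~2. Condition~1 follows identically from $[Y_i,v_k]=-k\,a_{ij}\,v_{k-1}$ when $a_{ii}=0$, $p(i)=0$ and some $a_{ij}\neq 0$. For condition~3, suppose $a_{ij}=0$, $a_{ji}\neq 0$, and $a_{ii}=2$; then $v_1=[X_i,X_j]$ satisfies $[Y_i,v_1]=0$ and $[Y_j,v_1]$ is a nonzero multiple of $a_{ji}X_i$, so $v_1\neq 0$ is a lowest-weight vector of positive $h_i$-weight $a_{ii}$, and the recursion started at $v_1$ produces an infinite string, again contradicting local nilpotency.

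For the converse I assume conditions 1--3 and prove termination. When $a_{ii}=2$, condition~2 gives $a_{ij}\in 2^{p(i)}\mathbb Z_{\leq 0}$, so the recursion forces $[Y_i,v_N]=0$ at the critical index $N=1-a_{ij}$; for $N\geq 3$ we also have $[Y_j,v_N]=0$, while for the small values of $N$ either $X_i$ is even so that $[Y_j,v_N]=\pm a_{ji}[X_i,X_i]=0$ or condition~3 forces $a_{ji}=0$, so in every case $[Y_j,v_N]=0$ and (P) gives $v_N=0$. When $a_{ii}=0$ and $p(i)=1$, the identity $\operatorname{ad}_{X_i}^2 X_j=0$ (from $[X_i,X_i]=0$ and super-Jacobi) gives nilpotency directly. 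When $a_{ii}=0$ and $p(i)=0$, condition~1 gives $a_{ij}=0$, hence $[Y_i,v_k]=0$ for every $k$; combined with $[Y_j,v_k]=0$ for $k\geq 3$ (and for small $k$ as above), (P) forces $v_k=0$ for large $k$, even in the delicate case $a_{ji}\neq 0$ that condition~3 permits.

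The main obstacle is twofold. First, pinning down the parity-dependent integrality in the $\mathfrak{osp}(1|2)$ case: there the string advances two steps at a time and only the even-indexed coefficients can vanish, which is exactly what forces $a_{ij}\in 2\mathbb Z_{\leq 0}$ rather than merely $a_{ij}\in\mathbb Z_{\leq 0}$, and this requires carrying the two-step recursion through carefully with the correct signs. Second, handling the genuinely non-symmetrizable configurations with $a_{ij}=0\neq a_{ji}$, where ordinary $\mathfrak{sl}_2$-representation theory does not apply: there termination cannot be read off from a finite-dimensional module and must instead be extracted from principle~(P) and the maximality built into the definition of $\mathfrak g(A)$, which is precisely the role played by the computation $[Y_j,v_k]=\pm\operatorname{ad}_{X_i}^k h_j$.
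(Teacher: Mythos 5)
The paper itself contains no proof of this lemma (it simply cites \cite{HS07}), and your argument is correct and essentially the standard one given there: reduce local nilpotency of the superderivation $\operatorname{ad}_{X_i}$ to the strings $v_k=\operatorname{ad}_{X_i}^k X_j$, compute $[Y_i,v_k]$ through the rank-one recursions for $\mathfrak{sl}_2$, $\mathfrak{osp}(1|2)$ and the Heisenberg case, use $[Y_j,v_k]=\pm\operatorname{ad}_{X_i}^k h_j$ to bring in $a_{ji}$, and kill any root vector annihilated by all the $Y_l$ via the maximality of the ideal in the definition of $\mathfrak g(A)$ (your principle (P)). Three cosmetic slips, none of which affects the argument: $\operatorname{ad}_{X_i}^0 h_j=h_j\neq 0$, so $[Y_j,v_k]$ vanishes only for $k\geq 3$ (harmless, since only $k\geq 1$ is ever used); in the $\mathfrak{osp}(1|2)$ recursion it is the odd-indexed coefficients $c_{2m+1}=a_{ij}+2m$ that can vanish, the even-indexed $c_{2m}=2m$ never do --- your ``only the even-indexed coefficients can vanish'' contradicts your own (correct) statement that termination occurs at index $1-a_{ij}$ with $a_{ij}$ even; and $[X_i,X_i]=0$ for isotropic $\alpha_i$ is not a consequence of the super-Jacobi identity alone but itself requires an application of (P), which you do have available.
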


\subsection{Roots and Reflections}
The Lie superalgebra $ {\mathfrak g}={\mathfrak g}\left(A\right) $ has a {\em root space decomposition}
\begin{equation}
{\mathfrak g}={\mathfrak h}\oplus\bigoplus_{\alpha\in\Delta}{\mathfrak g}_{\alpha}.
\notag\end{equation}
Every root is either a positive or a negative linear combination of the simple roots, $\alpha_1,\ldots,\alpha_n$.  Accordingly, we  have the decomposition $ \Delta=\Delta^{+}\cup\Delta^{-} $, and we call $\alpha\in\Delta^{+}$ positive and $\alpha\in\Delta^{-}$ negative.
One can define $ p:\Delta \to {\mathbb Z}_{2} $ by letting $ p\left(\alpha\right)=0 $ or 1 whenever $\alpha $ is even or odd, respectively. By $ \Delta_{0}$ (resp. $\Delta_{1}$) we denote the set of even (resp. odd) roots.

Let $\Pi:=\{\alpha_1,\ldots,\alpha_n\}$ be the set of simple roots of $\mathfrak{g}(A)$.
There are four possibilities for each simple root $\alpha_{i} $:
\begin{enumerate}
\item if $ a_{ii}=2 $ and $ p(\alpha_i)=0 $,
then $ X_{i} $, $ Y_{i} $ and $ h_{i} $ generate a subalgebra isomorphic to $ sl\left(2\right) $;

\item if $ a_{ii}=0 $ and $ p(\alpha_i)=0 $,
then $ X_{i} $, $ Y_{i} $ and $ h_{i} $ generate a subalgebra isomorphic to the Heisenberg algebra;

\item if $ a_{ii}=2 $ and $ p(\alpha_i)=1 $, then $ X_{i} $, $ Y_{i} $ and $ h_{i} $ generate a subalgebra isomorphic to $ \operatorname{osp}\left(1|2\right) $, and in this case $2\alpha_i\in\Delta $;

\item if $ a_{ii}=0 $ and $ p(\alpha_i)=1 $, then  $ X_{i} $, $ Y_{i} $ and $
h_{i} $ generate a subalgebra isomorphic to $ sl\left(1|1\right) $.
\end{enumerate}
\begin{definition}\label{isotropicdef}
A simple root $ \alpha_i$ is {\em isotropic} if $ a_{ii}=0 $ and $ p(\alpha_i)=1 $, and otherwise $\alpha_i$ is {\em non-isotropic}.
\end{definition}
\begin{definition}\label{regulardef}
A simple root $\alpha_i$ is {\em regular} if for any other simple root $\alpha_j$, $a_{ij}=0$ implies $a_{ji}=0$.
\end{definition}

If $\alpha_k$ is a simple root with $a_{kk}\neq 0$, we define the (even) reflection $r_{k}$ at $\alpha_k$ by
\begin{equation*}
r_{k}(\alpha_i)=\alpha_i - \alpha_i(h_k)\alpha_k,\hspace{2cm}\alpha_i\in\Pi.
\end{equation*}

If $\alpha_k$ is a regular isotropic root, we define the {\em odd reflection} $r_{k}$ at $\alpha_k$ as follows:
\begin{equation*}r_{k}(\alpha_{i})=\left\{
  \begin{array}{ll}
    -\alpha_{k}, & \hbox{if $i=k$;} \\
    \alpha_{i}, & \hbox{if $a_{ik}=a_{ki}=0$, $i \neq k$;} \\
    \alpha_{i}+\alpha_{k}, & \hbox{if $a_{ik}\neq 0$ or $a_{ki} \neq 0$,  $i \neq k$;}
  \end{array}
\right.\end{equation*}

\begin{equation*}
 X_{i}':=\left\{
  \begin{array}{ll}
     Y_{i}, & \hbox{if $i=k$;}\\
     X_{i}, & \hbox{if $i\neq k$, and $a_{ik}=a_{ki}=0$;}\\
    {[X_{i},X_{k}]}, & \hbox{if $i\neq k$, and $a_{ik}\neq 0$ or $a_{ki}\neq 0$;}
\end{array}\right.
\end{equation*}

\begin{equation*}
 Y_{i}':=\left\{
  \begin{array}{ll}
     X_{i}, & \hbox{if $i=k$;}\\
     Y_{i}, & \hbox{if $i\neq k$, and $a_{ik}=a_{ki}=0$;}\\
    {[Y_{i},Y_{k}]}, & \hbox{if $i\neq k$, and $a_{ik}\neq 0$ or $a_{ki}\neq 0$;}
\end{array}\right.
\end{equation*}
and
\begin{equation*}
h_{i}' := [X_{i}',Y_{i}'].
\end{equation*}
Then $$ h_{i}' = \begin{cases}
    (-1)^{p(\alpha_{i})} (a_{ik}h_{k}+a_{ki}h_{i}) &\text{ if }i \neq k\text{, and }a_{ik} \text{ or }a_{ki} \neq 0, \\
    h_{i} &\text{ if }i \neq k\text{, and }a_{ik}=a_{ki}=0, \\
    h_{k} &\text{ if } i=k. \end{cases} $$
Set $\alpha'_i:=r_{k}(\alpha_i)$ for $i\in I$.

A proof of the following lemma can be found in \cite{HS07}.
\begin{lemma} \label{lm1}\relax
Let $\mathfrak{g}(A)$ be a contragredient Lie superalgebra with base $\Pi=\{\alpha_{1},\ldots,\alpha_{n}\}$. Suppose that $\Pi'=\{ \alpha'_{1},\dots ,\alpha'_{n} \}$ is is obtained from $\Pi$ by an odd reflection with respect to a regular isotropic root. Then $\alpha'_{1},\dots ,\alpha'_{n}$ are linearly independent. The corresponding elements (defined above) $
X'_{1},\dots ,X'_{n}$, $Y'_{1},\dots ,Y'_{n} $ together with $ h'_{1},\dots ,h'_{n} $ satisfy~\eqref{equ1}. Moreover, $ {\mathfrak h} $ and $ X'_{1},\dots ,X'_{n},Y'_{1},\dots ,Y'_{n} $ generate $ {\mathfrak
g}(A) $.
\end{lemma}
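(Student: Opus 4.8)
The plan is to treat the three assertions in turn, reducing everything to the super-Jacobi identity, the defining relations~\eqref{equ1}, and two structural inputs: that $\alpha_k$ isotropic forces $[X_k,X_k]=[Y_k,Y_k]=0$ (since $2\alpha_k\notin\Delta$), and that $\alpha_k$ regular forces $a_{ik}\neq 0\Rightarrow a_{ki}\neq 0$. Throughout I work inside small subalgebras via Lemma~\ref{lm20}, so that every computation is a rank $\leq 3$ computation.

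For linear independence I would first note that the formulas defining $r_k$ on $\Pi$ extend to a linear endomorphism of $V=\langle\alpha_1,\dots,\alpha_n\rangle\subseteq\mathfrak h^*$, and then check on each of the three types of generator that $r_k^2=\operatorname{id}_V$: one has $r_k(-\alpha_k)=\alpha_k$; $r_k$ fixes $\alpha_i$ when $a_{ik}=a_{ki}=0$; and for the remaining $i$, $r_k(\alpha_i+\alpha_k)=(\alpha_i+\alpha_k)-\alpha_k=\alpha_i$. Hence $r_k\in GL(V)$ is an involution, so it carries the basis $\{\alpha_i\}$ to the linearly independent set $\{\alpha_i'\}$.

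For the relations~\eqref{equ1}, the two weight relations $[h,X_i']=\alpha_i'(h)X_i'$ and $[h,Y_i']=-\alpha_i'(h)Y_i'$ are immediate once one checks that $X_i'$ is a root vector of weight $\alpha_i'$: namely $Y_k$ has weight $-\alpha_k=\alpha_k'$, $X_i$ has weight $\alpha_i=\alpha_i'$ in the second branch, and $[X_i,X_k]$ has weight $\alpha_i+\alpha_k=\alpha_i'$ in the third, the last from $[h,[X_i,X_k]]=(\alpha_i+\alpha_k)(h)[X_i,X_k]$ by the Leibniz rule. The substantial point is $[X_i',Y_j']=\delta_{ij}h_i'$. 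The decisive case is $i=j$ in the third branch: expanding $[[X_i,X_k],[Y_i,Y_k]]$ and repeatedly using $[X_k,Y_i]=[X_i,Y_k]=0$, $[X_k,Y_k]=h_k$, $[X_i,Y_i]=h_i$ collapses the expression to $(-1)^{p(\alpha_i)}(a_{ik}h_k+a_{ki}h_i)$, which is exactly $h_i'$; the only delicate part is the super-sign bookkeeping, e.g. the prefactor $(-1)^{(p(\alpha_i)+1)p(\alpha_i)}=1$. The off-diagonal identities $[X_i',Y_j']=0$ for $i\neq j$ follow by analogous super-Jacobi computations (using $[X_k,X_k]=0$ for the cases involving $k$, e.g. $[[X_i,X_k],X_k]=0$), or, for $i,j\neq k$, simply because $[X_i',Y_j']$ lies in the zero weight space $\mathfrak g_{\alpha_i'-\alpha_j'}$, whose weight $r_k(\alpha_i-\alpha_j)$ is not of definite sign and hence not a root.

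Finally, for generation I would show that $\mathfrak h$ together with the $X_i',Y_i'$ recovers the original generators. Indeed $X_k=Y_k'$ and $Y_k=X_k'$; in the second branch $X_i=X_i'$ and $Y_i=Y_i'$; and in the third branch the super-Jacobi identity gives $[X_i',X_k']=[[X_i,X_k],Y_k]=-a_{ki}X_i$ and $[Y_i',Y_k']=[[Y_i,Y_k],X_k]=a_{ki}Y_i$. Here the hypothesis that $\alpha_k$ is \emph{regular} is essential, and is the main obstacle to keep in view: it guarantees that the third branch is entered exactly when $a_{ki}\neq 0$ (because $a_{ik}\neq 0\Rightarrow a_{ki}\neq 0$), so that these brackets are nonzero multiples of $X_i$ and $Y_i$ and the originals are genuinely recovered. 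Since $X_i,Y_i$ and $\mathfrak h$ generate $\mathfrak g(A)$, so do the primed generators.
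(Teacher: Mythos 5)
Your proof is correct, and since the paper does not prove Lemma~\ref{lm1} itself but defers to \cite{HS07}, there is no in-paper argument to diverge from: your direct verification --- the involution argument for linear independence, the super-Jacobi collapse $[[X_i,X_k],[Y_i,Y_k]]=(-1)^{p(\alpha_i)}(a_{ik}h_k+a_{ki}h_i)=h_i'$ with the correct sign $(-1)^{(p(\alpha_i)+1)p(\alpha_i)}=1$, the mixed-sign weight argument for $[X_i',Y_j']=0$ when $i,j\neq k$, and regularity forcing $a_{ki}\neq 0$ in the third branch so that $[X_i',X_k']=-a_{ki}X_i$ and $[Y_i',Y_k']=a_{ki}Y_i$ genuinely recover the original generators --- is precisely the standard argument of that reference. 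The one tacit step worth making explicit is that the vanishings you invoke, namely $[X_k,X_k]=[Y_k,Y_k]=0$ (isotropy) and $[X_i,X_k]=0$ when $a_{ik}=a_{ki}=0$, hold in $\mathfrak{g}(A)$ only because each such vector is annihilated by every lowering generator and hence generates an ideal meeting $\mathfrak{h}$ trivially, which is absorbed into the maximal ideal in the definition of the quotient $\mathfrak{g}(A)$.
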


It then follows that given a matrix $A$ and a regular isotropic root $\alpha_k$, one can construct a new matrix $A'$ such that $\mathfrak{g}(A')\cong\mathfrak{g}(A)$ as superalgebras.  Explicitly, the entries of $A'$ can be defined by $A'_{ij}=\alpha_{j}'(h_{i}')$.  After possibly rescaling the elements $h_i'$, we find that ($ i \neq k$ and $ j \neq k $):
\begin{equation*}
    a_{kk}' := a_{kk}; \hspace{1cm}
    a_{kj}' := a_{kj}; \hspace{1cm}
    a_{ik}' := -a_{ki}a_{ik}; \end{equation*}\begin{equation*}
    a_{ij}' := \begin{cases}
        a_{ij}, &\text{ if } a_{ik}=a_{ki}=0; \\
        a_{ki}a_{ij}, &\text{ if }a_{ik} \text{ or } a_{ki} \neq 0, \text{ and } a_{kj}=a_{jk}=0; \\
        a_{ki}a_{ij}+a_{ik}a_{kj}+a_{ki}a_{ik}, &\text{ if }a_{ik} \text{ or } a_{ki}\neq 0, \text{ and } a_{jk} \text{ or } a_{kj} \neq 0.
        \end{cases}
\end{equation*}

\begin{remark}\label{remrescale}
We can rescale the rows of $A'$ by rescaling the elements $h_i'$.  So after rescaling, we may assume that $a_{ii}'=\alpha_i'(h_i')=0$ or $2$.  In the case that $a_{ii}'=0$ for some $i$, it is our convention to rescale $A'$ so that $a_{ij}'=1$ for some $j$.
\end{remark}

We say that $A'$ is obtained from $A$ (and $\Pi':=\{\alpha'_1,\ldots,\alpha'_n\}$ is obtained from $\Pi$) by an odd reflection with respect to $\alpha_k$.  If $\Delta'^{+}$ is the set of positive roots with respect to $\Pi'$, then
\begin{equation*}
\Delta'^{+}=\left(\Delta^{+}\setminus\{\alpha_k\}\right)\cup\{-\alpha_k\}.
\end{equation*}
A root $\alpha$ is called {\em real} if $\alpha$ or $\frac{1}{2} \alpha$ is simple in some base $\Pi'$, which is obtained from $\Pi$ by a sequence of even and odd reflections. Otherwise, it is called {\em imaginary} otherwise.
\begin{remark}
A reflection with respect to a regular isotropic simple root $\alpha_k$ is indeed a reflection.  If $A''$ is obtained from $A$ by successively applying the reflection at $\alpha_k$ twice, then there is an invertible diagonal matrix $D$ such that $A''=DA$ and scalars $b_i,c_i$ such that $X''_i=b_i X_i$ and $Y''_i=c_i Y_i$.  It is possible to define an ``odd reflection'' at a simple isotropic root which is not regular, but in this case the subalgebra generated by $ X'_{1},\dots ,X'_{n},Y'_{1},\dots ,Y'_{n} $ and $ {\mathfrak h} $ is necessarily a proper subalgebra of $\mathfrak{g}(A)$.
\end{remark}

The notion of finite growth does not depend on the choice of a base for $\mathfrak{g}(A)$ \cite{HS07}.  It thus follows from Theorem~\ref{th1} that if $\mathfrak{g}(A)$ has finite growth and $A$ has no zero rows, then $A$ and any matrix $A'$ obtained from $A$ by a sequence of odd reflections satisfies the matrix conditions of Lemma~\ref{lm23}.

\begin{definition}\label{def gcm} A matrix $A$ is called a {\em generalized Cartan matrix} if it satisfies the matrix conditions of Lemma~\ref{lm23}, where the third condition is strengthened to the following:
\begin{equation*}
\text{3}'.\ \text{ if } a_{ij}=0 \text{, then } a_{ji}=0.
\end{equation*}\end{definition}
If a matrix satisfies the conditions of Lemma~\ref{lm23}, then condition (3$'$) is equivalent to the condition that all simple isotropic roots are regular.
We call $\mathfrak{g}(A)$ a {\em regular Kac-Moody superalgebra} if $A$ and any matrix obtained from $A$ by a sequence of odd reflections is a generalized Cartan matrix.

We call a root $\alpha\in\Delta$ {\em principal} if either $\alpha$ is even and belongs to some base $\Pi'$ obtained from $\Pi$ be a sequence of odd reflections, or if $\alpha=2\beta$ where $\beta$ is odd and belongs to some base $\Pi'$ obtained from $\Pi$ be a sequence of odd reflections.  For a principal root, the subalgebra generated by $X_{\alpha}$, $Y_{\alpha}$ and $h_{\alpha}:=[X_{\alpha},Y_{\alpha}]$ is isomorphic to $\mathfrak{sl}_2$, and we may choose $X_{\alpha}$, $Y_{\alpha}$ such that $\alpha(h_{\alpha})=2$.  Note that if $\alpha=2\beta$, then $X_{\alpha}=[X_{\beta},X_{\beta}]$.

Let $\Pi_{0}\subset\Delta$ denote the set of principal roots.  It is clear that $\Pi_{0}\subset\Delta_{0}^{+}$.  In general this set can be infinite, but this is not the case whenever $\frak{g}(A)$ has finite growth \cite{HS07}.
For any finite subset $S\subset \Pi_{0}$, we can define a matrix $B$ by setting $b_{ij}=\alpha_j(h_i)$.

A proof of the following lemma can be found in \cite{HS07}.

\begin{lemma}\label{lemma B}   If a Lie superalgebra $ {\mathfrak g}\left(A\right) $ has finite growth, then for any finite subset $S$ of ${\Pi}_0$ the Lie algebra $ {\mathfrak g}\left(B\right) $ also has finite growth. In particular, $ B $ is a generalized Cartan matrix for a finite growth Kac-Moody algebra.
\end{lemma}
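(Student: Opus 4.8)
The plan is to realize $\mathfrak{g}(B)$ as a homomorphic image of the subalgebra $\mathfrak{a}\subseteq\mathfrak{g}(A)$ generated by the principal triples $\{X_{\alpha_i},Y_{\alpha_i},h_{\alpha_i}\}_{\alpha_i\in S}$, and to bound the growth of $\mathfrak{a}$ using the principal grading of $\mathfrak{g}(A)$. The whole argument rests on one combinatorial fact, which I expect to be the main obstacle: \emph{distinct principal roots differ by a non-root}, i.e. $\alpha_i-\alpha_j\notin\Delta$ whenever $\alpha_i,\alpha_j\in\Pi_0$ are distinct.

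To prove this I would first record that an odd reflection changes the sign of only the single (odd) root $\alpha_k$, since $\Delta'^{+}=(\Delta^{+}\setminus\{\alpha_k\})\cup\{-\alpha_k\}$; hence the set $\Delta_0^{+}$ of positive even roots is invariant under every sequence of odd reflections. Now take distinct $\alpha_i,\alpha_j\in\Pi_0$ and suppose $\gamma:=\alpha_i-\alpha_j\in\Delta$. Since $\alpha_i,\alpha_j$ are even, $\gamma$ is an even root, and replacing $\gamma$ by $-\gamma$ if necessary we may assume $\gamma\in\Delta_0^{+}$; then both $\alpha_j$ and $\gamma$ lie in $\Delta_0^{+}$ and so are positive in \emph{every} base obtained from $\Pi$ by odd reflections. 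But $\alpha_i$ (or the odd $\beta$ with $\alpha_i=2\beta$) is simple in some such base $\Pi'$, and the decomposition $\alpha_i=\alpha_j+\gamma$ then exhibits $\alpha_i$ as a sum of two positive roots of $\Pi'$, which is impossible for a simple root; for $\alpha_i=2\beta$ it is equally impossible, since in $\Pi'$-coordinates the only even positive root proportional to the odd simple $\beta$ is $2\beta=\alpha_i$ itself. This contradiction proves the claim.

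Granting the claim, the relations \eqref{equ1} hold for $\{X_{\alpha_i},Y_{\alpha_i},h_{\alpha_i}\}$ with structure constants $B$: the Cartan relations are immediate from $X_{\alpha_j}\in\mathfrak{g}_{\alpha_j}$ and $b_{ij}=\alpha_j(h_{\alpha_i})$, while $[X_{\alpha_i},Y_{\alpha_j}]\in\mathfrak{g}_{\alpha_i-\alpha_j}=0$ for $i\neq j$ by the claim, and $[X_{\alpha_i},Y_{\alpha_i}]=h_{\alpha_i}$ by definition. This yields a homomorphism $\bar{\mathfrak{g}}(B)\to\mathfrak{g}(A)$ with image $\mathfrak{a}$; as its restriction to the Cartan of $\bar{\mathfrak{g}}(B)$ is injective (it realizes $B$ inside $\mathfrak{h}$), its kernel meets the Cartan trivially and is therefore contained in the maximal defining ideal of $\mathfrak{g}(B)$, so $\mathfrak{g}(B)$ is a homomorphic image of $\mathfrak{a}$. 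Grading $\bar{\mathfrak{g}}(B)$, $\mathfrak{a}$, and $\mathfrak{g}(B)$ compatibly by $\deg X_{\alpha_i}=\operatorname{ht}(\alpha_i)$, the principal grading of $\mathfrak{g}(A)$ restricts to $\mathfrak{a}$, whence $\dim\mathfrak{a}_m\le\dim\mathfrak{g}(A)_m$ grows polynomially; since $\mathfrak{g}(B)$ is a graded quotient of $\mathfrak{a}$, it too has finite growth (the bounded rescaling of the grading degrees does not affect this).

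Finally, to see that $B$ is a generalized Cartan matrix, note $b_{ii}=\alpha_i(h_{\alpha_i})=2$ and that all roots in $S$ are even. Because $\mathfrak{g}(A)$ has finite growth, each $\operatorname{ad}_{X_{\alpha_i}}$ and $\operatorname{ad}_{Y_{\alpha_i}}$ is locally nilpotent (apply Theorem~\ref{th1} in the base where $\alpha_i$, or $\beta$ with $\alpha_i=2\beta$, is simple), so $\mathfrak{g}(A)$ is an integrable module for the principal $\mathfrak{sl}_2^{(i)}$ and the eigenvalues of $\operatorname{ad}_{h_{\alpha_i}}$ are integers; in particular $b_{ij}\in\mathbb{Z}$. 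The claim gives $[Y_{\alpha_i},X_{\alpha_j}]\in\mathfrak{g}_{\alpha_j-\alpha_i}=0$, so $X_{\alpha_j}$ is a lowest-weight vector of weight $b_{ij}$ in a finite-dimensional $\mathfrak{sl}_2^{(i)}$-module, forcing $b_{ij}\le 0$ for $i\neq j$. Hence $B$ satisfies the conditions of Lemma~\ref{lm23} together with condition $(3')$, i.e. $B$ is a generalized Cartan matrix, and $\mathfrak{g}(B)$ is the corresponding finite-growth Kac-Moody algebra. The one subtlety to check carefully is the local nilpotency input when the relevant base has a zero row, where Theorem~\ref{th1} does not apply verbatim and one must verify local nilpotency of the single principal root vector directly.
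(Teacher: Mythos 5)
The paper itself does not prove this lemma --- it defers to \cite{HS07} --- so there is no in-paper argument to compare against; judged on its own, your proposal gets the combinatorial heart right but has a genuine gap in the growth-transfer step. What is right: your key claim that distinct principal roots differ by a non-root, proved via the invariance of $\Delta_0^{+}$ under odd reflections (each odd reflection flips the sign of a single odd root) together with the support argument in a base where $\alpha_i$, or $\beta$ with $\alpha_i=2\beta$, is simple, is correct and is exactly the right lemma to isolate. The $\mathfrak{sl}_2$-theoretic derivation of $b_{ii}=2$ and $b_{ij}\in\mathbb{Z}_{\leq 0}$ is also sound, though you assert condition $(3')$ without proof; it does follow from your own ingredients: if $b_{ij}=0$ then $X_{\alpha_j}$ is a lowest weight vector of weight $0$ in a locally finite $\mathfrak{sl}_2^{(i)}$-module, forcing $[X_{\alpha_i},X_{\alpha_j}]=0$, and then $b_{ji}X_{\alpha_i}=[h_{\alpha_j},X_{\alpha_i}]=-[Y_{\alpha_j},[X_{\alpha_j},X_{\alpha_i}]]=0$. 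The zero-row caveat you flag for Theorem~\ref{th1} is real but minor.

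The gap: your homomorphism $\bar{\mathfrak{g}}(B)\to\mathfrak{g}(A)$ need not exist, and its asserted Cartan-injectivity fails, because distinct principal roots --- and their coroots --- can be linearly \emph{dependent}. Concretely, take $\mathfrak{g}(A)=A(1,1)^{(1)}$, the $4$-cycle of isotropic vertices with $a_{i,i+1}=1$, $a_{i,i-1}=-1$ cyclically. Odd reflections at $\alpha_1$ and at $\alpha_3$ show that all four roots $\beta_i=\alpha_i+\alpha_{i+1}$ are principal, and they satisfy $\beta_1-\beta_2+\beta_3-\beta_4=0$; normalizing $\beta_i(h_{\beta_i})=2$ gives $h_{\beta_1}=h_1-h_2,\ h_{\beta_2}=h_2-h_3,\ h_{\beta_3}=h_3-h_4,\ h_{\beta_4}=h_4-h_1$, so $\sum_i h_{\beta_i}=0$. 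Since the functionals $\alpha_j^{B}$ are linearly independent on $\mathfrak{h}_B$ by construction, no linear map $\phi:\mathfrak{h}_B\to\mathfrak{h}$ can satisfy $\alpha_j\circ\phi=\alpha_j^{B}$ (it would transport the dependence $\beta_1-\beta_2+\beta_3-\beta_4=0$ back to $\mathfrak{h}_B^{*}$), and $\phi$ is not injective on the span of the $h_i^{B}$. In this example $B=A_1^{(1)}\oplus A_1^{(1)}$, and the subalgebra $\mathfrak{a}$ is a \emph{proper} quotient of the derived algebra of $\mathfrak{g}(B)$ in which the two canonical central elements become negatives of each other; so ``$\mathfrak{g}(B)$ is a homomorphic image of $\mathfrak{a}$'' is simply false, and the inequality $\dim\mathfrak{g}(B)_m\leq$ (polynomial bound from $\mathfrak{a}$) is not established. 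The conclusion happens to survive here because only finite-dimensional central data collapses, but your argument does not show this in general: to repair it you must work with the algebra on generators $X_i,Y_i,H_i$ alone, show that the kernel of its map onto $\mathfrak{a}$ meets $\operatorname{span}(h_{\alpha_i})$ only in central elements, and control the identifications of distinct $\mathbb{Z}^{S}$-weights that occur along the dependency lattice of $S$ (whose fibers, fortunately, lie in the null directions of $B$ and grow polynomially) --- a genuinely additional argument, and precisely the point where the citation to \cite{HS07} is doing real work.
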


\subsection{Matrix diagrams}\label{sec 1.3}
Given a Cartan matrix $A$, we can associate a {\em matrix diagram}, denoted $\Gamma_{A}$ (or simply $\Gamma$ when $A$ is fixed), as follows.  Recall that we may assume that $a_ii=0$ or $2$.  The vertices of $\Gamma_{A}$ correspond to the simple roots of $\mathfrak{g}(A)$ and are given by the following table.
\begin{equation*}
\begin{tabular}{|c|c|c|c|}
\hline
$\mathfrak{g}(A)$ & $A$ & $p(1)$ & Diagram \\
\hline
$A_1$ & (2) & 0 & $\O$ \\
$B(0,1)$ & (2) & 1 & \B \\
$A(0,0)$ & (0) & 1 & $\OX$ \\
Heisenberg & (0) & 0 & $\square$ \\
\hline
\end{tabular}
\end{equation*}

We join vertex $v_i$ to vertex $v_j$ by an arrow if $a_{ij} \neq 0$, and we label this arrow with the number $a_{ij}$.
This correspondence between Cartan matrices and matrix diagrams is a bijection.  The condition that the matrix $A$ is
indecomposable corresponds to the requirement that the diagram $\Gamma_{A}$ is connected.  We say that a vertex is
isotropic if it is of the type $\OX$. We use the notation of the following table to denote the possible vertex types.
\begin{center}
\begin{tabular}{|c|c|}
\hline
Notation & Vertex Types \\
\hline
$\bullet$ & $\O$ \text{ or } $\OX$\\
$\OD$ & $\O$ \text{ or } $\B$ \\
$\OV$ & $\O$ \text{ or } $\OX$ \text{ or } $\B$ \\
\hline
\end{tabular}
\end{center}

Let $A$ be a Cartan matrix with $a_{ii}=0$.  Then rescaling the $i^{th}$ row of the matrix $A$ (i.e. multiplying by a
non-zero constant) corresponds to rescaling all labels of arrows exiting the vertex $v_i$ of the diagram $\Gamma_{A}$.
This defines is an equivalence relation on matrix diagrams, where $\Gamma_{A'}\sim\Gamma_{A}$ if $A'=DA$ for some
invertible diagonal matrix $D$, and in this case $\mathfrak{g}(A')\cong\mathfrak{g}(A)$. We consider matrix diagrams modulo this
equivalence.

A diagram $\Gamma_{A}$ is called a {\em regular Kac-Moody} (or regular Kac-Moody diagram) if the corresponding contragredient Lie superalgebra $\mathfrak{g}(A)$ is regular Kac-Moody.  An odd reflection of a matrix diagram
$\Gamma_{A}$ at an isotropic vertex $v_{i}$ is defined to be the diagram $\Gamma_{A'}$, where $A'$ is obtained from $A$
by an odd reflection at the corresponding isotropic root $\alpha_i$ of $\mathfrak{g}(A)$.  Note that the set of all regular Kac-Moody diagrams is closed under odd reflections. Denote by $C(\Gamma)$ the collection of all diagrams obtained from sequences of odd reflections of a regular Kac-Moody diagram $\Gamma$.  When $\alpha_i$ is a simple odd isotropic root, we denote by $r_{i}$ the odd reflection with respect to $\alpha_i$.

By a {\em subdiagram} $\Gamma'$ of the diagram $\Gamma$, we mean a full subdiagram, i.e. if the vertices $v_i$ and $v_j$ are in  $\Gamma'$ then the arrows with labels $a_{ij}$ and $a_{ji}$ also belong to $\Gamma'$.
We say that a connected regular Kac-Moody diagram is {\em extendable} if it is a proper subdiagram of a connected regular Kac-Moody diagram. For a 3-vertex subdiagram $\Gamma' = \{v_i,v_j,v_k\}$, with $v_j$ isotropic, we refer to the fractions $\frac{a_{ji}}{a_{jk}}$ and $\frac{a_{jk}}{a_{ji}}$ as the {\em ratios} of the isotropic vertex $v_{j}$ in $\Gamma'$.

\section{Classification of connected subfinite regular Kac-Moody diagrams}\label{s1}

In order to classify regular Kac-Moody superalgebras, we classify the corresponding regular Kac-Moody diagrams.  A diagram for a finite-dimensional or affine Kac-Moody superalgebra is regular Kac-Moody.
\begin{definition}\label{defsubfinite}
We call a regular Kac-Moody diagram {\em subfinite} if it is connected, it contains an isotropic vertex, and it satisfies the following condition for all reflected diagrams: all connected proper subdiagrams which contain an isotropic vertex are of finite type.
\end{definition}
In Section~\ref{s35}, we will show that all regular Kac-Moody diagrams are subfinite.

In this section, we classify subfinite regular Kac-Moody diagrams, by using induction on the number of vertices.
A subdiagram of a subfinite regular Kac-Moody diagram is regular Kac-Moody, however if it does not contain an isotropic vertex then it is not part of our classification.  We will work around this difficulty by using odd reflections.

Note that we only need to find the extensions of one diagram $\Gamma$ belonging to a collection of
reflected diagrams $\mathcal{C}(\Gamma)$, and then include all reflections of each extended diagram in our
classification.

\subsection{Regular Kac-Moody diagrams: 2 or 3 vertices}\label{rkm23}
In this section, we find all connected diagrams with two or three vertices which are regular Kac-Moody and contain an isotropic vertex.

We say that an $n$-vertex diagram is a {\em cycle} if it is a connected diagram and each vertex is connected to exactly two other vertices.  We say that an $n$-vertex diagram is a {\em chain} if the vertices can be enumerated by the set
$\{1,2,...,n\}$ such that $a_{ij}=0$ if and only if $j \neq i+1$ and $i \neq j+1$.  A proper connected
subdiagram of a cycle is a chain.

\begin{lemma}
The connected regular Kac-Moody 2-vertex diagrams which contain an isotropic vertex are $A(1,0)$ and $B(1,1)$.
\end{lemma}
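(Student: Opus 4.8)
The plan is to exploit the defining feature of a regular Kac-Moody diagram: not only must the Cartan matrix $A$ be a generalized Cartan matrix, but so must every matrix obtained from it by odd reflections. For a two-vertex diagram there is only one isotropic vertex to reflect (two in the degenerate case where both are isotropic), so the entire reflection orbit is tiny and can be computed by hand. It is precisely this reflection condition, rather than the generalized Cartan conditions on $A$ alone, that will cut an a priori infinite family down to exactly two diagrams.

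First I would normalize. Let $v_1$ be the isotropic vertex, so $a_{11}=0$ and $p(1)=1$; by Remark~\ref{remrescale} I rescale the isotropic row so that $a_{12}=1$. Connectedness forces $a_{12}\neq 0$, and regularity (condition $3'$ of Definition~\ref{def gcm}) then forces $a_{21}\neq 0$. The second vertex $v_2$ has $a_{22}\in\{0,2\}$ and is one of four types; the Heisenberg type ($a_{22}=0$, $p(2)=0$) is ruled out at once, since condition (1) of Lemma~\ref{lm23} would force $a_{21}=0$. This leaves three cases. In each, the generalized Cartan conditions on $A$ restrict $a_{21}$ to a discrete family: $a_{21}=1$ (after rescaling) when $v_2$ is isotropic; $a_{21}=-c$ with $c\in\mathbb{Z}_{\geq 1}$ when $v_2$ is even non-isotropic; and $a_{21}=-2c$ with $c\in\mathbb{Z}_{\geq 1}$ when $v_2$ is of type $\B$.

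The heart of the argument is the odd reflection $r_1$ at $v_1$. Using the formulas for $a'_{ij}$ I compute $a_{12}'=1$, $a_{21}'=-a_{21}$, and $a_{22}'=a_{22}+2a_{21}$, while the parity of $v_2$ flips (because $X_2'=[X_2,X_1]$). I then rescale the second row of $A'$ so that $a_{22}''\in\{0,2\}$ and demand that $A'$ again be a generalized Cartan matrix. This integrality requirement is the crux. For $v_2$ even non-isotropic the reflected diagonal is $2-2c$: when $c=1$ it vanishes and $v_2$ reflects to an isotropic vertex, recovering the fully isotropic $\OX$--$\OX$ diagram; while for $c\geq 2$ the rescaled off-diagonal entry $\frac{c}{1-c}$ must lie in $2\mathbb{Z}_{\leq 0}$ (the new parity being odd), which holds only for $c=2$. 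For $v_2$ of type $\B$ the reflected diagonal $2-4c$ never vanishes, and the rescaled entry $\frac{2c}{1-2c}$ must lie in $\mathbb{Z}_{\leq 0}$, which holds only for $c=1$. All larger parameter values produce a non-integral entry and are discarded.

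Finally I collect the survivors. The fully isotropic diagram ($\OX$--$\OX$, equivalently the even case $c=1$) forms one reflection orbit, which I identify with $A(1,0)$; the pair consisting of the even $c=2$ diagram and the type-$\B$ $c=1$ diagram (the $\OX$--$\O$ and $\OX$--$\B$ diagrams with label $-2$, exchanged by $r_1$) forms the other, which I identify with $B(1,1)$, by matching against the known finite-type Cartan matrices. The main obstacle I anticipate is bookkeeping rather than conceptual: applying the reflection formulas together with the rescaling convention and the parity flip so that the rational entries and their integrality constraints come out correctly. A secondary check is to confirm that each surviving diagram's full odd-reflection orbit consists of generalized Cartan matrices, which is immediate here since the orbit has at most three elements and reflecting twice returns to the start.
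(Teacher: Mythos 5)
Your proof is correct and takes essentially the same route as the paper: normalize so $a_{12}=1$, apply the single odd reflection at the isotropic vertex, and impose the generalized Cartan integrality conditions on both the original and the reflected matrix, which forces $a_{21}\in\{-1,-2\}$ (up to the parity variants) and yields exactly $A(1,0)$ and $B(1,1)$. Your write-up merely organizes the cases by the type of $v_2$ and tracks the parity flip, the $2\mathbb{Z}_{\leq 0}$ refinement for odd non-isotropic vertices, and the Heisenberg exclusion more explicitly than the paper's condensed computation with the combined symbol $\OV$.
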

\begin{proof}
Recall that in the case that $a_{ii}=0$ for some $i$, it is our convention to rescale so that $a_{ij}=1$ for some $j$.
\begin{equation*}
    \xymatrix{ \OX \AW[r]^{1}_{a} & \OV}
\end{equation*}
\D If $a\neq -1$, then reflecting at $v_{1}$, we have $a, \frac{-a}{a+1} \in {\mathbb Z}_{<0}$ which implies $a=-2$.  Then $\frac{-a}{a+1}=-2$ and this is $B(1,1).$
\begin{equation*}
    \xymatrix{ \OX \AW[r]^{1}_{a} & \OD}
    \text{\hspace{.5cm}} \overrightarrow{r_{1}} \text{\hspace{.5cm}}
    \xymatrix{ \OX \AW[r]^{1}_{- \frac{a}{1+a}} & \OD}
\end{equation*}\\
\D If $a=-1$, then by reflecting at $v_{1}$ we have $A(1,0)$.
\begin{equation*}
    \xymatrix{ \OX \AW[r]^{1}_{-1} & \O }
    \text{\hspace{.5cm}} \overrightarrow{r_{1}} \text{\hspace{.5cm}}
    \xymatrix{ \OX \AW[r]^{1}_{1} & \OX }
    \end{equation*}
\end{proof}
We note that all 2-vertex diagrams of regular Kac-Moody superalgebras are of finite type.

\begin{lemma}
The regular Kac-Moody extensions of $A(1,0)$ to three vertices are the following: $A(0,2)$, $A(1,1)$, $B(1,2)$,
$B(2,1)$, $C(3)$, $G(3)$, $D(2,1,\alpha)$, $A(1,2)^{(2)}$, $A(0,1)^{(1)}$, $B(1,1)^{(1)}$, $S(1,2,\alpha)$, $q(3)^{(2)}$,
$Q^{\pm}(m,n,t)$.
\end{lemma}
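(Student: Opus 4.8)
The plan is to proceed by brute-force case analysis using the constraints built up in the preliminaries, organized around the collection $\mathcal{C}(\Gamma)$ of reflected diagrams. We start from $A(1,0)$, which by the previous lemma is the two-vertex diagram $\OX \AW[r]^{1}_{-1} \O$ (the case $a=-1$). By the remark following Definition~\ref{def gcm}, a three-vertex extension is regular Kac-Moody precisely when every matrix in its reflection-orbit is a generalized Cartan matrix, so the defining constraints are the conditions of Lemma~\ref{lm23} strengthened by $(3')$. First I would add a third vertex $v_3$ and record, for the new entries $a_{i3}, a_{3i}$, what conditions the generalized-Cartan-matrix requirements impose: on any non-isotropic vertex $a_{ii}=2$ we need $a_{ij}\in 2^{p(i)}\mathbb{Z}_{\le 0}$ (Lemma~\ref{lm23}(2)), on any isotropic vertex the ratios must behave correctly under reflection, and $(3')$ forces the arrow pattern to be symmetric in its zero/nonzero structure. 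Because $A(1,0)$ has one isotropic vertex, the extension has at least one isotropic vertex, so the diagram is a candidate for the subfinite classification and we may legitimately work within $\mathcal{C}(\Gamma)$.

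The core of the argument is to split into cases according to the \emph{topology} of the three-vertex diagram — chain versus cycle (a triangle) — and according to which vertices are isotropic, using the vertex-type notation $\bullet, \OD, \OV$ from Section~\ref{sec 1.3}. Next I would enumerate, for each connected shape, the admissible labels: in a chain $v_i - v_j - v_k$ attached to the $A(1,0)$ pair, and in a triangle where all three vertices are mutually connected. For each configuration I would apply odd reflections at each isotropic vertex and demand that \emph{every} resulting matrix again satisfy the generalized-Cartan-matrix conditions — this is exactly the regularity requirement and is what pins down the labels to discrete values, just as the two-vertex lemma forced $a=-2$ via $a,\frac{-a}{a+1}\in\mathbb{Z}_{<0}$. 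The explicit reflection formulas for $a_{ij}'$ given before Remark~\ref{remrescale} let me compute the reflected labels directly; the ratios $\frac{a_{ji}}{a_{jk}}, \frac{a_{jk}}{a_{ji}}$ at an isotropic vertex $v_j$ are the key invariants to track, since these must remain integral (or land in the allowed value set) after each reflection. Whenever a subdiagram of the candidate contains an isotropic vertex, subfiniteness forces that subdiagram to be of finite type, which sharply limits the possibilities; when a subdiagram has \emph{no} isotropic vertex I would reflect to move an isotropic vertex into it before invoking the finite-type list.

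After the discrete label constraints are extracted, I would match each surviving configuration against the known finite-type, affine, and exceptional diagrams to produce the names in the statement: the finite and affine Kac-Moody superalgebras $A(0,2), A(1,1), B(1,2), B(2,1), C(3), G(3), A(1,2)^{(2)}, A(0,1)^{(1)}, B(1,1)^{(1)}$ arising from the integrality-constrained cases, the one-parameter families $D(2,1,\alpha)$ and $S(1,2,\alpha)$ arising when a reflection orbit closes up consistently for a continuous parameter, and the new families $q(3)^{(2)}$ and $Q^{\pm}(m,n,t)$ arising from the triangle configurations where three integer parameters survive. The families $D(2,1,\alpha)$ and $S(1,2,\alpha)$ deserve care: here the constraints do not force a single numerical value but a whole curve, and I must verify that the reflection orbit stays within generalized Cartan matrices for the entire parameter range (and excludes the degenerate integer specializations, as noted in the introduction for $S(1,2,\alpha)$).

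The main obstacle I expect is the triangle (cycle) case, where all three pairs of vertices are connected and several reflections interact. In the chain case each reflection only reshuffles labels locally, but in a triangle an odd reflection at an isotropic vertex modifies the third, ``opposite'' edge via the term $a_{ki}a_{ij}+a_{ik}a_{kj}+a_{ki}a_{ik}$ in the formula for $a_{ij}'$, coupling all three parameters. Ensuring that the full orbit under the (possibly non-terminating) sequence of odd reflections consists entirely of generalized Cartan matrices is where the genuinely new family $Q^{\pm}(m,n,t)$ emerges, and separating its admissible integer triples $m,n,t\in\mathbb{Z}_{\le -1}$ from the inadmissible ones — while simultaneously recovering $q(3)^{(2)}$ as the special point $m=n=t=-1$ — is the delicate computational heart of the proof and cannot be shortcut by appeal to the existing finite/affine classification.
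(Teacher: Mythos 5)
Your proposal follows essentially the same route as the paper: split the three-vertex extensions into chain and triangle configurations, apply odd reflections at each isotropic vertex, and demand that every matrix in the reflection orbit satisfy the generalized Cartan matrix conditions, which pins the labels to the listed finite and affine diagrams, isolates the parameter families $D(2,1,\alpha)$ and $S(1,2,\alpha)$ via the orbit-closure analysis (with $\alpha\notin\mathbb{Z}$ for $S(1,2,\alpha)$), and produces the triangle system $1+a+\tfrac{1}{b}=m$, $1+b+\tfrac{1}{c}=n$, $1+c+\tfrac{1}{a}=t$ whose solutions give $D(2,1,\alpha)$ (all zero), $q(3)^{(2)}$ (all $-1$), and $Q^{\pm}(m,n,t)$. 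One caution: you invoke subfiniteness as a working constraint, but subfiniteness of all regular Kac-Moody diagrams is only established later (Theorem~\ref{thm2}), using this very classification as input, so appealing to it here would be circular --- at three vertices the legitimate substitute, which the paper uses implicitly, is just the two-vertex classification (every two-vertex regular Kac-Moody diagram with an isotropic vertex is $A(1,0)$ or $B(1,1)$, both of finite type), and this yields the same restrictions on subdiagrams.
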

\begin{proof} We consider each case for attaching a vertex to an $A(1,0)$ diagram. \\
\textbf{Case 1:}
\begin{equation*}
\xymatrix{ \OX_1 \AW[r]^{1}_{1} & \OX_2 \AW[r]^{a}_{b} & \OV_3} \hspace{1cm} a,b \neq 0
\end{equation*}\\
\D If $v_3$ is $\OX$, then by reflecting at $v_2$ we have $1+a, 1+\frac{1}{a} \in \mathbb{Z}_{\leq 0}$, which implies
$a=-1$. This is $A(1,1)$.
\begin{equation*}
\xymatrix{ \OX \AW[r]^{1}_{1}  & \OX \AW[r]^{a}_{1}  & \OX} \text{ \hspace{.5cm} } \overrightarrow{r_{2}} \text{  }
\xymatrix{& \OX \AW[ldd]^{1}_{-1} \AW[rdd]^{a}_{-1} & \\ & & \\ \O \AW[rr]^{1+a}_{1+\frac{1}{a}} & & \O }
\overset{a=-1}{\Longrightarrow}\text{  } \xymatrix{ \O \AW[r]^{-1}_{1}  & \OX \AW[r]^{-1}_{-1}  & \O}
\end{equation*}\\
\D If $v_3$ is $\OD$ and $b=-2$, then by reflecting at $v_{2}$ we have $1+a, 2+\frac{2}{a} \in \mathbb{Z}_{\leq 0}$,
which implies $a=-1$. This is $B(1,2)$.
\begin{equation*}
\xymatrix{ \OX \AW[r]^{1}_{1}  & \OX \AW[r]^{a}_{-2}  & \OD } \text{ \hspace{.5cm} } \overrightarrow{r_{2}} \text{  }
\xymatrix{& \OX \AW[ldd]^{1}_{-1} \AW[rdd]^{a}_{-2} & \\ & & \\ \O \AW[rr]^{1+a}_{2+\frac{2}{a}} & & \OD }
\overset{a=-1}{\Longrightarrow}\text{  } \xymatrix{ \O \AW[r]^{-1}_{1}  & \OX \AW[r]^{-1}_{-2}  & \OD}
\end{equation*}\\
\D If $v_3$ is $\O$ and $a,b=-1$, then this is $A(0,2)$.
\begin{equation*}
\xymatrix{ \OX \AW[r]^{1}_{1}  & \OX \AW[r]^{a}_{-1}  & \O } \text{ \hspace{.5cm} } \overrightarrow{r_{2}}\text{ \hspace{.5cm} }  \xymatrix{ \O \AW[r]^{-1}_{1}  & \OX \AW[r]^{-1}_{-1}  & \O}
\end{equation*} \\
\D If $v_3$ is $\O$, $b=-1$ and $a\neq -1$, then by reflecting at $v_{2}$ we have $1+a \in \{-1,-2\}$, which implies $a
\in \{-2,-3\}$. If $a=-2$, then $1+a=-1$ and this is $C(3)$. If $a=-3$, then $1+a=-2$ and this is $G(3)$.
\begin{equation*}
\xymatrix{ \OX \AW[r]^{1}_{1} & \OX \AW[r]^{a}_{-1} & \O } \text{ \hspace{.5cm} } \overrightarrow{r_{2}} \text{  }
\xymatrix{& \OX \AW[ldd]^{1}_{-1} \AW[rdd]^{a}_{a} & \\ & & \\ \O \AW[rr]^{1+a}_{-(1+a)} & & \OX }.
\end{equation*}\\ \\
\textbf{Case 2:}
\begin{equation*}
\xymatrix{& \OV_2 \AW[ldd]^{b}_{a} \AW[rdd]^{d}_{c} & \\ & & \\ \OX_1 \AW[rr]^{1}_{1} & &\OX_3 } \hspace{1cm} a,b,c,d
\neq 0
\end{equation*}\\
\D If $v_2$ is $\OX$, then after rescaling we have
\begin{equation*}
\xymatrix{& \OX \AW[ldd]^{1}_{c} \AW[rdd]^{b}_{1} & \\ & & \\ \OX \AW[rr]^{1}_{a} & &\OX } \text{ \hspace{.5cm} }
\overrightarrow{r_{2}} \text{  }
\xymatrix{& \OX \AW[ldd]^{1}_{-1} \AW[rdd]^{b}_{-1} & \\
& & \\ \O \AW[rr]^{1+b+\frac{1}{c}}_{1+a+\frac{1}{b}} & & \O}.
\end{equation*}
Then $1+a+\frac{1}{b}, 1+b+\frac{1}{c} \in \mathbb{Z}_{<0}$ or both zero. By symmetry it follows that
\begin{equation}\label{e3}
\begin{array}{l}
1+a+\frac{1}{b}=m \\
1+b+\frac{1}{c}=n \\
1+c+\frac{1}{a}=t \\
\end{array}
\in \mathbb{Z}_{<0} \text{, or all equal to zero.}
\end{equation}
If they all equal zero, then this is $D(2,1,\alpha)$.  If they all equal $-1$, then $a,b,c=-1$ and this is $q(3)^{(2)}$.
If they are in $\mathbb{Z}_{<0}$ and not all equal to $-1$, then this is $Q^{\pm}(m,n,t)$. \\ \\
\D If $v_2$ is $\OD$ and $b,d=-2$, then by reflecting at $v_1$ we have $4+\frac{2}{a},4+\frac{2}{c},1+a+c \in
\mathbb{Z}_{<0}$ or all zero, which implies $a,c=- \frac{1}{2}$.
If $v_2$ is $\O$ then this is $A(1,2)^{(2)}$, and if $v_2$ is $\B$ then this is $B(1,1)^{(1)}$.\\
\begin{equation*}
\xymatrix{& \OD  \AW[ldd]^{-2}_{a} \AW[rdd]^{-2}_{c} & \\ & & \\ \OX \AW[rr]^{1}_{1} & & \OX } \text{\hspace{.5cm} }
\overrightarrow{r_{1}} \text{  }
\xymatrix{& \OD \AW[ldd]^{-2}_{a} \AW[rdd]^{4+\frac{2}{a}}_{Q} & \\
& & \\ \OX \AW[rr]^{1}_{-1} & & \O } \text{ } Q=1+a+c.
\end{equation*}\\
\D If $v_2$ is $\O$, $b=-2$ and $d=-1$, then by reflecting at $v_{1}$ we have
\begin{equation}\label{d2}
\xymatrix{& \O \AW[ldd]^{-2}_{a} \AW[rdd]^{-1}_{c} & \\ & & \\ \OX \AW[rr]^{1}_{1} & & \OX } \text{\hspace{.5cm} }
\overrightarrow{r_{1}} \text{  } \xymatrix{& \B \AW[ldd]^{-2}_{a} \AW[rdd]^{3+\frac{2}{a}}_{Q} & \\ & & \\ \OX
\AW[rr]^{1}_{-1} & & \O} \text{ } Q=1+a+c.
\end{equation}
Then either $1+a+c \in \mathbb{Z}_{<0}$ and $3+\frac{2}{a} \in 2\mathbb{Z}_{<0}$, or both equal zero. If they are both zero, then $a=- \frac{2}{3}, c=- \frac{1}{3}$ and this is G(3).

So now assume that $1+a+c \neq 0$.  Then by reflecting at $v_{3}$ we have
\begin{equation}\label{d1}
\xymatrix{& \O \AW[ldd]^{-2}_{a} \AW[rdd]^{-1}_{c} & \\ & & \\ \OX \AW[rr]^{1}_{1} & & \OX } \text{\hspace{.5cm} }
\overrightarrow{r_{3}} \text{  } \xymatrix{& \OX \AW[ldd]^{\!\!\!\!-1-3c}_{1+a+c} \AW[rdd]^{\!c}_{c\!} & \\ & & \\ \O
\AW[rr]^{-1}_{1} & & \OX }.
\end{equation}
This implies $1+a+c \in \{-1,-2\}$.  Since $3+\frac{2}{a}\leq -2$, it follows that $c\in (-2,-1\frac{3}{5}]$  in the first case ($1+a+c=-1$) and $c\in (-3,-2\frac{3}{5}]$ in the second case.

If $1+a+c=-1$, then by reflecting the last diagram of (\ref{d1}) at $v_2$ we have:
\begin{equation*}
\begin{array}{c}\overrightarrow{r_{2}} \text{  } \xymatrix{& \OX \AW[ldd]^{P}_{P} \AW[rdd]^{c}_{-1} & \\ & & \\ \OX
\AW[rr]^{2+5c}_{-2} & & \O } \text{\hspace{.5cm} } \overrightarrow{r_{1}} \text{  } \xymatrix{& \O \AW[ldd]^{-1}_{-1-3c}
\AW[rdd]^{-1}_{R} & \\ & & \\ \OX \AW[rr]^{2+5c}_{-2} & & \B }\end{array} \text{ } \begin{array}{l}
P=-1-3c\\ \\
R=\frac{4+9c}{2+5c} \end{array}.
\end{equation*}
Then $R=\frac{4+9c}{2+5c} \in 2\mathbb{Z}_{<0}$, which contradicts  $c\in (-2,-1\frac{3}{5}]$.

If $1+a+c=-2$, then by reflecting the last diagram of (\ref{d1}) at $v_{2}$ we have
\begin{equation*}
\begin{array}{c}\xymatrix{& \OX \AW[ldd]^{P}_{-2} \AW[rdd]^{c}_{c} & \\ & & \\ \O \AW[rr]^{-1}_{1} & & \OX } \text{ \hspace{.5cm} }
\overrightarrow{r_{2}} \text{  } \xymatrix{& \OX \AW[ldd]^{P}_{-2} \AW[rdd]^{c}_{-1} & \\ & & \\ \B \AW[rr]^{S}_{-2} & &
\O }\end{array} \text{ } \begin{array}{l} P=-1-3c\\ \\S=\frac{3+7c}{1+3c}  \end{array}.
\end{equation*}
Then $S=\frac{3+7c}{1+3c} \in 2\mathbb{Z}_{<0}$, which contradicts $c\in (-3,-2\frac{3}{5}]$.\\

\D If $v_2$ is $\O$ and $b,d=-1$, then by reflecting at $v_{1}$ we have
\begin{equation*}
\xymatrix{& \O \AW[ldd]^{-1}_{a} \AW[rdd]^{-1}_{c} & \\ & & \\ \OX \AW[rr]^{1}_{1} & & \OX } \text{\hspace{.5cm} }
\overrightarrow{r_{1}} \text{  } \xymatrix{& \OX \AW[ldd]^{a}_{a} \AW[rdd]^{-1-2a}_{Q} & \\ & & \\ \OX \AW[rr]^{1}_{-1} &
& \O } \text{ } Q=1+a+c.
\end{equation*}
Then either $1+a+c = 0$ and $1+2a = 0$, or $1+a+c \in \{-1,-2\}$. If $1+a+c = 0$, then this is $C(3)$. We have that $1+a+c\neq -2$ by the previous case, since the first diagram is not a diagram for $G(3)$.

Suppose now that $1+a+c=-1$.  If $c=-1$, then this is $A(0,1)^{(1)}$. For $c \neq -1$ the substitution
$c=\frac{1-\alpha}{\alpha}$ is reversible. After substituting and rescaling, we have
\begin{equation*}
\xymatrix{& \O \AW[ldd]^{\!-1}_{\alpha-1} \AW[rdd]^{-1}_{\alpha+1\!\!\!} & \\ & & \\ \OX \AW[rr]^{-\alpha}_{-\alpha} & & \OX }.
\end{equation*}
This diagram is $S(1,2,\alpha)$, and is regular Kac-Moody precisely when $\alpha$ is not an integer.  Indeed,
by reflecting the following diagram at $v_{1}$ we have:
\begin{equation*}
\begin{array}{c}
\xymatrix{& \O \AW[ldd]^{\!-1}_{Q-1} \AW[rdd]^{Q+1}_{-1\!} & \\ & & \\ \OX \AW[rr]^{-Q}_{-Q}  & &\OX } \\
Q=\alpha+n\\
\end{array}
\overrightarrow{r_{1}}
\begin{array}{c}
\xymatrix{& \OX \AW[ldd]^{\!\!-R}_{-R} \AW[rdd]^{R-1}_{-1\!} & \\ & & \\ \OX \AW[rr]^{R+1}_{-1} & & \O } \\
R=Q-1=\alpha+(n-1)
\end{array}.
\end{equation*}
Reflecting at $v_{3}$ gives us
\begin{equation*}
\begin{array}{c}
\xymatrix{& \O \AW[ldd]^{\!-1}_{Q-1} \AW[rdd]^{Q+1}_{-1\!} & \\ & & \\ \OX \AW[rr]^{-Q}_{-Q}  & &\OX } \\
Q=\alpha+n\\
\end{array}
\overrightarrow{r_{3}}
\begin{array}{c}
\xymatrix{& \OX \AW[ldd]^{\!\!\!R+1}_{-1} \AW[rdd]^{-R}_{-R\!\!} & \\ & & \\ \O \AW[rr]^{-1}_{R-1} & & \OX } \\
R=Q+1=\alpha+(n+1)\\
\end{array}.
\end{equation*}
By induction, two diagrams given by labels $\alpha_{1}$ and $\alpha_{2}$ are connected by a sequence of odd reflections precisely when $\alpha_{1}-\alpha_{2} \in \mathbb{Z}$.  Hence, $S(1,2,\alpha)$ is regular Kac-Moody if and only
if $\alpha$ is not an integer.

We have found all regular Kac-Moody extensions of $A(1,0)$ by one vertex.
\end{proof}

\begin{lemma}
The regular Kac-Moody extensions of $B(1,1)$ to three vertices that are not extensions of $A(1,0)$ are the
following: $A(2,2)^{(4)}$, $D(2,1)^{(2)}$.
\end{lemma}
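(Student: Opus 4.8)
The plan is to extend $B(1,1)$ by a third vertex exactly as in the previous lemma. By the first lemma of Section~\ref{rkm23}, $B(1,1)$ is the two-vertex diagram consisting of an isotropic vertex $\OX$ (call it $v_1$) joined to a vertex of type $\OD$ (call it $v_2$) with $a_{12}=1$ and $a_{21}=-2$; it is its own odd reflection at $v_1$. I would classify the connected three-vertex extensions by how the new vertex $v_3$ attaches: (I) to $v_2$ alone, (II) to $v_1$ alone, (III) to both (a cycle). In each configuration I introduce unknown labels on the new arrows, impose the generalized Cartan matrix conditions of Lemma~\ref{lm23} together with condition (3$'$) of Definition~\ref{def gcm}, and then repeat this after each available odd reflection. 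As in the previous lemma, every reflection converts a required inequality into a membership condition in $2^{p(i)}\mathbb{Z}_{\leq 0}$, and the finitely many constraints determine the admissible labels.

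The hypothesis that we only want extensions which are \emph{not} extensions of $A(1,0)$ is what shortens the search. Since any subdiagram of a regular Kac-Moody diagram is again regular Kac-Moody, whenever $v_3$ is joined to the isotropic vertex $v_1$ the two-vertex subdiagram on $\{v_1,v_3\}$ is a connected regular Kac-Moody diagram with an isotropic vertex, hence by the first lemma it is $A(1,0)$ or $B(1,1)$. If it is $A(1,0)$, the whole diagram is an extension of $A(1,0)$ and is excluded; so in cases (II) and (III) I may assume $\{v_1,v_3\}=B(1,1)$, i.e. $v_3$ is of type $\OD$ with $a_{13}=1$ and $a_{31}=-2$. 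The same reasoning applied to $\{v_2,v_3\}$ in case (I) discards the subcase where $v_3$ is isotropic and that block is $A(1,0)$. This reduces the problem to a handful of configurations: the chain in which both $v_2$ and $v_3$ hang off the isotropic vertex $v_1$ as $B(1,1)$ blocks, the chain $\OX$--$\OD$--$\OV$ with a non-isotropic tail $v_3$, and their cyclic closures.

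For each surviving configuration the remaining work is the same finite computation as before: reflect at $v_1$ (and at a second isotropic vertex when one is present), read off the transformed labels from the odd-reflection formulas of Section~\ref{sec 1.3}, and retain only those numerical labels for which every diagram in the reflection class $C(\Gamma)$ is a generalized Cartan matrix. Solving these small systems, the solutions that survive and are not already on the $A(1,0)$ list are precisely two diagrams, which I would then recognize as $A(2,2)^{(4)}$ and $D(2,1)^{(2)}$.

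I expect the main obstacle to be the interaction between odd reflections and the $A(1,0)$-exclusion. An odd reflection at $v_1$ flips the parity of each neighbor, and for $A(1,0)$-type labels it can send a neighbor's diagonal entry to $0$, turning a non-isotropic vertex into an isotropic one — exactly as in the first lemma, where $A(1,0)$ reflects to a diagram with two $\OX$ vertices. Such a reflection can expose an $A(1,0)$ subdiagram in a different base, so the exclusion must be tested on the whole orbit $C(\Gamma)$ rather than on the starting diagram alone. The crux is therefore to confirm that the two claimed diagrams never acquire an $A(1,0)$ block under any sequence of reflections, whereas every other candidate either violates the generalized Cartan matrix conditions somewhere in its orbit or does reduce to an extension of $A(1,0)$.
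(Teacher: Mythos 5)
Your strategy coincides with the paper's: it also splits into the three attachment patterns (new vertex joined to the non-isotropic vertex, to the isotropic vertex, or to both, giving its Cases 1--3) and settles each by an odd reflection plus the generalized Cartan matrix conditions of Lemma~\ref{lm23}. But there is one concrete wrong step, and it sits exactly where the two answers live. In cases (II) and (III) you pass from ``$\{v_1,v_3\}=B(1,1)$'' to ``i.e.\ $a_{13}=1$ and $a_{31}=-2$.'' The conclusion $a_{31}=-2$ is right (that row has diagonal entry $2$, so it is rigid), but $a_{13}=1$ is an over-normalization: the row of the isotropic vertex $v_1$ can be rescaled only once, and you already spent that freedom setting $a_{12}=1$. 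The relative scale between the two entries of the isotropic row is a genuine unknown; it is precisely the parameter $a$ that the paper's Case 2 keeps free (there written as $\OD_1 \xleftrightarrow{-2,\,a} \OX_2 \xleftrightarrow{1,\,-2} \B_3$) and then pins down by reflecting at the isotropic vertex, which produces the constraints $2+2a,\ 2+\tfrac{2}{a}\in\mathbb{Z}_{\leq 0}$ with unique solution $a=-1$, whence $A(2,2)^{(4)}$ (companion $\O$) or $D(2,1)^{(2)}$ (companion $\B$). Under your normalization $a=1$, the reflected labels are $4$ and $4$, the test fails, and case (II) --- the only case with solutions --- comes back empty: your procedure as written would conclude there are no new diagrams at all. (Your earlier sentence ``introduce unknown labels on the new arrows'' is the correct instinct; the ``i.e.''\ clause contradicts it.)

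Two smaller points. First, your summary of surviving configurations restricts case (I) to a non-isotropic tail, but the exclusion argument only removes an isotropic tail whose block with $v_2$ is $A(1,0)$; the chain $\OX$--$\OD$--$\OX$ built from two $B(1,1)$ blocks is not excluded a priori and must be checked --- the paper handles this uniformly by taking $\OV_3$ arbitrary in its Case 1 and killing everything at once: reflecting at $v_1$ flips the parity of the companion vertex (so $B(1,1)$ is \emph{not} literally fixed by that reflection, only its labels are; the companion alternates between $\B$ and $\O$) and sends $a_{23}=a$ to $-a$, forcing $a,-a\in\mathbb{Z}_{<0}$ simultaneously, a contradiction. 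Second, for a lemma of this kind the small reflection computations \emph{are} the proof; asserting that ``precisely two diagrams'' survive the finite search is a prediction, not a derivation, and here the prediction would in fact be falsified by the normalization error above until that error is repaired.
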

\begin{proof} We consider each case for attaching a vertex to a $B(1,1)$  diagram. \\
\textbf{Case 1:}
\begin{equation*}
\xymatrix{ \OX_1 \AW[r]^{1}_{-2} & \B_2 \AW[r]^{a}_{b} & \OV_3 }\hspace{1cm}a,b\neq 0
\end{equation*}
Reflecting at $v_{1}$ we have
\begin{equation*}
\xymatrix{ \OX \AW[r]^{1}_{-2} & \B \AW[r]^{a}_{b} & \OV } \text{\hspace{.5cm} } \overrightarrow{r_{1}} \text{
\hspace{.5cm} } \xymatrix{ \OX \AW[r]^{1}_{-2} & \O \AW[r]^{-a}_{b} & \OV}.
\end{equation*}
This implies $a,-a \in \mathbb{Z}_{<0}$, which is a contradiction. \\
\textbf{Case 2:}
\begin{equation*}
\xymatrix{ \OD_1 \ar@<.5ex>[r]^{-2} & \OX_2 \ar@<.5ex>[l]^{a} \ar@<.5ex>[r]^{1} & \B_3 \ar@<.5ex>[l]^{-2}}\hspace{1cm}a\neq 0
\end{equation*}
By reflecting at $v_{2}$ we have
\begin{equation*}
\xymatrix{ \OD \AW[r]^{-2}_{a} & \OX \AW[r]^{1}_{-2} & \B } \text{\hspace{.5cm} } \overrightarrow{r_{2}} \text{  }
\xymatrix{& \OX \AW[ldd]^{a}_{-2} \AW[rdd]^{1}_{-2} & \\ & & \\ \OD \AW[rr]^{2+\frac{2}{a}}_{2+2a} & & \O },
\end{equation*}
which implies $2+2a, 2+\frac{2}{a} \in \mathbb{Z}_{<0}$.  The unique
solution is $a=-1$.  If $v_1$ is $\O$ then this is $A(2,2)^{(4)}$, and if $v_1$ is $\B$ then this is $D(2,1)^{(2)}$.\\
\textbf{Case 3:}
\begin{equation*}
\xymatrix{& \OD_2 \AW[ldd]^{-2}_{a} \AW[rdd]^{b}_{c} & \\ & & \\ \OX_1 \AW[rr]^{1}_{-2} & & \B_3 }\hspace{1cm}a,b,c\neq 0
\end{equation*}
Now $a \neq 0$, $b \in \mathbb{Z}_{<0}$, and $c \in 2\mathbb{Z}_{<0}$.  By reflecting at $v_{1}$ we have
\begin{equation*}
\xymatrix{& \OD \AW[ldd]^{-2}_{a} \AW[rdd]^{b}_{c} & \\ & & \\ \OX \AW[rr]^{1}_{-2} & & \B } \text{ \hspace{.5cm} }
\overrightarrow{r_{1}} \text{  } \xymatrix{& \OD \AW[ldd]^{-2}_{a} \AW[rdd]^{P}_{Q} & \\ & & \\ \OX \AW[rr]^{1}_{-2} & &
\O }
\end{equation*}
where $P = 2-b+\frac{2}{a}$, $Q = 2-c+2a$, and $P,Q \in \mathbb{Z}_{\leq 0}$.  But, the system of equations
\begin{equation*}
b \leq -1, \hspace{1cm} c \leq -2, \hspace{1cm} 2-c+2a \leq 0, \hspace{1cm} 2-b+\frac{2}{a} \leq 0
\end{equation*}
has no real solution.  This is a contradiction.
\end{proof}

We note that the only regular Kac-Moody diagram with three vertices which is not finite or of finite growth
is $Q^{\pm}(m,n,t)$.

\subsection{Preliminaries}

\begin{lemma} \label{lm12.2}\relax
Suppose that $\Gamma$ is a subfinite regular Kac-Moody $n$-vertex chain where the vertex $v_n$ is isotropic and the
vertices $v_i$ for $1<i<n$ are not isotropic.  Then there is a sequence of odd reflections of the vertices $v_i$ with
$3\leq i\leq n$ such that in the reflected diagram $\Gamma'$ the vertex $v_2$ is isotropic.  The vertex $v_1$ is
unchanged and $a_{12}'=a_{12}$.
\end{lemma}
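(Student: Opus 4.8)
The plan is to propagate the isotropic property leftward by reflecting successively at $v_n, v_{n-1}, \dots, v_3$, and to argue by downward induction on $j$ that the reflection at $v_j$ turns its left neighbour $v_{j-1}$ into an isotropic vertex while leaving the block of smaller-indexed vertices alone. Concretely, I would maintain the invariant that, just before reflecting at $v_j$, the current diagram $\Delta$ (obtained from $\Gamma$ by reflecting at $v_n,\dots,v_{j+1}$) is a subfinite regular Kac-Moody diagram in which the full subdiagram on $\{v_1,\dots,v_j\}$ is still the chain $v_1-\cdots-v_j$, the vertices $v_1,\dots,v_{j-1}$ together with all labels among them agree with $\Gamma$ (so $v_2,\dots,v_{j-1}$ are still non-isotropic), and $v_j$ is isotropic. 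The invariant is preserved because an odd reflection at $v_j$ alters only $v_j$ and its neighbours, and within $\{v_1,\dots,v_j\}$ the unique neighbour of $v_j$ is $v_{j-1}$; any new edges created by the reflection join $v_{j-1}$ to vertices of index $>j$ and hence lie outside the tracked block $\{v_1,\dots,v_{j-1}\}$.

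The heart of the argument is the claim that at each step $\{v_{j-1},v_j\}$ is an $A(1,0)$ subdiagram. This matters because reflecting at the isotropic vertex of $A(1,0)=\O\!X-\O$ flips the parity of the neighbour and turns it into a second isotropic vertex $\OX$, whereas reflecting the isotropic vertex of $B(1,1)$ only toggles its neighbour between the even type $\O$ and the odd type $\B$, so it never becomes isotropic. To prove the claim I would note that $\{v_{j-1},v_j\}$ is a connected $2$-vertex regular Kac-Moody subdiagram containing an isotropic vertex, hence $A(1,0)$ or $B(1,1)$ by the $2$-vertex classification, and then exclude $B(1,1)$ in two steps. First, $v_{j-1}$ cannot be of type $\B$: since $j\geq 3$ the vertex $v_{j-2}$ exists, and $\{v_{j-2},v_{j-1},v_j\}$ would be the chain $\OV-\B-\OX$ with the isotropic vertex at the end, which is forbidden by Case~1 of the $B(1,1)$-extension lemma. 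Hence $v_{j-1}$ is of type $\O$; if $\{v_{j-1},v_j\}$ were nevertheless $B(1,1)$, then reflecting at $v_j$ would convert the even vertex $v_{j-1}$ into a $\B$ vertex, producing the forbidden chain $\OV-\B-\OX$ inside a diagram of $C(\Gamma)$ (still regular Kac-Moody, by closure under reflections), again a contradiction. Thus $\{v_{j-1},v_j\}=A(1,0)$ and reflecting at $v_j$ makes $v_{j-1}$ isotropic, supplying the inductive step; after the reflection at $v_3$ the vertex $v_2$ is isotropic.

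It remains to verify the assertions about $v_1$. Throughout the process $v_1$ is adjacent only to $v_2$, and we never reflect at $v_2$, so no reflection is performed at a neighbour of $v_1$; moreover $v_1$ is never adjacent to a reflected vertex $v_j$ with $j\geq 3$, so the first row of the Cartan matrix is untouched, giving both that the type of $v_1$ is unchanged and that $a_{12}'=a_{12}$ (the eventual rescaling of $v_2$, once it becomes isotropic, modifies only the second row). The base cases $n=2$ (where $v_2=v_n$ is already isotropic and no reflection is needed) and $n=3$ (a single reflection at $v_3$) are immediate. I expect the main obstacle to be exactly the exclusion of the $B(1,1)$ alternative: because $B(1,1)$ can occur with an even neighbour, which at the level of vertex types is indistinguishable from $A(1,0)$, one cannot read off the outcome from the picture and must invoke the odd reflection together with the parity flip to expose the forbidden $\B$-configuration — while simultaneously checking that the surrounding chain structure survives the earlier reflections so that Case~1 genuinely applies.
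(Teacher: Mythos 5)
Your proposal is correct, and its global mechanism coincides with the paper's: the paper proves the lemma by induction on $n$, reflecting at $v_n$ and passing to $\Gamma'-\{v_n\}$, which unrolls to exactly your downward iteration at $v_n,v_{n-1},\dots,v_3$ with the same invariant (the untouched initial segment remains the original chain, since the reflected vertex's only neighbour in the block is its left neighbour, and any new edges go to higher-indexed vertices). Where you genuinely diverge is in the justification of the key step that the pair $\{v_{j-1},v_j\}$ is of type $A(1,0)$. The paper invokes the subfinite hypothesis: the $3$-vertex subdiagram $\{v_{n-2},v_{n-1},v_n\}$ is a connected proper subdiagram with an isotropic vertex, hence of finite type, and the finite-type $3$-vertex classification then gives $a_{n-1,n}=-1$ with $v_{n-1}$ even in one lookup. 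You instead use only the $2$-vertex classification ($A(1,0)$ or $B(1,1)$) and exclude $B(1,1)$ by the parity flip under the odd reflection combined with the impossibility of the chain $\OX-\B-\OV$ (Case~1 of the $B(1,1)$-extension lemma), applied inside $C(\Gamma)$, which is legitimate since regular Kac-Moody diagrams are closed under odd reflections and under passing to full subdiagrams. Your route buys two things: it never uses subfiniteness, so it establishes the statement for an arbitrary regular Kac-Moody chain; and it treats $n=3$ uniformly, whereas the paper's appeal to ``proper subdiagram of finite type'' is vacuous there because $\{v_{n-2},v_{n-1},v_n\}$ is then the whole diagram (a small gap in the paper's write-up that your two-step exclusion of $B(1,1)$ repairs). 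What the paper's version buys in exchange is brevity: a single citation of the finite-type classification replaces your case analysis, and in the only context where the lemma is later applied the diagram is subfinite anyway, so the extra generality is not needed.
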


\begin{proof}
If $n=2$, then we are done.  So suppose the lemma holds for such a chain with $n-1$ vertices.  Let $\Gamma$ be a chain
with $n$ vertices satisfying the hypothesis of the lemma.  Now $a_{n,n}=0$ because $v_{n}$ is isotropic, and
$a_{n-1,n-1}=2$ because $v_{n-1}$ is not isotropic.  Since $\Gamma$ is a chain, $a_{n-1,n-2}\neq 0$ and
$a_{n-2,n}=a_{n,n-2}=0$.  Since $\Gamma$ is a subfinite regular Kac-Moody diagram, the subdiagram containing the vertices $v_{n-2},v_{n-1},v_{n}$ is of finite type, which by the three vertex classification implies that
$a_{n-1,n}=-1$ and the vertex $v_{n-1}$ is even.

By reflecting at $v_n$ we obtain a diagram $\Gamma'$ where $v'_{n-1}$ is isotropic.  We observe that $\Gamma'$ is again a chain.  Indeed, $a'_{in}=a'_{ni}=0$ and $a'_{ij}=a_{ij}$ for $i<n-1$, $j<n$, because $a_{in}=a_{ni}=0$ for $i\neq n-1$. Also, $a'_{n-1,i}=a_{n,n-1} a_{n-1,i}=0$ for $i < n-2$. We can apply the induction hypothesis to $\Gamma'-\{v_n\}$ which gives us the desired sequence of odd reflections of $\Gamma'$ and hence of $\Gamma$.
\end{proof}

\begin{lemma} \label{lm12.3}\relax
Let $\Gamma$ be a subfinite regular Kac-Moody diagram.  Suppose $\Gamma'$ is a connected $m$-vertex subdiagram of
$\Gamma$ such that $\Gamma'$ does not contain an isotropic vertex.  Then there is a sequence of odd reflections $R$ of
$\Gamma$ such that $R(\Gamma') = \Gamma'$, and there is a connected $(m+1)$-vertex subfinite regular Kac-Moody subdiagram of $R(\Gamma)$ which contains $\Gamma'=R(\Gamma')$ as a subdiagram and an isotropic vertex.
\end{lemma}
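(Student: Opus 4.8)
The plan is to bring an isotropic vertex of $\Gamma$ into contact with $\Gamma'$ by reflecting only at vertices that are ``far'' from $\Gamma'$, so that $\Gamma'$ itself is left untouched, and then to invoke Lemma~\ref{lm12.2} to slide the isotropic label along a path until it sits next to $\Gamma'$.

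First I would set up the path. Since $\Gamma$ is connected and contains an isotropic vertex while $\Gamma'$ does not, $\Gamma'$ is a proper subdiagram and there is an isotropic vertex outside it. Choose a geodesic (shortest path in $\Gamma$) $w_0, w_1, \dots, w_\ell$ from the vertex set of $\Gamma'$ to the set of isotropic vertices, with $w_0 \in \Gamma'$, $w_\ell$ isotropic, and $w_1, \dots, w_{\ell-1}$ non-isotropic; by minimality $w_1, \dots, w_\ell \notin \Gamma'$ and the full subdiagram on $\{w_0, \dots, w_\ell\}$ has no chords, hence is a chain with its isotropic vertex $w_\ell$ at one end. This chain is a connected subdiagram of the subfinite diagram $\Gamma$ containing an isotropic vertex, so it is either $\Gamma$ itself (subfinite by hypothesis) or a connected proper subdiagram which by subfiniteness of $\Gamma$ is of finite type, hence subfinite; in either case Lemma~\ref{lm12.2} applies. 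Taking $v_1 = w_0$, it produces a sequence $R$ of odd reflections at $w_2, \dots, w_\ell$ after which $w_1$ becomes isotropic, $w_0$ is unchanged, and $a_{w_0 w_1}$ is unchanged, so $w_0$ and $w_1$ remain adjacent.

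The main point to verify --- and the step I expect to be the main obstacle --- is that performing this reflection sequence on all of $\Gamma$, not merely on the chain, leaves $\Gamma'$ unchanged. Here I would prove the key invariance statement: if $k$ is a vertex with $k \notin \Gamma'$ and $k$ not adjacent to any vertex of $\Gamma'$, then the odd reflection at $k$ (i) fixes the full subdiagram $\Gamma'$, and (ii) preserves, for every external vertex $q$, the adjacency between $q$ and $\Gamma'$. Both follow directly from the explicit reflection formulas: an entry $a_{ij}'$ with $i \in \Gamma'$ equals $a_{ij}$ because $a_{ik}=a_{ki}=0$, giving (i); and an entry $a_{qu}'$ with $u \in \Gamma'$, $q$ external, either equals $a_{qu}$ or equals $a_{kq}a_{qu}$ (since $u$ is not adjacent to $k$), and in both cases vanishes iff $a_{qu}$ does, giving (ii). Consequently the set $S$ of vertices at distance at least $2$ from $\Gamma'$ is preserved by any reflection performed at a vertex of $S$. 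Since $w_2, \dots, w_\ell$ all lie in $S$ initially and $S$ is preserved, induction on the length of $R$ shows every reflection is legal and keeps $\Gamma'$ fixed; moreover, since every relevant entry is computed from data internal to the chain, the restriction of $R$ to the chain coincides with the reflections of Lemma~\ref{lm12.2}, so $w_1$ is genuinely isotropic in $R(\Gamma)$. Thus $R(\Gamma') = \Gamma'$.

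Finally I would assemble the desired subdiagram. Let $\Delta$ be the full subdiagram of $R(\Gamma)$ on the $m+1$ vertices $V(\Gamma') \cup \{w_1\}$. It contains $\Gamma' = R(\Gamma')$, it is connected because $w_1$ is adjacent to $w_0 \in \Gamma'$, and it contains the isotropic vertex $w_1$. Since $R(\Gamma)$ is a reflection of the subfinite diagram $\Gamma$ it is regular Kac-Moody and subfinite, so $\Delta$ is regular Kac-Moody as a subdiagram; and $\Delta$ is either equal to $R(\Gamma)$ (hence subfinite) or a connected proper subdiagram of $R(\Gamma)$ containing an isotropic vertex, which is then of finite type and therefore subfinite. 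In all cases $\Delta$ is the required connected $(m+1)$-vertex subfinite regular Kac-Moody subdiagram containing $\Gamma'$ and an isotropic vertex. The base case $\ell = 1$, an isotropic vertex already adjacent to $\Gamma'$, needs no reflections and is handled by taking $R$ empty.
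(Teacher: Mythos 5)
Your proof is correct and follows essentially the same route as the paper: the paper takes a minimal subdiagram $\Gamma''$ containing $\Gamma'$ and an isotropic vertex, which (like your geodesic) yields a chordless chain attached to $\Gamma'$ at a single vertex, and then applies Lemma~\ref{lm12.2}, noting that reflections at the chain vertices not adjacent to $\Gamma'$ leave $\Gamma'$ unchanged. Your explicit invariance argument (that reflections at vertices of distance at least two from $\Gamma'$ fix $\Gamma'$ and preserve its neighbor set, so the whole sequence stays legal) merely spells out the step the paper disposes of in one sentence via minimality of $\Gamma''$.
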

\begin{proof}
First note that $\Gamma$ contains an isotropic vertex because it is subfinite.  So we may take a minimal subdiagram $\Gamma''$ containing $\Gamma'$ and an isotropic vertex.  Then by the minimality of $\Gamma''$ the set of vertices in $\Gamma''-\Gamma'$ form a chain with an isotropic vertex, and only one of these vertices is connected to the subdiagram $\Gamma'$.  Denote this vertex $v_{2}$ and the rest of the vertices in the chain $\Gamma''-\Gamma'$ by $\{v_3,...,v_n\}$ so that $v_k$ is connected to $v_{k-1}$ and $v_{k+1}$.  Then $v_n$ is isotropic by minimality.  Choose any vertex in $\Gamma'$ that is connected to $v_2$ and denote it $v_1$.

Now apply the Lemma~\ref{lm12.2} to the subdiagram given by $\{v_1,v_2,...,v_n\}$ to obtain a sequence of odd reflections $R$ such that $R(v_2)$ is isotropic.  By minimality of $\Gamma''$ each vertex $v_k$ for $k\geq 3$ is not connected to $\Gamma'$, so each reflection does not change the subdiagram $\Gamma'$, ie. $a'_{ij}=a_{ij}$ if $v_i,v_j \in \Gamma'$.  Then the subdiagram $R(\Gamma' \cup \{v_2\})$ of $R(\Gamma)$ is a subfinite regular Kac-Moody $m+1$-vertex diagram containing the diagram $\Gamma'$ and an isotropic vertex.
\end{proof}

\begin{lemma}\label{52} If $\Gamma$ is a subfinite regular Kac-Moody diagram with $n \geq 4$ vertices, then any odd non-isotropic vertex of $\Gamma$ has degree one.
\end{lemma}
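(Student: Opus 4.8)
The plan is to argue by contradiction: suppose $v$ is an odd non-isotropic vertex (a $\B$ vertex, so $a_{vv}=2$ and $p(v)=1$) of degree at least two, and force a forbidden three-vertex configuration. The first step is to reduce to the case where $v$ is adjacent to an isotropic vertex. By condition~(1) of Lemma~\ref{lm23} a Heisenberg vertex $\square$ is isolated, so in a connected diagram every neighbor of $v$ has type $\O$, $\B$, or $\OX$. If no neighbor of $v$ is isotropic, I would apply Lemma~\ref{lm12.3} to the one-vertex subdiagram $\Gamma'=\{v\}$ (which has no isotropic vertex): this produces a sequence of odd reflections $R$ fixing $\Gamma'$ and a connected two-vertex subfinite subdiagram $\{v,z\}$ of $R(\Gamma)$ with $z$ isotropic, so $z$ is an isotropic neighbor of $v$. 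The point is that the reflections supplied by Lemma~\ref{lm12.3} (via Lemma~\ref{lm12.2}) are carried out at vertices of a chain running \emph{away} from $v$, none of them adjacent to $v$; by the explicit formulas for the reflected Cartan matrix (the first case gives $a'_{pq}=a_{pq}$ when $p$ is not adjacent to the reflection site), every edge incident to $v$ is preserved. Hence $v$ remains a $\B$ vertex of the same degree $\geq 2$, now with an isotropic neighbor. Since $R(\Gamma)$ is again subfinite regular Kac-Moody, I may replace $\Gamma$ by $R(\Gamma)$ and assume from the outset that $v$ has an isotropic neighbor $z$ and a second neighbor $w\neq z$.

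Next I would identify the local picture. The connected two-vertex subdiagram $\{z,v\}$ is regular Kac-Moody with $z$ isotropic and $v$ of type $\B$, so by the two-vertex classification it is $B(1,1)$; thus the $z$--$v$ edge carries $a_{zv}=1$ and $a_{vz}=-2$. I then examine the connected three-vertex subdiagram $\{z,v,w\}$, which is regular Kac-Moody (as a subdiagram of one), and which, being \emph{proper} since $n\geq 4$ and containing the isotropic vertex $z$, must be of finite type by subfiniteness (Definition~\ref{defsubfinite}).

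The argument then splits on whether $w$ is adjacent to $z$. If $w$ is not adjacent to $z$, then $\{z,v,w\}$ is the chain $\OX$--$\B$--$\OV$ obtained by extending $B(1,1)$ at its $\B$ vertex; this is exactly Case~1 of the three-vertex extension of $B(1,1)$, where reflecting at $z$ forces $a,-a\in\mathbb{Z}_{<0}$, a contradiction. If $w$ is adjacent to $z$, then $\{z,v,w\}$ is a triangle built on the $B(1,1)$ edge $z$--$v$ with third vertex $w$. When $w$ is $\O$ or $\B$ (type $\OD$) this triangle is Case~3 of the three-vertex extension of $B(1,1)$, which was shown to have no solution, contradicting regular Kac-Moodiness; when $w$ is $\OX$ the triangle is $B(1,1)^{(1)}$, which is affine and not of finite type, contradicting subfiniteness. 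In every case we reach a contradiction, so $v$ has degree at most one; since $\Gamma$ is connected with $n\geq 4$ vertices, $v$ has degree exactly one.

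The step I expect to be the main obstacle is the reduction in the first paragraph. Because an odd reflection flips the parity of each vertex adjacent to the reflection site, one must be careful that the reflections used to create an isotropic neighbor of $v$ neither change the type of $v$ from $\B$ nor alter its degree. This is exactly why the reflections have to be performed along a chain leading away from $v$, as guaranteed by Lemmas~\ref{lm12.2} and~\ref{lm12.3}, and why one verifies via the reflection formulas that such reflections fix every edge incident to $v$. Once $v$ is adjacent to an isotropic vertex, the rest is a direct appeal to the two- and three-vertex classifications already established, together with the observation that the single three-vertex diagram realizing a degree-two $\B$ vertex, namely $B(1,1)^{(1)}$, is affine and hence excluded as a proper subdiagram.
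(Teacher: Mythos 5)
Your proof is correct, but it takes a recognizably different route through the same machinery. The paper also argues by contradiction via Lemma~\ref{lm12.3} and the $3$-vertex classification, but it applies Lemma~\ref{lm12.3} to the \emph{entire} three-vertex neighborhood $\Gamma'=\{v_1,v_2,v_3\}$ of the offending vertex (after first using the classification to rule out an isotropic vertex inside $\Gamma'$), thereby reducing to a four-vertex diagram with an adjoined isotropic vertex $v_4$; it then performs one further explicit odd reflection at $v_4$, which makes the even neighbor $v_1$ isotropic while fixing $v_2$ and all entries $a_{2i}$, and contradicts the fact that finite type $3$-vertex diagrams contain no odd non-isotropic vertex of degree two. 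You instead apply Lemma~\ref{lm12.3} to the single vertex $\{v\}$, so the isotropic vertex lands directly adjacent to $v$, and you close the argument by citing the explicit low-rank computations: Case~1 and Case~3 of the $B(1,1)$ extensions admit no regular Kac-Moody solution at all, and the triangle with two isotropic vertices is $B(1,1)^{(1)}$, which is affine and therefore excluded by subfiniteness since $n\geq 4$ makes it a proper subdiagram. The trade-off is instructive: the paper's choice of $\Gamma'$ means the stated conclusion $R(\Gamma')=\Gamma'$ of Lemma~\ref{lm12.3} suffices as a black box, at the cost of one extra reflection computed by hand (including the implicit check that $R(v_1)$ is indeed isotropic, which rests on the two-vertex classification); your choice needs the strengthening that the reflections also preserve the edges joining $v$ to vertices \emph{outside} $\Gamma'$, which the lemma's statement does not assert. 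You correctly identify this as the delicate point and justify it by inspecting the proofs of Lemmas~\ref{lm12.2} and~\ref{lm12.3}: the reflection sites lie on a chain whose vertices, by minimality, satisfy $a_{vk}=a_{kv}=0$, so the reflection formulas leave $a_{vj}$ unchanged and only rescale $a_{jv}$ by nonzero factors, preserving the type and degree of $v$. With that supplied, your case analysis is exhaustive (Heisenberg neighbors being excluded by condition~(1) of Lemma~\ref{lm23}) and the argument is complete; your version buys a shorter endgame with no final reflection, while the paper's buys a proof that never looks inside the auxiliary lemmas.
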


\begin{proof}
First note that connected finite type $3$-vertex regular Kac-Moody diagrams satisfy the condition that odd non-isotropic vertices have degree one.  Suppose that the vertex $v_2$ is odd non-isotropic with degree greater than or equal to two.  Let $\Gamma'=\{v_1,v_2,v_3\}$, where $a_{12},a_{21},a_{23},a_{32} \neq 0$.  By the 3-vertex classification, $\Gamma'$ does not contain an isotropic vertex since $\Gamma$ is subfinite.  By Lemma~\ref{lm12.3}, we are reduced to the case when $\Gamma$ has four vertices.

If $v_4$ is connected to an odd non-isotropic vertex $v_i$, then $v_i$ has degree two in a $3$-vertex subdiagram $\{v_4,v_i,v_j\}$, which contradicts the assumption that $\Gamma$ is subfinite.  Thus, $a_{24}=a_{42}=0$.  We may assume $a_{14},a_{41}\neq 0$, which implies that $v_1$ is even. By
reflecting at $v_4$ we obtain a diagram in which $R(v_1)$ is isotropic, $R(v_2)=v_2$ and $a'_{2i}=a_{2i}$ for $i=1,\dots,4$.  But then the subdiagram $R(\{v_1,v_2,v_3\})$ is not of finite type.  This contradicts the assumption that $\Gamma$ is subfinite.
\end{proof}

\begin{lemma}\label{53} If $\Gamma$ is a subfinite regular Kac-Moody diagram with $n \geq 4$ vertices, and $\Gamma'$ is a 3-vertex subdiagram with an isotropic vertex of degree two in $\Gamma'$ such that the ratio of the vertex is not a negative rational number, then $\Gamma'$ is a $D(2,1,\alpha)$ diagram.
\end{lemma}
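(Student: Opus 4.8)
The plan is to combine the subfiniteness hypothesis with the three-vertex classification obtained above, thereby reducing everything to the case of a triangle. First I would observe that, since $\Gamma$ is subfinite and has $n\geq 4$ vertices, the connected proper subdiagram $\Gamma'$ contains an isotropic vertex and is therefore of finite type. By the classification of connected finite-type three-vertex diagrams with an isotropic vertex (the extensions of $A(1,0)$ and $B(1,1)$ computed above), $\Gamma'$ is one of $A(0,2)$, $A(1,1)$, $B(1,2)$, $B(2,1)$, $C(3)$, $G(3)$, or $D(2,1,\alpha)$. Writing $v_j$ for the isotropic degree-two vertex and $v_i,v_k$ for its two neighbours, I note first that the ratio $a_{ji}/a_{jk}$ is unchanged by rescaling, since rescaling row $j$ multiplies $a_{ji}$ and $a_{jk}$ by the same scalar; it is therefore a well-defined invariant of $\Gamma'$ (and the same is true of its reciprocal $a_{jk}/a_{ji}$).

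The key step is a reflection trick that reduces the chain case to the triangle case. Since $\Gamma$ is regular Kac-Moody, $v_j$ is a regular isotropic vertex, so I may perform the odd reflection $r_j$. By the reflection formulas above, reflecting at $v_j$ leaves row $j$ unchanged, $a'_{ji}=a_{ji}$ and $a'_{jk}=a_{jk}$, while $a'_{jj}=a_{jj}=0$ and the parity of $v_j$ stays odd; hence $v_j$ is again isotropic in $r_j(\Gamma')$ with exactly the same ratio, and $r_j(\Gamma')$ is again of finite type because $\mathfrak{g}(A_{r_j(\Gamma')})\cong\mathfrak{g}(A_{\Gamma'})$. If $v_i$ and $v_k$ are not joined in $\Gamma'$, the reflection formula for the entry between them gives
\begin{equation*}
a'_{ik}=a_{ij}\left(a_{ji}+a_{jk}\right),
\end{equation*}
which vanishes precisely when $a_{ji}/a_{jk}=-1$. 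Thus if $v_i,v_k$ are unjoined and the ratio is $-1$, the ratio is a negative rational and the hypothesis of the lemma fails; if they are unjoined and the ratio is not $-1$, then $r_j(\Gamma')$ is a triangle carrying the isotropic vertex $v_j$ with the same ratio; and if $v_i,v_k$ are already joined, $\Gamma'$ is itself a triangle. In every case in which the hypothesis can hold I am therefore reduced to a finite-type triangle with an isotropic degree-two vertex whose ratio equals that of $v_j$.

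Finally I would read off the ratios in the triangle case. By the three-vertex classification, a connected finite-type three-vertex triangle containing an isotropic vertex is $C(3)$, $G(3)$, or $D(2,1,\alpha)$. In $C(3)$ and $G(3)$ the isotropic vertices are the two non-apex vertices, and from the explicit diagrams computed above their ratios are $-\tfrac12$ in the case of $C(3)$ and $-\tfrac13$ or $-\tfrac23$ in the case of $G(3)$; all of these, together with their reciprocals, are negative rationals. Consequently, if the ratio of $v_j$ is not a negative rational number, the triangle cannot be $C(3)$ or $G(3)$, so it must be $D(2,1,\alpha)$; since $D(2,1,\alpha)$ is closed under odd reflections, the original subdiagram $\Gamma'$ is $D(2,1,\alpha)$ as well. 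The main obstacle is this last computational step: one must extract the Cartan entries of $C(3)$ and $G(3)$ in their triangle forms from the earlier case analysis, verify that every isotropic ratio occurring there is a negative rational, and confirm that these together with $D(2,1,\alpha)$ exhaust the finite-type triangles carrying an isotropic vertex.
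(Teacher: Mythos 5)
Your proof is correct and takes essentially the same route as the paper, whose entire argument is the one-line observation that subfiniteness forces $\Gamma'$ to be a finite-type regular Kac-Moody diagram, so the claim follows from the $3$-vertex classification. Your odd-reflection trick reducing the chain case with ratio $\neq -1$ to the triangle case, together with the explicit check that the isotropic ratios in the $C(3)$ and $G(3)$ triangle forms are negative rationals, is just a clean way of organizing the case verification that the paper leaves implicit in its appeal to the classification.
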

\begin{proof}
Since $\Gamma'$ is finite type regular Kac-Moody, this follows from the $3$-vertex classification.
\end{proof}

\begin{corollary} \label{54} If $\Gamma$ is a subfinite regular Kac-Moody diagram with $n \geq 4$ vertices and $\Gamma'$ is a 4-vertex subdiagram containing an isotropic vertex of degree three in $\Gamma'$, then $\Gamma'$ contains a $D(2,1,\alpha)$ diagram containing this vertex.
\end{corollary}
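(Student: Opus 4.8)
The plan is to reduce the statement to Lemma~\ref{53}, applied to a suitable $3$-vertex subdiagram through the given isotropic vertex. Write $v_0$ for the isotropic vertex of degree three in $\Gamma'$ and let $v_1,v_2,v_3$ be its three neighbours in $\Gamma'$. Since $v_0$ has degree three, the entries $a_{01},a_{02},a_{03}$ are all nonzero, so each of the three $3$-vertex subdiagrams $\{v_0,v_i,v_j\}$ has $v_0$ as an isotropic vertex of degree two, whose ratio is $a_{0i}/a_{0j}$. To invoke Lemma~\ref{53} it suffices to exhibit one pair $\{v_i,v_j\}$ whose ratio is not a negative rational number; the lemma then forces $\{v_0,v_i,v_j\}$ to be a $D(2,1,\alpha)$ diagram, and this subdiagram contains $v_0$, which is exactly the assertion.

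The key step is the observation that not all three ratios can be negative rationals. First I would record the telescoping identity
\[
\frac{a_{01}}{a_{02}}\cdot\frac{a_{02}}{a_{03}}\cdot\frac{a_{03}}{a_{01}}=1,
\]
each factor being the ratio of $v_0$ in one of the three $3$-vertex subdiagrams $\{v_0,v_1,v_2\}$, $\{v_0,v_2,v_3\}$, $\{v_0,v_1,v_3\}$ (the choice of numerator versus denominator is immaterial, since a number is a negative rational exactly when its reciprocal is). If all three ratios were negative rational numbers, the left-hand product would be a negative number, contradicting the value $1$. Hence at least one pair $\{v_i,v_j\}$ has ratio not a negative rational number, and Lemma~\ref{53} applies to the subdiagram $\{v_0,v_i,v_j\}$.

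I do not anticipate a genuine obstacle here: once the three relevant $3$-vertex subdiagrams are identified, the sign/product argument is decisive, and it survives even when the entries $a_{0i}$ are complex (as occurs in the $S(1,2,\alpha)$ family), since in that case the ratios are typically not real at all and so are automatically not negative rationals. The only points requiring care are checking that $v_0$ really has degree two in each $3$-vertex subdiagram (immediate, as $\{v_0,v_i,v_j\}$ has only three vertices and $v_0$ is joined to both of the others) and confirming that the hypotheses of Lemma~\ref{53}, namely that $\Gamma$ is subfinite regular Kac-Moody with $n\geq 4$ vertices, are inherited unchanged from the present statement.
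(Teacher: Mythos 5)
Your proposal is correct and is exactly the argument the paper intends: the corollary is stated without proof as an immediate consequence of Lemma~\ref{53}, and your product identity $\frac{a_{01}}{a_{02}}\cdot\frac{a_{02}}{a_{03}}\cdot\frac{a_{03}}{a_{01}}=1$ is precisely the missing justification that the three ratios at a degree-three isotropic vertex cannot all be negative rationals. Your attention to the reciprocal ambiguity in the definition of ratio and to non-real entries is sound, so nothing further is needed.
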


\subsection{Subfinite regular Kac-Moody: 4 vertices}
We introduce the following notation in order to simplify the presentation.  If $\Gamma$ is a diagram and $\{v_i\}_{i\in I}$ is a subset of the vertices of $\Gamma$, then we denote by $\Gamma_{\{i\mid i\in I\}}$ the subdiagram of $\Gamma$ obtained by removing the vertices $\{v_i\}_{i\in I}$. For example, $\Gamma_{1,2}$ is the subdiagram of $\Gamma$ obtained by removing the vertices $v_1$ and $v_2$. Let $\mathcal{F}$ denote the set of all connected finite type regular Kac-Moody diagrams which contain an isotropic vertex.

For each finite type $3$-vertex diagram, we will consider each case for attaching an additional vertex to the diagram.   Let $\Gamma$ denote the corresponding extended diagram.

\begin{lemma}
The subfinite regular Kac-Moody extensions of $D(2,1,\alpha)$ to four vertices are $D(3,1)$, $D(2,2)$, $F_{4}$, $B(2,1)^{(2)}$,
$D(2,1,\alpha)^{(1)}$, $G(3)^{(1)}$, $A(1,3)^{(2)}$, and $A(2,3)^{(2)}$.
\end{lemma}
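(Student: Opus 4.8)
The plan is to classify, up to odd reflection, all subfinite regular Kac-Moody diagrams $\Gamma$ on four vertices containing the $D(2,1,\alpha)$ triangle $\{v_1,v_2,v_3\}$, by analysing how a fourth vertex $v_4$ attaches to it. First I would pin down $v_4$ itself. A Heisenberg vertex ($\square$) is impossible in a connected diagram with more than one vertex, since condition (1) of Lemma~\ref{lm23} would force it to be disconnected; hence $v_4$ is one of $\O$, $\B$, $\OX$. If $v_4$ is of type $\B$ (odd non-isotropic), then Lemma~\ref{52} forces $\deg v_4 = 1$. Since $\Gamma$ is connected, $v_4$ is joined to the triangle, so its degree is $1$, $2$ or $3$, and I would organize the argument around these three cases, recording in each the unknown labels on the arrows incident to $v_4$ and imposing that the resulting matrix be a generalized Cartan matrix in the sense of Definition~\ref{def gcm}.

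In each case there are two reflection-independent constraints: every connected $3$-vertex subdiagram of $\Gamma$ containing an isotropic vertex must be of finite type (subfiniteness), and the conditions of Definition~\ref{def gcm} must hold. Reading the permissible $3$-vertex diagrams off the three-vertex classification of Section~\ref{rkm23} already restricts the labels on edges at $v_4$ severely. For instance, a $3$-vertex subdiagram through $v_4$ and two triangle vertices in which the isotropic vertex has degree two and non-negative-rational ratio must be a $D(2,1,\alpha)$ by Lemma~\ref{53}, while an isotropic vertex of degree three forces an internal $D(2,1,\alpha)$ by Corollary~\ref{54}; the latter governs the degree-three case. When a relevant $3$-vertex subdiagram happens to contain no isotropic vertex, Lemma~\ref{lm12.3} lets me reflect $\Gamma$ so as to push an isotropic vertex into an enlarging subdiagram, again reducing everything to the finite-type condition.

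The essential point, and the main obstacle, is that subfiniteness must persist under \emph{every} sequence of odd reflections, and odd reflections do not preserve the shape of the diagram: the reflection formulas following Lemma~\ref{lm1} both create and destroy edges. Thus for each candidate I would apply the odd reflections at the isotropic vertices (exactly as in the three-vertex computations \eqref{e3}, \eqref{d1}, \eqref{d2}) and re-impose the finite-type condition on every $3$-vertex subdiagram of each reflected diagram. This yields systems of rational equations in the labels whose solution sets must be determined. The continuous one-parameter families of solutions should produce $D(2,1,\alpha)^{(1)}$ — which, after reflection, I expect to appear as the triangle with an extra degree-two vertex (a $4$-cycle in another base) — while the isolated rational solutions should produce the finite and twisted-affine diagrams $D(3,1)$, $D(2,2)$, $F_4$, $B(2,1)^{(2)}$, $G(3)^{(1)}$, $A(1,3)^{(2)}$, and $A(2,3)^{(2)}$.

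Finally I would establish completeness in both directions: that each listed diagram is genuinely subfinite regular Kac-Moody, by checking that all of its reflected $3$-vertex subdiagrams are of finite type; and that the rational systems admit no further solutions, by exhibiting for every rejected candidate a sequence of reflections after which some $3$-vertex subdiagram fails to be of finite type, precisely as the non-solutions were eliminated in the three-vertex lemmas. The bulk of the work is this reflect-and-check bookkeeping across the degree $1$, $2$, $3$ cases; the delicate part is cleanly separating the genuine continuous family $D(2,1,\alpha)^{(1)}$ from the isolated solutions, mirroring the way $D(2,1,\alpha)$ and $S(1,2,\alpha)$ emerged as parameter families in the three-vertex classification.
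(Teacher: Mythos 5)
Your plan follows essentially the same route as the paper's proof: the paper also splits into three cases according to which triangle vertices $v_4$ attaches to (degree one, two, or three), constrains the unknown labels by requiring each $3$-vertex subdiagram containing an isotropic vertex to be of finite type (read off the $3$-vertex classification, with the $D(2,1,\alpha)$ relations as side constraints), and disposes of the residual subcases by explicit odd reflections, just as you propose. One small correction of your expectation: in the paper the continuous family $D(2,1,\alpha)^{(1)}$ emerges in the degree-three case as the all-isotropic complete diagram on four vertices (the degree-two attachment case produces only contradictions), so no passage to a $4$-cycle base is needed.
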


\begin{proof} We consider each case for attaching a vertex to a $D(2,1,\alpha)$ diagram.  \\
\textbf{Case 1:}
\begin{equation}\label{e1} \begin{array}{c}
\xymatrix{ & & \OX_2 \AW[ldd]^{1}_{c} \AW[rdd]^{b}_{1}  & \\ & & &  \\
  \OV_4 \AW[r]^{e}_{d} &  \OX_1 \AW[rr]^{1}_{a} & & \OX_3} \end{array}
a,b,c,d,e\neq 0\hspace{.5cm}
\begin{array}{lll}
a+\frac{1}{b}=-1 \\
b+\frac{1}{c}=-1 \\
c+\frac{1}{a}=-1 \\
\end{array}
\end{equation}
\D If $v_4$ is $\OX$, then $\Gamma_{2}\in \mathcal{F}$ implies $d=-1$ and $\Gamma_{3}\in \mathcal{F}$ implies $c=-d$.  Then by (\ref{e1}) we have $a=\frac{-1}{2}$, $b=-2$, $c=1$, and $d=-1$.  This is $D(2,2)$. \\
\D If $v_4$ is $\OD$ and $e=-2$, then $\Gamma_2\in\mathcal{F}$ implies $d=-1$ and $\Gamma_{3}\in\mathcal{F}$ implies $c=-d$.  Then by (\ref{e1}) we have $a=\frac{-1}{2}$, $b=-2$, $c=1$, and $d=-1$.  If $v_4$ is $\O$, then this is $B(2,1)^{(1)}$.  If $v_4$ is \B, then this is $A(2,3)^{(2)}$. \\
\D If $v_4$ is $\O$ and $e=-1$, then $\Gamma_{2}\in\mathcal{F}$ implies $d\in \{-1,-2,-3\}$ and $\Gamma_{3}\in\mathcal{F}$ implies $c \in \{-d,\frac{-d}{2},\frac{-d}{3}\}$.  If $d \in \{-1,-2\}$, then $c \in \{-d,\frac{-d}{2}\}$.  This yields three distinct options, and we find that $\Gamma$ is one of the following: $D(3,1)$,  $F_{4}$, $A(1,3)^{(2)}$.  If $d=-3$, then $c \in \{3,\frac{3}{2},1\}$ and by reflecting at $v_{1}$ we obtain
\begin{equation*}r_1(\Gamma)\hspace{1cm}
\xymatrix{ \OX_4 \AW[rr]^{3-c}_{1-\frac{3}{c}} \AW[dd]^{2}_{-2} \AW[rrdd]^{-3}_{-3} & & \O_2 \AW[dd]^{-1}_{c} \\
  & & \\ \O_3 \AW[rr]^{-1}_{1} & & \OX_1  \\ }
\end{equation*}
Then $r_{1}(\Gamma)_{1}\in\mathcal{F}$ implies $\frac{3-c}{2} \leq 0$.  Hence, $c=3$. Then by (\ref{e1}) we have $a=\frac{-1}{4}$, $b=\frac{-4}{3}$. This is $G(3)^{(1)}$. \\ \\
\textbf{Case 2:}
\begin{equation}\label{e2}\begin{array}{c}
\xymatrix{ \OX_2 \AW[rr]^{b}_{1} \AW[dd]^{1}_{c}  & & \OX_3 \AW[dd]^{f}_{g} \\
  & & \\ \OX_1 \AW[rruu]^{1}_{a} \AW[rr]^{d}_{e}  & & \OV_4  } \end{array}\hspace{.5cm}
  a,b,c,d,e,f,g\neq 0\hspace{.5cm}
\begin{array}{lll}
a+\frac{1}{b}=-1 \\
b+\frac{1}{c}=-1 \\
c+\frac{1}{a}=-1 \\
\end{array}
\end{equation}
\D If $v_4$ is $\OX$, then $\Gamma_3\in\mathcal{F}$ implies $d=-c$ and $\Gamma_1\in\mathcal{F}$ implies $f=-1$.  Then by substitution and by (\ref{e2}), we have $d+\frac{f}{a}=-(c+\frac{1}{a})=1$. Now $\Gamma_{2}$ must be a $D(2,1,\alpha)$ diagram, so (\ref{e3}) implies $d+\frac{f}{a}=-1$.  This is a contradiction. \\
\D If $v_4$ is $\O$, $e=-1$ and $g=-2$, then $\Gamma_2\in\mathcal{F}$ implies $a=\frac{3}{2}$, $d=\frac{-1}{2}$. Now $\Gamma_3\in\mathcal{F}$ implies $\frac{d}{c}<0$.  Hence, $c>0$.  Then $c+\frac{1}{a}>0$, which contradicts (\ref{e2}).\\
\D If $v_4$ is $\O$ and $e=g=-1$, then $\Gamma_2\in\mathcal{F}$ implies $d=\frac{f}{a}=\frac{-1}{2}$, $\Gamma_1\in\mathcal{F}$ implies $f<0$, and $\Gamma_3\in\mathcal{F}$ implies $\frac{d}{c}<0$.  Since $d,f<0$ we have $a,c>0$.  But then $c+\frac{1}{a}>0$, which contradicts (\ref{e2}).\\ \\
\textbf{Case 3:}
\begin{equation}\label{e8}\begin{array}{c}
\xymatrix{ & & \OX_2 \AW[llddd]^{a}_{a} \AW[rrddd]^{b}_{b} & & \\ & & & & \\
& & \OV_4 \AW[uu]^{g}_{d} \AW[lld]^{k}_{f} \AW[rrd]^{h}_{e} & & \\
\OX_1 \AW[rrrr]^{c}_{c} & & & & \OX_3 }\end{array}\hspace{.5cm} \begin{array}{c} a,b,c,d,e,f,g,h,k\neq 0\\ \\ \\a+b+c=0\end{array}
\end{equation}
Suppose $v_4$ is $\O$.  Then $\frac{d}{b}, \frac{a}{d}, \frac{c}{e}, \frac{e}{b}, \frac{f}{a}, \frac{c}{f}, \in \mathbb{Q}_{<0}$ implies $\frac{a+c}{b}>0$, which contradicts (\ref{e8}). Hence, $v_4$ is $\OX$.  Thus all subdiagrams are $D(2,1,\alpha)$ diagrams.  So we have $g=d$, $h=e$, and $k=f$.  We also have $a+b+c=0$, $c+d+e=0$, $a+e+f=0$, and $b+d+f=0$.  It follows that $d=a$, $e=b$ and $f=c$.  Hence, this is $D(2,1,\alpha)^{(1)}$.
\end{proof}
Now that we have found all subfinite regular Kac-Moody extensions of $D(2,1,\alpha)$ by one vertex, we may restrict our attention to diagrams that do not contain a $D(2,1,\alpha)$ subdiagram.  In particular, the ratio of an isotropic vertex must be a negative rational number by Lemma~\ref{53}, and an isotropic vertex has at most degree two by Corollary~\ref{54}.

\begin{lemma}
The subfinite regular Kac-Moody 4-vertex extensions of $C(3)$ that are not extensions of $D(2,1,\alpha)$ are the
following: $C(4)$, $C(3)^{(1)}$, $B(1,2)^{(1)}$, and $A(1,4)^{(2)}$.
\end{lemma}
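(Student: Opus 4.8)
The plan is to argue exactly as in the preceding four-vertex lemmas: fix a representative of $C(3)$, attach a fourth vertex $v_4$ in every way compatible with the constraints, and read off the admissible labels. Having already disposed of the extensions that contain a $D(2,1,\alpha)$ subdiagram, I may assume $\Gamma$ contains none; then by Corollary~\ref{54} every isotropic vertex of $\Gamma$ has degree at most two, and by Lemma~\ref{53} every isotropic vertex of degree two has a negative rational ratio. Take the chain representative of $C(3)$ with $v_1,v_2$ isotropic and $v_3$ even, joined by bonds $(a_{12},a_{21})=(1,1)$ and $(a_{23},a_{32})=(-2,-1)$. The middle isotropic vertex $v_2$ already has degree two and is therefore saturated, so $v_4$ can only be joined to $v_1$, to $v_3$, or to both. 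This gives three families of configurations — a chain extension at the even end $v_3$, a chain extension at the isotropic end $v_1$, and the closing of a four-cycle through $v_1$ and $v_3$ — with $v_4$ of type $\OV$ in each.

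In every configuration I would impose that each connected three-vertex subdiagram of $\Gamma$ containing an isotropic vertex belongs to $\mathcal{F}$, and invoke the three-vertex classification of Section~\ref{rkm23} to pin down the new labels $a_{i4},a_{4i}$. For the even-end extension the decisive subdiagram is $\Gamma_1=\{v_2,v_3,v_4\}$, a chain with isotropic end $v_2$; matching it against the finite-type chains in $\mathcal{F}$ (after the harmless rescaling of the $v_2$-row that turns the $v_2$--$v_3$ bond into $A(1,0)$ form) forces $(a_{34},a_{43})$ into finitely many values, according to whether the new bond is of $A$-, $B$- or $C$-type and on the type of $v_4$. For the isotropic-end extension the constraints come from $\Gamma_3=\{v_4,v_1,v_2\}\in\mathcal{F}$ together with the degree-two ratio condition at $v_1$. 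When a relevant three-vertex subdiagram happens to contain no isotropic vertex, I would first apply a sequence of odd reflections, exactly as in Lemmas~\ref{lm12.2} and~\ref{lm12.3}, to bring an isotropic vertex into it before applying the finite-type test, carefully tracking the transformed labels through the odd-reflection formulas. Solving the resulting rational systems and discarding any solution whose diagram contains a $D(2,1,\alpha)$ (equivalently, any isotropic ratio that is not a negative rational) leaves precisely the diagrams $C(4)$, $B(1,2)^{(1)}$ and $A(1,4)^{(2)}$ among the chain extensions.

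The cycle-closing configuration, where $v_4$ is joined to both $v_1$ and $v_3$, is the case I expect to be the main obstacle and the source of $C(3)^{(1)}$. Here the four vertices form a cycle $v_1$--$v_2$--$v_3$--$v_4$--$v_1$, and each of the four triples obtained by deleting one vertex is a chain; those triples containing an isotropic vertex must lie in $\mathcal{F}$, while the others must be reduced to finite-type chains by reflecting at an isotropic vertex first. Because closing the cycle couples all of the labels simultaneously, the governing relations form a genuine system rather than a chain of one-variable deductions, and one must also verify throughout that no isotropic vertex acquires degree three or a non-rational negative ratio, either of which would force a $D(2,1,\alpha)$ and push the diagram into the previous lemma. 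Checking that this system has $C(3)^{(1)}$ as its only admissible solution, and that the chain cases together contribute exactly the remaining three diagrams with no duplicates once reflections are accounted for, is the computational heart of the argument.
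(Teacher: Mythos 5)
Your framework is the paper's own --- fix one representative of the reflection class $\mathcal{C}(C(3))$, attach $v_4$, force every connected $3$-vertex subdiagram containing an isotropic vertex into $\mathcal{F}$, and cap isotropic degrees via Lemma~\ref{53} and Corollary~\ref{54} --- but your roadmap contains a concrete error in the case analysis. The cycle-closing configuration is empty, and $C(3)^{(1)}$ does \emph{not} come from it. In your chain representative $v_1(\OX)\,{-}\,v_2(\OX)\,{-}\,v_3(\O)$ with bonds $(1,1)$ and $(-2,-1)$, the isotropic vertex $v_2$ has ratio $-\tfrac{1}{2}$, and a direct check of the four triples of a $4$-cycle through $v_1$ and $v_3$ shows the label system is inconsistent (note you could not shortcut this via Lemma~\ref{lm12.1}, which is itself proved \emph{from} the $4$-vertex classification). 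Meanwhile $C(3)^{(1)}$ arises as a chain extension at the isotropic end $v_1$: reflecting the triangle-plus-pendant form of $C(3)^{(1)}$ at its isotropic vertex $v_2$ disconnects $v_1$ from $v_3$, since $a'_{13}=a_{21}a_{13}+a_{12}a_{23}+a_{21}a_{12}=-1+2-1=0$, and the triangle opens into a four-vertex chain in which the new even vertex hangs off the isotropic end of your $C(3)$ chain. The same reflection shows that all four diagrams of the lemma occur among your chain extensions at $v_1$. So if you executed your plan trusting your predicted distribution, you would find your ``computational heart'' vacuous and $C(3)^{(1)}$ apparently missing from a case you had declared to yield only three diagrams.

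You also miss the reduction that makes the paper's proof essentially one case: the paper takes the \emph{triangle} form of $C(3)$ (even vertex $v_1$ joined to the two isotropic vertices $v_2,v_3$, which are joined to each other). There both isotropic vertices already have degree two, so by Corollary~\ref{54} the new vertex can attach only to $v_1$ --- a single configuration with two unknown labels $(a,b)$. The conditions $\Gamma_2\in\mathcal{F}$ then pin down $(a,b)$ at once, producing $D(2,2)$ (already counted among $D(2,1,\alpha)$ extensions), $B(1,2)^{(1)}$, $A(1,4)^{(2)}$, $C(4)$ and $C(3)^{(1)}$, and the only genuinely computational step is excluding the residual value $b=-3$ by the two odd reflections $r_2$ then $r_1$ --- not any cycle analysis. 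Your chain-representative route can be repaired (redo the bookkeeping, delete the cycle case, and handle the stray-label exclusions by reflections as the paper does), but as written its predicted outcomes are wrong; also note that in this lemma every relevant triple already contains an isotropic vertex, so the Lemma~\ref{lm12.2}/\ref{lm12.3} preprocessing you budget for is never needed.
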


\begin{proof}
Since an isotropic vertex has degree at most two, we are reduced to the following case.
\begin{equation*}
\xymatrix{ & & \OX_2 \AW[ldd]^{1}_{-1} \AW[rdd]^{-2}_{-2}  & \\ & & &  \\
    \OV_4 \AW[r]^{a}_{b} &  \O_1 \AW[rr]^{-1}_{1} & & \OX_3 }\hspace{.5cm}a,b\neq 0
\end{equation*}
\D If $v_4$ is $\OX$, then $\Gamma_{2}\in\mathcal{F}$ implies $b=-1$.  This is $D(2,2)$. \\
\D If $v_4$ is $\B$, then $\Gamma_{2}\in\mathcal{F}$ implies $a=-2$, $b=-1$. This is $B(1,2)^{(1)}$. \\
\D If $v_4$ is $\O$, then $\Gamma_{2}\in\mathcal{F}$ implies $a=-2$, $b=-1$.  This is $A(1,4)^{(2)}$. \\
\D If $v_4$ is $\O$, then $\Gamma_{2}\in\mathcal{F}$ implies $a=-1$, $b \in \{-1,-2,-3\}$.  If $b=-1$, then this is $C(4)$.  If $b=-2$, then this is $C(3)^{(1)}$. If $b=-3$, then by reflecting at $v_2$ and then at $v_1$ we obtain
\begin{equation*}
\text{ $\overrightarrow{r_{2}}$ } \hspace{.5cm} \xymatrix{ \O_4 \AW[r]^{-1}_{-3} & \OX_1 \AW[r]^{1}_{1} & \OX_2
\AW[r]^{-2}_{-1} & \O_3 }\hspace{.5cm}
\text{ $\overrightarrow{r_{1}}$ }
\xymatrix{ & \OX_4 \AW[ldd]^{-3}_{-3} \AW[rdd]^{2}_{-2} & & \\ & & & \\
    \OX_1 \AW[rr]^{1}_{-1} & & \O_2 \AW[r]^{-2}_{-1} & \O_3 }
\end{equation*}
But, $r_1(r_2(\Gamma))_1\not\in\mathcal{F}$.  Hence, $b \neq -3$.
\end{proof}

\begin{lemma}
All subfinite regular Kac-Moody extensions of $G(3)$ are extensions of $D(2,1,\alpha)$.
\end{lemma}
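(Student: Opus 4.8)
The plan is to realize $G(3)$ by its triangular diagram, in which both isotropic vertices have degree two, and then to determine where the fourth vertex $v_4$ can attach. Concretely, I take the $G(3)$ triangle produced in Case~2, with the even vertex $v_2$ (type $\O$) joined to the two isotropic vertices $v_1,v_3$ and with $a_{13}=a_{31}=1$; here $a=a_{12}=-\tfrac{2}{3}$ and $c=a_{32}=-\tfrac{1}{3}$. The decisive structural point is that $v_1$ and $v_3$ already have degree two in this $G(3)$, so the argument splits according to whether $v_4$ meets an isotropic vertex or meets only the even vertex $v_2$.

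First I would dispose of the case in which $v_4$ is joined to $v_1$ or to $v_3$. That isotropic vertex then has degree three in the four-vertex diagram $\Gamma$, so Corollary~\ref{54} furnishes a $D(2,1,\alpha)$ subdiagram containing it; hence $\Gamma$ is an extension of $D(2,1,\alpha)$. This settles every case in which $v_4$ meets an isotropic vertex, whether or not it is also joined to $v_2$.

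The one remaining configuration, which I expect to be the crux, is that $v_4$ is joined to $v_2$ but to neither $v_1$ nor $v_3$. Here I would invoke the odd reflection at $v_1$ already displayed in \eqref{d2}: it flips the parity of $v_2$ while preserving $a_{22}=2$, so $v_2$ becomes an odd non-isotropic vertex of type $\B$, and for the $G(3)$ values $a=-\tfrac{2}{3}$, $c=-\tfrac{1}{3}$ the two labels $3+\tfrac{2}{a}$ and $1+a+c$ on the edge $v_2 v_3$ both vanish, so in $R(\Gamma)$ the vertex $v_2$ is no longer joined to $v_3$. On the other hand $v_2$ stays joined to $v_1$ (the entry $a_{21}'=-a_{12}a_{21}=-\tfrac{4}{3}$ is nonzero), and since a reflection at $v_1$ leaves $v_4$ untouched the edge $v_2 v_4$ survives (indeed $a_{42}'=a_{42}$). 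Thus $v_2$ is an odd non-isotropic vertex of degree two in the four-vertex subfinite diagram $R(\Gamma)$, contradicting Lemma~\ref{52}.

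Consequently this last configuration cannot occur, and every subfinite regular Kac-Moody extension of $G(3)$ is one of the diagrams of the second paragraph and therefore contains a $D(2,1,\alpha)$ subdiagram. The only computations to check are the three reflected entries $a_{21}',a_{23}',a_{32}'$ at $v_1$ and the persistence of the edge $v_2 v_4$; these are routine substitutions into the odd-reflection formulas, so the real substance of the argument is the interplay of Corollary~\ref{54} (a degree-three isotropic vertex forces $D(2,1,\alpha)$) with Lemma~\ref{52} (odd non-isotropic vertices have degree one) after a single reflection.
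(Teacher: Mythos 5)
Your proof is correct, and its first half coincides in substance with the paper's: having just classified the extensions of $D(2,1,\alpha)$, the paper invokes Corollary~\ref{54} exactly as you do, so that any extension of $G(3)$ not containing a $D(2,1,\alpha)$ subdiagram can attach the new vertex only to the even vertex of the $G(3)$ triangle. Where you genuinely diverge is the endgame. The paper dispatches the remaining configuration in one line: in its (rescaled) triangle the even vertex carries the label $-2$ toward one of the isotropic vertices, so deleting the other isotropic vertex leaves a $3$-vertex chain of the form $v_4$--$\O$--$\OX$ with label $-2$ from the even middle vertex to the isotropic end, and by the $3$-vertex classification no finite type diagram has this shape; hence $\Gamma_3\not\in\mathcal{F}$, contradicting subfiniteness. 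You instead reflect at $v_1$ as in (\ref{d2}), verify that for the $G(3)$ values $a=-\tfrac{2}{3}$, $c=-\tfrac{1}{3}$ both labels $3+\tfrac{2}{a}$ and $1+a+c$ vanish so the edge $v_2v_3$ disappears, and conclude that $v_2$ becomes an odd non-isotropic vertex of degree two in the reflected four-vertex diagram, contradicting Lemma~\ref{52}. This is legitimate: subfiniteness is by definition stable under odd reflections, so Lemma~\ref{52} applies to $R(\Gamma)$, and your bookkeeping ($a'_{21}=-a_{12}a_{21}\neq 0$, $a'_{42}=a_{42}$, and $a'_{24}=a_{12}a_{24}\neq 0$ since $v_4$ is not adjacent to $v_1$) checks out against the reflection formulas. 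The trade-off is that the paper's step leans on completeness of the $3$-vertex table, whereas yours is self-contained modulo Lemma~\ref{52} plus one explicit reflection --- in effect you re-derive the needed exclusion by the same reflection technique that proves Lemma~\ref{52} itself; the paper's route is shorter, yours makes the mechanism visible.
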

\begin{proof}
Since an isotropic vertex has degree at most two, we are reduced to the following case.
\begin{equation*}
\xymatrix{ & & \OX_2 \AW[ldd]^{-2}_{-2} \AW[rdd]^{3}_{3}  & \\ & & &  \\
    \OV_4 \AW[r]^{a}_{b} &  \O_1 \AW[rr]^{-1}_{-1} & & \OX_3 }\hspace{.5cm}a,b\neq 0
\end{equation*}
But, $\Gamma_3\not\in\mathcal{F}$.
\end{proof}

\begin{lemma}
The subfinite regular Kac-Moody extensions of $B(1,2)$ that are not extensions of $D(2,1,\alpha)$, $C(3)$ or $G(3)$ are
the following: $B(1,3)$, $B(2,2)$, $D(2,2)^{(2)}$, and $A(2,4)^{(4)}$.
\end{lemma}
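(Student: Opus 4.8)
The plan is to mimic the structure of the preceding extension lemmas: enumerate the ways to attach a fourth vertex $v_4$ to a $B(1,2)$ diagram, and for each configuration use the constraints coming from subfiniteness, namely that every connected proper subdiagram containing an isotropic vertex must be of finite type (lie in $\mathcal{F}$), together with the results already established. Recall that the $B(1,2)$ diagram has three vertices with a single isotropic vertex $v_2$ of type $\OX$, adjacent to an $\O$ vertex and to a $\OD$-type vertex with the characteristic labels $\{1,1\}$ and $\{a,-2\}$ giving $A(1,0)$ as a subdiagram. Because we are excluding extensions that are already extensions of $D(2,1,\alpha)$, $C(3)$, or $G(3)$, I may invoke the paragraph following the $D(2,1,\alpha)$ lemma: the ratio of the isotropic vertex $v_2$ is a negative rational number (Lemma~\ref{53}) and the isotropic vertex has degree at most two (Corollary~\ref{54}). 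This immediately cuts down the possible shapes of $\Gamma$.

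First I would fix where $v_4$ attaches. Since the isotropic vertex already has degree two in $B(1,2)$ and may not exceed degree two, $v_4$ cannot attach to $v_2$; so $v_4$ attaches to $v_1$ (the $\O$ vertex) or to $v_3$ (the $\OD$ vertex), possibly to both if that keeps the isotropic degree bounded and avoids forbidden subdiagrams. For each attachment point and each admissible vertex type of $v_4$ (using $\OV$, i.e. $\O$, $\OX$, or $\B$), I would write down the generic labels and then impose that the two or three proper connected subdiagrams containing $v_2$ — namely $\Gamma_4$ (the original $B(1,2)$, automatically finite), and the subdiagrams $\Gamma_1$ and $\Gamma_3$ obtained by deleting $v_1$ or $v_3$ — lie in $\mathcal{F}$. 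Matching each such subdiagram against the three-vertex classification pins down the unknown labels, exactly as in the $C(3)$ and $D(2,1,\alpha)$ extension proofs. When a subdiagram fails to contain an isotropic vertex, I would apply Lemma~\ref{lm12.3} (via an odd reflection that fixes the relevant subdiagram) to force it into finite type indirectly, and use odd reflections $r_i$ to recognize or rule out candidate diagrams, just as in the $C(3)$ case where reflecting at $v_2$ and then $v_1$ exposed a non-finite subdiagram.

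The surviving consistent label assignments should produce precisely the four claimed diagrams $B(1,3)$, $B(2,2)$, $D(2,2)^{(2)}$, and $A(2,4)^{(4)}$; I would identify each by comparing its Cartan matrix (or a reflected representative) with the standard diagram of that superalgebra. For the remaining cases I expect to derive a numerical contradiction — typically an equation forced by two different finite-type subdiagram conditions that has no solution compatible with the sign and integrality constraints of Lemma~\ref{lm23} and Definition~\ref{def gcm}, mirroring the ``this is a contradiction'' conclusions in Case~2 and Case~3 of the $D(2,1,\alpha)$ lemma.

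The main obstacle will be the bookkeeping in the branch where $v_4$ attaches to the heavy ($\OD$, i.e.\ $\O$ or $\B$) vertex $v_3$ carrying the $-2$ label, since there the parity of $v_4$ and the $2^{p(i)}$-divisibility conditions of Lemma~\ref{lm23} interact with the already-present $\B$ possibility, generating several subcases whose subdiagram-finiteness constraints must each be checked; distinguishing the genuine $D(2,2)^{(2)}$ and $A(2,4)^{(4)}$ solutions from near-misses that violate the strengthened condition (3$'$) after an odd reflection is where the computation is most delicate. I would organize this by reflecting at $v_2$ first whenever it clarifies which chain of vertex types is allowed, and then reading off the labels from the resulting chain as was done repeatedly above.
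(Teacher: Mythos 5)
Your plan follows the paper's method in all essentials: attach a fourth vertex to a fixed representative of the $B(1,2)$ reflection class, require every connected proper subdiagram containing an isotropic vertex to lie in $\mathcal{F}$, match those subdiagrams against the 3-vertex classification, and use the standing exclusion of $D(2,1,\alpha)$, $C(3)$ and $G(3)$ to discard the non-$b=-1$ branches. Two points, though. First, you misremember the representative: the paper uses the chain $\OX_1\,\text{--}\,\OX_2\,\text{--}\,\B_3$ (two isotropic vertices), and the third vertex of $B(1,2)$ is specifically $\B$, not a generic $\OD$ --- treating it as generic $\OD$ would conflate this lemma with the separate $B(2,1)$ lemma, whose answer list is different ($B(3,1)$, $D(1,3)^{(2)}$ instead of $B(1,3)$, $D(2,2)^{(2)}$). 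Second, and more substantively, the reduction you are missing is Lemma~\ref{52}: in a subfinite diagram with at least four vertices, an odd non-isotropic vertex has degree one. Combined with Corollary~\ref{54} (isotropic degree at most two, $D(2,1,\alpha)$ being excluded), this forces $v_4$ to attach only to the end isotropic vertex, so the branch you single out as ``the main obstacle'' --- attaching to the heavy vertex, with its parity and $2^{p(i)}$-divisibility bookkeeping --- is simply empty, and no odd reflections are needed anywhere in this lemma (unlike the $C(3)$ case you model the argument on). Your framework would still close that branch eventually, since the 3-vertex classification contains no finite-type diagram in which $\B$ has degree two (cf.\ Cases 1 and 3 of the $B(1,1)$ extension lemma), so the proposal is recoverable rather than wrong; but as stated it leaves the heaviest casework open where the paper's proof is a three-line check: with the single remaining picture, $\Gamma_3\in\mathcal{F}$ gives $b=-1$ for $v_4=\OX$ (yielding $B(2,2)$), $a=-2$, $b=-1$ for $v_4=\B$ (yielding $D(2,2)^{(2)}$), and for $v_4=\O$ it gives $a\in\{-1,-2\}$, $b\in\{-1,-2,-3\}$, where $b\in\{-2,-3\}$ is excluded because $\Gamma_3$ would then be $C(3)$ or $G(3)$, leaving $B(1,3)$ for $a=-1$ and $A(2,4)^{(4)}$ for $a=-2$.
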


\begin{proof}
Since $\OX$ has at most degree two and $\B$ has at most degree one, we are reduced to the following case.
\begin{equation*}
\xymatrix{ \OV_4 \AW[r]^{a}_{b} & \OX_1 \AW[r]^{1}_{1} & \OX_2 \AW[r]^{-1}_{-2} & \B_3 \ar@<.5ex>[l]^{-2} }
\hspace{.5cm}a,b\neq 0
\end{equation*}
\D If $v_4$ is $\OX$, then $\Gamma_{3}\in\mathcal{F}$ implies $b=-1$.  This is $B(2,2)$. \\
\D If $v_4$ is $\B$, then $\Gamma_{3}\in\mathcal{F}$ implies $a=-2$, $b=-1$. This is $D(2,2)^{(2)}$. \\
\D If $v_4$ is $\O$, then $\Gamma_{3}\in\mathcal{F}$ implies $a\in\{-1,-2\}$ and $b\in\{-1,-2,-3\}$.  By assumption, $\Gamma_3$ is not $C(3)$ or $G(3)$, which implies $b=-1$.  If $a=-1$, then this is $B(1,3)$.  If $a=-2$, then this is $A(2,4)^{(4)}$.
\end{proof}

\begin{lemma}
The subfinite regular Kac-Moody extensions of $B(2,1)$ that are not extensions of $D(2,1,\alpha)$, $C(3)$ or $G(3)$ are
the following: $B(3,1)$, $B(2,2)$, $A(2,4)^{(4)}$, and $D(1,3)^{(2)}$.
\end{lemma}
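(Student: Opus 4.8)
The plan is to follow the pattern of the preceding $B(1,2)$ lemma. I would work in the base of $B(2,1)$ in which it is the chain
\[
\xymatrix{ \O_1 \AW[r]^{-1}_{1} & \OX_2 \AW[r]^{-1}_{-2} & \B_3 },
\]
reached from its three-vertex presentation by odd reflections; since the whole reflection class $\mathcal{C}(B(2,1))$ yields the same extensions up to reflection, it suffices to extend this one diagram. The first step is the reduction. By Lemma~\ref{52} the odd non-isotropic vertex $v_3$ has degree at most one, and—having already discarded all diagrams with a $D(2,1,\alpha)$ subdiagram—Corollary~\ref{54} and Lemma~\ref{53} force every isotropic vertex to have degree at most two with a negative rational ratio. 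Inside $B(2,1)$ the vertices $v_3$ and $v_2$ already attain these bounds, so in any subfinite extension the extra vertex $v_4$ cannot meet $v_2$ or $v_3$; it is therefore joined to the even vertex $v_1$ alone, and by a single edge. This leaves the single chain
\[
\xymatrix{ \OV_4 \AW[r]^{a}_{b} & \O_1 \AW[r]^{-1}_{1} & \OX_2 \AW[r]^{-1}_{-2} & \B_3 } \qquad a,b\neq 0 .
\]

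Next I would split on the type of $v_4$ and fix $a,b$ by imposing that the proper subdiagrams carrying an isotropic vertex be of finite type, the decisive ones being $\{v_4,v_1\}$ and $\Gamma_3=\{v_4,v_1,v_2\}$. If $v_4$ is $\OX$, then $\{v_4,v_1\}=A(1,0)$ and $\Gamma_3\in\mathcal{F}$ determine the edge, yielding $B(2,2)$. If $v_4$ is $\B$, then $\{v_4,v_1\}$ must be the finite diagram $\operatorname{osp}(1|4)$ and $\Gamma_3\in\mathcal{F}$ forces $a=-2,\ b=-1$, yielding $D(1,3)^{(2)}$. If $v_4$ is $\O$, the edge $v_4-v_1$ now joins two even roots, so $\Gamma_3$ is an even--even--isotropic chain: finiteness allows only the $A_2$, $B_2$, $C_2$, and $G_2$ rank-two possibilities, and I would read off that the $A_2$ bond gives $B(3,1)$ and the $B_2$-oriented double bond gives $A(2,4)^{(4)}$, while the $C_2$ orientation makes $\Gamma_3$ equal to $C(3)$ and the $G_2$ bond makes it $G(3)$, both excluded by hypothesis.

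The step I expect to be the main obstacle is precisely this last ($v_4=\O$) case. In the $B(1,2)$ lemma the new edge abutted an isotropic vertex, so only one label was free and the $C(3)$/$G(3)$ exclusions were automatic; here the edge lies between two even vertices, and the whole rank-two classification of the bond enters. The delicate point is to separate the two orientations of the double bond—the $B_2$ one, which keeps $\Gamma_3$ equal to $B(2,1)$ and produces the genuinely new diagram $A(2,4)^{(4)}$, from the $C_2$ one, which makes $\Gamma_3=C(3)$ and must be thrown out as a $C(3)$-extension—and likewise to recognize the triple bond as a $G(3)$-extension. I would also be prepared, as in the $b=-3$ sub-case of the $C(3)$ lemma (where one exhibits a reflection with $r_1(r_2(\Gamma))_1\notin\mathcal{F}$), to rule out a borderline completion by a short explicit odd reflection. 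Once the excluded completions are removed, verifying that the four survivors $B(3,1)$, $B(2,2)$, $A(2,4)^{(4)}$, $D(1,3)^{(2)}$ are subfinite regular Kac-Moody is immediate, since each is of finite or affine type.
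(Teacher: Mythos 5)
Your change of base is legitimate, and the skeleton of your argument is sound and genuinely different in execution from the paper's: the paper works in the base $\OX_1\,(1,1)\,\OX_2\,(-1,-2)\,\O_3$ and must dispose of attachments at the even end by an explicit extra case (showing the subdiagram $\{v_2,v_3,v_4\}$ is not in $\mathcal{F}$ even when $v_4$ is joined only to $v_3$), whereas your base $\O_1\,(-1,1)\,\OX_2\,(-1,-2)\,\B_3$ --- which is indeed the odd reflection $r_2$ of the paper's base --- lets Lemma~\ref{52} (degree one at $\B_3$) and Corollary~\ref{54} (degree two at $\OX_2$, once $D(2,1,\alpha)$ subdiagrams are excluded) pin $v_4$ to $v_1$ immediately. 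The ensuing split on the type of $v_4$, with the four rank-two bonds in the $v_4=\O$ case and the $C(3)$- and $G(3)$-oriented bonds discarded as excluded extensions, is the same mechanism the paper runs in its Case 2. Two small imprecisions: the $2$-vertex subdiagram $\{v_4,v_1\}$ in the $\B$ case contains no isotropic vertex, so subfiniteness says nothing about it directly --- the forcing of $a=-2$, $b=-1$ comes from $\Gamma_3\in\mathcal{F}$ together with the generalized Cartan matrix condition $a_{41}\in 2\mathbb{Z}_{\leq 0}$ for a $\B$ row; and of the two orientations of the triple bond, only $(b,a)=(-3,-1)$ makes $\Gamma_3$ a $G(3)$ diagram, the other failing $\Gamma_3\in\mathcal{F}$ outright.

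The genuine error is that you have interchanged the names of two of the four completions. Reflect your extended chain at $v_2$: since $v_4$ is not adjacent to $v_2$, this odd reflection leaves the type of $v_4$ and the labels $(a_{41},a_{14})=(a,b)$ unchanged, while carrying $\O_1\,(-1,1)\,\OX_2\,(-1,-2)\,\B_3$ back to the paper's base (one checks $a'_{11}=0$, $a'_{33}=2$, and edge labels $(1,1)$ and $(-1,-2)$), so your diagram becomes exactly the paper's Case 2 chain $\OV_4\,(a,b)\,\OX_1\,(1,1)\,\OX_2\,(-1,-2)\,\O_3$ with the same $v_4$ and the same $(a,b)$. Hence your $v_4=\B$ completion with $(a,b)=(-2,-1)$ is the paper's $A(2,4)^{(4)}$, not $D(1,3)^{(2)}$; and your surviving double-bond completion with $v_4=\O$ --- the orientation that keeps $\Gamma_3$ a $B(2,1)$ diagram, i.e. $(a,b)=(-2,-1)$ --- is the paper's $D(1,3)^{(2)}$, not $A(2,4)^{(4)}$. (A consistency check inside the paper: in the $B(1,2)$ lemma, $A(2,4)^{(4)}$ arises as the chain $\O\,(-2,-1)\,\OX\,(1,1)\,\OX\,(-1,-2)\,\B$, the mirror image of the chain your $\B$ case produces.) Since both names occur in the final list, the lemma's statement survives the swap, but as written two of your case conclusions are false, and the ``read off'' step is precisely where a verification --- such as the one-line reflection at $v_2$ above --- was required.
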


\begin{proof} We consider each case for attaching a vertex to a $B(2,1)$ diagram.  \\
\textbf{Case 1:}
\begin{equation*}
\xymatrix{ \OX_2 \AW[dd]^{1}_{1} \AW[rr]^{-1}_{-2}  & & \O_3 \AW[dd]^{a}_{b} \\  & & \\ \OX_1 \AW[rr]^{d}_{c} & & \OV_4 }\hspace{.5cm}a,b\neq 0
\end{equation*}
Then $\Gamma_1\not\in\mathcal{F}$ even if $c,d=0$.\\
\textbf{Case 2:}
\begin{equation*}
\xymatrix{ \OV_4 \AW[r]^{a}_{b} & \OX_1 \AW[r]^{1}_{1} & \OX_2 \AW[r]^{-1}_{-2} & \O_3 }\hspace{.5cm}a,b\neq 0
\end{equation*}
\D If $v_4$ is $\OX$, then $\Gamma_{3}\in\mathcal{F}$ implies $b=-1$. This is $B(2,2)$. \\
\D If $v_4$ is $\B$, then $\Gamma_{3}\in\mathcal{F}$ implies $a=-2$, $b=-1$. This is $A(2,4)^{(4)}$. \\
\D If $v_4$ is $\O$, then $\Gamma_{3}\in\mathcal{F}$ implies $a\in\{-1,-2\}$ and $b \in \{-1,-2,-3\}$.  By assumption, $\Gamma_3$ is not $C(3)$ or $G(3)$, which implies $b=-1$. If $a=-1$, then this is $B(3,1)$.  If $a=-2$, then this is $D(1,3)^{(2)}$. \\
\end{proof}

\begin{lemma}
The subfinite regular Kac-Moody extensions of $A(0,2)$ that are not extensions of $D(2,1,\alpha)$, $C(3)$ or $G(3)$ are the following: $A(0,3)$, $A(1,2)$, $B(1,3)$, $B(3,1)$, $A(0,2)^{(1)}$, and $q(4)^{(2)}$.
\end{lemma}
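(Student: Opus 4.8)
The plan is to follow the template of the preceding four-vertex lemmas. I fix the representative $\OX_1 - \OX_2 - \O_3$ of $A(0,2)$, in which $v_1,v_2$ are the two adjacent isotropic vertices and $v_3$ is even; since every extension of another member of $\mathcal{C}(A(0,2))$ is a reflection of one found here, this is no loss of generality. Because the extensions containing a $D(2,1,\alpha)$ subdiagram have already been treated, I may assume none occurs, so by Corollary~\ref{54} every isotropic vertex has degree at most two and by Lemma~\ref{53} the ratio of a degree-two isotropic vertex is a negative rational; moreover by Lemma~\ref{52} every $\B$-vertex has degree one.

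The vertex $v_2$ is isotropic of degree two, so $v_4$ cannot be joined to it. Hence $v_4$ is joined to $v_1$, to $v_3$, or to both, giving three configurations: (A) $v_4$ on the even end $v_3$; (B) $v_4$ on the isotropic end $v_1$; and (C) $v_4$ joined to both, forming the four-cycle $v_4-v_1-v_2-v_3-v_4$. Note that $v_4=\B$ is impossible in (C), since a $\B$-vertex must have degree one.

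In the chain cases (A) and (B) the only new constraint is that the three-vertex subdiagram through $v_4$ and its isotropic neighbour be of finite type and, by hypothesis, not $C(3)$, $G(3)$, or $D(2,1,\alpha)$; the remaining subdiagram $\{v_1,v_2,v_3\}$ is untouched. Running the three-vertex classification over $v_4\in\{\O,\B,\OX\}$ and the admissible labels (rescaling a double bond as in Remark~\ref{remrescale}) should force exactly the finite-type extensions $A(0,3)$, $A(1,2)$, $B(1,3)$, and $B(3,1)$; any other label choice either violates a generalized Cartan matrix condition or produces a degree-two isotropic vertex whose ratio is not a negative rational, and is discarded.

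The essential case is (C). All three new three-vertex subdiagrams $\{v_4,v_1,v_2\}$, $\{v_1,v_4,v_3\}$, and $\{v_2,v_3,v_4\}$ contain an isotropic vertex, so each must be finite type, which couples the four labels around the cycle by a system analogous to~(\ref{e3}). I expect the even choice $v_4=\O$ to close up to the affine diagram $A(0,2)^{(1)}$ and the isotropic choice $v_4=\OX$ to close up to the twisted affine diagram $q(4)^{(2)}$. The main obstacle is confirming that these cyclic diagrams are regular Kac-Moody: unlike the chain cases, the defining condition must hold for the \emph{entire} reflection collection, so I would reflect successively at the isotropic vertices and check that the labels produced by the cycle formulas remain admissible and that no reflected diagram acquires a forbidden $C(3)$, $G(3)$, or $D(2,1,\alpha)$ subdiagram. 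Once the cyclic labels are shown to be consistent under all reflections, the list $A(0,3)$, $A(1,2)$, $B(1,3)$, $B(3,1)$, $A(0,2)^{(1)}$, $q(4)^{(2)}$ is exhaustive.
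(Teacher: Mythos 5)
Your proposal is correct and follows essentially the same route as the paper: the same three configurations (attach $v_4$ at the even end, at the isotropic end, or close a four-cycle, with attachment to $v_2$ excluded via Corollary~\ref{54} and $v_4=\B$ excluded from the cycle via Lemma~\ref{52}), with the labels pinned down by requiring each new three-vertex subdiagram containing an isotropic vertex to be finite type and, by hypothesis, not $C(3)$, $G(3)$ or $D(2,1,\alpha)$, which yields exactly the six diagrams listed. The one place you overcomplicate is your ``main obstacle'' in case (C): no reflection-by-reflection verification is needed, because once the cycle constraints force the labels of $A(0,2)^{(1)}$ (for $v_4=\O$) and $q(4)^{(2)}$ (for $v_4=\OX$), these are diagrams of affine Kac-Moody superalgebras, hence regular Kac-Moody (and subfinite) by the fact recorded at the start of Section~\ref{s1}, which is exactly how the paper disposes of the cyclic case.
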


\begin{proof} We consider each case for attaching a vertex to an $A(0,2)$ diagram.  \\
\textbf{Case 1:}
\begin{equation*}
\xymatrix{ \OV_4 \AW[r]^{a}_{b} & \OX_1 \AW[r]^{1}_{1} & \OX_2 \AW[r]^{-1}_{-1} & \O_3 }\hspace{.5cm}a,b\neq 0
\end{equation*}
\D If $v_4$ is $\OX$, then $\Gamma_{3}\in\mathcal{F}$ implies $b=-1$. This is $A(1,2)$. \\
\D If $v_4$ is \B, then $\Gamma_{3}\in\mathcal{F}$ implies $a=-2$, $b=-1$.  This is $B(1,3)$. \\
\D If $v_4$ is $\O$, then $\Gamma_{3}\in\mathcal{F}$ implies $a\in\{-1,-2\}$ and $b \in \{-1,-2,-3\}$.  By assumption, $\Gamma_3$ is not $C(3)$ or $G(3)$, which implies $b=-1$. If $a=-1$, then this is $A(0,3)$. If $a=-2$, then this is $B(3,1)$. \\ \\
\textbf{Case 2:}
\begin{equation*}
\xymatrix{  \OX_1   \AW[r]^{1}_{1} & \OX_2 \AW[r]^{-1}_{-1}  & \O_3 \AW[r]^{b}_{a} & \OV_4  }\hspace{.5cm}a,b\neq 0
\end{equation*}
\D If $v_4$ is $\OX$, then $\Gamma_{1}\in\mathcal{F}$ implies $b=-1$.  This is $A(1,2)$. \\
\D If $v_4$ is \B, then $\Gamma_{1}\in\mathcal{F}$ implies $a=-2$, $b=-1$.  This is $B(1,3)$. \\
\D If $v_4$ is $\O$, then $\Gamma_{1}\in\mathcal{F}$ implies $a\in\{-1,-2\}$ and $b\in\{-1,-2,-3\}$.  By assumption, $\Gamma_1$ is not $C(3)$ or $G(3)$, which implies $b=-1$.  If $a=-1$, then this is $A(0,3)$. If $a=-2$, then this is $B(3,1)$. \\ \\
\textbf{Case 3:}
\begin{equation*}
\xymatrix{ \OX_2 \AW[dd]^{1}_{1} \AW[rr]^{-1}_{-1}  & & \O_3 \AW[dd]^{a}_{b} \\  & & \\ \OX_1 \AW[rr]^{d}_{c} & & \OV_4 }\hspace{.5cm}a,b,c,d\neq 0
\end{equation*}
\D If $v_4$ is $\O$, then $\Gamma_2\in\mathcal{F}$ implies $c=-1$ and $b\in\{-1,-2,-3\}$.   By assumption, $\Gamma_2$ is not $C(3)$ or $G(3)$, which implies $b=-1$.  Since we assume that $\Gamma_3$ is not $C(3)$ or $G(3)$, we have that $\Gamma_3\in\mathcal{F}$ implies $d=-1$.  Now $\Gamma_1\in\mathcal{F}$ implies $a\in\{-1,-2,-3\}$ and since we assume that it is not $C(3)$ or $G(3)$, we have $a=-1$.  This is $A(0,2)^{(1)}$. \\
\D If $v_4$ is $\OX$, then $\Gamma_1\in\mathcal{F}$ implies $a=-1$.  Since we assume that $\Gamma_3$ is not $D(2,1,\alpha)$, we have that $\Gamma_3\in\mathcal{F}$ implies $d=-1$. Finally, $\Gamma_2\in\mathcal{F}$ and not equal to $C(3)$ or $G(3)$ implies $\frac{b}{c}=-1$.  This is $q(4)^{(2)}$.
\end{proof}

\begin{lemma}
The subfinite regular Kac-Moody extensions of $A(1,1)$ that are not extensions of $D(2,1,\alpha)$, $C(3)$ or $G(3)$ are the following: $A(1,2)$, $B(2,2)$, $A(1,1)^{(1)}$, and $q(4)^{(2)}$.
\end{lemma}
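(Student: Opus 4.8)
The plan is to follow exactly the case-analysis template of the preceding lemmas. First I would fix the all-isotropic representative of $A(1,1)$, namely the three-vertex chain $\OX_1-\OX_2-\OX_3$ in which every simple root is odd isotropic; by the observation that it suffices to extend a single member of $\mathcal{C}(A(1,1))$ and then close under reflection, this loses no generality. The standing hypotheses of the subsection are decisive: since we discard any extension containing a $D(2,1,\alpha)$ subdiagram, Corollary~\ref{54} forces every isotropic vertex to have degree at most two and Lemma~\ref{53} forces every ratio of an isotropic vertex to be a negative rational, while Lemma~\ref{52} makes any $\B$ vertex have degree one. In the chosen representative all three vertices are isotropic, so the middle vertex $v_2$ already has degree two and cannot receive the new edge; hence $v_4$ can only be joined to the endpoints $v_1,v_3$, and connectivity together with the degree bound leaves precisely two configurations: (i) a four-vertex \emph{chain}, with $v_4$ joined to one endpoint, and (ii) a four-vertex \emph{cycle}, with $v_4$ joined to both endpoints.

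For the chain configuration I would split on the type of $v_4$. A $\B$ vertex has degree one, so it fits as a pendant endpoint; the adjoined three-vertex subdiagram $\{v_4,v_1,v_2\}$ must lie in $\mathcal{F}$ and, being excluded from $C(3)$ and $G(3)$, this should force the double-bond labels and produce $B(2,2)$. If $v_4$ is even or isotropic, the same requirement that the adjoined triangle be finite type and neither $C(3)$ nor $G(3)$ should pin the remaining labels to $-1$ and yield the four-vertex type-$A$ chain $A(1,2)$, its standard and all-isotropic bases accounting for the even and isotropic choices of $v_4$ respectively. Any residual value such as a label in $\{-2,-3\}$ is discarded exactly as in the $A(0,2)$ and $B(1,2)$ extensions, because it manufactures a forbidden $C(3)$ or $G(3)$ subdiagram.

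For the cycle configuration, a $\B$ vertex is already excluded by its degree bound, so $v_4$ is either $\O$ or $\OX$. I would label the four edges of the cycle, impose that each of the four three-vertex subdiagrams obtained by deleting a single vertex---each a chain---lies in $\mathcal{F}$ and avoids $C(3)$, $G(3)$, $D(2,1,\alpha)$, and combine these with the cyclic compatibility of the labels. This should determine the diagram up to the type of $v_4$, giving the all-isotropic four-cycle $A(1,1)^{(1)}$ when $v_4$ is $\OX$ and the three-isotropic four-cycle $q(4)^{(2)}$ when $v_4$ is $\O$, in complete parallel with how these cycles arose in the cyclic extension of $A(0,2)$. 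Because both resulting diagrams are affine, the generalized-Cartan-matrix condition along their full reflection orbit is automatic.

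The main obstacle is the cycle configuration. A cycle has no free endpoint, so unlike a chain its admissibility is not read off from a single adjoined triangle: one must simultaneously satisfy the four deletion conditions and then confirm, by carrying a sequence of odd reflections around the cycle, that no reflected label violates the sign and integrality requirements of Lemma~\ref{lm23} together with condition (3$'$). This is precisely where a collection of locally admissible ratios can still fail globally, and it is what rules out any spurious non-affine four-cycle and leaves only $A(1,1)^{(1)}$ and $q(4)^{(2)}$. Completeness needs no separate check of the other bases $\O-\OX-\O$ and $\OX-\O-\OX$: in any base where the $A(1,1)$-subdiagram is all isotropic the fourth vertex cannot branch off the full middle vertex, so every such extension is a reflection of one of the chain or cycle diagrams above, while the genuinely branched configurations produce an isotropic vertex of degree three after one reflection and hence contain a $D(2,1,\alpha)$ by Corollary~\ref{54}, placing them outside the scope of this lemma.
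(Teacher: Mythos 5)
Your skeleton coincides with the paper's: fix the all-isotropic base of $A(1,1)$, use Corollary~\ref{54} to cap isotropic degree at two (so the new vertex cannot attach to the middle vertex), and split into a chain and a four-cycle, with Lemma~\ref{52} removing a vertex of type \B\ from the cycle. Your cycle analysis also lands where the paper's does ($A(1,1)^{(1)}$ for $\OX$, $q(4)^{(2)}$ for $\O$), and the ``global'' reflection check you flag as the main obstacle is not actually needed there: every connected $3$-vertex subdiagram of the $4$-cycle contains an isotropic vertex, so the four deletion conditions alone pin all the labels ($a_{24}=a_{34}=-1$ and ratio $-1$ at $v_4$ when $v_4$ is $\OX$; labels out of the isotropic vertices equal to $-1$ and then $a_{42}=a_{43}=-1$ when $v_4$ is $\O$), exactly as in the paper's two-line computation.

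The genuine flaw is in your chain case with an even pendant. Writing $a_{34},a_{43}$ for the labels on the new edge joining $\OX_3$ to $\O_4$, the requirement $\{v_2,v_3,v_4\}\in\mathcal{F}$ gives $a_{34}\in\{-1,-2,-3\}$ and $a_{43}\in\{-1,-2\}$; excluding $C(3)$ and $G(3)$ forces only $a_{34}=-1$, while $a_{43}=-2$ remains admissible, because the resulting $3$-vertex subdiagram is then a $B(1,2)$ diagram --- finite type and not on the excluded list. That admissible diagram is a second base of $B(2,2)$, and it is exactly what the paper records in its $v_4=\O$ subcase (``if $a=-2$, then this is $B(2,2)$''). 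Your blanket assertion that ``any residual value such as a label in $\{-2,-3\}$ manufactures a forbidden $C(3)$ or $G(3)$ subdiagram'' conflates the label issuing from the isotropic vertex $v_3$ (where $-2,-3$ do produce $C(3)$, $G(3)$) with the label issuing from the even vertex $v_4$ (where $-2$ produces $B(1,2)$), and so wrongly discards a valid extension. Your final list survives only by accident, since $B(2,2)$ also arises from the pendant of type \B; but this classification is consumed downstream through its diagrams, not just its algebras --- the five-vertex step repeatedly tests $4$-vertex subdiagrams for membership in $\mathcal{F}$ and $\mathcal{K}$ --- so an enumeration that silently drops a base is a real gap in the argument, not a cosmetic one.
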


\begin{proof} We consider each case for attaching a vertex to an $A(1,1)$ diagram.  \\
\textbf{Case 1:}
\begin{equation*}
\xymatrix{  \OX_1   \AW[r]^{1}_{1} & \OX_2 \AW[r]^{-1}_{1} & \OX_3 \AW[r]^{b}_{a} & \OV_4 }\hspace{.5cm}a,b\neq 0
\end{equation*}
\D If $v_4$ is $\OX$, then $\Gamma_1\in\mathcal{F}$ implies $b-1$.  This is $A(1,2)$. \\
\D If $v_4$ is \B, then $\Gamma_1\in\mathcal{F}$ implies $a=-2$, $b=-1$.  This is $B(2,2)$. \\
\D If $v_4$ is $\O$, then $\Gamma_1\in\mathcal{F}$ implies $a\in\{-1,-2\}$ and $b\in\{-1,-2,-3\}$.  By assumption, $\Gamma_1$ is not $C(3)$ or $G(3)$, which implies $b=-1$.  If $a=-1$, then this is $A(1,2)$.  If $a=-2$, then this is $B(2,2)$. \\ \\
\textbf{Case 2:}
\begin{equation*}
\xymatrix{ \OX_1 \AW[dd]^{1}_{1} \AW[rr]^{-1}_{1} & & \OX_2 \AW[dd]^{a}_{b} \\  & & \\ \OX_3 \AW[rr]^{d}_{c} & & \OV_4  }\hspace{.5cm}a,b,c,d\neq 0
\end{equation*}
\D If $v_4$ is $\OX$, then $\Gamma_1\in\mathcal{F}$ implies $a=-1$, $\Gamma_3\in\mathcal{F}$ implies $d=-1$, and $\Gamma_2\in\mathcal{F}$ implies $\frac{b}{c}=-1$.  This is $A(1,1)^{(1)}$. \\
\D If $v_4$ is $\O$, then $\Gamma_1\in\mathcal{F}$ implies $b=c=-1$. Then $\Gamma_2,\Gamma_3\in\mathcal{F}$ and not $C(3)$ or $G(3)$ imply $a=d=-1$.  This is $q(4)^{(2)}$.
\end{proof}

This completes the classification of connected subfinite regular Kac-Moody diagrams with four vertices.  We observe that all are either of finite type or have finite growth.

\subsection{Subfinite regular Kac-Moody: 5 vertices}

For each finite type $4$-vertex diagram, we will consider each case for attaching a vertex to the diagram.   Let $v_5$ denote the additional vertex, and let $\Gamma$ denote the corresponding extended diagram.  Recall that $a_{ij}$ denotes the label of the arrow from the vertex $v_i$ to the vertex $v_j$.  Also, $a_{ij}=0$ if and only if $a_{ji}=0$.

\begin{lemma}
The subfinite regular Kac-Moody extensions of $D(2,2)$ are the following: $D(2,3)$, $D(3,2)$, $B(2,2)^{(1)}$,
$A(4,3)^{(2)}$, $A(3,3)^{(2)}$, $D(2,2)^{(2)}$.
\end{lemma}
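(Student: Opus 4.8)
The plan is to mirror the inductive method already used for the four-vertex extensions: fix a convenient representative of the four-vertex diagram $D(2,2)$, attach a fifth vertex $v_5$ in every admissible position and of every admissible type ($\O$, $\OX$ or $\B$, i.e.\ $\OV$), and in each case pin down the arrow labels using two tools. First, since the extended diagram $\Gamma$ must be a regular Kac-Moody diagram, the labels obey the generalized Cartan matrix conditions, namely $a_{ij}=0$ if and only if $a_{ji}=0$ together with the sign and integrality conditions of Lemma~\ref{lm23} and condition $(3')$. Second, by subfiniteness every connected proper subdiagram of $\Gamma$ that contains an isotropic vertex is of finite type, hence must appear in the already-completed four-vertex classification, i.e.\ in $\mathcal{F}$. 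Concretely, for each $i$ the deleted-vertex subdiagram $\Gamma_i$, when it is connected and isotropic, will be matched against the finite list of finite-type four-vertex diagrams with an isotropic vertex, and this match forces the labels.

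First I would enumerate the attachment positions. Using the adjacency of the chosen $D(2,2)$ representative and its symmetries, the cases reduce to attaching $v_5$ to each orbit of vertices and, where the geometry allows, to several vertices simultaneously, exactly as in Case~2 and Case~3 of the $D(2,1,\alpha)$ extension. For each position I split into subcases according to the type of $v_5$ and the values of the labels $a_{5j},a_{j5}$. In the subcases where $v_5$ is attached to an isotropic vertex, the relevant three-vertex subdiagram is either a chain through that vertex or a $D(2,1,\alpha)$ triangle, so the conditions~(\ref{e3}) and the three-vertex classification immediately restrict the ratios; combining the several such conditions coming from the different $\Gamma_i$ determines the labels up to finitely many solutions, each of which I identify with one of $D(2,3)$, $D(3,2)$, $B(2,2)^{(1)}$, $A(4,3)^{(2)}$, $A(3,3)^{(2)}$ or $D(2,2)^{(2)}$.

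The delicate subcases, and the place where odd reflections are essential, are those in which $v_5$ attaches to a non-isotropic vertex, so that the naive subdiagram containing $v_5$ has no isotropic vertex and is not directly constrained by $\mathcal{F}$. Here I would invoke Lemma~\ref{lm12.2} and Lemma~\ref{lm12.3} to produce a sequence of odd reflections $R$ that fixes the even subdiagram while moving an isotropic vertex into adjacency with it; applying the finite-type condition to $R(\Gamma)$ then supplies the missing numerical constraints, and a configuration is discarded precisely when some reflected subdiagram fails to lie in $\mathcal{F}$. Because $D(2,2)$ already contains a $D(2,1,\alpha)$ subdiagram, the degree-two bound of Corollary~\ref{54} and the negative-rational-ratio restriction of Lemma~\ref{53} are \emph{not} available in this lemma; consequently I must also allow an isotropic vertex of degree three and treat it through the sum conditions~(\ref{e3}), exactly as the affine diagram $D(2,1,\alpha)^{(1)}$ was produced in Case~3 of the $D(2,1,\alpha)$ extension.

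I expect the main obstacle to be bookkeeping rather than conceptual: ensuring that the enumeration of attachment positions and vertex types is exhaustive, and carrying out the reflect-and-check computations in the even-attachment subcases without error, since ruling out a spurious configuration typically requires reflecting once or twice and deriving a contradiction from a relation such as a rational label being forced into $2\mathbb{Z}_{<0}$ while constrained to a bounded interval (compare the contradictions obtained in the analysis of~(\ref{d1})). A secondary point of care is to work only up to the reflection-equivalence $\mathcal{C}(\Gamma)$, recording each surviving diagram once and then closing the final list under odd reflections.
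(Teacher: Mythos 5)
Your plan is correct and is essentially the paper's proof: attach $v_5$ of each type $\O$, $\OX$, $\B$ and pin down the labels by demanding that the deleted-vertex subdiagrams (here $\Gamma_4$ and $\Gamma_2$) lie in $\mathcal{F}$, matched against the completed four-vertex classification. The only difference is that the paper works with the representative of $D(2,2)$ in which all four vertices are isotropic, so $\Gamma_4\in\mathcal{F}$ immediately forces $a_{52}=a_{53}=0$, every remaining subcase is settled by $\Gamma_2\in\mathcal{F}$ alone, and the odd-reflection contingency you build in via Lemmas~\ref{lm12.2} and~\ref{lm12.3} is never needed.
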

\begin{proof}
\begin{equation*}
\xymatrix{ & & \OX_2 \AW[ldd]^{1}_{1} \AW[rdd]^{-2}_{-2} & \\ & & & \\
     \OX_4 \AW[r]^{-1}_{-1} & \OX_1 \AW[rr]^{1}_{1} & & \OX_3 }
\end{equation*}
First observe that $\Gamma_4\in\mathcal{F}$ implies $a_{52}=a_{53}=0$.\\
\D If $v_5$ is \B, then $\Gamma_2\in\mathcal{F}$ implies $a_{51}=0$, $a_{54}=-2$, $a_{45}=-1$. This is $B(2,2)^{(1)}$.\\
\D If $v_5$ is $\O$, then $\Gamma_2\in\mathcal{F}$ implies $a_{51}=0$ and $a_{45},a_{54}\in\{-1,-2\}$.
If $a_{54}=-2$ and $a_{45}=-1$, then this is $A(4,3)^{(2)}$. If $a_{54}=-1$ and $a_{45}=-2$, then this is $D(2,2)^{(2)}$. If $a_{54}=a_{45}=-1$, then this is $D(2,3)$. \\
\D If $v_5$ is $\OX$, then $\Gamma_2\in\mathcal{F}$ implies that either
$a_{51}=a_{15}=0$, $a_{45}=-1$, $a_{54}=1$ and this is $D(3,2)$, or
$a_{51}=-1$, $a_{15}=1$, $a_{45}=a_{54}=-2$ and this is $A(3,3)^{(2)}$.
\end{proof}

\begin{lemma}
The subfinite regular Kac-Moody extensions of $D(3,1)$ are the following: $D(2,3)$, $D(4,1)$, $B(3,1)^{(1)}$,
$A(2,5)^{(2)}$, $A(3,3)^{(2)}$, $A(5,1)^{(2)}$.
\end{lemma}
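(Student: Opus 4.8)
The plan is to follow the template of the two preceding five-vertex lemmas. I first fix a base for the finite-type diagram $D(3,1)$ that contains an isotropic vertex — for instance the claw-shaped distinguished diagram, an even trivalent vertex with three leaves exactly one of which is the isotropic $\OX$ vertex, or any base related to it by odd reflections. Throughout I may assume no $D(2,1,\alpha)$ subdiagram occurs: otherwise the extension would already have been found among the $D(2,1,\alpha)$-extensions, and moreover $D(3,1)$ itself contains no triangle. Consequently, by Lemma~\ref{53} every isotropic vertex has ratio a negative rational number and by Corollary~\ref{54} every isotropic vertex has degree at most two. By Lemma~\ref{52} a $\B$-type new vertex has degree one, and a Heisenberg vertex $\square$ is excluded outright, since condition (1) of Lemma~\ref{lm23} would force it disconnected from the rest of $\Gamma$. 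Writing $v_5$ for the new vertex and $\Gamma$ for the extension, the subfinite hypothesis says exactly that every connected four-vertex subdiagram of $\Gamma$, and of every reflection of $\Gamma$, that contains an isotropic vertex must be of finite type; these membership conditions are the constraints I exploit.

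The first step is to cut down the possible attachment points of $v_5$. For each vertex $v_i$ of $D(3,1)$, deleting $v_i$ from $\Gamma$ leaves a four-vertex subdiagram $\Gamma_i$ on $v_5$ together with the remaining vertices; whenever $\Gamma_i$ is connected and contains an isotropic vertex it must lie in $\mathcal{F}$, which is read off from the four-vertex classification of the previous subsection. Exactly as in the $D(2,2)$ proof, these conditions forbid $v_5$ from attaching to most vertices and confine it to one or two distinguished vertices of the $D(3,1)$ spine, while simultaneously bounding the labels $a_{5i},a_{i5}$ to small negative integers (to $2\mathbb{Z}_{<0}$ when $v_5$ is $\B$) via the conditions of Lemma~\ref{lm23} applied at $v_5$.

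Next I case-split on the type of $v_5$, namely $\O$, $\OX$, or $\B$. In each case I substitute the allowed labels and test membership of the relevant four-vertex subdiagrams in $\mathcal{F}$, reflecting at an isotropic vertex and rescaling by the convention of Remark~\ref{remrescale} whenever I need to normalize a ratio. I expect precisely six surviving diagrams: the orthosymplectic finite extensions $D(4,1)$ and $D(2,3)$, and the affine extensions $B(3,1)^{(1)}$, $A(2,5)^{(2)}$, $A(3,3)^{(2)}$, $A(5,1)^{(2)}$, with every other label choice eliminated by a finiteness test.

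The main obstacle is the bookkeeping for subdiagrams of $D(3,1)$ that contain no isotropic vertex — for example the even $D_3$-type tail obtained by deleting the isotropic vertex. For such a subdiagram the subfinite condition gives no direct constraint, since $\mathcal{F}$-membership is asserted only for subdiagrams that contain an isotropic vertex. The fix, used exactly as in the elimination of the $b=-3$ subcase when extending $C(3)$ and guaranteed abstractly by Lemma~\ref{lm12.3}, is to apply a sequence of odd reflections $R$ that transports an isotropic vertex into the offending subdiagram while fixing $v_5$ and the labels being pinned down; then $R(\Gamma)$ possesses a four-vertex subdiagram with an isotropic vertex to which the finite-type test applies, and the contradiction (or surviving case) is extracted there. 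Tracking the label transformations of the odd-reflection formulas through each such reflection is the delicate part; once it is in place, the remaining steps are the same routine numerical eliminations carried out in the earlier lemmas, and the verification that the six listed diagrams are genuinely subfinite regular Kac-Moody is immediate, each being the diagram of a finite-dimensional or affine Kac-Moody superalgebra.
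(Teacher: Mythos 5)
Your general template --- pick a base of $D(3,1)$ with an isotropic vertex, attach $v_5$, and test connected four-vertex subdiagrams for membership in $\mathcal{F}$ --- is indeed the paper's method, but your opening reduction contains a genuine error that breaks the case analysis. You assume ``no $D(2,1,\alpha)$ subdiagram occurs,'' on the grounds that such an extension ``would already have been found among the $D(2,1,\alpha)$-extensions'' and that ``$D(3,1)$ itself contains no triangle.'' Neither holds. The $D(2,1,\alpha)$-extension lemma is a \emph{four}-vertex statement; it does not dispose of five-vertex diagrams containing a $D(2,1,\alpha)$ subdiagram --- those are precisely among the diagrams this lemma must classify. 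Moreover $D(3,1)$ itself contains $D(2,1,\alpha)$ subdiagrams: the base the paper actually works with is an all-isotropic triangle $\{v_1,v_2,v_3\}$ (a $D(2,1,\alpha)$ diagram) with an even tail $v_4$ attached at $v_1$, so \emph{every} extension of $D(3,1)$ contains one; and even starting from your claw-shaped distinguished base, a single odd reflection at the isotropic leaf produces an isotropic vertex of degree three whose ambient three-vertex chain is a (non-triangle) base of $D(2,1)\subset D(3,1)$ with vertex ratio $+1$. Hence the two consequences you extract --- negative rational ratios at all isotropic vertices via Lemma~\ref{53}, and isotropic degree at most two via Corollary~\ref{54} --- are simply false in this setting: already inside reflected $D(3,1)$ an isotropic vertex has positive ratio, and in the paper's proof the surviving diagram $A(3,3)^{(2)}$ arises with $v_5$ attached directly to the isotropic vertex $v_1$, which then has degree four. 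A case analysis pruned by your assumptions would therefore be unsound and would eliminate, or never reach, several of the six diagrams you are supposed to find.

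The paper's actual proof also shows that your anticipated ``delicate part'' is a non-issue. Working in the triangle-with-tail base, the subdiagram $\Gamma_4$ (the triangle together with $v_5$) contains isotropic vertices, so $\Gamma_4\in\mathcal{F}$ forces $a_{52}=a_{53}=0$ at once; then the single test $\Gamma_3\in\mathcal{F}$, split according to whether $v_5$ is $\B$, $\O$ or $\OX$, pins down the remaining labels $a_{51},a_{15},a_{54},a_{45}$ and yields exactly $A(2,5)^{(2)}$, $A(3,3)^{(2)}$, $B(3,1)^{(1)}$, $A(5,1)^{(2)}$, $D(4,1)$ and $D(2,3)$. In this base every four-vertex subdiagram used already contains an isotropic vertex, so no transport of isotropic vertices by odd reflections in the style of Lemma~\ref{lm12.3} is ever needed; that machinery is only forced on you by the claw base, where deleting the unique isotropic leaf leaves an isotropic-free subdiagram. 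If you repair your argument, you should both drop the $D(2,1,\alpha)$-exclusion entirely and switch to a base in which the $D(2,1,\alpha)$ triangle is visible, since it is exactly that subdiagram which does the pruning work.
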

\begin{proof}
\begin{equation*}
\xymatrix{ & & \OX_2 \AW[ldd]^{1}_{1} \AW[rdd]^{-2}_{-2} & \\ & & & \\
     \O_4 \AW[r]^{-1}_{-1} & \OX_1 \AW[rr]^{1}_{1} & & \OX_3 }
\end{equation*}
First observe that $\Gamma_4\in\mathcal{F}$ implies $a_{52}=a_{53}=0$.\\
\D If $v_5$ is \B, then $\Gamma_3\in\mathcal{F}$ implies $a_{51}=0$, $a_{54}=-2$, $a_{45}=-1$. This is $A(2,5)^{(2)}$.\\
\D If $v_5$ is $\O$, then $\Gamma_3\in\mathcal{F}$ implies that $\Gamma$ satisfies the following conditions.  If $a_{51}\neq 0$, then $a_{51}=a_{15}=-1$, $a_{54}=a_{45}=0$, and this is $A(3,3)^{(2)}$.  If $a_{51}=0$, then $a_{54},a_{45}\in\{-1,-2\}$. If $a_{51}=0$, $a_{54}=-2$ and $a_{45}=-1$, then this is $B(3,1)^{(1)}$.  If $a_{51}=0$, $a_{54}=-1$ and $a_{45}=-2$, then this is $A(5,1)^{(2)}$.  If $a_{51}=0$, $a_{54}=-1$ and $a_{45}=-1$, then this is $D(4,1)$.\\
\D If $v_5$ is $\OX$, then $\Gamma_3\in\mathcal{F}$ implies $a_{51}=0$, $a_{45}=-1$, $a_{54}=1$.  This is $D(2,3)$.
\end{proof}

\begin{lemma}
The subfinite regular Kac-Moody extensions of $C(4)$ are the following: $C(5)$, $D(2,3)$, $B(1,3)^{(1)}$, $D(1,3)^{(1)}$,
$A(5,1)^{(2)}$, $A(6,1)^{(2)}$.
\end{lemma}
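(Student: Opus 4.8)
The plan is to carry out the same delete-a-vertex induction used for the $D(2,2)$ and $D(3,1)$ bases. I would work with the reflected representative of $C(4)$ produced in the three-vertex analysis, namely the triangle on $v_1,v_2,v_3$ in which $v_1$ is the even vertex $\O$ and $v_2,v_3$ are the isotropic vertices $\OX$, together with an even tail $v_4$ (an $\O$) joined to $v_1$; the labels $a_{21}=1,\ a_{12}=-1,\ a_{23}=a_{32}=-2,\ a_{13}=-1,\ a_{31}=1,\ a_{41}=a_{14}=-1$ are read off directly from the diagram obtained when $C(4)$ appeared as an extension of $C(3)$. Having these explicit labels lets me test every finite-type condition $\Gamma_i\in\mathcal F$ by inspection.

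The first step is to pin down where the new vertex $v_5$ can attach. Both isotropic vertices $v_2$ and $v_3$ already have degree two, so by Corollary~\ref{54} (equivalently, because deleting the opposite even vertex must leave a diagram in $\mathcal F$) we get $a_{52}=a_{25}=0$ and $a_{53}=a_{35}=0$. Hence $v_5$ is joined only to the even vertices $v_1$ and $v_4$, and the analysis breaks into the cases: $v_5$ joined to $v_4$ alone, to $v_1$ alone, and to both (closing a cycle through $v_1$ and $v_4$), each subdivided further by the type of $v_5$ ($\O$, $\B$, or $\OX$).

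In each branch I would fix the remaining labels by requiring that the subdiagrams $\Gamma_2$ and $\Gamma_3$---which delete one isotropic vertex and leave a finite-type chain---lie in $\mathcal F$, and, when a deleted-vertex subdiagram is itself a reflected $C(3)$ on the even vertex, by invoking the three- and four-vertex classifications already established. I anticipate that an even $v_5$ produces the finite chain $C(5)$ together with the affine forms $D(1,3)^{(1)}$ and $A(5,1)^{(2)}$; that a $\B$ vertex produces $B(1,3)^{(1)}$; and that an isotropic $v_5$ produces $D(2,3)$ and $A(6,1)^{(2)}$. Whenever a candidate label is not immediately forced, I would apply an odd reflection at $v_1$ (or at an isotropic vertex exposed after an earlier reflection) and exhibit a three-vertex subdiagram of the reflected diagram that fails to be of finite type, discarding that value---the same mechanism that eliminated the spurious labels in the three-vertex analyses, for instance the value $b=-3$ in the $C(3)$ extension.

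The step I expect to be the main obstacle is the cycle case, where $v_5$ is joined to both $v_1$ and $v_4$: none of the deleted-vertex subdiagrams is then a plain chain, so the finite-type tests are weaker, and I must instead track the isotropic ratios (which by Lemma~\ref{53} have to be negative rationals) through a short sequence of reflections in order to separate the genuine twisted-affine solution $A(6,1)^{(2)}$ from the configurations that must be rejected. The delicate part is the bookkeeping---deciding which reflected copy of the diagram makes the violating subdiagram visible---rather than any single arithmetic computation.
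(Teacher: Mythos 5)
Your framework is the one the paper actually uses: the paper works with exactly this reflected representative of $C(4)$ (the triangle on $v_1,v_2,v_3$ with $v_2,v_3$ isotropic and the even tail $v_4$), first forces $a_{52}=a_{53}=0$ by requiring $\Gamma_4\in\mathcal{F}$ --- your parenthetical ``deleting the opposite even vertex'' is precisely this, and is the sound version of the step, since Corollary~\ref{54} by itself only forces a $D(2,1,\alpha)$ subdiagram rather than forbidding degree three outright --- and then settles every remaining label with the single test $\Gamma_3\in\mathcal{F}$ together with the four-vertex classification. So the method is correct and, executed honestly, self-correcting. But two of your anticipated outcomes are misfiled, and one of them points you at a dead end. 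When $v_5$ is $\OX$, the test $\Gamma_3\in\mathcal{F}$ forces $a_{51}=0$, $a_{54}=1$, $a_{45}=-1$, and the isotropic branch yields only $D(2,3)$: the algebra $A(6,1)^{(2)}$ does not arise there. It arises in your even branch as a fourth solution you did not list, namely $v_5=\O$ attached to $v_4$ alone with the asymmetric labels $a_{54}=-2$, $a_{45}=-1$, alongside $(-1,-1)$ giving $C(5)$, $(-1,-2)$ giving $D(1,3)^{(1)}$, and the attachment of $v_5$ to $v_1$ alone (with $a_{51}=a_{15}=-1$, $a_{54}=0$) giving $A(5,1)^{(2)}$. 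Since you expect exactly three even solutions, you risk discarding the $(-2,-1)$ labeling, which is precisely where the missing algebra lives.

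Second, the cycle case you single out as the main obstacle is in fact vacuous, and no ratio-tracking through reflections is needed: if $a_{51}$ and $a_{54}$ are both nonzero, then $\Gamma_3$ (delete the isotropic vertex $v_3$) contains the all-even triangle on $v_1,v_4,v_5$ with the isotropic vertex $v_2$ pendant, and no finite-type diagram contains an all-even cycle (finite-type even Cartan matrices correspond to trees; an even $3$-cycle is at best affine). This is exactly how the paper's case analysis disposes of it: for even $v_5$ with $a_{51}\neq 0$, membership $\Gamma_3\in\mathcal{F}$ immediately forces $a_{54}=a_{45}=0$. As written, your plan would have you hunting for $A(6,1)^{(2)}$ in an empty case by the delicate mechanism of Lemma~\ref{53}-style ratio bookkeeping, while risking its loss in the chain case where it actually sits; everything else in the proposal (the representative and its labels, the $a_{52}=a_{53}=0$ step, the reliance on deletion subdiagrams and the lower-rank classifications) coincides with the paper's proof.
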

\begin{proof}
\begin{equation*}
\xymatrix{ & & \OX_2 \AW[ldd]^{1}_{-1} \AW[rdd]^{-2}_{-2} & \\ & & & \\
     \O_4 \AW[r]^{-1}_{-1} & \O_1 \AW[rr]^{-1}_{1} & & \OX_3 }
\end{equation*}
First observe that $\Gamma_4\in\mathcal{F}$ implies $a_{52}=a_{53}=0$. \\
\D If $v_5$ is \B, then $\Gamma_3\in\mathcal{F}$ implies
$a_{51}=0$, $a_{54}=-2$, $a_{45}=-1$.  This is $B(1,3)^{(1)}$. \\
\D If $v_5$ is $\O$, then $\Gamma_3\in\mathcal{F}$ implies that $\Gamma$ satisfies the following conditions.  If $a_{51}\neq 0$, then
$a_{51}=a_{15}=-1$, $a_{54}=a_{45}=0$, and this is $A(5,1)^{(2)}$.  If $a_{51}=0$, then $a_{54},a_{45}\in\{-1,-2\}$.
If $a_{51}=0$, $a_{54}=-2$ and $a_{45}=-1$, then this is $A(6,1)^{(2)}$.  If $a_{51}=a_{15}=0$, $a_{54}=-1$ and $a_{45}=-2$, then this is $D(1,3)^{(1)}$.  If $a_{51}=a_{15}=0$, $a_{54}=-1$ and $a_{45}=-1$, then this is $C(5)$. \\
\D If $v_5$ is $\OX$, then $\Gamma_3\in\mathcal{F}$ implies $a_{51}=0$, $a_{54}=1$, $a_{45}=-1$.  This is $D(2,3)$.
\end{proof}

\begin{lemma}
The only subfinite regular Kac-Moody extensions of $F(4)$ is $F(4)^{(1)}$.
\end{lemma}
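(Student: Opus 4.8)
The plan is to proceed exactly as in the preceding lemmas: enumerate the ways a fifth vertex $v_5$ can be attached to the four-vertex diagram $F(4)$, and eliminate all but the affine extension $F(4)^{(1)}$ using the subfinite hypothesis together with the three- and four-vertex classifications already established. Recall $F(4)$ is a chain on four vertices containing a single isotropic ($\OX$) vertex; by Lemma~\ref{52} any odd non-isotropic vertex has degree one, and by the remarks following the $D(2,1,\alpha)$ extension lemma we may assume that the isotropic vertex has degree at most two and that its ratio is a negative rational, since we need only consider diagrams with no $D(2,1,\alpha)$ subdiagram. These structural constraints sharply limit where $v_5$ may be attached.

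First I would write down the $F(4)$ diagram with its labeled arrows and identify, for each vertex $v_i$, which vertices $v_5$ could attach to without violating the degree bounds on isotropic and odd non-isotropic vertices. As in the $D(2,2)$, $D(3,1)$, and $C(4)$ lemmas, the key computational device is the condition that every connected proper subdiagram of $\Gamma$ containing an isotropic vertex must lie in $\mathcal{F}$ (finite type with an isotropic vertex). For each candidate attachment I would examine the three- and four-vertex subdiagrams $\Gamma_j$ obtained by deleting one of the original vertices: requiring $\Gamma_j\in\mathcal{F}$ forces the unknown labels $a_{i5},a_{5i}$ and the vertex type of $v_5$ into a finite list of possibilities via the three-vertex classification (and its finite-type restriction). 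Most attachments will be ruled out immediately because some resulting subdiagram containing the isotropic vertex fails to be finite type, exactly the kind of contradiction ``$\Gamma_j\not\in\mathcal{F}$'' used in the $G(3)$ and $B(2,1)$ Case~1 arguments.

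The surviving configuration should then be pinned down uniquely: the labels and parity of $v_5$ are determined, and matching against the known diagram of the affine superalgebra $F(4)^{(1)}$ identifies it. Since $F(4)^{(1)}$ is the affinization of $F(4)$, one expects $v_5$ to attach at a single vertex with an undirected $(-1,-1)$ bond of type $\O$, giving a chain (or the appropriate extended Dynkin configuration) whose every isotropic-containing proper subdiagram is finite type.

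The main obstacle I anticipate is purely bookkeeping: correctly enumerating the attachment sites consistent with the degree restrictions, and then, for each, checking all the relevant subdiagrams $\Gamma_j$ against the finite-type list without omitting a case. The genuine mathematical content is light, since the three- and four-vertex classifications and the degree lemmas (Lemmas~\ref{52}, \ref{53} and Corollary~\ref{54}) do all the heavy lifting; the care needed is in verifying that no alternative labeling of $v_5$ slips through, especially attachments producing a $C(3)$- or $G(3)$-type subdiagram that must be excluded by the standing assumption that $\Gamma$ is not an extension of $D(2,1,\alpha)$, $C(3)$, or $G(3)$.
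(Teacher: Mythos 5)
Your overall engine --- attach $v_5$, then force the labels $a_{5i},a_{i5}$ by demanding that the subdiagrams $\Gamma_j$ containing an isotropic vertex lie in $\mathcal{F}$ --- is the same one the paper uses. But there is a genuine gap in your reductions. You assume at the outset that the extended diagram contains no $D(2,1,\alpha)$ subdiagram, and hence (via Lemma~\ref{53} and Corollary~\ref{54}) that every isotropic vertex has degree at most two with negative rational ratio. That exclusion was licensed in the paper only for the \emph{four-vertex} lemmas following the classification of 4-vertex extensions of $D(2,1,\alpha)$; it cannot be invoked when extending $F(4)$, because the reflection class of $F(4)$ itself contains a $D(2,1,\alpha)$ triangle. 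Indeed, the paper's proof works with exactly that representative: three mutually connected isotropic vertices $v_1,v_2,v_3$ (a $D(2,1,\alpha)$ subdiagram with labels $2$, $1$, $-3$) and an even vertex $v_4$ attached to $v_1$ --- so your premise that ``$F(4)$ is a chain with a single isotropic vertex'' describes a different (reflected) base, and any extension, including $F(4)^{(1)}$ itself, contains $D(2,1,\alpha)$ subdiagrams in suitable bases. Concretely, your reduction discards a priori the attachments of $v_5$ that create a degree-three isotropic vertex or a new triangle; these are live cases (compare Case~3 of the $D(2,1,\alpha)$ extension lemma, which produced $D(2,1,\alpha)^{(1)}$ rather than a contradiction) and must be eliminated by computation, not by fiat. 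There is no prior 5-vertex lemma that covers them for you.

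Two smaller points. First, choosing the chain representative is legitimate in principle --- the paper notes that it suffices to extend one diagram of a reflection class $\mathcal{C}(\Gamma)$ --- but it buys you a longer argument: with the triangle representative the paper's proof is three lines ($\Gamma_4\in\mathcal{F}$ forces $a_{52}=a_{53}=0$ because the rigid $F(4)$-triangle admits essentially no finite-type 4-vertex enlargement through $v_2$ or $v_3$; then $\Gamma_2\in\mathcal{F}$ forces $a_{51}=0$ and $a_{54}=a_{45}=-1$, which is $F(4)^{(1)}$), whereas in the chain basis you should expect to need odd reflections to kill some attachments, as in the $b=-3$ case of the $C(3)$ extension lemma. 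Second, and related: subfiniteness quantifies over \emph{all reflected diagrams}, so your plan of checking only the unreflected subdiagrams $\Gamma_j$ against $\mathcal{F}$ is not automatically sufficient; in your write-up you should either verify the reflected configurations or choose a base, as the paper does, where the unreflected checks already pin everything down.
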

\begin{proof}
\begin{equation*}
\xymatrix{ & & \OX_2 \AW[ldd]^{2}_{2} \AW[rdd]^{-3}_{-3} & \\ & & & \\
     \O_4 \AW[r]^{-1}_{-2} & \OX_1 \AW[rr]^{1}_{1} & &  \OX_3 }
\end{equation*}
First observe that $\Gamma_4\in\mathcal{F}$ implies $a_{52}=a_{53}=0$.  Then $\Gamma_2\in\mathcal{F}$ implies $a_{51}=0$, $a_{54}=a_{45}=-1$.  This is $F(4)^{(1)}$.
\end{proof}

Let $\mathcal{K}=\{\Gamma'\in\mathcal{F}\mid R(\Gamma') \text{ is a chain for every sequence $R$ of odd reflections }\}$.

\begin{lemma} The subfinite regular Kac-Moody extensions of $A(0,3)$, $A(1,2)$, $B(1,3)$,
$B(3,1)$ and $B(2,2)$, that are not extensions of $C(4)$, $D(3,1)$, $D(2,2)$ or $F_{4}$, are the following:
$A(0,4)$, $A(1,3)$, $A(2,2)$, $B(1,4)$, $B(2,3)$, $B(3,2)$, $B(4,1)$, $A(0,3)^{(1)}$,
$A(1,2)^{(1)}$, $A(4,4)^{(2)}$, $A(6,2)^{(2)}$, $D(2,3)^{(2)}$, $D(3,2)^{(2)}$, $D(4,1)^{(2)}$, $q(5)^{(2)}$.
\end{lemma}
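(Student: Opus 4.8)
The plan is to exploit the fact that $A(0,3)$, $A(1,2)$, $B(1,3)$, $B(3,1)$ and $B(2,2)$ are precisely the four-vertex members of $\mathcal{K}$: each is a chain, and in each the two interior vertices are isotropic (type $\OX$) while the two terminal vertices are even (type $\O$), odd non-isotropic, or isotropic. Throughout I use the standing reductions established just above: since we have already classified the extensions containing a $D(2,1,\alpha)$ subdiagram, every isotropic vertex now has degree at most two (Corollary~\ref{54}) and ratio a negative rational (Lemma~\ref{53}), and every odd non-isotropic vertex has degree one (Lemma~\ref{52}). The decisive structural observation is that the new vertex $v_5$ cannot be joined to an interior vertex of any of the five chains: such a vertex is isotropic and already has degree two, so a third incident arrow would force an isotropic vertex of degree three and hence a $D(2,1,\alpha)$ subdiagram by Corollary~\ref{54}, which is excluded. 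Thus $v_5$ is joined only to terminal vertices, which splits the analysis into exactly two configurations.

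In the first configuration $v_5$ is joined to a single terminal vertex, producing a five-vertex chain. For each of the five base chains, each terminal vertex, and each admissible type of $v_5$, I would read off the constraints from the requirement that the two four-vertex subdiagrams obtained by deleting an interior isotropic vertex lie in $\mathcal{F}$, and then solve for the new labels $a_{5j}$ and $a_{j5}$ using the three- and four-vertex classifications already in hand. This should return the finite-type chains $A(0,4)$, $A(1,3)$, $A(2,2)$, $B(1,4)$, $B(2,3)$, $B(3,2)$, $B(4,1)$ together with the twisted affine chains $A(4,4)^{(2)}$, $A(6,2)^{(2)}$, $D(2,3)^{(2)}$, $D(3,2)^{(2)}$ and $D(4,1)^{(2)}$. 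As in the $C(4)$ extension lemma, borderline values that would create a $C(3)$ or $G(3)$ subdiagram (typically an end-label $-3$) must be eliminated by performing one or two odd reflections and checking that a resulting four-vertex subdiagram falls outside $\mathcal{F}$.

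In the second configuration $v_5$ is joined to both terminal vertices, closing the chain into a five-cycle. This is possible only when neither terminal vertex is odd non-isotropic, since such a vertex must have degree one by Lemma~\ref{52}; the admissible closures therefore come from $A(0,3)$, $B(3,1)$ and $A(1,2)$ and yield the affine diagrams $A(0,3)^{(1)}$, $A(1,2)^{(1)}$ and the twisted affine $q(5)^{(2)}$, the closure of $B(3,1)$ producing $B(3,1)^{(1)}$, which is already counted among the extensions of $D(3,1)$. Here I would label the cycle, constrain the labels using that every vertex-deletion subdiagram is a four-vertex chain in $\mathcal{F}$, and then impose the global closure relation obtained by going once around the loop. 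The diagram $q(5)^{(2)}$ should arise from closing $A(1,2)$, exactly as $q(4)^{(2)}$ arose by closing a four-vertex chain in the $A(0,2)$ and $A(1,1)$ extension lemmas, and is confirmed by the ratio conditions together with stability under the odd reflections permuting the vertices of the cycle.

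I expect the cycle case to be the principal obstacle. Unlike a chain, a cycle has no proper subdiagram that already fixes all of its labels, so one must combine the finite-type constraints on each four-vertex sub-chain with the single global consistency relation and then rule out spurious cyclic patterns by exhibiting an odd reflection under which some proper subdiagram leaves $\mathcal{F}$. Verifying that the surviving cycles are genuinely $A(0,3)^{(1)}$, $A(1,2)^{(1)}$ and $q(5)^{(2)}$, rather than a non-subfinite five-cycle, is where the explicit reflection computations concentrate. The final step, checking that none of the listed diagrams already occurs among the previously found extensions of $C(4)$, $D(3,1)$, $D(2,2)$ or $F_4$ so that the qualifier in the statement is met, is then routine bookkeeping.
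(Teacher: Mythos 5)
Your structural premise fails for two of the five base diagrams, and this breaks your decisive step. The chains for $B(3,1)$ and $B(1,3)$ are $\OX_1$--$\OX_2$--$\O_3$--$\O_4$ and $\OX_1$--$\OX_2$--$\O_3$--$\B_4$: the second interior vertex $v_3$ is \emph{even}, not isotropic. Consequently your argument that $v_5$ cannot be joined to an interior vertex --- because an isotropic vertex would acquire degree three and force a $D(2,1,\alpha)$ subdiagram via Corollary~\ref{54} --- says nothing about joining $v_5$ to $v_3$ in these two cases, and nothing else in your proposal closes that case. The paper excludes it by a different mechanism: since extensions of $C(4)$, $D(3,1)$, $D(2,2)$ and $F_4$ are ruled out by hypothesis, every connected four-vertex subdiagram of $\Gamma$ containing an isotropic vertex must be one of the five listed chains, and these all lie in $\mathcal{K}$, i.e.\ they remain chains under \emph{every} sequence of odd reflections. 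Hence $\Gamma_1,\Gamma_4\in\mathcal{K}$ immediately forces $a_{52}=a_{53}=a_{54}=0$ in the $B$-cases: an arrow from $v_5$ to $v_3$ would make $\Gamma_4=\{v_1,v_2,v_3,v_5\}$ a branched four-vertex subdiagram containing the isotropic vertices $v_1,v_2$, which is not a chain. (Two smaller points: deleting an \emph{interior} vertex of a chain disconnects it, so your plan of constraining via ``the two four-vertex subdiagrams obtained by deleting an interior isotropic vertex'' does not directly apply --- the subfinite condition governs connected subdiagrams, and the paper's constraints come from deleting terminal vertices; and the $\mathcal{K}$-device also replaces your proposed reflection computations for borderline labels such as $-3$, since a $G(3)$-type bond is excluded outright because $G(3)$ reflects to a non-chain base.)

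Your cycle analysis is also wrong at one point. Joining $v_5$ to both terminals of $B(3,1)$ produces a genuine five-cycle, in which every vertex has degree two; this can never be $B(3,1)^{(1)}$, whose diagram has a vertex of degree three and which arises in the paper as a pendant extension of $D(3,1)$, not as a closure. In fact no subfinite closure of $B(3,1)$ exists: deleting $v_1$ from the putative cycle leaves a four-vertex chain containing the isotropic vertex $v_2$ whose double bond ($a_{43}=-2$) sits in an interior position, which is not among the five chains in $\mathcal{K}$; equivalently, every vertex of a subfinite cycle must satisfy the conditions of Lemma~\ref{lm12.1}, which $a_{43}=-2$ violates. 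Your final list happens to be correct --- the surviving closures come only from $A(0,3)$ and $A(1,2)$, giving $A(0,3)^{(1)}$, $A(1,2)^{(1)}$ and $q(5)^{(2)}$ --- but as written the argument both admits an attachment it cannot exclude and misidentifies a closure; with the $\mathcal{K}$-chain argument substituted at these two places, your case division coincides with the paper's proof.
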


\begin{proof} Note that every connected $4$-vertex subdiagram of $\Gamma$ containing an isotropic vertex is an element of $\mathcal{K}$.  We find the extensions for each of the diagrams: $A(0,3)$, $A(1,2)$, $B(1,3)$,
$B(3,1)$ and $B(2,2)$.
    \begin{equation*}B(3,1) \hspace{2cm}
\xymatrix{ \OX_1 \AW[r]^{1}_{1} & \OX_2 \AW[r]^{-1}_{-1} & \O_3 \AW[r]^{-1}_{-2} & \O_4 }
    \end{equation*}
First observe that since $\Gamma_1,\Gamma_4\in\mathcal{K}$ we have $a_{52}=a_{53}=a_{54}=0$.\\
\D If $v_5$ is $\OX$, then $\Gamma_4\in\mathcal{K}$ implies $a_{15}=-1$, $a_{15}=1$. This is $B(3,2)$.\\
\D If $v_5$ is \B, then $\Gamma_4\in\mathcal{K}$ implies $a_{15}=-1$, $a_{51}=-2$.  This is $A(6,2)^{(2)}$.\\
\D If $v_5$ is $\O$, then $\Gamma_4\in\mathcal{K}$ implies $a_{15}=-1$, $a_{51}\in\{-1,-2\}$. If $a_{51}=-1$, then this is $B(4,1)$.  If $a_{51}=-2$, then this is $D(4,1)^{(2)}$.\\
    \begin{equation*}B(1,3) \hspace{2cm}
\xymatrix{ \OX_1 \AW[r]^{1}_{1} & \OX_2 \AW[r]^{-1}_{-1} & \O_3 \AW[r]^{-1}_{-2} & \B_4 }
    \end{equation*}
First observe that since $\Gamma_1,\Gamma_4\in\mathcal{K}$ we have $a_{52}=a_{53}=a_{54}=0$.\\
\D If $v_5$ is $\OX$, then $\Gamma_4\in\mathcal{K}$ implies $a_{15}=-1$, $a_{15}=1$.  This is $B(2,3)$.\\
\D If $v_5$ is \B, then $\Gamma_4\in\mathcal{K}$ implies $a_{15}=-1$, $a_{51}=-2$.  This is $D(2,3)^{(2)}$.\\
\D If $v_5$ is $\O$, then $\Gamma_4\in\mathcal{K}$ implies  $a_{15}=-1$, $a_{51}\in\{-1,-2\}$. If $a_{51}=-1$, then this is $B(1,4)$. If $a_{51}=-2$, then this is $A(6,2)^{(2)}$.\\
    \begin{equation*}B(2,2) \hspace{2cm}
\xymatrix{ \OX_1 \AW[r]^{1}_{1} & \OX_2 \AW[r]^{-1}_{1} & \OX_3 \AW[r]^{-1}_{-2} & \B_4 }
    \end{equation*}
First observe that since $\Gamma_1,\Gamma_4\in\mathcal{K}$ we have $a_{52}=a_{53}=a_{54}=0$.\\
\D If $v_5$ is $\OX$, then $\Gamma_4\in\mathcal{K}$ implies $a_{15}=-1$, $a_{15}=1$.  This is $B(2,3)$.\\
\D If $v_5$ is \B, then $\Gamma_4\in\mathcal{K}$ implies $a_{15}=-1$, $a_{51}=-2$.  This is $A(4,4)^{(2)}$.\\
\D If $v_5$ is $\O$, then $\Gamma_4\in\mathcal{K}$ implies $a_{15}=-1$, $a_{51}\in\{-1,-2\}$. If $a_{51}=-1$, then this is $B(3,2)$. If $a_{51}=-2$, then this is $D(3,2)^{(2)}$. \\

Finally, we restrict our attention to diagrams satisfying: a connected 4-vertex subdiagram containing an isotropic vertex is a diagram for $A(0,3)$ or $A(1,2)$.
\begin{equation*}A(1,2) \hspace{2cm}
\xymatrix{ \OX_1 \AW[r]^{1}_{1} & \OX_2 \AW[r]^{-1}_{1} & \OX_3 \AW[r]^{-1}_{1} & \OX_4 }
    \end{equation*}
First observe that since $\Gamma_1,\Gamma_4\in\mathcal{K}$ we have $a_{52}=a_{53}=0$. There are two distinct cases: $a_{54}=0$ and $a_{51},a_{54} \neq 0$.\\
\D If $v_5$ is $\OX$ and $a_{54}=0$, then $a_{51}=1$, $a_{15}=-1$.  This is $A(2,2)$.\\
\D If $v_5$ is $\O$ and $a_{54}=0$, then $a_{51},a_{15}=-1$.  This is $A(1,3)$. \\
\D If $v_5$ is $\OX$ and $a_{54},a_{51}\neq 0$, then $a_{51}=1$, $a_{15},a_{54},a_{45}=-1$.  This is $q(5)^{(2)}$.\\
\D If $v_5$ is $\O$ and $a_{54},a_{51}\neq 0$, then $a_{51},a_{15},a_{54},a_{45}=-1$. This is $A(1,2)^{(1)}$.
\begin{equation*}A(0,3) \hspace{2cm}
\xymatrix{ \O_1 \AW[r]^{-1}_{1} & \OX_2 \AW[r]^{-1}_{1} & \OX_3 \AW[r]^{-1}_{-1} & \O_4 }
\end{equation*}
Now we assume that a connected 4-vertex subdiagram is a diagram for $A(0,3)$.  First observe that since $\Gamma_1,\Gamma_4\in\mathcal{K}$ we have $a_{52},a_{53}=0$. There are two distinct cases: $a_{54}=0$ and $a_{51},a_{54} \neq 0$. \\
\D If $v_5$ is $\OX$ and $a_{54}=0$, then $a_{51}=1$, $a_{15}=-1$. This is $A(1,3)$.\\ \D If $v_5$ is $\O$ and $a_{54}=0$, then $a_{51},a_{15}=-1$.  This is $A(0,4)$. \\
\D If $v_5$ is $\OX$ and $a_{54},a_{51}\neq 0$, then $a_{51}=1$, $a_{15},a_{54},a_{45}=-1$.  This is $q(5)^{(2)}$.\\
\D If $v_5$ is $\O$ and $a_{54},a_{51}\neq 0$, then $a_{51},a_{15},a_{54},a_{45}=-1$.  This is $A(0,3)^{(1)}$.
\end{proof}

This completes the classification of connected subfinite regular Kac-Moody diagrams with five vertices.  We observe that all are either of finite type or have finite growth.

\subsection{Subfinite regular Kac-Moody: $n\geq 6$ vertices}

Now we handle the general case.  We find all connected subfinite regular Kac-Moody diagrams with six or more vertices.  We will show in Theorem~\ref{thm2} that the subfinite regular Kac-Moody diagrams which are not of finite type are not extendable, completing the classification of regular Kac-Moody diagrams.

\begin{remark} \label{rm261}
A finite type diagram $\Gamma'$ with $n\geq 5$ vertices has the following form:
\begin{equation}\label{finite}
\xymatrix{ & \bullet_2 \ar@{-}[rdd] \ar@{..}[ldd] \\   \\
    \OV_1 & &  \bullet_3 \ar@{..}[ll]&  \bullet_{n-2} \ar@{--}[l]  &   \bullet_{n-1} \ar@{-}[l] \\},
\end{equation}
where $\Gamma'_{1}$ is $A(k,l)$ or $A_k$, and the subdiagram $\{v_1,v_2,v_3\}$ is one of the diagrams in the
following table.
\end{remark} \text{ }\\
\begin{center}
\begin{tabular}{|c|c|c|}
 \multicolumn{3}{c}{Table \ref{rm261}} \\
\hline
$\xymatrix{ \bullet_2 \ar@{-}[rrd] \ar@{-}[dd] & & \\
 & & \bullet_3 \\
  \bullet_1 & & \\}$ &
$\xymatrix{ \O_2 \AW[rrd]^{-1}_{} \AW[dd]^{-2}_{-1} & & \\ & & \bullet_3 \\ \O_1 & & \\}$ & $\xymatrix{ \O_2
\AW[rrd]^{-1}_{-1} & & \\ & & \bullet_3 \\ \O_1  \AW[rru]^{-1}_{-1}
& & \\}$ \\
\hline $\xymatrix{ \OX_2 \AW[rrd]^{1}_{} \AW[dd]^{-2}_{-1} & & \\ & & \bullet_3 \\ \O_1 & & \\}$ & $\xymatrix{ \bullet_2
\ar@{-}[rrd] \AW[dd]^{}_{-2} & & \\ & & \bullet_3 \\  \OD_1  & & \\}$  &
$\xymatrix{ \OX_2 \AW[rrd]^{-1}_{-1} \AW[dd]^{2}_{2} & & \\ & & \bullet_3  \\ \OX_1 \AW[rru]^{-1}_{-1} & & \\}$ \\
\hline
\end{tabular}
\end{center} \text{ }
\vspace{.5cm}
\begin{lemma} \label{lm12.1}\relax
Let $\Gamma$ be a subfinite regular Kac-Moody cycle with $n\geq 4$ vertices and let $v_{j}$ be a vertex of $\Gamma$
connected to $v_{i}$ and $v_{k}$. Then either
\begin{equation}\label{eq1}
\begin{array}{l}
v_{j} \text{ is }\O \text{ with }a_{ji}=a_{jk}=-1 \text{; or} \\
v_{j} \text{ is }\OX \text{ with }\frac{a_{ji}}{a_{jk}}=-1. \\
\end{array}
\end{equation}
If $\Gamma$ has an even number of odd roots, then $\mathfrak{g}(A)$ is an $A(k-1,l)^{(1)}$ diagram. If $\Gamma$ has an
odd number of odd roots, then $\mathfrak{g}(A)$ is a $q(n)^{(2)}$ diagram.
\end{lemma}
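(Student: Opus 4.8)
The plan is to establish the local statement \eqref{eq1} first and then read off the global type from a symmetrizability computation. Since $\Gamma$ is regular Kac-Moody, its matrix $A$ is a generalized Cartan matrix and so satisfies Lemma~\ref{lm23}; condition~(1) would make a Heisenberg vertex isolated, which is impossible in the connected cycle $\Gamma$, and Lemma~\ref{52} forbids odd non-isotropic vertices because every vertex of a cycle has degree two and $n\geq 4$. Thus every vertex is of type $\O$ or $\OX$, and the number of odd roots equals the number of isotropic vertices. Fix $v_j$ with neighbours $v_i,v_k$; as $n\geq 4$ the three vertices span a chain with $v_j$ in the middle.

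For an isotropic $v_j$ I compute, from the odd reflection formulas, that the label joining its two neighbours after $r_j$ is $a'_{ik}=a_{ij}(a_{ji}+a_{jk})$, which vanishes exactly when $a_{ji}/a_{jk}=-1$. If the ratio were different, then $r_j(\Gamma)$ would contain the triangle on $\{v_i,v_j,v_k\}$, a $3$-vertex subdiagram of a subfinite regular Kac-Moody diagram carrying the isotropic vertex $v_j$, hence of finite type. By the $3$-vertex classification the only finite type triangles with an isotropic vertex are $C(3)$, $G(3)$ and $D(2,1,\alpha)$; the first two each carry a second isotropic vertex, and $D(2,1,\alpha)$ is excluded by our standing hypothesis. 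Thus one of $v_i,v_k$ is an isotropic vertex of degree three in $r_j(\Gamma)$, which by Corollary~\ref{54} forces a $D(2,1,\alpha)$ subdiagram, a contradiction. Hence $a_{ji}/a_{jk}=-1$, the second alternative of \eqref{eq1}.

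Now let $v_j$ be even. Reflecting at an isotropic vertex $v_m$ of ratio $-1$ creates no new edge (by the formula above), so $r_m(\Gamma)$ is again a cycle; an even neighbour $v_p$ of $v_m$ becomes odd, and a direct computation of the reflected diagonal gives $2a_{mp}(1+a_{pm})$, which is $0$ precisely when $a_{pm}=-1$ and $2$ otherwise. Since a cycle has no odd non-isotropic vertices (Lemma~\ref{52}), that label must be $-1$: every label from an even vertex to an adjacent isotropic vertex equals $-1$. When $v_j$ is adjacent to an isotropic $v_i$ with other neighbour $v_\ell$, reflecting at $v_i$ therefore makes $v_j$ isotropic, and evaluating its (now $-1$) ratio after cancelling the reflection scalars gives $a_{j\ell}=-1$ as well. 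A vertex whose two neighbours are both even is brought into this situation by propagating isotropy along its maximal arc of even vertices: each reflection at the current isotropic vertex turns the next even vertex isotropic, and because these reflections are performed at vertices not adjacent to $v_j$, the outgoing labels of $v_j$ are left unchanged until the step that finally makes a neighbour of $v_j$ isotropic. I expect the bookkeeping that the intermediate reflections preserve the relevant entries---together with the case of a cycle having a single isotropic vertex, where the ratio computation rather than the exclusion of $\B$ must be used---to be the main technical obstacle. This establishes the first alternative of \eqref{eq1}.

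Finally, \eqref{eq1} determines $A$ up to rescaling of isotropic rows, and its cyclic symmetrizability obstruction is
\begin{equation*}
\prod_{i}\frac{a_{i,i+1}}{a_{i+1,i}}=\prod_{i}\frac{a_{i,i+1}}{a_{i,i-1}}=(-1)^{\#\{\text{isotropic vertices}\}},
\end{equation*}
because each even vertex contributes $(-1)/(-1)=1$ while each isotropic vertex contributes $-1$. Hence $A$ is symmetrizable exactly when $\Gamma$ has an even number of odd roots. In that case $\Gamma$ is a symmetrizable regular Kac-Moody cycle with an isotropic root, which by comparison with the explicit Cartan matrices and the symmetrizable classification of \cite{L86,L89} is $A(k-1,l)^{(1)}$; otherwise $A$ is non-symmetrizable, and matching the explicit Cartan matrix of the twisted affinization identifies $\Gamma$ as $q(n)^{(2)}$.
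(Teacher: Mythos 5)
Your opening reductions are sound (Heisenberg vertices are excluded by condition (1) of Lemma~\ref{lm23}, odd non-isotropic vertices by Lemma~\ref{52}, and the formulas $a'_{ik}=a_{ij}(a_{ji}+a_{jk})$ and $2a_{ji}(1+a_{ij})$ for the reflected off-diagonal and diagonal entries are correct), and your symmetrizability product $(-1)^{\#\{\text{isotropic vertices}\}}$ in the last paragraph is a legitimate way to split the two global types. But the central step --- forcing the ratio of an isotropic vertex to be $-1$ --- has a genuine gap. There is no ``standing hypothesis'' in Lemma~\ref{lm12.1} excluding $D(2,1,\alpha)$ subdiagrams; that exclusion is a running convention inside the four-vertex classification section and is not available here. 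Worse, the case you discard by fiat is the \emph{generic} one: when both neighbours $v_i,v_k$ of the isotropic $v_j$ are even, your own $\B$-exclusion argument forces $a_{ij}=a_{kj}=-1$, and then the reflected triangle has the symmetric matrix with off-diagonal entries $a_{ji}$, $a_{jk}$, $-(a_{ji}+a_{jk})$, which sum to zero --- so for \emph{every} ratio $\neq -1$ the triangle is automatically a $D(2,1,\alpha)$ diagram, hence of finite type, and the subfiniteness condition on $3$-vertex subdiagrams yields no contradiction at all. Your use of Corollary~\ref{54} is also inverted: that corollary \emph{concludes} that a $D(2,1,\alpha)$ diagram containing the degree-three isotropic vertex exists, it does not produce a contradiction; and since $D(2,1,\alpha)$ also has chain-shaped diagrams $\O-\OX-\O$, in the $C(3)$/$G(3)$ branch the chain $\{v_h,v_i,v_k\}$ through the pendant neighbour can satisfy it, so again nothing breaks without explicit label computations. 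Note that a finite type triangle with one pendant vertex genuinely occurs ($D(2,2)$, $D(3,1)$, $F_4$ arise exactly this way in the paper's four-vertex lemmas), so the configuration is not locally absurd: killing it inside a cycle requires larger subdiagrams --- e.g.\ that the reflected diagram contains a proper $4$-vertex cycle, or a triangle with tails at two distinct vertices, and that neither is ever finite type --- and those facts are exactly what the paper's $4$- and $5$-vertex classifications supply.

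This is also where your route diverges from the paper's, which does not argue locally at all: it observes that all subfinite $4$- and $5$-vertex cycles already classified are of type $A(k,l)^{(1)}$ or $q(n)^{(2)}$ and satisfy \eqref{eq1}, and for $n\geq 6$ it places each vertex as the middle of a proper $5$-vertex chain subdiagram and reads \eqref{eq1} off the $5$-vertex classification of finite type chains. Your third paragraph (propagating isotropy along the cycle by reflections at non-adjacent vertices, in the spirit of Lemma~\ref{lm12.2}) and the final identification are workable, but you leave the propagation bookkeeping and the single-isotropic-vertex case explicitly unproved. In sum: the local reflection strategy could probably be completed, but only by reinstating an appeal to the $4$-/$5$-vertex classification at the triangle step, at which point it essentially collapses back into the paper's argument; as written, the proposal fails at its key step.
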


\begin{proof}
By the above classification, all subfinite regular Kac-Moody 4-vertex and 5-vertex cycles are of type $A(k,l)^{(1)}$ or $q(n)^{(2)}$, and satisfy (\ref{eq1}).  Let $\Gamma$ be a subfinite regular Kac-Moody cycle with $n\geq 6$ vertices and let $v_{j}$ be a vertex which is connected to $v_{i}$ and $v_{k}$. Since $n\geq 6$, $v_j$ is contained in a proper $5$-vertex subdiagram where $v_j$ is the middle vertex of the chain.  By the $5$-vertex classification, we see that for every finite type chain diagram with five vertices, the middle vertex satisfies (\ref{eq1}).  Hence, every vertex of $\Gamma$ satisfies (\ref{eq1}).  If $\Gamma$ has an even number of odd roots, then the corresponding matrix $A$ is symmetrizable and $\mathfrak{g}(A)$ is an $A(k,l)^{(1)}$ diagram. If $\Gamma$ has an odd number of odd roots, then $A$ is non-symmetrizable and $\mathfrak{g}(A)$ is a $q(n)^{(2)}$ diagram.
\end{proof}

\begin{lemma} \label{lm12.4}\relax
If $\Gamma$ contains a proper subdiagram $\Gamma'$ which is a cycle with $n \geq 4$ vertices then $\Gamma$ is not
subfinite regular Kac-Moody.
\end{lemma}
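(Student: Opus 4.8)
The plan is to argue by contradiction: assume $\Gamma$ is subfinite regular Kac-Moody and contains a proper cycle subdiagram $\Gamma'$ with $n\geq 4$ vertices, and split into two cases according to whether $\Gamma'$ already contains an isotropic vertex. First suppose it does. Then $\Gamma'$ is a proper connected subdiagram of $\Gamma$ containing an isotropic vertex, so subfiniteness of $\Gamma$ forces $\Gamma'$ to be of finite type. But a finite-type diagram containing an isotropic vertex is itself subfinite (its proper subdiagrams and all of its reflections are again finite type, so the defining condition holds vacuously), and Lemma~\ref{lm12.1} classifies every subfinite regular Kac-Moody cycle with $n\geq 4$ vertices as $A(k-1,l)^{(1)}$ or $q(n)^{(2)}$. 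These are affine, hence of finite growth rather than finite type, a contradiction. This disposes of the first case.

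Next suppose $\Gamma'$ contains no isotropic vertex. Here I would invoke Lemma~\ref{lm12.3} to produce a sequence of odd reflections $R$ with $R(\Gamma')=\Gamma'$ and a connected $(n+1)$-vertex subfinite regular Kac-Moody subdiagram $\Gamma''\subseteq R(\Gamma)$ that contains the cycle $\Gamma'$ together with an isotropic vertex $w$. The construction of $\Gamma''$ in Lemma~\ref{lm12.3} (which feeds a chain into Lemma~\ref{lm12.2}) attaches $w$ to the cycle through a single vertex $v_1\in\Gamma'$, so $w$ has degree one in $\Gamma''$. Since $\Gamma''$ is subfinite with at least five vertices and every cycle vertex has degree at least two, Lemma~\ref{52} rules out any odd non-isotropic vertex on the cycle; as the cycle also has no isotropic vertex by assumption, every cycle vertex is even. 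In particular $v_1$ is even, and the two-vertex subdiagram $\{v_1,w\}$ is finite type with an isotropic vertex, hence is $A(1,0)$ by the classification of two-vertex diagrams in Section~\ref{rkm23}, giving (after the standard rescaling) $a_{v_1 w}=-1$ and $a_{w v_1}=1$.

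Now I would reflect $R(\Gamma)$ at $w$. The odd-reflection formula for the diagonal entry yields $a_{v_1 v_1}'=a_{w v_1}a_{v_1 v_1}+2a_{v_1 w}a_{w v_1}=2-2=0$, so $v_1$ becomes isotropic. Because $w$ meets the cycle only at $v_1$ and $a_{w v_1}=1$, the two cycle edges incident to $v_1$ have labels $a_{v_1 j}'=a_{w v_1}a_{v_1 j}=a_{v_1 j}\neq 0$ (and $a_{j v_1}'=a_{j v_1}$), while all remaining cycle edges are untouched. Hence the reflected diagram still contains a cycle on the same $n\geq 4$ vertices, but now this cycle carries the isotropic vertex $v_1$ and sits as a proper subdiagram inside the subfinite regular Kac-Moody diagram obtained by reflecting $R(\Gamma)$ at $w$. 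This is exactly the situation of the first case, which already yields a contradiction, so the proof is complete.

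The main obstacle is controlling the interaction between the cycle and the isotropic vertex supplied by Lemma~\ref{lm12.3}: the whole reduction hinges on showing that $w$ is attached at a \emph{single} cycle vertex (so that reflecting at $w$ does not break the cycle) and that this attaching vertex is \emph{even} (so that $\{v_1,w\}$ is forced to be $A(1,0)$, the reflection turns $v_1$ isotropic, and the cycle edges survive). Once these two structural facts are secured — the first from the chain construction of Lemma~\ref{lm12.2}/\ref{lm12.3} and the second from Lemma~\ref{52} — the diagonal-entry computation is routine and the reduction to the first case is immediate.
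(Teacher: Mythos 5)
Your first case is correct and coincides with the paper's argument: a cycle containing an isotropic vertex is a proper connected subdiagram with an isotropic vertex, hence forced by subfiniteness to be of finite type, while Lemma~\ref{lm12.1} says any subfinite cycle with $n\geq 4$ vertices is $A(k,l)^{(1)}$ or $q(n)^{(2)}$, which are affine. Your use of Lemma~\ref{52} to make all cycle vertices even is also fine. The gap is in your second case, in the claim that the isotropic vertex $w$ supplied by Lemma~\ref{lm12.3} ``is attached to the cycle through a single vertex $v_1$, so $w$ has degree one in $\Gamma''$.'' Lemma~\ref{lm12.3} gives no such thing. Its minimality argument shows only that a single vertex of the connecting chain $\Gamma''\setminus\Gamma'$, namely $v_2$, is adjacent to $\Gamma'$; it does not bound the number of vertices of $\Gamma'$ adjacent to $v_2$, and minimality cannot remove such adjacencies, because the subdiagram is required to contain all of $\Gamma'$. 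Moreover, the reflections $R$ are performed at chain vertices $v_3,\dots,v_n$, none of which is adjacent to $\Gamma'$, so by the reflection formulas they only rescale the entries $a_{v_2 j}$ for $j\in\Gamma'$ by nonzero factors: $w=R(v_2)$ is adjacent to exactly the same set of cycle vertices as $v_2$ was, which may well contain two or more vertices.

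This is fatal to your reduction rather than cosmetic: your computation $a'_{v_1 j}=a_{w v_1}a_{v_1 j}$ for the cycle neighbors $j$ of $v_1$ uses $a_{jw}=a_{wj}=0$; if $w$ is also adjacent to such a $j$, the reflected entry acquires the extra terms $a_{v_1 w}a_{wj}+a_{w v_1}a_{v_1 w}$ and can vanish, so the cycle need not survive the reflection at $w$, and the reduction to your first case collapses. Nor can multiple attachments be dismissed by finding a shorter cycle through $w$: the short cycles so produced can be triangles, and a triangle with isotropic vertices can be of finite type ($D(2,1,\alpha)$), so no contradiction results that way. The paper's proof avoids this interaction entirely: after disposing of $n=4$ directly by the 5-vertex classification, it takes (for $n\geq 5$) four consecutive cycle vertices with the isotropic vertex attached to the second, explicitly allowing arbitrary further attachments (the dotted edges in its picture), and observes that no 5-vertex diagram of this shape occurs in the finite type classification; since this 5-vertex subdiagram is proper, connected and contains an isotropic vertex, subfiniteness is contradicted. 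Replacing your reflection-at-$w$ step with exactly this subdiagram argument would repair the proof.
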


\begin{proof} Suppose $\Gamma$ is a subfinite regular Kac-Moody diagram which contains a cycle with $n \geq 4$ vertices.  By the Lemma~\ref{lm12.3}, we are reduced to the case where $\Gamma=\Gamma' \cup \{v_{n+1}\}$ is subfinite regular Kac-Moody. By the above classification, a subfinite regular Kac-Moody 5-vertex diagram does not contain a 4-vertex cycle subdiagram. So now suppose $\Gamma'$ is a cycle with $n$ vertices where $n \geq 5$ and that $\Gamma=\Gamma' \cup \{v_{n+1}\}$ is subfinite regular Kac-Moody. If $\Gamma'$ contains an isotropic vertex, then by Lemma~\ref{lm12.1} the diagram $\Gamma'$ is either $A(m,n)^{(1)}$ or $q(n)^{(2)}$, which is not of finite type.

Suppose $\Gamma'$ does not contain an isotropic vertex.  Then the additional vertex $v_{n+1}$ of $\Gamma$ is isotropic. Since $\Gamma$ is connected we have the following subdiagram:
    \begin{equation*}
    \xymatrix{ \OD \AW[r]^{}_{} & \OD \AW[d]^{}_{} \AW[r]^{}_{} &
    \OD \AW[r]^{}_{} & \OD  \\
    & \ \quad \OX_{n+1}  \ar@{..}[lu] \ar@{..}[ru] \ar@{..}[rru] & & \\},
    \end{equation*}
where the double lines are necessarily connected, and the dotted lines are possibly connected. But by the 5-vertex
classification, we see that this is not a finite type subdiagram.
\end{proof}

\begin{lemma}\label{gamma}
Let $\Gamma$ be the following diagram
\begin{equation*}
\xymatrix{ \bullet_1 \ar@{-}[r]  & \bullet_2 \ar@{-}[r] & \bullet_3 \ar@{-}[r]& \bullet_4 \ar@{-}[r] & \bullet_5 \\
    & & \OV_6\!\! \AW[u]^{}_{} & & \\}
\end{equation*}
where $\Gamma$ contains an isotropic vertex and $\Gamma_6$ is a diagram for $A(k,l)$ or $A_k$.  Then $\Gamma$ is not a subfinite regular Kac-Moody diagram.
\end{lemma}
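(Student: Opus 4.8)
The plan is to show that no such $\Gamma$ can be subfinite by producing, after a short sequence of odd reflections, an isotropic vertex of degree at least three, and then invoking Corollary~\ref{54} to extract a forbidden $D(2,1,\alpha)$ subdiagram. Throughout I use that we have already reduced to diagrams containing no $D(2,1,\alpha)$ subdiagram, so that every isotropic vertex has degree at most two.

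First I would pin down the branch vertex $v_3$, which is the unique vertex of degree three (joined to $v_2$, $v_4$, and $v_6$). Since isotropic vertices have degree at most two and, by Lemma~\ref{52}, odd non-isotropic vertices of a diagram with $n\geq 4$ vertices have degree one, the vertex $v_3$ must be even. Because $\Gamma$ contains an isotropic vertex and $v_3$ is even, this isotropic vertex lies either on one of the two length-two arms $\{v_1,v_2\}$, $\{v_4,v_5\}$ inside the type-$A$ chain $\Gamma_6$, or it is the pendant vertex $v_6$.

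The heart of the argument is: if I can arrange, after odd reflections, that $v_3$ becomes isotropic while keeping a neighbour on each of its three arms, then the four-vertex subdiagram consisting of $v_3$ together with three of its neighbours displays an isotropic vertex of degree three, so by Corollary~\ref{54} it contains a $D(2,1,\alpha)$ subdiagram, contradicting our standing restriction. To make $v_3$ isotropic I would first bring an isotropic vertex adjacent to it. If one already sits at $v_2$ or $v_4$, nothing is needed; if it sits at $v_1$ (or symmetrically $v_5$), I reflect there first. Since $\Gamma_6$ is of type $A$ and type-$A$ diagrams have no double bonds, the bond $\{v_1,v_2\}$ is an $A(1,0)$, so this reflection makes $v_2$ isotropic without disturbing $v_3$ or the pendant $v_6$ (neither is adjacent to $v_1$). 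Now with an isotropic vertex adjacent to the even vertex $v_3$, and the bond between them necessarily an $A(1,0)$ (again because the restriction to $\{v_1,\dots,v_5\}$ stays type $A$ under reflection), a final reflection at that neighbour turns $v_3$ isotropic; the bonds from $v_3$ to $v_4$ and to $v_6$ are only rescaled, hence survive, so $v_3$ keeps degree at least three. This is the desired contradiction.

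The one case not covered by ``$\Gamma_6$ contains an isotropic vertex'' is when the sole isotropic vertex is the pendant $v_6$, so that $\Gamma_6$ is $A_5$ and $v_6$ is joined to the even vertex $v_3$. The two-vertex subdiagram $\{v_3,v_6\}$ is then either $A(1,0)$ or $B(1,1)$. If it is $A(1,0)$, reflecting at $v_6$ makes $v_3$ isotropic of degree three and we finish as above. The remaining possibility, a $B(1,1)$ double bond at $v_6$, is the main obstacle, since there reflecting at $v_6$ leaves $v_3$ even. I expect to dispose of it by a direct computation: reflecting the three-vertex subdiagram $\{v_6,v_3,v_4\}=\OX\Rightarrow\O-\O$ at $v_6$ forces, after renormalising the diagonal entry of $v_3$ back to $2$, a positive off-diagonal entry at the even vertex $v_3$, violating the generalized Cartan matrix conditions. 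Thus this subdiagram would not be regular Kac-Moody, contradicting that every subdiagram of $\Gamma$ is. The real delicacy to get right in the write-up is the bookkeeping of how odd reflections rescale and possibly create bonds at $v_3$ — in particular verifying that $v_3$ never loses contact with its three arms and that the reflected diagrams stay type $A$ on $\Gamma_6$ — rather than any single hard idea.
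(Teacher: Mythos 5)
There is a genuine gap, and it sits exactly where your argument declares victory. The lemma carries no hypothesis that $\Gamma$ contains no $D(2,1,\alpha)$ subdiagram, and no such ``standing restriction'' is available where the lemma is used: it is invoked inside the proof of the proposition classifying subfinite diagrams with $n\geq 6$ vertices, which quantifies over \emph{all} subfinite diagrams, including ones containing $D(2,1,\alpha)$ subdiagrams (the reduction ``we may restrict to diagrams with no $D(2,1,\alpha)$ subdiagram'' was a bookkeeping device local to the four-vertex classification, made legitimate there only because all extensions of $D(2,1,\alpha)$ had just been listed). In your $\Gamma$ the only places a $D(2,1,\alpha)$ can sit are the forks $\{v_2,v_3,v_6\}$ and $\{v_4,v_3,v_6\}$ at the branch vertex, and this is precisely the configuration your argument manufactures and then dismisses: after your reflections make $v_3$ isotropic of degree three, Corollary~\ref{54} says one of these forks \emph{is} a $D(2,1,\alpha)$ diagram --- but $D(2,1,\alpha)$ is of finite type, so this is not by itself in conflict with subfiniteness (Lemma~\ref{53} explicitly allows such forks in subfinite diagrams). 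Your proof thus terminates at the one genuinely open case instead of closing it; the same unproved restriction is also used earlier, when you infer from ``isotropic vertices have degree at most two'' that $v_3$ is even.

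The paper's proof runs the same reduction as yours --- it dispatches the case of isotropic $v_6$ by noting $\Gamma_5\not\in\mathcal{F}$, and uses odd reflections in $\Gamma_{3,6}$ to make $v_3$ isotropic --- but then closes the fork case numerically rather than by fiat: $\Gamma_1,\Gamma_5\in\mathcal{F}$ force $v_2,v_4,v_6$ to be even, and by the five-vertex classification $\Gamma_1\in\mathcal{F}$ forces the ratio of $v_3$ in $\{v_2,v_3,v_6\}$ to equal $1$, while $\Gamma_5\in\mathcal{F}$, combined with the type-$A$ relation $a_{32}/a_{34}=-1$ along the chain, forces that same ratio to equal $-1$. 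This contradiction in particular rules out the $D(2,1,\alpha)$ fork, which is the step your proposal is missing; repairing your write-up amounts to replacing the appeal to the standing restriction by this comparison of the two five-vertex subdiagrams. Your other ingredients are sound and parallel the paper's (or make explicit what it leaves implicit): the bookkeeping that reflections at chain vertices keep $\Gamma_6$ of type $A$, create no new bonds at $v_3$ (this uses the ratio $-1$ at the reflected isotropic vertex), and preserve the three arms of $v_3$; and the $B(1,1)$-pendant computation, where reflecting at $v_6$ yields, after renormalizing the diagonal entry to $2$, a positive off-diagonal entry at the resulting odd non-isotropic vertex, violating the generalized Cartan matrix conditions.
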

\begin{proof}
If $v_6$ is isotropic, then $\Gamma_5\not\in\mathcal{F}$, which is a contradiction.
We may assume that $v_3$ is isotropic, by using odd reflections in the subdiagram $\Gamma_{3,6}$.
Then $\Gamma_1,\Gamma_5\in\mathcal{F}$ implies that the vertices $v_2,v_4,v_6$ are even.  Moreover, $\Gamma_1\in\mathcal{F}$ implies that the ratio of $v_3$ in the subdiagram $\{v_2,v_3,v_6\}$ is $1$, while $\Gamma_5\in\mathcal{F}$ implies that the ratio is $-1$, which is a contradiction.
\end{proof}

\begin{proposition}
If $\Gamma$ is a subfinite regular Kac-Moody diagram with $n\geq 6$ vertices and $\Gamma$ is not a cycle, then $\Gamma$
is of the form
\begin{equation}\label{form}
\xymatrix{
\bullet_2 \ar@{-}[rrd] & & & & & &  \bullet_{n-1} \ar@{-}[lld] \\
& & \bullet_3  \ar@{-}[r] & \bullet \ar@{-}[r]  \cdots \bullet \ar@{-}[l] \ar@{-}[r]  & \bullet_{n-2} & & \\
\OV_1 \ar@{..}[uu] \ar@{..}[rru] & & & & & & \OV_{n} \ar@{..}[uu] \ar@{..}[llu] \\},
\end{equation}
where the subdiagram $\Gamma \setminus\{v_1,v_n\}$ is $A(k,l)$ or $A_k$, and the subdiagrams
$\{v_1,v_2,v_3\}$ and $\{v_n,v_{n-1},v_{n-2}\}$ are diagrams from Table~\ref{rm261}.
\end{proposition}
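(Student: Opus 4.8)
The plan is to realize $\Gamma$ as a finite-type ``chain with one corner'' from which the global spindle shape is forced. Throughout I work under the standing reduction, in force since the $D(2,1,\alpha)$ extension lemma, that $\Gamma$ contains no $D(2,1,\alpha)$ subdiagram. Then by Corollary~\ref{54} every isotropic vertex has degree at most two, by Lemma~\ref{52} every odd non-isotropic vertex (type $\B$) is a leaf, and by Lemma~\ref{lm12.4}, together with the hypothesis that $\Gamma$ is not a cycle, the diagram $\Gamma$ has no subcycle on four or more vertices. Consequently the only cycles in the underlying graph are triangles, and the only vertices of degree $\geq 3$ are of type $\O$.

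First I would locate the central chain. Since $\Gamma$ is subfinite it contains an isotropic vertex, and after a sequence of odd reflections I may assume that some connected $(n-1)$-vertex subdiagram $\Delta=\Gamma\setminus\{v\}$ still contains an isotropic vertex. Being a proper connected subdiagram of a subfinite diagram, with an isotropic vertex and $n-1\geq 5$ vertices, $\Delta$ is of finite type and hence has the form (\ref{finite}) of Remark~\ref{rm261}. Thus $\Delta$ is a chain carrying a single distinguished corner $\{w_1,w_2,w_3\}$ from Table~\ref{rm261}, and its spine $\Delta\setminus\{w_1\}$ is of type $A(k,l)$ or $A_k$.

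Next I would determine how the deleted vertex $v$ reattaches to $\Delta$. It cannot attach in the interior of the $A$-type spine: an attachment to two consecutive spine vertices would create a subcycle on four vertices, contradicting Lemma~\ref{lm12.4}, while a single interior attachment would place an even vertex of degree $\geq 3$ in the middle of a five-vertex $A$-chain, which Lemma~\ref{gamma} forbids once that five-vertex subchain has been certified $A$-type. Together with the degree constraints (isotropic vertices of degree $\leq 2$, type-$\B$ vertices leaves), this forces $v$ to attach only at the plain end of the spine, producing a second Table~\ref{rm261} corner opposite to $\{w_1,w_2,w_3\}$. Matching the two corners against the three-vertex classification and relabelling then yields exactly the form (\ref{form}), with $\Gamma\setminus\{v_1,v_n\}$ the $A$-type spine.

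The main obstacle will be the bookkeeping of odd reflections required to guarantee that each subdiagram to which I apply subfiniteness and Remark~\ref{rm261} genuinely contains an isotropic vertex, and that its isotropic vertex is positioned so that the relevant five-vertex subchain is actually of type $A$; Lemma~\ref{gamma} applies only after this certification, which must be carried out while keeping both end corners intact, using that reflections at spine vertices away from an end do not disturb that end, exactly as in the proofs of Lemmas~\ref{lm12.2} and~\ref{lm12.3}. The remaining work is the finite case-check that the two reconstructed corners are compatible with a single shared $A$-type spine.
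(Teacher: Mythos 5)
Your skeleton matches the paper's (delete a vertex, apply Remark~\ref{rm261} to the remainder, then constrain the reattachment), but there are two genuine gaps. The fatal one is your opening ``standing reduction'' that $\Gamma$ contains no $D(2,1,\alpha)$ subdiagram. That reduction was licensed in the paper only within the four-vertex section, after \emph{all} four-vertex extensions of $D(2,1,\alpha)$ had been enumerated separately; no such enumeration exists at $n\geq 6$, and the proposition you are proving must itself cover the diagrams you discard. Indeed, two of the corners in Table~\ref{rm261} -- the fork with two vertices attached to $\bullet_3$, and the triangle with two $\OX$ vertices -- \emph{are} $D(2,1,\alpha)$ subdiagrams (for the triangle one checks directly that its labels satisfy $1+a+\frac{1}{b}=1+b+\frac{1}{c}=1+c+\frac{1}{a}=0$). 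Concretely, $D(3,1)$ has an isotropic vertex of degree three lying in a triangle, and its higher-rank relatives among the subfinite $n\geq 6$ diagrams (finite and affine $D$-type, $F(4)^{(1)}$, etc.) are connected, not cycles, and squarely within the proposition's scope. Consequently the degree constraints you derive at the outset (``every isotropic vertex has degree at most two'', ``only $\O$ vertices have degree $\geq 3$'') are false for exactly the diagrams with Table~\ref{rm261} fork or triangle corners that the conclusion (\ref{form}) is designed to accommodate; Corollary~\ref{54} only locates a degree-three isotropic vertex inside a $D(2,1,\alpha)$ subdiagram, it does not forbid one.

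The second gap is in the reattachment step. Attaching $v$ to two \emph{consecutive} spine vertices creates a triangle, not a cycle on four or more vertices, so Lemma~\ref{lm12.4} is silent there; triangles are genuinely allowed (they are corners), and the actual task is to show they occur only at the ends. The paper achieves this through a step your plan omits: after disposing of the case where $v_n$ meets only the corner tip $v_1$, it observes that $\Gamma\setminus\{v_1\}$ is connected with an isotropic vertex, hence itself satisfies (\ref{finite}); being a chain with a single corner, this immediately forbids $v_n$ from touching any $v_j$ with $3<j<n-2$, single or double attachment alike. Your substitute -- Lemma~\ref{gamma} applied to a certified $A$-type five-chain -- handles only a single attachment at the exact middle of such a chain, and near the corner end the relevant five-vertex window contains the corner, so the certification you describe cannot be carried out there; attachments at $v_2$, $v_3$, or at the corner tip $v_1$ (which the paper analyzes explicitly, finding either the form (\ref{form}) or that $\Gamma$ is of type $A(k,l)$ or $A_k$) are asserted away rather than treated. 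With the complement-of-$v_1$ argument restored and the $D(2,1,\alpha)$-bearing corners readmitted, your outline could be repaired, but as written it neither covers the proposition's full scope nor correctly excludes interior attachments.
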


\begin{proof}  First note that diagrams satisfying (\ref{form}) reflect by odd reflections to diagrams that again satisfy (\ref{form}). So it suffices to prove the proposition for a diagram obtained by odd reflections from the original diagram.

Let $\Gamma$ be a subfinite regular Kac-Moody diagram with $n\geq 6$ vertices, which is not a cycle.  Choose a vertex
$v_n$ such that $\Gamma':=\Gamma \setminus \{v_n\}$ is connected and contains an isotropic vertex. Then $\Gamma'$ satisfies (\ref{finite}).  We may assume that the subdiagram $\{v_1,v_2,v_3\}$ contains an isotropic vertex, by using odd reflections in the subdiagram $\{v_4,\ldots,v_{n-1}\}$.  If $\Gamma'\setminus \{v_1\}$ does not contain an isotropic vertex, then $v_1$ is isotropic. In this case, by Table~\ref{rm261}, $\Gamma'$ is the standard diagram for $A(1,n-2)$ and an odd reflection at $v_1$ results in a diagram with $v_2$ isotropic.  So we may assume that $v_2$ or $v_3$ is isotropic.

Suppose that $v_n$ is connected only to the vertex $v_1$. If $a_{12},a_{21}=0$, then $a_{13},a_{31}\neq 0$ and $\{v_n,v_1,v_3,v_2,v_4\}\in\mathcal{F}$ implies that the subdiagram $\{v_n,v_1,v_3\}$ is $A(k,l)$ or $A_k$.  Then by Lemma~\ref{gamma}, the diagram $\{v_n,v_1,v_2,v_3,v_4,v_5\}$ is not a subfinite regular Kac-Moody diagram.  If $a_{12},a_{21}\neq 0$, then $\Gamma_{n-1}\in\mathcal{F}$ implies $a_{13},a_{31}=0$.  The condition $\{v_n,v_1,v_2,v_3,v_4\}\in\mathcal{F}$ yields two possibilities: either $\{v_n,v_1,v_2\}$ is a diagram from Table~\ref{rm261} and $\{v_1,v_2,v_3,v_4\}$ is $A(k,l)$ or $A_k$, or $\{v_4,v_3,v_2\}$ is a diagram from Table~\ref{rm261} and $\{v_n,v_1,v_2,v_3\}$ is $A(k,l)$ or $A_k$.  In the first case, $\Gamma_{n}$ is $A(k,l)$ or $A_k$, and we are done.  In the second case, $\Gamma$ is $A(k,l)$ or $A_k$, and we are done.

Now we may assume that $\Gamma\setminus \{v_1\}$ is connected, and hence satisfies (\ref{finite}).  Thus, the vertex $v_n$ is not connected to vertices $v_j$ with $3 < j < n-2$.  First suppose that $v_n$ is connected to $v_{n-2}$ or $v_{n-1}$.  Then $\Gamma_1\in\mathcal{F}$ implies that $v_n$ is not connected to $v_2$ or $v_3$ and $\Gamma_1$ satisfies (\ref{finite}).    By Lemma~\ref{lm12.4}, $v_n$ is not connected to $v_1$. Thus, $\Gamma$ satisfies (\ref{form}).
Now suppose that $v_n$ is not connected to $v_{n-2}$ or $v_{n-1}$.  If $v_n$ is connected to $v_3$ then  $\{v_n,v_1,v_2,v_3,v_4\}$ satisfies (\ref{finite}), implying $v_n$ is not connected to $v_1$ or $v_2$, $a_{13},a_{31}=0$, and the subdiagram $\{v_1,v_2,v_3\}$ is $A(k,l)$ or $A_k$.  But then by Lemma~\ref{gamma},
$\{v_n,v_1,v_2,v_3,v_4,v_5\}$ is not a subfinite regular Kac-Moody diagram.  Hence, $v_n$ is not connected to $v_3$. Finally, since subdiagram $\Gamma \setminus \{v_{n-1}\}$ satisfies (\ref{finite}) with $v_n$ connected only to $v_1$ or $v_2$ we conclude that $\Gamma$ satisfies (\ref{form}).
\end{proof}

\newpage
\subsection{Classification theorem}
\begin{theorem}\label{271}
If $A$ is a symmetrizable matrix and $\mathfrak{g}(A)$ has a simple isotropic root, then $\mathfrak{g}(A)$ is regular
Kac-Moody if and only if it has finite growth.
If $A$ is a non-symmetrizable matrix and $\mathfrak{g}(A)$ has a simple isotropic root, then $\mathfrak{g}(A)$ is regular Kac-Moody if and only if it is one of the following three classes:
\begin{center} \begin{tabular}{|c|c|}
  \hline &\\
  \bf{Algebra} & \bf{Dynkin diagrams} \\
\hline &\\
$q(n)^{(2)}$ & $\xymatrix{ & & \bullet \ar@<.5ex>[lld]^{a} \ar@<.5ex>[rrd]^{b} & & \\
\bullet \ar@<.5ex>[rru] \ar[r] & \bullet \ar[l] \ar[r] & \cdots \ar[l] \ar[r] & \bullet \ar[l] \ar[r]
& \bullet \ar[l] \ar@<.5ex>[llu] \\}$ \text{  } $\begin{array}{c} \text{There are n $\bullet$.} \\
\text{Each $\bullet$ is either $\bigcirc$ or $\bigotimes$.} \\ \text{An odd number of them are
$\bigotimes$.} \\
\text{If $\bullet$ is $\bigcirc$,
 then $a=b=-1$.} \\
\text{If $\bullet$ is $\bigotimes$, then $\frac{a}{b}=-1$.} \\
 \end{array}$
\\
\hline $\begin{array}{c} \\ \\ \\ S(1,2,\alpha) \end{array}$ &
$\xymatrix{& \bigotimes \ar@<.5ex>[rdd]^{-1+\alpha} \ar@<.5ex>[ldd]^{1} & \\
& & \\ \bigotimes \ar@<.5ex>[ruu]^{1} \ar@<.5ex>[rr]^{-1-\alpha} & &
\bigcirc \ar@<.5ex>[ll]^{-1} \ar@<.5ex>[luu]^{-1}}$ \text{  } $\begin{array}{c} \\ \\ \\ \alpha \neq 0, \text{  }\frac{1}{\alpha} \in \mathbb{C} \setminus \mathbb{Z} \end{array}$\\
 \hline &\\
& $\xymatrix{& \bigotimes \ar@<.5ex>[rdd]^{b} \ar@<.5ex>[ldd]^{1} & \\
& & \\ \bigotimes \ar@<.5ex>[ruu]^{c} \ar@<.5ex>[rr]^{1} & & \bigotimes \ar@<.5ex>[ll]^{a} \ar@<.5ex>[luu]^{1}}$ \text{
}
$\xymatrix{& \bigotimes \ar@<.5ex>[rdd]^{b} \ar@<.5ex>[ldd]^{1} & \\
& & \\ \bigcirc \ar@<.5ex>[ruu]^{-1} \ar@<.5ex>[rr]^{1+b+\frac{1}{c}} & &
\bigcirc \ar@<.5ex>[ll]^{1+a+\frac{1}{b}} \ar@<.5ex>[luu]^{-1}}$  \\
$Q^{\pm}(m,n,t)$ & $\xymatrix{& \bigotimes \ar@<.5ex>[rdd]^{c} \ar@<.5ex>[ldd]^{1} & \\
& & \\ \bigcirc \ar@<.5ex>[ruu]^{-1} \ar@<.5ex>[rr]^{1+c+\frac{1}{a}} & & \bigcirc \ar@<.5ex>[ll]^{1+b+\frac{1}{c}}
\ar@<.5ex>[luu]^{-1}}$ \text{  }
$\xymatrix{& \bigotimes \ar@<.5ex>[rdd]^{a} \ar@<.5ex>[ldd]^{1} & \\
& & \\ \bigcirc \ar@<.5ex>[ruu]^{-1} \ar@<.5ex>[rr]^{1+a+\frac{1}{b}} & &
\bigcirc \ar@<.5ex>[ll]^{1+c+\frac{1}{a}} \ar@<.5ex>[luu]^{-1}}$ \\
& $\begin{array}{l}
1+a+\frac{1}{b} = m \\
1+b+\frac{1}{c} = n \\
1+c+\frac{1}{a} = t \\
\end{array}$
\hspace{.5cm} $\begin{array}{l}
 m,n,t \in \mathbb{Z}_{\leq -1} \text{ and}\\
 \text{not all equal to }-1, \\
 a,b,c \in \mathbb{R}\backslash\mathbb{Q}. \\
 \end{array}$ \\
 &\\
\hline
\end{tabular}
\end{center}
\end{theorem}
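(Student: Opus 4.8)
The plan is to combine the classification of subfinite diagrams carried out in this section with the subfiniteness result of Section~\ref{s35}, and then to read off which entries of the resulting list are symmetrizable and which are not. First I would invoke Section~\ref{s35}: every connected regular Kac-Moody diagram containing an isotropic vertex is subfinite. Since $\mathfrak{g}(A)$ has a simple isotropic root, its diagram $\Gamma_A$ is therefore subfinite, so the lemmas of this section — the $2$-, $3$-, $4$-, and $5$-vertex classifications together with the Proposition describing the $n\geq 6$ case — constitute a complete list of the diagrams that can occur. In the $n\geq 6$ case the Proposition reduces $\Gamma_A$ either to a cycle or to a chain carrying two short branches; by Lemma~\ref{lm12.1} the cycles are exactly $A(k,l)^{(1)}$ and $q(n)^{(2)}$, while the branched chains are the remaining affine diagrams, all of which have finite growth. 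The non-extendability statement of Theorem~\ref{thm2} guarantees that no infinite family other than these arises.

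Next I would split this list according to symmetrizability. Every tree diagram — in particular every finite-dimensional diagram and every affine diagram of non-cyclic shape — is automatically symmetrizable, whereas for a cycle symmetrizability is governed by comparing the product of the arrow labels taken around the loop in the two directions. Applying this criterion, the only non-symmetrizable diagrams produced by the classification are the cyclic family $q(n)^{(2)}$ (the cycles carrying an odd number of odd vertices, recorded in Lemma~\ref{lm12.1}) together with the two three-vertex triangles $S(1,2,\alpha)$ with $\alpha\in\mathbb{C}\setminus\mathbb{Z}$ and $Q^{\pm}(m,n,t)$ with $m,n,t\in\mathbb{Z}_{\leq -1}$. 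Each of these three families was already shown to be regular Kac-Moody during its construction, since the explicit reflection computations in the $3$-vertex lemmas and in Lemma~\ref{lm12.1} verify that every matrix obtained by a sequence of odd reflections remains a generalized Cartan matrix; in particular the $3$-vertex analysis established that $S(1,2,\alpha)$ is regular Kac-Moody precisely when $\alpha\notin\mathbb{Z}$.

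For the symmetrizable half of the theorem I would observe that the symmetrizable entries of the list are precisely the diagrams of the finite-dimensional and affine Kac-Moody superalgebras. On one hand, each such diagram is regular Kac-Moody, as noted at the start of this section; on the other hand, by the classification of symmetrizable finite-growth contragredient Lie superalgebras in \cite{L86,L89} these are exactly the symmetrizable finite-growth superalgebras possessing a simple isotropic root. Matching the two lists term by term yields the equivalence ``regular Kac-Moody $\iff$ finite growth'' in the symmetrizable case; the forward implication also uses that the only infinite-growth member of the entire classification, namely $Q^{\pm}(m,n,t)$, is non-symmetrizable, so that a symmetrizable regular Kac-Moody superalgebra with an isotropic root necessarily has finite growth.

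The part I expect to require the most care is not any single deduction but the bookkeeping that guarantees completeness and sharpness: one must be sure that the inductive extension process of this section, once combined with Theorem~\ref{thm2}, leaves no room for further non-symmetrizable families beyond the three listed, and that the stated parameter ranges coincide exactly with the values for which the defining matrix remains a generalized Cartan matrix under \emph{all} odd reflections. Verifying that these regularity ranges are sharp — which is where the explicit reflection calculations of the $3$-vertex lemmas are indispensable — is the real content underlying the clean statement of the theorem.
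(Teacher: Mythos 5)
Your proposal is correct and follows essentially the same route as the paper, whose proof of Theorem~\ref{271} is precisely the assembly you describe: the subfinite classification of Section~\ref{s1}, Theorem~\ref{thm2} from Section~\ref{s35} to guarantee subfiniteness and non-extendability, the non-symmetrizability and infinite growth of $Q^{\pm}(m,n,t)$ from Section~\ref{s2}, and the comparison with the van de Leur classification \cite{L86,L89} in the symmetrizable case. The only point you gloss is that your loop-product criterion for the triangle $Q^{\pm}(m,n,t)$ reduces to showing $abc\neq 1$, which is not automatic from irrationality of $a,b,c$ alone and is exactly the content of the paper's Lemma~\ref{Qnotsymm}.
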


\begin{remark}
This theorem follows from the classification of connected subfinite regular Kac-Moody diagrams in Section~\ref{s1} and Theorem~\ref{thm2}.
The fact that $Q^{\pm}(m,n,t)$ is not symmetrizable and does not have finite growth will be proven in Section~\ref{s2}.  Note that $Q^{\pm}(m,n,t) \cong Q^{\pm}(n,t,m)$ as algebras (see Remark~\ref{remisom}).
\end{remark}

\section{The Lie superalgebra $Q^{\pm}(m,n,t)$}\label{s2}
\begin{equation*}
\begin{tabular}{|ccc|}
\hline \hspace{.2cm} $Q^{\pm}(m,n,t)$ & $\begin{array}{l}\xymatrix{& \OX \AW[ldd]^{1}_{c} \AW[rdd]^{b}_{1} & \\ & & \\
\OX \AW[rr]^{1}_{a} & & \OX }
\end{array}$ &
$\begin{array}{l}
1+a+\frac{1}{b} = m \\
1+b+\frac{1}{c} = n \\
1+c+\frac{1}{a} = t \\ \\
m,n,t \in \mathbb{Z}_{\leq -1} \text{, not all equal to -1.} \end{array}$ \\
\hline
\end{tabular}\end{equation*}

In this section, we describe the parameters of the defining matrices for $Q^{\pm}(m,n,t)$, and then we show that
$Q^{\pm}(m,n,t)$ is not symmetrizable and does not have finite growth.

\begin{lemma}\label{lm333}
For each diagram $Q^{\pm}(m,n,t)$, it follows that $a,b,c \in \mathbb{R}\setminus\mathbb{Q}$, and there are two solutions of the above equations, namely $Q^{-}(m,n,t)$ with $a,b,c<-1$ and
$Q^{+}(m,n,t)$ with $-1<a,b,c<0$.
\end{lemma}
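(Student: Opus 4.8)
The plan is to eliminate $a$ and $c$ and reduce the cyclic system to a single quadratic in $b$, then read off everything from that quadratic. Writing $M=m-1$, $N=n-1$, $T=t-1$, all in $\mathbb{Z}_{\le -2}$, the equations become $a=M-\tfrac1b$, $b=N-\tfrac1c$, $c=T-\tfrac1a$. Substituting successively, a solution is exactly a fixed point of the composition of the three M\"obius maps $x\mapsto M-\tfrac1x$, $x\mapsto T-\tfrac1x$, $x\mapsto N-\tfrac1x$, whose $2\times2$ matrix is $P=B_NB_TB_M$ with $B_X=\bigl(\begin{smallmatrix}X&-1\\1&0\end{smallmatrix}\bigr)$. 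Since $\det B_X=1$ we have $\det P=1$, and a direct multiplication gives $\operatorname{tr}P=MNT-M-N-T$ together with the fixed-point quadratic $f(b)=(TM-1)b^2-(MNT+T-M-N)b+(NT-1)=0$.

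First I would settle reality, irrationality, and the count of solutions at once. The fixed-point quadratic of a determinant-$1$ matrix has discriminant $(\operatorname{tr}P)^2-4\det P=(\operatorname{tr}P)^2-4$. Writing $M=-p$, $N=-q$, $T=-r$ with $p,q,r\ge 2$, one gets $|\operatorname{tr}P|=pqr-p-q-r$, which is strictly increasing in each variable, equals $2$ exactly at $p=q=r=2$ (the excluded case $m=n=t=-1$), and is at least $5$ otherwise. Hence the discriminant is strictly positive, giving two distinct real roots $b$; the accompanying $a,c$ are finite and nonzero by the sign analysis below, so we obtain two genuine solutions. Moreover $(\operatorname{tr}P)^2-4$ is a perfect square only when two integer squares differ by $4$, i.e.\ only when $|\operatorname{tr}P|=2$; since $|\operatorname{tr}P|\ge 5$ here, the discriminant is not a perfect square and the integer-coefficient $f$ has irrational roots. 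As $a=M-\tfrac1b$ and $c=T-\tfrac1a$, irrationality of $b$ forces $a,b,c\in\mathbb{R}\setminus\mathbb{Q}$.

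Next I would pin down the signs to separate the two solutions into $Q^-$ and $Q^+$. A short cyclic argument shows every solution has $a,b,c<0$: if, say, $b>0$ then $a=M-\tfrac1b<M\le -2$, whence $c=T-\tfrac1a\in(T,T+1)\subset(-\infty,-1)$ and then $b=N-\tfrac1c\in(N,N+1)\subset(-\infty,-1)$, a contradiction. The same reciprocal estimates give the implications $a<-1\Rightarrow c<-1\Rightarrow b<-1\Rightarrow a<-1$, so within a single solution either all of $a,b,c$ lie in $(-1,0)$ or all lie in $(-\infty,-1)$, the value $-1$ being excluded by irrationality. Finally I would localize the two $b$-roots: the leading coefficient $TM-1$ and the constant term $NT-1$ are both positive, so the product of the roots is positive and $f(0)>0$, while a direct computation gives $f(-1)=mnt-mn-m-n$, which is strictly negative precisely because $m,n,t$ are not all $-1$. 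Thus $-1$ lies strictly between the two roots while $0$ does not, so one root lies in $(-1,0)$ and the other in $(-\infty,-1)$; by the regime dichotomy these give exactly $Q^+(m,n,t)$ with $-1<a,b,c<0$ and $Q^-(m,n,t)$ with $a,b,c<-1$.

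The main obstacle is the irrationality statement together with reproducing the excluded case on the nose. Both hinge on the boundary point $p=q=r=2$: it is exactly where $(\operatorname{tr}P)^2-4$ becomes a perfect square ($|\operatorname{tr}P|=2$, yielding the rational double root $b=-1$) and exactly where the strict inequality $f(-1)<0$ degenerates to $f(-1)=0$. Making these two degenerations coincide with the hypothesis ``not all equal to $-1$'' is the crux; the remaining sign bookkeeping is routine once the cyclic reciprocal estimates are in place.
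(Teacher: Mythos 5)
Your proof is correct and takes essentially the same route as the paper's: both eliminate two variables to reduce the cyclic system to a single integer-coefficient quadratic (yours in $b$, the paper's in $a$) with discriminant $(MNT-M-N-T)^2-4$, prove positivity by bounding $|MNT-M-N-T|$ away from $2$ when $m,n,t$ are not all $-1$, deduce irrationality from the fact that $k^2-y^2=4$ forces $k=\pm2$, $y=0$, and then separate the two solutions via $f(0)>0$ and $f(-1)<0$ combined with the reciprocal estimates that propagate the regime $(-1,0)$ or $(-\infty,-1)$ around the cycle. Your M\"obius-matrix packaging, which produces the trace--determinant form of the discriminant without direct expansion, is a tidy presentational variant, but the underlying computation and logic coincide with the paper's.
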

\begin{proof}
First, we show that a solution exists.  To simplify the calculations we let
\begin{equation*} M=1-m, \hspace{1cm} N=1-n, \hspace{1cm} T=1-t. \end{equation*}
Then $M,N,T \in \mathbb{Z}_{\geq 2}$.  Solving Equation 1 for $b$ and Equation 3 for $c$, and then substituting into
Equation 2 yields
\begin{equation*}
-N = \frac{-1}{a+M} + \frac{-a}{aT+1}.
\end{equation*}
By clearing denominators and regrouping, we have $f(a)=0$ where
\begin{equation*}
f(a)=(NT-1)a^{2}+(MNT-M+N-T)a+(MN-1).
\end{equation*}

Since $M,N,T\geq 2$, one has $NT-1>0$.  The discriminant of $f(a)$ is $D =(MNT-M-N-T)^{2} - 4$.  Now since $M-1,N-1,T-1 \in\mathbb{Z}_{\geq 1}$ and not all equal to 1, we have
\begin{align*}
3 &\leq (M-1)(N-1)(T-1) +1\\
&= MNT-(N-1)M-(T-1)N-(M-1)T \\
&< MNT-M-N-T,
 \end{align*}
which implies $D>0$.  Hence, $f(a)$ has two real roots. Moreover, these roots are not rational, since by taking $k=MNT-M-N-T$ and $y^{2}=D$, we obtain the equation $y^{2}=k^{2}-4$, which has only two solutions with integral $k$ and rational $y$, namely $k=\pm 2$, $y=0$.  Since $D\neq 0$, we conclude that $y$ is not rational.  Hence, $a\in\mathbb{R}\setminus\mathbb{Q}$, and it follows from the defining equations that $b,c\in\mathbb{R}\setminus\mathbb{Q}$.

Let $a_1 > a_2$ be the roots of the quadratic equation $f(a) = 0$. Since $NT-1,MN-1 >0$, the function $f(a)$ is concave up with $f(0)>0$.  We can express $f(-1)$ as
$$f(-1)= - (M-1)(N-1)(T-1)-(M-2)(T-2)+1,$$ where it is easy to see that $f(-1)<0$ for $M,N,T\in\mathbb{Z}_{\geq 2}$, not all equal to $2$.  Hence, $$a_2 < -1 < a_1 < 0.$$
Denote by $b_1 , b_2, c_1 , c_2$ the corresponding values of $b,c$.  From $a_i + b_i^{-1} \leq -2$ and similar formulas we obtain
$$-1 < a_1 < 0 \ \Longrightarrow\ -1 < b_1 < 0 \ \Longrightarrow\ -1 < c_1 < 0$$
and
$$a_2 < -1 \ \Longrightarrow\ c_2 < -1 \ \Longrightarrow\ b_2 < -1.$$
\end{proof}

\begin{corollary}\label{cor3}
The determinant of the Cartan matrix equals $1 + abc$ and is nonzero. Hence, the dimension of the Cartan subalgebra is
$3$.
\end{corollary}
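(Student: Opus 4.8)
The plan is to read the Cartan matrix directly off the $Q^{\pm}(m,n,t)$ diagram, compute its $3\times 3$ determinant by hand, and then rule out vanishing using the sign information already extracted in Lemma~\ref{lm333}. With the vertices numbered $v_2$ (top), $v_1$ (bottom left), $v_3$ (bottom right) as in the diagram, and recalling that all three vertices are isotropic so that $a_{ii}=0$, the labels give
\begin{equation*}
A=\begin{pmatrix} 0 & c & 1 \\ 1 & 0 & b \\ a & 1 & 0 \end{pmatrix}.
\end{equation*}
A cofactor expansion along the first row yields $\det A = -c(0-ab)+1(1-0)=1+abc$, which is the asserted value.

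Next I would verify that $\det A\neq 0$, which amounts to $abc\neq -1$. Here I invoke Lemma~\ref{lm333}: the parameters are real and fall into exactly one of two regimes, namely $Q^{-}(m,n,t)$ with $a,b,c<-1$ and $Q^{+}(m,n,t)$ with $-1<a,b,c<0$. In the first regime each factor is negative with absolute value exceeding $1$, so $abc<-1$ and hence $1+abc<0$. In the second regime each factor is negative with absolute value less than $1$, so $-1<abc<0$ and hence $0<1+abc<1$. In either case $1+abc\neq 0$, so $\det A\neq 0$.

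Finally, since $A$ is a $3\times 3$ matrix with $\det A\neq 0$, its rank is $3$. Recalling from the construction of $\mathfrak{g}(A)$ that the Cartan subalgebra of a contragredient Lie superalgebra defined by an $n\times n$ matrix has dimension $2n-\operatorname{rk}(A)$, and substituting $n=3$, I obtain $\dim\mathfrak{h}=2\cdot 3-3=3$, as claimed.

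The determinant computation is entirely routine; the only point with genuine content is the nonvanishing of $1+abc$, i.e. excluding the special value $abc=-1$ (which the irrationality of $a,b,c$ alone would \emph{not} forbid, since a product of irrationals can be rational). The crux is therefore that Lemma~\ref{lm333} pins down not merely irrationality but the precise sign and magnitude regime of $a,b,c$, which makes that lemma the essential input and the sign analysis above the heart of the argument.
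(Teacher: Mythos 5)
Your proof is correct and is exactly the argument the paper intends: the corollary is stated without proof, and it follows just as you say from the direct $3\times 3$ determinant computation (your row ordering differs from the matrix displayed in Section 4, but gives the same value $1+abc$) together with the sign regimes $a,b,c<-1$ or $-1<a,b,c<0$ established in Lemma~\ref{lm333}, which force $abc<-1$ or $-1<abc<0$ respectively. Your closing observation is also apt: irrationality of $a,b,c$ alone would not exclude $abc=-1$, so the magnitude information from Lemma~\ref{lm333} is the essential input.
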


\begin{remark}\label{remisom}
It is clear that $Q^{\pm}(m,n,t) \cong Q^{\pm}(n,t,m)$ as algebras by cyclic permutation of the variables $a,b,c$.  We also have
$Q^{\pm}(m,n,t) \cong Q^{\mp}(m,t,n)$ by transforming the equations: $a \rightarrow \frac{1}{b}$, $b \rightarrow
\frac{1}{a}$, $c \rightarrow \frac{1}{c}$.
\end{remark}

\begin{lemma}\label{Qnotsymm} $Q^{\pm}(m,n,t)$ is not symmetrizable. \end{lemma}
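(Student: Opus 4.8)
The plan is to argue directly from the defining Cartan matrix of $Q^{\pm}(m,n,t)$, using the characterization of symmetrizability as the existence of an invertible diagonal matrix $D$ with $DA$ symmetric. Reading the labels off the diagram for $Q^{\pm}(m,n,t)$, with $v_2$ the top vertex and $v_1,v_3$ the bottom-left and bottom-right vertices (so that the label of the arrow $v_i\to v_j$ is $a_{ij}$), the Cartan matrix is
\begin{equation*}
A=\begin{pmatrix} 0 & c & 1 \\ 1 & 0 & b \\ a & 1 & 0 \end{pmatrix}.
\end{equation*}
Since all three diagonal entries vanish, symmetrizability will be controlled entirely by the three off-diagonal symmetry conditions, and these will force a single multiplicative relation among $a,b,c$.

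Concretely, I would suppose for contradiction that $A$ is symmetrizable, so that $DA$ is symmetric for some invertible $D=\operatorname{diag}(d_1,d_2,d_3)$. The condition $d_i a_{ij}=d_j a_{ji}$ applied to the pairs $(1,2)$, $(2,3)$, and $(1,3)$ yields $d_2=cd_1$, $d_3=bd_2$, and $d_1=ad_3$, respectively. Substituting the first two relations into the third gives $d_1=abc\,d_1$, and since $d_1\neq 0$ this is exactly the cycle condition $abc=1$.

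It then remains only to contradict $abc=1$, which is where the earlier analysis enters: by Lemma~\ref{lm333}, in both the $Q^{-}$ and $Q^{+}$ cases the parameters $a,b,c$ are all negative (either all $<-1$, or all in $(-1,0)$), so $abc<0$ and in particular $abc\neq 1$. This contradiction shows that $A$ is not symmetrizable. I expect the only real content to be the translation of symmetrizability into the relation $abc=1$; note in particular that Corollary~\ref{cor3} alone does not suffice, since it only records $abc\neq -1$, so the sign information from Lemma~\ref{lm333} is genuinely needed to rule out $abc=1$.
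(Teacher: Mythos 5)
Your proof is correct, and it diverges from the paper's argument at the step that turns $abc=1$ into a contradiction. Both proofs begin the same way: the paper also asserts (without writing out the diagonal-matrix computation you supply) that symmetrizability forces the cycle relation $abc=1$. But the paper then substitutes $c=\frac{1}{ab}$ into the three defining equations, solves the first for $b=\frac{1}{m-a-1}$, substitutes into the second, and obtains $a=\frac{mn-m-n}{n}\in\mathbb{Q}$, contradicting the irrationality of $a$ established in Lemma~\ref{lm333}. You instead invoke the other half of Lemma~\ref{lm333} --- that in both solutions $a,b,c$ are all negative (either all $<-1$ or all in $(-1,0)$) --- so $abc<0$ and the relation $abc=1$ is impossible outright. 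Your route is shorter and avoids all manipulation of the defining equations; it also uses weaker input, since the sign and interval information is easier to extract from the quadratic analysis in Lemma~\ref{lm333} than the irrationality statement. Your observation that Corollary~\ref{cor3} alone (which only gives $abc\neq-1$) would not suffice is accurate and shows you identified exactly which piece of the earlier analysis is needed. The only cosmetic mismatch is a relabeling of vertices: your matrix is a permuted form of the Cartan matrix displayed in the paper's section on integrable modules, but the cycle product $abc$, and hence the symmetrizability condition, is unchanged.
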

\begin{proof}
Suppose that we have a symmetrizable solution $a,b,c$.  We show that this implies $a \in \mathbb{Q}$, which is a
contradiction.  If the matrix is symmetrizable then $abc = 1$.  So, we substitute $c = \frac{1}{ab}$ into the defining
equations.  This yields
\begin{align*}
1+a+\frac{1}{b} = m \\
1+(a+1)b = n \\
1+\frac{1}{ab}+\frac{1}{a} = t\\
\end{align*}
From the first equation we have $b=\frac{1}{m-a-1}$.  Substituting this into the second equation and solving for $a$,
we have $a=\frac{mn-m-n}{n} \in \mathbb{Q}$.  Hence, there is no symmetrizable solution.\\
\end{proof}

\begin{lemma}\label{Qnotfg} $Q^{\pm}(m,n,t)$ does not have finite growth. \end{lemma}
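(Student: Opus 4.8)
The plan is to contradict Lemma~\ref{lemma B}: if $\mathfrak{g}(A)=Q^{\pm}(m,n,t)$ had finite growth, then for any finite subset $S\subset\Pi_0$ the associated matrix $B=(\beta_j(h_{\beta_i}))_{\beta_i,\beta_j\in S}$ would be a generalized Cartan matrix of a finite-growth Kac--Moody algebra, hence of finite or affine type. I will exhibit a three-element set $S\subset\Pi_0$ whose matrix $B$ has $\det B<0$; since an indecomposable generalized Cartan matrix of finite type has positive determinant and one of affine type has zero determinant, $B$ must then be of indefinite type, so $\mathfrak{g}(B)$ has infinite growth, giving the contradiction.

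First I would fix the defining data. Writing the three isotropic simple roots of the cycle as $\alpha_1,\alpha_2,\alpha_3$, the Cartan matrix is
\[
A=\begin{pmatrix} 0 & c & 1\\ 1 & 0 & b\\ a & 1 & 0\end{pmatrix},
\]
so that $\alpha_i(h_j)=a_{ji}$ and the defining relations are $1+a+\frac{1}{b}=m$, $1+b+\frac{1}{c}=n$, $1+c+\frac{1}{a}=t$. The even roots $\beta_1=\alpha_1+\alpha_2$, $\beta_2=\alpha_2+\alpha_3$, $\beta_3=\alpha_3+\alpha_1$ occur as simple roots after the single odd reflections $r_1,r_2,r_3$, so $\{\beta_1,\beta_2,\beta_3\}\subset\Pi_0$. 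Applying the displayed odd-reflection formula for $h_i'$ (with $p(\alpha_i)=1$) and rescaling so that $\beta_i(h_{\beta_i})=2$, their coroots are $h_{\beta_1}=\frac{1}{c}h_1+h_2$, $h_{\beta_2}=\frac{1}{b}h_2+h_3$, $h_{\beta_3}=h_1+\frac{1}{a}h_3$; the rescalings are legitimate because $a,b,c\neq 0$ by Lemma~\ref{lm333}.

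The heart of the argument is the evaluation of $b_{ij}=\beta_j(h_{\beta_i})$. A direct computation using $\alpha_i(h_j)=a_{ji}$ together with the defining equations should yield
\[
B=\begin{pmatrix} 2 & n & n\\ m & 2 & m\\ t & t & 2\end{pmatrix},
\qquad
\det B = 2\bigl(4-(mn+nt+tm)+mnt\bigr).
\]
Since $m,n,t\in\mathbb{Z}_{\leq -1}$ are not all equal to $-1$, the cyclic symmetry of Remark~\ref{remisom} lets me assume $t\leq -2$; then $mn\geq 1$, $nt\geq 2$, $tm\geq 2$ and $mnt\leq -2$, so $4-(mn+nt+tm)+mnt\leq -3<0$ and hence $\det B<0$. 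As $B$ is indecomposable (all off-diagonal entries $m,n,t$ are nonzero integers), the negative determinant excludes both finite and affine type, so $B$ is of indefinite type and $\mathfrak{g}(B)$ has infinite growth, contradicting Lemma~\ref{lemma B}. The main obstacle is purely computational: correctly tracking the signs in the coroots $h_{\beta_i}$ and verifying the nine entries of $B$; once $B$ is in hand, the determinant estimate and the appeal to the finite/affine trichotomy are routine. (As a consistency check, the excluded value $m=n=t=-1$ gives the affine matrix of $A_2^{(1)}$ with $\det B=0$, matching the fact that $q(3)^{(2)}$ does have finite growth.)
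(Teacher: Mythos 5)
Your proposal is correct and takes essentially the same route as the paper: both apply Lemma~\ref{lemma B} to the set $\Pi_{0}=\{\alpha_1+\alpha_2,\alpha_2+\alpha_3,\alpha_3+\alpha_1\}$ of principal roots and compute the same matrix $B$ (yours agrees with the paper's up to a relabeling of the vertices, and your coroot and entry computations check out). The only difference is the final step: the paper observes directly that a $3$-cycle generalized Cartan matrix with off-diagonal entries negative integers not all $-1$ cannot belong to a finite-growth Kac--Moody algebra, whereas you exclude finite and affine type via $\det B=2\bigl(4-(mn+nt+tm)+mnt\bigr)<0$; both conclusions are valid.
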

\begin{proof}
The set of principal roots of $Q^{\pm}(m,n,t)$ is $\Pi_{0}=\{\alpha_1+\alpha_2,\alpha_2+\alpha_3,\alpha_3+\alpha_1\}$.
The Cartan matrix $B$ of the subalgebra of $Q^{\pm}(m,n,t)$ generated by $\Pi_{0}$ is
\begin{equation*}
B = \left( \begin{array}{ccc}
2 & m & m  \\
n & 2 & n \\
t & t & 2 \\
\end{array} \right).
\end{equation*}
All off diagonal entries of $B$ are negative integers.  Since they are not all equal to $-1$, this is not a Cartan matrix
of a finite-growth Kac-Moody algebra.  By Lemma~\ref{lemma B}, $Q^{\pm}(m,n,t)$ does not have finite growth.
\end{proof}

A Kac-Moody superalgebra is called {\em hyperbolic} if after the removal of any simple root the superalgebra is either of
finite or affine type.

\begin{lemma}
The regular Kac-Moody superalgebra $Q^{\pm}(m,n,t)$ is hyperbolic for the following $m \geq n \geq t$.
\begin{center}
\begin{tabular}{|c|c|c|}
\hline
m & n & t \\
\hline
-1 & -1 & -2 \\
-1 & -1 & -3 \\
-1 & -1 & -4 \\
-1 & -2 & -2 \\
-2 & -2 & -2 \\
\hline
\end{tabular}
\end{center}
\end{lemma}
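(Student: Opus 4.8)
The plan is to reduce hyperbolicity to a rank-two computation and then enumerate. First recall that, by Lemma~\ref{Qnotfg}, $Q^{\pm}(m,n,t)$ has infinite growth whenever $m,n,t$ are not all equal to $-1$; in particular it is itself of indefinite type, so ``hyperbolic'' is a meaningful (non-vacuous) assertion for all five triples. It then remains to verify that deleting any one simple root — from any base in the reflection class $\mathcal{C}(\Gamma)$ — yields a diagram of finite or affine type.

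Next I would catalogue the bases and their deletions. By the odd-reflection computations recorded in Theorem~\ref{271}, the diagrams of $Q^{\pm}(m,n,t)$ are the all-isotropic triangle together with three ``mixed'' triangles, each having one $\OX$ vertex and two $\O$ vertices. Removing a vertex from any of these leaves a connected $2$-vertex diagram. Whenever the surviving pair still contains an isotropic vertex — which happens for every deletion from the all-$\OX$ triangle and for every deletion of a $\O$ vertex from a mixed triangle — the $2$-vertex classification forces it (up to odd reflection) to be $A(1,0)$ or $B(1,1)$, hence of finite type. The only deletions that can possibly produce an indefinite diagram are those removing the isotropic vertex from a mixed triangle; reading off the three mixed diagrams in Theorem~\ref{271}, these leave exactly the $\O$–$\O$ subdiagrams whose pairs of labels are $(m,n)$, $(n,t)$ and $(t,m)$.

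The heart of the argument is then the standard rank-two dichotomy: an ordinary $\O$–$\O$ diagram whose two off-diagonal labels have product $P$ (here $P=mn,\,nt,\,tm$, each positive since $m,n,t<0$) is of finite type exactly when $P<4$, of affine type when $P=4$, and of indefinite type when $P>4$. Combining this with the previous paragraph, $Q^{\pm}(m,n,t)$ is hyperbolic if and only if $mn\leq 4$, $nt\leq 4$ and $tm\leq 4$.

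Finally I would enumerate under the normalization $m\geq n\geq t$, i.e. $1\leq|m|\leq|n|\leq|t|$, so that the largest product is $|n|\,|t|=nt$ and the binding constraint is $nt\leq 4$. If $|n|=1$ then $|m|=1$ and $|t|\in\{2,3,4\}$ (the value $|t|=1$ being excluded by ``not all equal to $-1$''), giving $(-1,-1,-2)$, $(-1,-1,-3)$, $(-1,-1,-4)$; if $|n|=2$ then $2|t|\leq 4$ forces $|t|=2$ and $|m|\in\{1,2\}$, giving $(-1,-2,-2)$ and $(-2,-2,-2)$; and $|n|\geq 3$ forces $nt\geq 9$, which is excluded. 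This yields precisely the five triples in the table. The one step that needs care is the base bookkeeping of the second paragraph — checking that the only potentially indefinite rank-two deletions are the three $\O$–$\O$ subdiagrams and matching their label products correctly to $mn$, $nt$, $tm$; once that is secured, the rest is the rank-two finite/affine criterion together with a short enumeration.
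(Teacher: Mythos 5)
Your proposal is correct, and in fact it fills a genuine gap: the paper states this lemma with no proof at all, so there is nothing to compare against on the paper's side. Your reduction is exactly the natural one and all the details check out. The reflection class of $Q^{\pm}(m,n,t)$ really does consist of the four triangles displayed in Theorem~\ref{271} (the all-isotropic base plus the three mixed bases, each obtained by one odd reflection, and reflecting a mixed base at its unique isotropic vertex returns the all-isotropic one), so your case analysis of deletions is exhaustive. Your bookkeeping of the even--even edges is right: the $\bigcirc$--$\bigcirc$ subdiagrams of the three mixed bases carry the label pairs $(m,n)$, $(n,t)$, $(t,m)$, and since each mixed base has $a_{ii}=2$ at both even vertices with negative integer off-diagonal entries, the standard rank-two trichotomy (finite for $mn\leq 3$, affine for $mn=4$, indefinite for $mn\geq 5$) applies verbatim; the remaining deletions all leave a two-vertex diagram with an isotropic vertex and nonzero labels, which after rescaling the isotropic row is the $A(1,0)$ diagram (your hedge ``or $B(1,1)$'' is harmless but never occurs, since no $\B$ vertex appears in any base of $Q^{\pm}(m,n,t)$). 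Two further points in your favor: you adopt the stronger reading of the paper's definition of hyperbolic (deletions from every base in $\mathcal{C}(\Gamma)$, not just the standard one), which subsumes the weaker reading; and your enumeration under $|m|\leq|n|\leq|t|$ actually proves the converse as well, i.e.\ that the five listed triples are the only hyperbolic ones --- a completeness statement the lemma does not even assert. The appeal to Lemma~\ref{Qnotfg} to confirm indefinite type is the right touch for the triples with $m,n,t$ not all $-1$, which covers all five rows of the table.
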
\

\section{Integrable modules}\label{s3}

Let $\mathfrak{g}(A)$ be a regular Kac-Moody superalgebra, and let
$\mathfrak{g}(A)=\mathfrak{h}\oplus(\oplus_{\alpha\in\Delta}\mathfrak{g}(A)_{\alpha})$ be the root space decomposition. Recall that we may assume that $a_{ii}\in \{0,2\}$ without loss of generality.  An element $\rho \in \mathfrak{h}^{*}$ such that $\rho(h_i)=\alpha_i(h_i)=\frac{1}{2}a_{ii}$ for all $i\in I$ is called a {\em Weyl vector}.

A root $\alpha$ is called {\em real} if $\alpha$ or $\frac{1}{2} \alpha$ is simple in some base obtained by a sequence of even and odd reflections, and it is called {\em imaginary} otherwise. For each
real root $\alpha$, the vector space $[\mathfrak{g}_{\alpha},\mathfrak{g}_{-\alpha}]$ is a one dimensional subspace of the Cartan subalgebra $\mathfrak{h}$. Moreover, if $\alpha$ is nonisotropic, then for each nonzero $h\in[\mathfrak{g}_{\alpha},\mathfrak{g}_{-\alpha}]$ we have that $\alpha(h)\neq 0$. In this case, we may choose $h_{\alpha}$ to be the unique vector in $[\mathfrak{g}_{\alpha},\mathfrak{g}_{-\alpha}]$ which satisfies $\alpha(h_{\alpha})=2$ (see Remark~\ref{remrescale}). Then take $x_{\alpha}\in\mathfrak{g}_{\alpha}$, $y_{\alpha}\in\mathfrak{g}_{-\alpha}$ such that $[x_{\alpha},y_{\alpha}]=h_{\alpha}$.  If $\alpha$ is even, then $\{x_{\alpha},y_{\alpha},h_{\alpha}\}$ forms an $\mathfrak{sl}_{2}$-triple.  If $\alpha$ is odd nonisotropic, then $\{x_{\alpha},y_{\alpha},h_{\alpha}, [x_{\alpha},x_{\alpha}],[y_{\alpha},y_{\alpha}] \}$ is a basis for a subalgebra which is isomorphic to $\mathfrak{osp}(1,2)$, and  $\{-\frac{1}{4}[x_{\alpha},x_{\alpha}],\frac{1}{4}[y_{\alpha},y_{\alpha}],\frac{1}{2}h_{\alpha} \}$ forms an $\mathfrak{sl}_{2}$-triple.

Let $U(\mathfrak{g})$ denote the universal enveloping algebra of a Lie superalgebra $\mathfrak{g}$. Let
$\mathfrak{n}_{+}$ (resp. $\mathfrak{n}_{-}$) denote the subalgebra of $\mathfrak{g}(A)$ generated by the elements
$X_i$ (resp. $Y_i$), $i\in I$.  Then one has the triangular decomposition
$\mathfrak{g}(A)=\mathfrak{n}_{-}+\mathfrak{h}+\mathfrak{n}_{+}$.

A $\mathfrak{g}(A)$-module $V$ is called a {\em weight module} if $V=\oplus_{\mu \in \mathfrak{h}^{*}} V_{\mu}$, where
$V_{\mu}=\{v\in V \mid hv=\mu(h)v, \text{ for all } h\in\mathfrak{h}\}$. If $V_{\mu}$ is non-zero, then $\mu$ is called a {\em weight}.  A $\mathfrak{g}(A)$-module $V$ is called a {\em highest weight module} with {\em highest weight} $\lambda \in \mathfrak{h}^{*}$ if there exists a vector $v_{\lambda}\in V$ such that
\begin{equation*}
\mathfrak{n}_{+}v_{\lambda}=0, \hspace{1cm} hv=\lambda(h)v_{\lambda} \text{ for } h\in\mathfrak{h}, \hspace{1cm}
U(\mathfrak{g}(A))v_{\lambda}=V.
\end{equation*}
A highest weight module is a weight module.  We let $L(\lambda)$ denote the irreducible highest weight module with
highest weight $\lambda$.
To simplify notation we define $\lambda_i := \lambda(h_i)$, for $i\in I$.

A subalgebra $\mathfrak{s}$ of $\mathfrak{g}(A)$ is {\em locally finite} on a module $V$ if $\operatorname{dim
}U(\mathfrak{s})v<\infty$ for any $v\in V$.  An element $x\in\mathfrak{g}$ is {\em locally nilpotent} on $V$ if for any
$v\in V$ there exists a positive integer $N$ such that $x^{N}v=0$.\\

\subsection{Integrable modules over affine Lie superalgebras}\label{SectionAffine}

Let $\mathfrak{g}$ be a basic Lie superalgebra, that is, a finite-dimensional simple Lie superalgebra over $\mathbb{C}$ with a nondegenerate even symmetric invariant bilinear form $(\cdot,\cdot)$, such that $\mathfrak{g}_{\bar{0}}$ is reductive \cite{K77}.  The associated (non-twisted) affine Lie superalgebra is
\begin{equation*}
\widehat{\mathfrak{g}} = \left(\mathbb{C}[t,t^{-1}] \otimes_{\mathbb{C}} \mathfrak{g}\right)
\oplus \mathbb{C}K \oplus \mathbb{C}d
\end{equation*}
with commutation relations
\begin{equation*}
[a(n),b(l)]=[a,b](n+l) + n \delta_{n,-l} (a|b)K, \hspace{.5cm} [d,a(n)]=-n a(n),
\hspace{.5cm} [K,\widehat{\mathfrak{g}}]=0
\end{equation*}
where $a,b \in \mathfrak{g}$; $n,l \in \mathbb{Z}$ and $a(n)=t^{n}\otimes a$.  By identifying $\mathfrak{g}$ with
$1 \otimes \mathfrak{g}$, we have that the Cartan subalgebra of $\widehat{\mathfrak{g}}$ is
$$\mathfrak{h} = \overset{\circ}{\mathfrak{h}} \oplus \mathbb{C}K \oplus \mathbb{C}d$$
where $\overset{\circ}{\mathfrak{h}}$ is the Cartan subalgebra of $\mathfrak{g}$.
Let $\delta$ be the linear function defined on $\mathfrak{h}$ by
$\delta\mid_{\overset{\circ}{\mathfrak{h}} \oplus \mathbb{C}K} = 0$ and $\delta(d) = 1$.
Let $\theta$ be the unique highest weight of the $\mathfrak{g}$.  Then $\alpha_{0}=\delta-\theta$ is the
additional simple root which extends $\mathfrak{g}$ to $\widehat{\mathfrak{g}}$.

Now suppose $\sigma$ is an automorphism of $\mathfrak{g}$ with order $m\neq 1$.  Then $\mathfrak{g}$ decomposes into
eigenspaces:
\begin{equation*}
\mathfrak{g} = \bigoplus_{j \in \mathbb{Z}/{m\mathbb{Z}}} \mathfrak{g}(\sigma)_{j}, \hspace{1cm}
 \mathfrak{g}(\sigma)_{j}= \{x \in \mathfrak{g} \mid \sigma(x) = e^{\frac{j 2\Pi i}{m}} x\}.
\end{equation*}
The associated twisted affine Lie superalgebra is
\begin{equation*}
\mathfrak{g}^{(m)} = \left(\bigoplus_{j \in \mathbb{Z}} t^{j} \otimes \mathfrak{g}(\sigma)_{j \operatorname{mod } m}\right)
\oplus \mathbb{C}K \oplus \mathbb{C}d
\end{equation*}
with the same commutation relations as given above. By identifying $\mathfrak{g}(\sigma)_{0}$ with
$1 \otimes \mathfrak{g}(\sigma)_{0}$, we have that the Cartan subalgebra of $\mathfrak{g}^{(m)}$ is
$\mathfrak{h} = \overset{\circ}{\mathfrak{h}} \oplus \mathbb{C}K \oplus \mathbb{C}d$ where $\overset{\circ}{\mathfrak{h}}$ is the Cartan subalgebra of $\mathfrak{g}(\sigma)_{0}$.
Let $\delta$ be the linear function defined on $\mathfrak{h}$ by
$\delta\mid_{{\mathfrak{h}}_{0} \oplus \mathbb{C}K} = 0$ and $\delta(d) = 1$.
Let $\theta$ be the unique highest weight of the $\mathfrak{g}(\sigma)_{0}$-module
$\mathfrak{g}(\sigma)_{1}$.  Then $\alpha_{0}=\delta-\theta$ is the additional simple root
which extends $\mathfrak{g}(\sigma)_{0}$ to $\mathfrak{g}^{(m)}$.

The bilinear form $(\cdot,\cdot)$ on $\mathfrak{g}$ gives rise to a nondegenerate symmetric
invariant bilinear form on $\widehat{\mathfrak{g}}$ and on $\mathfrak{g}^{(m)}$ by:
\begin{eqnarray*}
&(a(n),b(l))=\delta_{n,-l}(a,b) &(\mathbb{C}[t,t^{-1}]\otimes\mathfrak{g},\mathbb{C}K\oplus\mathbb{C}d)=0 \\
&(K,K)=(d,d)=0,\ (K,d)=1 &(\bigoplus_{j \in \mathbb{Z}} t^{j} \otimes \mathfrak{g}(\sigma)_{j \operatorname{mod } m},\mathbb{C}K\oplus\mathbb{C}d)=0.
\end{eqnarray*}
The restriction of this form to $\mathfrak{h}$ is nondegenerate and is also denoted by $(\cdot,\cdot)$.  We use this form to identify $\mathfrak{h}$ with $\mathfrak{h}^{*}$, which induces a nondegenerate bilinear form on $\mathfrak{h}^{*}$.  Let $V(\lambda)$ be a highest module over $ \widehat{\mathfrak{g}}$ . Then $k = \lambda(K)$ is the {\em level} of $L(\lambda)$.

A highest weight module $V$ over an affine Lie superalgebra $ \widehat{\mathfrak{g}} $ (resp. $\mathfrak{g}^{(m)}$) is called {\em integrable} if it is integrable over the affine Lie algebra $\widehat{\mathfrak{g}}_{\bar{0}}$ (resp. $\mathfrak{g}^{(m)}_{\bar{0}}$), that is, if $\mathfrak{g}_{\alpha}(n)$ is locally finite on $V$ for every root $\alpha$ of $\mathfrak{g}_{\bar{0}}$ and $n\in\mathbb{Z}$.

Since $\mathfrak{g}_{\bar{0}} = \oplus_{j \in \mathbb{Z}/{m\mathbb{Z}}} \left(\mathfrak{g}(\sigma)_{j}\right)_{\bar{0}}$ is a graded reductive Lie algebra, it follows that $(\mathfrak{g}(\sigma)_{0})_{\bar{0}}$ is reductive. Hence we may write $\mathfrak{g}_{\bar{0}}$ (resp. $(\mathfrak{g}(\sigma)_{0})_{\bar{0}}$)
as a sum $\oplus_{i=0}^{N}\mathfrak{g}_{\bar{0}i}$, where $\mathfrak{g}_{\bar{0}0}$ is abelian and $\mathfrak{g}_{\bar{0}i}$ for $i=1,\ldots ,N$ are simple Lie algebras.  Then for each $i$, the superalgebra $\widehat{\mathfrak{g}}$ (resp. $\mathfrak{g}^{(m)}$) contains an affine Lie algebra $\widehat{\mathfrak{g}}_{\bar{0}i}$ associated to $\mathfrak{g}_{\bar{0}i}$.
Explicitly, we have $$\widehat{\mathfrak{g}}_{\bar{0}i}=
\left(\mathbb{C}[s,s^{-1}] \otimes_{\mathbb{C}} \mathfrak{g}_{\bar{0}i}\right)
\oplus \mathbb{C}K' \oplus \mathbb{C}d,$$ where $s=t$, $K'=K$ (resp. $s=t^{m}$, $K'=mK$).  The Cartan subalgebra of $\widehat{\mathfrak{g}}_{\bar{0}i}$ is $\mathfrak{h}_i=\overset{\circ}{\mathfrak{h}_i}\oplus\mathbb{C}K\oplus\mathbb{C}d$, where $\overset{\circ}{\mathfrak{h}_i}=\overset{\circ}{\mathfrak{h}}\cap \mathfrak{g}_{\bar{0}i}$.
If $V$ is integrable over $ \widehat{\mathfrak{g}} $ (resp. $\mathfrak{g}^{(m)}$), then it follows from the definition that $V$ is integrable over $\widehat{\mathfrak{g}}_{\bar{0}i}$, $i=1,\ldots,N$.

It was shown in \cite{KW01}, that most non-twisted affine Lie superalgebras have only trivial irreducible integrable
highest weight modules, which led to the consideration of weaker notions of integrability.

\begin{proposition}[Kac, Wakimoto]\label{p1}
The only non-twisted affine Lie superalgebras with non-trivial irreducible integrable highest weight
modules are $B(0,n)^{(1)}$, $C(n)^{(1)}$ and $A(0,m)^{(1)}$.
\end{proposition}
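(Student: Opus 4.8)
The plan is to leverage the definition of integrability recalled just above: since $\mathfrak{g}_{\bar 0}=\bigoplus_{i=0}^N\mathfrak{g}_{\bar 0 i}$ with $\mathfrak{g}_{\bar 0 0}$ abelian and $\mathfrak{g}_{\bar 0 1},\dots,\mathfrak{g}_{\bar 0 N}$ simple, a highest weight module $L(\lambda)$ over $\widehat{\mathfrak{g}}$ is integrable if and only if it is integrable over each affine Kac-Moody \emph{algebra} $\widehat{\mathfrak{g}}_{\bar 0 i}$. Integrability over $\widehat{\mathfrak{g}}_{\bar 0 i}$ forces $\lambda$ to restrict to a dominant integral weight there, and in particular the level $k_i$ of $L(\lambda)$ over that factor (computed with the factor's standard normalization) must satisfy $k_i\in\mathbb{Z}_{\geq 0}$; a factor on which $k_i<0$ admits no nonzero integrable highest weight module at all. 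So the first step is to record the inequality $k_i\geq 0$ for every simple ideal of $\mathfrak{g}_{\bar 0}$.

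The heart of the argument is a sign computation. For a basic Lie superalgebra the level over the $i$-th factor equals $c_i\,\lambda(K)$, where $c_i$ is a nonzero scalar proportional to $(\theta_i,\theta_i)$, the square length in the invariant form $(\cdot,\cdot)$ of the highest root of $\mathfrak{g}_{\bar 0 i}$. The crucial \emph{super} phenomenon is that when $\mathfrak{g}_{\bar 0}$ has two simple ideals $\mathfrak{g}_{\bar 0 1},\mathfrak{g}_{\bar 0 2}$, the form $(\cdot,\cdot)$ restricts to them with \emph{opposite} signs; for instance on $\mathfrak{sl}(m+1\mid n+1)$ the supertrace form is $+\operatorname{tr}$ on the $A_m$ block and $-\operatorname{tr}$ on the $A_n$ block. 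Hence $c_1$ and $c_2$ have opposite signs, so $k_1=c_1\lambda(K)$ and $k_2=c_2\lambda(K)$ cannot both be $\geq 0$ unless $\lambda(K)=0$. I would then check against the classification of basic Lie superalgebras that the even part has at least two simple ideals exactly for $A(m,n)$ with $m,n\geq 1$, $B(m,n)$ with $m\geq 1$, $D(m,n)$, $D(2,1,\alpha)$, $F(4)$, $G(3)$, whereas $A(0,m)$, $B(0,n)$, $C(n)$ are precisely those whose even part has a \emph{single} simple ideal, so that this obstruction is vacuous for them.

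Granting $\lambda(K)=0$ in the two-ideal case, I would finish the restrictive direction as follows. At level $0$, dominant integrality over each factor gives $\langle\lambda,\alpha_0^{(i)\vee}\rangle=-\langle\lambda,\theta_i^\vee\rangle\geq 0$ together with $\langle\lambda,\alpha_j^\vee\rangle\geq 0$ on the finite even simple coroots; since $\lambda$ is also finite-dominant this forces $\langle\lambda,\theta_i^\vee\rangle=0$, whence $\lambda$ vanishes on every even coroot. It then remains to kill the isotropic directions, where integrability imposes no direct condition. Here I would use that integrability is invariant under odd reflections, and that an odd reflection sends $L(\lambda)$ to $L(\lambda')$ with $\lambda'=\lambda$ or $\lambda-\alpha_k$; cycling through the bases reachable by odd reflections and reapplying the even dominant-integrality constraints in each pins $\lambda$ down to $0$ on all coroots, so $L(\lambda)$ is trivial.

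For the converse I would exhibit nonzero integrable modules over $A(0,m)^{(1)}$, $B(0,n)^{(1)}$, $C(n)^{(1)}$: with a single simple even ideal the level is unconstrained in sign, so the remaining conditions (local nilpotency on the real even roots, and for $B(0,n)$, which has no isotropic roots, on the non-isotropic odd roots through the embedded $\mathfrak{osp}(1\mid 2)$-triples) are simultaneously satisfiable, and I would simply produce admissible $\lambda$. I expect the main obstacle to be the endgame in the two-ideal case: fixing the normalizations so that the signs of the $c_i$ are provably opposite, and then disposing of the isotropic coroot directions, for which no integrability inequality is available and one must instead propagate the even constraints across all bases via odd reflections.
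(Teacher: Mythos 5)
Your plan is essentially the Kac--Wakimoto argument, and it is also essentially the argument of this paper: Proposition~\ref{p1} itself is quoted from \cite{KW01} without proof, but the twisted analogue, Proposition~\ref{p2}, is proved here by exactly your mechanism. There the even part of the affine superalgebra contains two affine subalgebras $\mathfrak{g}'$, $\mathfrak{g}''$ on which the invariant form is positive, respectively negative, definite; integrability makes the labels $k_i$, $k'$, $k''$ nonnegative with respect to both, and the identities $k = k' + \lambda({\theta'}^{\vee})$, $-k = k'' - \lambda({\theta''}^{\vee})$ then force the level $k=\lambda(K)$ to vanish, and with it all even labels. Your sign computation (the square lengths $(\theta_i,\theta_i)$ of opposite signs on two simple even ideals, e.g.\ the supertrace form on $\mathfrak{sl}(m+1|n+1)$) and your list of which basic superalgebras have a single simple even ideal are both correct; for $D(2,1,\alpha)$, with three ideals, it suffices that some two restrictions of the form have opposite signs, which holds.

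The one place where you diverge from the paper's method, and where your sketch is thinnest, is the isotropic directions. The paper needs no odd reflections there: in its chosen base ${\alpha_0'}^{\vee} = K - {\theta'}^{\vee}$, and since the canonical central element $K$ involves the isotropic coroot $\alpha_d^{\vee}$ with nonzero coefficient while ${\theta'}^{\vee}$, ${\theta''}^{\vee}$ lie in the span of the even simple coroots, the already-established vanishing of $k$, $k'$, $k''$ and of the even labels kills $k_d$ by linear algebra in a single base. Your alternative of cycling through bases via odd reflections (Lemma~\ref{lm222}) and re-imposing even dominance in each is viable --- it is exactly how the paper treats $q(n)^{(2)}$ inside the proof of Proposition~\ref{p2}, and later $S(1,2,\alpha)$ and $Q^{\pm}(m,n,t)$ --- but in bases of $A(m,n)^{(1)}$ with several isotropic simple roots it requires a case analysis you have not carried out, and the shifts $\lambda'=\lambda-\alpha_s$ make the reflected constraints less transparent than the one-line relation through $K$. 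Finally, the converse half (exhibiting nontrivial integrable $L(\lambda)$ over $B(0,n)^{(1)}$, $C(n)^{(1)}$, $A(0,m)^{(1)}$) is asserted rather than proved in your sketch; the paper likewise takes it, together with the explicit dominance conditions, from \cite{KW01}, so to make your write-up self-contained you would need to import or reprove those conditions.
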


In \cite{EF03}, S. Eswara Rao and V. Futorny show that over non-twisted affine Lie superalgebras all irreducible
integrable highest weight modules with non-zero level are highest weight modules.  The proof of the following statement is similar to the non-twisted case.

\begin{proposition}\label{p2}
Let $\mathfrak{g}^{(m)}$ be a twisted affine Lie superalgebra which is not one of the algebras:
$A(0,2n-1)^{(2)}$, $A(0,2n)^{(4)}$ and $C(n)^{(2)}$.  Then an irreducible integrable highest weight module over
$\mathfrak{g}^{(m)}$ is trivial.
\end{proposition}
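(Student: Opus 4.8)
The plan is to adapt the argument of Kac and Wakimoto behind Proposition~\ref{p1} to the twisted setting. Suppose, toward a contradiction, that $L(\lambda)$ is a nontrivial irreducible integrable highest weight module over a twisted affine Lie superalgebra $\mathfrak{g}^{(m)}$ not in the excluded list. By definition, integrability means that $L(\lambda)$ is integrable over each even affine subalgebra $\widehat{\mathfrak{g}}_{\bar{0}i}$ constructed above. I would first record what this says numerically: restricting $\lambda$ to $\mathfrak{h}_i=\overset{\circ}{\mathfrak{h}_i}\oplus\mathbb{C}K\oplus\mathbb{C}d$ makes it a dominant integral weight of the affine Lie algebra $\widehat{\mathfrak{g}}_{\bar{0}i}$; in particular the normalized level of $L(\lambda)$ over $\widehat{\mathfrak{g}}_{\bar{0}i}$ is a nonnegative number.

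The heart of the argument is the sign of the invariant form. For every $\mathfrak{g}^{(m)}$ outside the three excluded families, the reductive even part $(\mathfrak{g}(\sigma)_{0})_{\bar{0}}$ has at least two simple ideals $\mathfrak{g}_{\bar{0}1}$ and $\mathfrak{g}_{\bar{0}2}$ on which the restriction of the even invariant form $(\cdot,\cdot)$ has \emph{opposite} signs, say a positive multiple of the normalized form on $\mathfrak{g}_{\bar{0}1}$ and a negative multiple on $\mathfrak{g}_{\bar{0}2}$. Since each even affine component carries $K'=mK$, the normalized level of $L(\lambda)$ over $\widehat{\mathfrak{g}}_{\bar{0}i}$ equals $\epsilon_i\, m\,\lambda(K)$, where $\epsilon_i$ is the sign of the restricted form. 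The two nonnegativity conditions $\epsilon_1 m\lambda(K)\geq 0$ and $\epsilon_2 m\lambda(K)\geq 0$, with $\epsilon_1>0>\epsilon_2$ and $m\neq 0$, force $\lambda(K)=0$; that is, $L(\lambda)$ has level zero.

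It then remains to deduce $\lambda=0$ from level zero. Over each $\widehat{\mathfrak{g}}_{\bar{0}i}$, an integrable highest weight module of level zero is one-dimensional, so $\lambda$ annihilates every even real coroot and the derived even affine subalgebra acts trivially on $L(\lambda)$. To kill the remaining isotropic directions I would invoke odd reflections (Lemma~\ref{lm1}): $L(\lambda)$ is unchanged as a module, and in any base $\Pi'$ obtained by a sequence of odd reflections the transformed highest weight (which is $\lambda$ or differs from it by isotropic simple roots) must again be dominant integral and of level zero for the even part of $\Pi'$. Running through the bases produced by the cycle of odd reflections of the affine diagram forces $\lambda(h_k)=0$ for the isotropic simple roots as well, whence $\lambda=0$ and $L(\lambda)$ is trivial. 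The three excluded algebras $A(0,2n-1)^{(2)}$, $A(0,2n)^{(4)}$, $C(n)^{(2)}$ are exactly the twisted analogues of the non-twisted exceptions of Proposition~\ref{p1}: their even parts fail to contain two opposite-sign simple ideals, so the level is not pinned to zero and nontrivial integrable modules survive.

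The step I expect to be the main obstacle is not any single inequality but the uniform, type-by-type bookkeeping underlying the second paragraph. For each twisted type one must compute the fixed-point superalgebra $\mathfrak{g}(\sigma)_{0}$, decompose $(\mathfrak{g}(\sigma)_{0})_{\bar{0}}$ into simple ideals, and determine the sign of the restricted invariant form on each, together with the normalization constants relating the component levels to $\lambda(K)$. The delicate point peculiar to the twisted case is verifying that two ideals of opposite sign persist for every non-exceptional $\mathfrak{g}^{(m)}$ and that the list of exceptions is precisely these three; once this structural fact is established, the level-squeeze and the level-zero rigidity finish the proof just as in the non-twisted case.
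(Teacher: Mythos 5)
For the symmetrizable twisted algebras your plan is essentially the paper's proof. The paper fixes a base $\Pi=\{\alpha_0,\dots,\alpha_l\}$ with a unique isotropic simple root $\alpha_d$ (with $d\neq 0,l$ precisely because the three listed algebras are excluded), exhibits inside $\mathfrak{g}^{(m)}_{\bar{0}}$ an affine subalgebra $\mathfrak{g}'$ on which the invariant form is positive definite and an affine subalgebra $\mathfrak{g}''$ on which it is negative definite, and uses the identities ${\alpha'_{0}}^{\vee}=K-{\theta'}^{\vee}$ and ${\alpha''_{0}}^{\vee}=-(K-{\theta''}^{\vee})$ together with Lemma~\ref{lm67} to squeeze $k=\lambda(K)$ to zero --- exactly your ``opposite signs pin the level'' step. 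Two deviations are worth noting. First, the paper does not need your concluding appeal to odd reflections: once $k=0$, it reads off $k'=k''=\lambda({\theta'}^{\vee})=\lambda({\theta''}^{\vee})=0$, hence $k_i=0$ for $i\neq d$ because the coefficients $c_i$ of $\theta',\theta''$ are positive integers, and then $k_d=0$ falls out of $k'=0$ and the coroot identity, so even the isotropic direction is killed by linear algebra rather than by reflecting through bases (your reflection route, via Lemma~\ref{lm222}, is plausible but you have not carried it out). Second, your assertion that an integrable level-zero module over $\widehat{\mathfrak{g}}_{\bar{0}i}$ is one-dimensional is loose, since the restriction of $L(\lambda)$ is not irreducible over the subalgebra; what you actually use --- dominance integrality plus level zero forces $\lambda$ to vanish on the even affine coroots --- is correct and is what the paper's inequalities deliver.

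The genuine gap is $q(n)^{(2)}$. It is a twisted affine Lie superalgebra not on the excluded list, so the proposition covers it (the introduction singles it out explicitly), but it is non-symmetrizable: it carries no nondegenerate even invariant form and is not produced by the construction of Section~\ref{SectionAffine} from a basic superalgebra with an automorphism. Consequently the entire mechanism of your second paragraph --- signs of the restricted form on simple ideals, normalized levels $\epsilon_i m\lambda(K)$ --- cannot even be formulated for it, and your type-by-type bookkeeping over twists of basic $\mathfrak{g}$ would never encounter it. The paper devotes the first half of its proof to exactly this case, by a direct, form-free computation: for $n$ odd all simple roots are isotropic, odd reflections produce the even principal roots $\alpha_i+\alpha_{i+1}$ with $h_{\alpha_i+\alpha_{i+1}}=h_{i+1}-h_i$, and Lemma~\ref{lm67} yields the cyclic integrability conditions $\lambda_{i+1}-\lambda_i\in\mathbb{Z}_{\geq 0}$ $(\operatorname{mod}\ n)$, which force $\lambda=0$; for $n$ even the principal roots $\{\alpha_1,\alpha_2+\alpha_3,\dots,\alpha_{n-1}+\alpha_n,\alpha_n+\alpha_1+\alpha_2\}$ give $\lambda_1\in\mathbb{Z}_{\geq 0}$, $\lambda_{i+1}-\lambda_i\in\mathbb{Z}_{\geq 0}$, and $\lambda_2-\lambda_n-\lambda_1\in\mathbb{Z}_{\geq 0}$, again forcing $\lambda=0$. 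As written, your proposal proves the proposition only for the symmetrizable twisted algebras; to close the gap you must add a separate argument of this kind for $q(n)^{(2)}$.
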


\begin{proof}
First we consider the non-symmetrizable twisted affine Lie superalgebra, $q(n)^{(2)}$.  The Lie superalgebra $q(n)^{(2)}$ is not covered by the construction given above, so we must handle this case separately.  If $n$ is odd, then we have a Cartan matrix defined by $a_{i,i+1}=-1$, $a_{i+1,i}=1$ for $i=1,\dots,n$ $(mod\ n)$, and all other entries zero. All
simple roots in this case are odd isotropic.  By an odd reflection at $\alpha_i$,  we obtain an even root $\alpha_i +
\alpha_{i+1}$ with $h_{\alpha_i + \alpha_{i+1}}=h_{i+1}-h_{i}$.  By Lemma~\ref{lm67}, the conditions for integrability
are $\lambda_{i+1}-\lambda_{i} \in \mathbb{Z}_{\geq 0}$, $(mod\ n)$. This implies $\lambda_i=0$ for all $i=1,\dots,n$.

If $n$ is even, the we have a Cartan matrix defined by $a_{i,i+1}=-1$, $a_{i+1,i}=1$ for $i=1,\dots,n-1$, $a_{1,1}=2$,
$a_{1,n}=a_{n,1}=-1$, and all other entries zero.  In this case, the simple root $\alpha_1$ is even, and all other
simple roots are odd isotropic.  Using odd reflections, we obtain the set of principal roots
$\{\alpha_1,\alpha_2+\alpha_3,\alpha_3+\alpha_4,\dots,\alpha_{n-1}+\alpha_{n},\alpha_{n}+\alpha_{1}+\alpha_{2}\}$.
For $i=2,\dots,n-1$ we have $h_{\alpha_{i}+\alpha_{i+1}}=h_{i+1}-h_{i}$, and
$h_{\alpha_n+\alpha_1+\alpha_2}=h_2-h_n-h_1$.  This yields integrability conditions $\lambda_1 \in \mathbb{Z}_{\geq 0}$,
$\lambda_{i+1}-\lambda_{i} \in \mathbb{Z}_{\geq 0}$ for $i=2,\dots,n-1$, and
$\lambda_2-\lambda_n-\lambda_1 \in \mathbb{Z}_{\geq 0}$.  Together these imply $\lambda_i=0$ for $i=1,\dots,n$.

For a symmetrizable twisted affine Lie superalgebra $\mathfrak{g}^{(m)}$, we have the standard nondegenerate
symmetric invariant bilinear form.  The structure of $\mathfrak{g}^{(m)}_{\overline{0}}$, the even part of
$\mathfrak{g}^{(m)}$, is given by van de Leur in \cite{L86}.  Let $\mathfrak{g}^{(m)}$ be a symmetrizable twisted
affine Lie superalgebra, which is not one of the algebras: $A(0,2n-1)^{(2)}$, $A(0,2n)^{(4)}$ and
$C(n)^{(2)}$, and choose a base $\Pi=\{\alpha_0,\dots,\alpha_l\}$ with a unique simple isotropic root, $\alpha_d$.
Using the bilinear form to identify $\mathfrak{h}$ with $\mathfrak{h}^{*}$, we have
$\alpha_i = {\alpha_i}^{\vee}$ for $i \leq d$ and $\alpha_i = -{\alpha_i}^{\vee}$ for $i > d$.
Note that $d\neq0$ and $d\neq l$ by choice of $\mathfrak{g}^{(m)}$.

Then $\mathfrak{g}^{(m)}_{\overline{0}}$ has as subalgebras,
both an affine subalgebra $\mathfrak{g'}$ on which this form is positive definite,
and an affine subalgebra $\mathfrak{g''}$ on which this form is negative definite.
Denote by $\overset{\circ}{\mathfrak{g}}'$ (resp. $\overset{\circ}{\mathfrak{g}}''$) the finite part of
$\mathfrak{g'}$ (resp. $\mathfrak{g''}$).  Let $\theta'$ (resp. $\theta''$) denote the highest weight
of the module $\overset{\circ}{\mathfrak{g}}'_{1}$
(resp. $\overset{\circ}{\mathfrak{g}}''_{1}$) over
$\overset{\circ}{\mathfrak{g}}'_{0}$ (resp. $\overset{\circ}{\mathfrak{g}}''_{0}$).
Then the simple root $\alpha'_{0}=\delta-\theta'$ (resp. $\alpha''_{0}=\delta-\theta''$) extends
$\overset{\circ}{\mathfrak{g}}'$ (resp. $\overset{\circ}{\mathfrak{g}}''$) to
$\mathfrak{g'}$ (resp. $\mathfrak{g''}$).  Also,  $\theta'=\sum_{i=0}^{d-1} c_i \alpha_i$ and
$\theta''=\sum_{i=d+1}^{l} c_i \alpha_i$ with $c_i \in \mathbb{Z}_{>0}$.

Let $L(\lambda)$ be an irreducible integrable highest weight module over $\mathfrak{g}^{(m)}$.  Let $k=\lambda(K)$,
$k_i=\lambda({\alpha_i}^{\vee})$, $k'=\lambda({\alpha'_{0}}^{\vee})$ and $k''=\lambda({\alpha''_{0}}^{\vee})$.
Then by Lemma~\ref{lm67}, $k_i \in \mathbb{Z}_{\geq 0}$ for $i \in I\setminus \{d\}$ and $k',k'' \in \mathbb{Z}_{\geq 0}$.
Now using the bilinear form to identify $\mathfrak{h}$ with $\mathfrak{h}^{*}$, we have
\begin{equation*}
\begin{array}{llll}
{\alpha'_{0}}^{\vee} & = \alpha'_{0} & = \delta-\theta' & = K-{\theta'}^{\vee} \\
{\alpha''_{0}}^{\vee} & = -\alpha''_{0} & = -(\delta-\theta'') & = -(K-{\theta''}^{\vee}).
\end{array}
\end{equation*}
Hence,
\begin{equation*}
\begin{array}{rlll}
k & =\lambda(K) & =\lambda({\alpha'_{0}}^{\vee})+\lambda({\theta'}^{\vee}) & = k' + \lambda({\theta'}^{\vee})\\
-k & = -\lambda(K) & = \lambda({\alpha''_{0}}^{\vee})-\lambda({\theta''}^{\vee}) & = k'' - \lambda({\theta''}^{\vee}).
\end{array}
\end{equation*}
Since ${\theta'}^{\vee}$ (resp. ${\theta''}^{\vee}$) is a positive (resp. negative)
combination of ${\alpha_i}^{\vee}$ with $i \in I \setminus \{d\}$ and
$\lambda({\alpha_i}^{\vee})\geq 0$ for $i \in I \setminus \{d\}$, we have that
$\lambda({\theta'}^{\vee}) \geq 0$ and $ \lambda({\theta''}^{\vee}) \leq 0$.
Hence, $k=0$ and the level of $L(\lambda)$ is zero.  The above equations then imply that
$k',k'',\lambda({\theta'}^{\vee}),\lambda({\theta''}^{\vee})=0$.  Since
$\lambda({\theta'}^{\vee})=\sum_{i=0}^{d-1} c_i k_i$ and
$\lambda({\theta''}^{\vee})=-\sum_{i=d+1}^{l} c_i k_i$ with $c_i \in \mathbb{Z}_{>0}$,
we have that $k_i=0$ for $i \in I \setminus \{d\}$.  Finally, $k'=0$ then implies $k_d=0$.
Hence, the weight $\lambda$ is zero, and so the module $L(\lambda)$ is trivial.
\end{proof}

\begin{remark} We can summarize this as follows.  If $\overset{\circ}{\mathfrak{g}}_{\bar{0}}$ is a direct sum of two or more simple Lie algebras, then the corresponding affine (or twisted affine) Lie superalgebra does not have any nontrivial irreducible integrable highest weight modules.
\end{remark}

\subsection{Integrable modules over regular Kac-Moody superalgebras}

Let $\mathfrak{g}(A)$ be a regular Kac-Moody superalgebra, and let $V$ be a weight module over $\mathfrak{g}(A)$.  We
call $V$ {\em integrable} if for every real root $\alpha$  the element $X_{\alpha} \in \mathfrak{g}(A)_{\alpha} $ is
locally nilpotent on $V$.  If $\mathfrak{g}(A)$ is an affine Lie superalgebra then this definition coincides with the definition given in Section~\ref{SectionAffine}.  This follows from the fact that every even root of an affine Lie
superalgebra with non-zero length is real, as was shown in the dissertation of V. Serganova.

The following lemma follows from Lemma~\ref{lm23}.

\begin{lemma}\label{lm411}
The adjoint module of a regular Kac-Moody superalgebra is an integrable module.  In particular, $\operatorname{ad }_{X_{\alpha}}$ is locally nilpotent for every real root $\alpha$, where $X_{\alpha} \in \mathfrak{g}(A)_{\alpha}$.
\end{lemma}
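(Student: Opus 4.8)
The plan is to prove the equivalent assertion that $\operatorname{ad}_{X_\alpha}$ is locally nilpotent on $\mathfrak{g}(A)$ for every real root $\alpha$, since by the definition of integrability in this subsection that is precisely the statement that the adjoint module is integrable. I would argue by following the root vectors along the sequence of reflections that exhibits $\alpha$ (or $\frac{1}{2}\alpha$) as a simple root, maintaining throughout the invariant that in the current base every simple root vector acts locally nilpotently on $\mathfrak{g}(A)$. For the base case, since $\mathfrak{g}(A)$ is regular Kac--Moody the matrix $A$ is a generalized Cartan matrix and hence satisfies the conditions of Lemma~\ref{lm23}, so $\operatorname{ad}_{X_i}$ is locally nilpotent for every simple root; applying the Chevalley involution, which interchanges $X_i$ and $Y_i$ up to sign, gives local nilpotency of $\operatorname{ad}_{Y_i}$ as well. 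I would first dispose of the isotropic case directly: if $\alpha$ is isotropic then $2\alpha\notin\Delta$, so $[X_\alpha,X_\alpha]\in\mathfrak{g}_{2\alpha}=0$ and $\operatorname{ad}_{X_\alpha}^2=\frac{1}{2}\operatorname{ad}_{[X_\alpha,X_\alpha]}=0$, which is local nilpotency for free.

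Next I would treat the even-reflection step of the induction. For a non-isotropic simple root $\alpha_k$, local nilpotency of $\operatorname{ad}_{X_k}$ and $\operatorname{ad}_{Y_k}$ (via the even $\mathfrak{sl}_2$ inside $\operatorname{osp}(1|2)$ when $\alpha_k$ is odd, using $\operatorname{ad}_{[X_k,X_k]}=2\operatorname{ad}_{X_k}^2$) allows me to exponentiate to the inner automorphism $\tau_k=\exp(\operatorname{ad}_{X_k})\exp(-\operatorname{ad}_{Y_k})\exp(\operatorname{ad}_{X_k})$. This $\tau_k$ normalizes $\mathfrak{h}$, sends $\mathfrak{g}_\beta$ to $\mathfrak{g}_{r_k(\beta)}$, and carries the base $\Pi$ to $r_k(\Pi)$; a short computation $\alpha'_j(h'_i)=(\alpha_j\circ\tau_k^{-1})(\tau_k(h_i))=a_{ij}$ shows the reflected base has the very same Cartan matrix. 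Since $\tau_k$ conjugates ad-operators, it preserves both membership among the generalized Cartan matrices and local nilpotency of the simple root vectors, so the invariant survives an even reflection.

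For an odd reflection $r_k$ at a regular isotropic simple root, Lemma~\ref{lm1} identifies the reflected generators $X'_i,Y'_i$ as the standard generators of $\mathfrak{g}(A')$ with $\mathfrak{g}(A')\cong\mathfrak{g}(A)$, and the defining property of regular Kac--Moody superalgebras makes $A'$ a generalized Cartan matrix; Lemma~\ref{lm23} then gives local nilpotency of the new simple root vectors, so the invariant survives an odd reflection. Here I would be careful about the following point, which is the crux of the bookkeeping: the regular Kac--Moody hypothesis speaks only of sequences of odd reflections, but because even reflections leave the Cartan matrix unchanged (as established above), any matrix arising from a mixed sequence already coincides with one obtained by odd reflections alone. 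Hence the hypothesis genuinely applies at each odd step of the interleaved sequence.

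Finally, given a real root $\alpha$, I take the sequence $R$ with $R(\Pi)=\Pi'$ in which $\alpha=\alpha'_i$ or $\frac{1}{2}\alpha=\alpha'_i$ is simple. By the invariant, $\operatorname{ad}_{X'_i}$ is locally nilpotent. If $\alpha=\alpha'_i$ then $X_\alpha$ is a scalar multiple of $X'_i$ and we are done; if $\frac{1}{2}\alpha=\alpha'_i$, the $\operatorname{osp}(1|2)$ case, then $X_\alpha=[X'_i,X'_i]$ and $\operatorname{ad}_{X_\alpha}=2\operatorname{ad}_{X'_i}^2$ is again locally nilpotent. The main obstacle is not any single estimate but the consistency of the induction: verifying that even reflections preserve the Cartan matrix (so that the regular Kac--Moody hypothesis remains applicable after interleaving with odd reflections) and that the automorphisms $\tau_k$ correctly track the simple root vectors between bases.
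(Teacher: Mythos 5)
Your proof is correct and takes essentially the same approach as the paper, whose entire proof is the one-line observation that the lemma follows from Lemma~\ref{lm23}: since by definition every odd-reflected matrix of a regular Kac--Moody superalgebra is a generalized Cartan matrix, and even reflections leave the Cartan matrix unchanged, every base exhibiting a real root $\alpha$ (or $\tfrac{1}{2}\alpha$) as simple satisfies the conditions of Lemma~\ref{lm23}, which is exactly the skeleton you flesh out. Your extra care with the automorphisms $\tau_k$, the interleaving of even and odd reflections, and the $\mathfrak{osp}(1|2)$ case via $\operatorname{ad}_{X_{\alpha}}=2\operatorname{ad}_{X'_i}^{2}$ supplies details the paper leaves implicit, and all of it is sound.
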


The following lemma follows from Proposition~\ref{p1} and Proposition~\ref{p2}.

\begin{lemma}\label{lm123}
Suppose $\mathfrak{g}(A)$ is a regular Kac-Moody superalgebra with a subfinite regular Kac-Moody diagram which is not of
finite type and it is not one of the algebras: $A(0,m)^{(1)}$, $C(n)^{(1)}$, $S(1,2,\alpha)$, and $Q^{\pm}(m,n,t)$. Then
all irreducible integrable highest weight modules are trivial.
\end{lemma}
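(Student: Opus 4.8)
The plan is to combine the classification of subfinite regular Kac-Moody diagrams carried out in Section~\ref{s1} with the two structural results on affine Lie superalgebras, Proposition~\ref{p1} and Proposition~\ref{p2}. First I would extract from that classification the one fact we need: every subfinite regular Kac-Moody diagram which is not of finite type is the diagram of an affine Lie superalgebra, or of $S(1,2,\alpha)$, or of $Q^{\pm}(m,n,t)$. Indeed, the three-, four- and five-vertex classifications exhibit each such diagram explicitly and the $n\geq 6$ case is pinned down by the Proposition of Section~\ref{s2}'s predecessor; in every list the only non-finite-type entries are affine diagrams together with the two exceptional families $S(1,2,\alpha)$ and $Q^{\pm}(m,n,t)$. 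After removing $A(0,m)^{(1)}$, $C(n)^{(1)}$, $S(1,2,\alpha)$ and $Q^{\pm}(m,n,t)$ as in the hypothesis, the diagrams left to treat are exactly those of affine Lie superalgebras other than $A(0,m)^{(1)}$ and $C(n)^{(1)}$.

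Next I would use that a subfinite diagram contains an isotropic vertex by definition, so $\mathfrak{g}(A)$ has an isotropic simple root and hence its root system contains an isotropic root. Since isotropy of a root ($\alpha$ odd with $(\alpha,\alpha)=0$) does not depend on the choice of base, this is an intrinsic property of the affine Lie superalgebra in question. Now Proposition~\ref{p1} says that among non-twisted affine Lie superalgebras only $B(0,n)^{(1)}$, $C(n)^{(1)}$ and $A(0,m)^{(1)}$ carry non-trivial irreducible integrable highest weight modules, and Proposition~\ref{p2} says that among twisted affine Lie superalgebras only $A(0,2n-1)^{(2)}$, $A(0,2n)^{(4)}$ and $C(n)^{(2)}$ can do so. So it suffices to prove that each remaining candidate bearing non-trivial modules contains no isotropic root, and is therefore absent from the subfinite list; the only two of the six exceptional families that do contain an isotropic root, namely $A(0,m)^{(1)}$ and $C(n)^{(1)}$, have already been excluded by hypothesis.

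Finally I would establish this root-system dichotomy for the four offending families. For $B(0,n)^{(1)}$ it is immediate: $B(0,n)=\operatorname{osp}(1|2n)$ has no isotropic odd root, and non-twisted affinization only shifts each real root by a multiple of $\delta$, which leaves $(\alpha,\alpha)$ unchanged, so $B(0,n)^{(1)}$ acquires no isotropic root. For the three twisted families $A(0,2n-1)^{(2)}$, $A(0,2n)^{(4)}$ and $C(n)^{(2)}$ I would read off the even and odd root systems from the description of $\mathfrak{g}^{(m)}_{\bar 0}$ used in the proof of Proposition~\ref{p2} (van de Leur's structure computation) and check that in each case every odd root has nonzero length. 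Consequently none of $B(0,n)^{(1)}$, $A(0,2n-1)^{(2)}$, $A(0,2n)^{(4)}$, $C(n)^{(2)}$ admits a subfinite diagram, and together with the first two steps this shows that every remaining subfinite non-finite-type diagram is that of an affine Lie superalgebra with only trivial irreducible integrable highest weight modules.

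The main obstacle I expect is precisely this last verification for the three twisted families: it is the only point where one must unwind the twisted affine root systems rather than argue formally, since the remaining deductions are bookkeeping against the classification and the two cited propositions. If one prefers to avoid the computation entirely, an alternative is to observe that these four families simply never appear in the inductive enumeration of Section~\ref{s1} (their diagrams have no isotropic vertex in any base, hence cannot be subfinite), so the cross-reference against Proposition~\ref{p1} and Proposition~\ref{p2} can be made directly against the already-compiled output of the classification.
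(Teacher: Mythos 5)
Your proposal is correct and follows essentially the same route as the paper, whose entire proof is the remark that the lemma ``follows from Proposition~\ref{p1} and Proposition~\ref{p2}'' combined with the Section~\ref{s1} classification. The only content you add is to make explicit the implicit step that $B(0,n)^{(1)}$, $A(0,2n-1)^{(2)}$, $A(0,2n)^{(4)}$ and $C(n)^{(2)}$ have no isotropic roots (equivalently, never occur in the subfinite enumeration, which is your simpler alternative and suffices), so they cannot arise under the hypothesis that the diagram contains an isotropic vertex.
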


The conditions for an irreducible highest weight module over $A(0,m)^{(1)}$ or $C(n)^{(1)}$ to be integrable were given
in \cite{KW01}.  We are interested in the irreducible integrable highest weight modules for the regular Kac-Moody
superalgebras: $S(1,2,\alpha)$ and $Q^{\pm}(m,n,t)$.  We will see that they have non-trivial irreducible integrable
highest weight modules, and we will describe the weights.  First we need the following lemma.

\begin{lemma}\label{lm67}
Let $\mathfrak{g}(A)$ be a Kac-Moody superalgebra, and let $L(\lambda)$ be an irreducible highest weight module.  If
$\alpha_i$ is a simple non-isotropic root of $\mathfrak{g}(A)$, then $Y_i \in \mathfrak{g}(A)_{-\alpha_i}$ is locally
nilpotent on $L(\lambda)$ if and only if $\lambda_i \in 2^{p(i)}\mathbb{Z}_{\geq 0}$.  If $\alpha_i$ is a simple
isotropic root, then $Y_i \in \mathfrak{g}(A)_{-\alpha_i}$ is locally nilpotent on $L(\lambda)$.
\end{lemma}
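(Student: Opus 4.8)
The plan is to separate the isotropic case, which is immediate, from the non-isotropic case, which reduces to the representation theory of an $\mathfrak{sl}_2$-triple attached to $\alpha_i$ together with the irreducibility of $L(\lambda)$.

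For the isotropic case, note that if $\alpha_i$ is isotropic then $a_{ii}=0$, $p(i)=1$, and $2\alpha_i$ is not a root (the subalgebra generated by $X_i,Y_i,h_i$ is $\mathfrak{sl}(1|1)$, in which the odd generators square to zero). Hence $[Y_i,Y_i]\in\mathfrak{g}(A)_{-2\alpha_i}=0$, so in $U(\mathfrak{g}(A))$ one has $Y_i^2=\tfrac12[Y_i,Y_i]=0$. Thus $Y_i$ annihilates every vector after two applications and is locally nilpotent on $L(\lambda)$ with no condition on $\lambda$, as asserted.

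For the non-isotropic case one has $a_{ii}=2$ (a non-isotropic simple root with $a_{ii}=0$ would be a Heisenberg root, which the first condition of Lemma~\ref{lm23} forces to be isolated), so $\{X_i,Y_i,h_i\}$ generates $\mathfrak{sl}_2$ when $p(i)=0$ and $\operatorname{osp}(1|2)$ when $p(i)=1$, and $2^{p(i)}\in\{1,2\}$. Writing $w_k:=Y_i^k v_\lambda$, I would compute $X_i w_k=d_k w_{k-1}$ directly from the relations \eqref{equ1}, $[h_i,Y_i]=-2Y_i$ and $[X_i,Y_i]=h_i$; the scalars $d_k$ depend on $\lambda_i$ and, when $p(i)=1$, on the parity of $k$, and in every case $d_{\lambda_i+1}=0$ precisely when $\lambda_i\in 2^{p(i)}\mathbb{Z}_{\geq 0}$. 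Since $[X_j,Y_i]=0$ for $j\neq i$, one also gets $X_j w_k=0$ for all $k$. Consequently, when $\lambda_i\in 2^{p(i)}\mathbb{Z}_{\geq 0}$ the vector $w_{\lambda_i+1}$ is annihilated by every $X_j$, hence by $\mathfrak{n}_+$; having weight strictly below $\lambda$, it would generate a proper submodule, so by irreducibility $w_{\lambda_i+1}=Y_i^{\lambda_i+1}v_\lambda=0$. Conversely, if $Y_i$ is locally nilpotent, let $N\geq 1$ be minimal with $w_N=0$; then $0=X_i w_N=d_N w_{N-1}$ with $w_{N-1}\neq 0$ forces $d_N=0$, which yields $\lambda_i\in 2^{p(i)}\mathbb{Z}_{\geq 0}$. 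In the $\operatorname{osp}(1|2)$ case this is the same statement read through the $\mathfrak{sl}_2$-triple $\{-\tfrac14[X_i,X_i],\tfrac14[Y_i,Y_i],\tfrac12 h_i\}$, using $\tfrac14[Y_i,Y_i]=\tfrac12 Y_i^2$ so that local nilpotency of $Y_i$ and of this lowering operator coincide.

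It then remains to upgrade local nilpotency on $v_\lambda$ to local nilpotency on all of $L(\lambda)$. By Lemma~\ref{lm411}, $\operatorname{ad}_{Y_i}$ is locally nilpotent on $\mathfrak{g}(A)$, since $-\alpha_i$ is a real root. Hence the set $W=\{v\in L(\lambda)\mid Y_i^N v=0 \text{ for some } N\}$ is a $\mathfrak{g}(A)$-submodule: for $g\in\mathfrak{g}(A)$ and $v\in W$, expanding $Y_i^M g$ as a finite sum of terms involving $(\operatorname{ad}_{Y_i})^k(g)\,Y_i^{M-k}$ and taking $M$ larger than the sum of the nilpotency orders of $\operatorname{ad}_{Y_i}$ on $g$ and of $Y_i$ on $v$ makes every summand annihilate $v$. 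As $v_\lambda\in W$ and $L(\lambda)$ is irreducible, $W=L(\lambda)$, so $Y_i$ is locally nilpotent on $L(\lambda)$. The only points needing genuine care are the sign bookkeeping in the $\operatorname{osp}(1|2)$ recursion for $d_k$ and in the super-Leibniz expansion of $Y_i^M g$; in the latter the signs do not affect the vanishing, and in the former they only change the parity pattern of $d_k$, not the final divisibility condition. The representation-theoretic heart is the classical $\mathfrak{sl}_2$ singular-vector argument, made applicable by the irreducibility of $L(\lambda)$.
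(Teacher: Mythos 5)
Your proof is correct and takes essentially the same route as the paper's: the isotropic case via $Y_i^2=\tfrac{1}{2}[Y_i,Y_i]=0$, the $\mathfrak{sl}_2$-triple $\{-\tfrac{1}{4}[X_i,X_i],\tfrac{1}{4}[Y_i,Y_i],\tfrac{1}{2}h_i\}$ inside $\operatorname{osp}(1|2)$ for odd non-isotropic roots, the singular-vector argument using irreducibility, and the reduction of local nilpotency on all of $L(\lambda)$ to nilpotency on $v_\lambda$ via local nilpotency of $\operatorname{ad}_{Y_i}$. The only difference is one of detail: you make explicit the recursion for the scalars $d_k$ and the submodule $W=\{v\mid Y_i^Nv=0\text{ for some }N\}$, which the paper compresses into the single assertion that for a Kac-Moody superalgebra $f$ is locally nilpotent on $V$ if and only if it is nilpotent on the highest weight vector.
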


\begin{proof}
If $\alpha_i$ is a simple even root, let $e=X_i$, $f=Y_i$ and $h=h_i$.  Then $\{e, f, h\}$ is an $\mathfrak{sl}_{2}$-triple. If
$\alpha_i$ is a simple odd non-isotropic root, let $e=-\frac{1}{4}[X_i,X_i]$, $f=\frac{1}{4}[Y_i,Y_i]$, and
$h=\frac{1}{2}h_i$.  Then $\{e, f, h\}$ is an $\mathfrak{sl}_{2}$-triple.  Since $\mathfrak{g}(A)$ is a Kac-Moody superalgebra, it
follows that $f$ is locally nilpotent on $V$ if and only if $f$ is nilpotent on the highest weight vector $v_{\lambda}$.
Now $f$ is nilpotent on $v_{\lambda}$ if and only if $\lambda(h) \in \mathbb{Z}_{\geq 0}$.  If $\alpha_i$ is odd, then
$[Y_i,Y_i] v = 2(Y_i)^{2} v$ for $v \in V$. Thus, $f$ is nilpotent on $v_{\lambda}$ if and only if $Y_i$ is nilpotent on
$v_{\lambda}$. Hence, $Y_i \in \mathfrak{g}(A)_{-\alpha_i}$ is locally nilpotent on $L(\lambda)$ if and only if
$\lambda_i \in 2^{p(i)}\mathbb{Z}_{\geq 0}$.  If $\alpha_i$ is a simple isotropic root, then $(Y_i)^{2}v=0$ for all $v
\in L(\lambda)$.
\end{proof}

\begin{lemma} \label{lm222}
Let $L(\lambda)$ be an irreducible integrable highest weight module of a regular Kac-Moody superalgebra
$\mathfrak{g}(A)$, and let $\alpha_s$ be a simple root.  Let $\mathfrak{n}'_{+} = r_s(\mathfrak{n}_+)$ and $A'=r_s(A)$.
Then after a simple even or odd reflection $r_s$ the module $L(\lambda)$ is an irreducible integrable highest weight
module of $\mathfrak{g}(A')$ with highest weight given below.
\begin{enumerate}
  \item If $\lambda(h_s)=0$, then $v'_{\lambda}=v_{\lambda}$ is a highest weight vector with respect to
$\mathfrak{n}'_{+}$ and the highest weight is $\lambda'=\lambda$.
  \item If $\lambda(h_s)\neq 0$ and $\alpha_s$ is isotropic, then $v'_{\lambda}=Y_s v_{\lambda}$ is a highest weight
vector with respect to $\mathfrak{n}'_{+}$ and the highest weight is $\lambda'=\lambda-\alpha_s$.
  \item If $\lambda(h_s)=k\neq 0$ and $\alpha_s$ is even, then $v'_{\lambda}= (Y_s)^{k} v_{\lambda}$ is a
highest weight vector with respect to $\mathfrak{n}'_{+}$ and the highest weight is $\lambda'=\lambda-k\alpha_s$.
 \item If $\lambda(h_s)=2n\neq 0$ and $\alpha_s$ is odd non-isotropic, then $v'_{\lambda}= (Y_s)^{2n} v_{\lambda}$ is a
highest weight vector with respect to $\mathfrak{n}'_{+}$ and the highest weight is $\lambda'=\lambda-2n\alpha_s$.
\end{enumerate}
\end{lemma}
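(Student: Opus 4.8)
The plan is to separate the three structural requirements—highest weight, irreducibility, and integrability—and observe that the last two are essentially automatic, so that all the content lies in exhibiting the new highest weight vector $v'_\lambda$ and checking $\mathfrak{n}'_+ v'_\lambda=0$. Since $\mathfrak{g}(A')\cong\mathfrak{g}(A)$ as Lie superalgebras (Lemma~\ref{lm1}), $L(\lambda)$ is literally the same module viewed through a new triangular decomposition: irreducibility is intrinsic and so is inherited, and integrability is preserved because it is defined through the real roots, whose set is unchanged by a reflection. Thus once I produce a nonzero weight vector $v'_\lambda$ with $\mathfrak{n}'_+ v'_\lambda=0$, irreducibility forces $U(\mathfrak{g}(A'))v'_\lambda=L(\lambda)$, whence $L(\lambda)=U(\mathfrak{n}'_-)v'_\lambda$ is an irreducible integrable highest weight module over $\mathfrak{g}(A')$ with highest weight $\lambda'=\operatorname{wt}(v'_\lambda)=r_s(\lambda)$, from which the four weight formulas read off at once.

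Next I would pin down the candidate by a local analysis at $\alpha_s$. By Lemma~\ref{lm67}, integrability forces $\lambda(h_s)\in\mathbb{Z}_{\ge0}$ when $\alpha_s$ is even and $\lambda(h_s)\in 2\mathbb{Z}_{\ge0}$ when $\alpha_s$ is odd non-isotropic, so the $\alpha_s$-string through $v_\lambda$ terminates and $(Y_s)^{k}v_\lambda$ with $k=\lambda(h_s)$ is the nonzero extremal vector, with $Y_s\cdot(Y_s)^{k}v_\lambda=(Y_s)^{k+1}v_\lambda=0$; as $r_s(\alpha_s)=-\alpha_s$, the new simple raising operator in the $\alpha_s$-direction is $Y_s$, so this is exactly $X_s' v'_\lambda=0$. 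For $\alpha_s$ isotropic, $(Y_s)^2=0$ gives $X_s'(Y_s v_\lambda)=0$ directly, and $Y_s v_\lambda\neq0$ precisely when $\lambda(h_s)\neq0$ since $\{X_s,Y_s\}=h_s$ yields $X_s Y_s v_\lambda=\lambda(h_s)v_\lambda$. The one delicate point is Case (1) with $\alpha_s$ isotropic and $\lambda(h_s)=0$, where I must show $Y_s v_\lambda=0$ outright: the relations $[X_j,Y_s]=\delta_{js}h_j$ give $\mathfrak{n}_+(Y_s v_\lambda)=0$, so $Y_s v_\lambda$ would be an $\mathfrak{n}_+$-invariant of weight $\lambda-\alpha_s\neq\lambda$, and since the only $\mathfrak{n}_+$-invariants in the irreducible $L(\lambda)$ are multiples of $v_\lambda$, it must vanish.

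It then remains to verify $X_i' v'_\lambda=0$ for $i\neq s$, which I would split according to the type of reflection. For an odd reflection I would compute directly from the explicit formulas for $X_i'$ recorded before Lemma~\ref{lm1}: when $X_i'=X_i$ the relations $[X_i,Y_s]=0$ and $X_i v_\lambda=0$ give the claim, and when $X_i'=[X_i,X_s]$ a single application of the super Jacobi identity reduces $[X_i,X_s](Y_s v_\lambda)$ to multiples of $X_i v_\lambda$ and of $[X_i,X_s]v_\lambda$, both zero. For an even reflection—including the case of an odd non-isotropic $\alpha_s$, where one uses the even $\mathfrak{sl}_2$-triple $\{-\tfrac14[X_s,X_s],\tfrac14[Y_s,Y_s],\tfrac12 h_s\}$ attached to $2\alpha_s$—I would instead use the reflection operator $\widetilde\sigma_s=\exp(f_s)\exp(-e_s)\exp(f_s)\in\operatorname{GL}(L(\lambda))$ formed from the $\mathfrak{sl}_2$-triple $\{e_s,f_s,h^{\circ}_s\}$ at $\alpha_s$, which is well defined because $e_s,f_s$ act locally nilpotently by integrability. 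Standard $\mathfrak{sl}_2$-theory shows $\widetilde\sigma_s$ maps $L(\lambda)_\mu$ onto $L(\lambda)_{r_s(\mu)}$, so $\widetilde\sigma_s v_\lambda$ is a nonzero multiple of $v'_\lambda$, while $\widetilde\sigma_s X_i\widetilde\sigma_s^{-1}$ lies in $\mathfrak{g}_{r_s(\alpha_i)}$ and is therefore proportional to the new raising operator $X_i'$; hence $X_i' v'_\lambda$ is proportional to $\widetilde\sigma_s X_i v_\lambda=0$.

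I expect the main obstacle to be this off-diagonal annihilation $X_i' v'_\lambda=0$ in the even case: carrying it out by brute bracket manipulation is unpleasant, so the crux is to replace that computation by the conjugation identity for $\widetilde\sigma_s$, after first justifying the operator's existence and its compatibility with the root spaces from integrability. The delicate isotropic subcase of (1) is the other place requiring genuine care, and it is settled by the uniqueness of $\mathfrak{n}_+$-invariants noted above; everything else is bookkeeping.
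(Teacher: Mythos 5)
Your proposal is correct, and its skeleton is the paper's: exhibit the extremal vector $v'_{\lambda}$ via the $\alpha_s$-string, inherit irreducibility because $\mathfrak{g}(A')\cong\mathfrak{g}(A)$ acts by the same operators, and deduce integrability from the fact that the real roots of $\mathfrak{g}(A)$ and $\mathfrak{g}(A')$ coincide --- the paper closes with exactly that remark. The genuine divergence lies in the two technical steps. For the termination $(Y_s)^{k+1}v_{\lambda}=0$ you invoke local nilpotence of $Y_s$ (legitimate, since $-\alpha_s$ is real, together with finite-dimensional $\mathfrak{sl}_2$ and $\mathfrak{osp}(1|2)$ theory), whereas the paper avoids local nilpotence of $Y_s$ entirely: it computes $X_s v_{k+1}=0$ from the string formulas and $X_i v_{k+1}=0$ for $i\neq s$ from $[X_i,Y_s]=0$, so that $v_{k+1}$ is a singular vector of weight different from $\lambda$ and vanishes by irreducibility --- the same mechanism you reserve for the delicate isotropic subcase of (1), where your uniqueness-of-$\mathfrak{n}_+$-invariants argument is exactly right. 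More substantially, for the off-diagonal annihilation $X_i'v'_{\lambda}=0$ in the even case you introduce the operator $\widetilde{\sigma}_s=\exp(f_s)\exp(-e_s)\exp(f_s)$; the paper has no counterpart --- after establishing $Y_s v_k=0$ it simply declares $v_k$ a highest weight vector for $\mathfrak{n}'_+$, leaving implicit the standard weight argument (integrability at $\alpha_s$ makes the weight set of $L(\lambda)$ stable under $r_s$, so $r_s(\lambda+\alpha_i)$ is not a weight; equivalently every weight is below $\lambda'$ in the order attached to $\Pi'$). So your route is more explicit on precisely the point the paper elides, at the cost of justifying $\widetilde{\sigma}_s$ (well-definedness from integrability of $L(\lambda)$, and the conjugation identity from integrability of the adjoint module, Lemma~\ref{lm411}); the weight-stability argument would be a lighter substitute, while your operator buys the bijections $L(\lambda)_{\mu}\to L(\lambda)_{r_s(\mu)}$ as a bonus. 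One cosmetic slip: writing $\lambda'=r_s(\lambda)$ is only meaningful for even reflections, since odd reflections do not act linearly on weights; but your case-by-case identification $\lambda'=\operatorname{wt}(v'_{\lambda})$ is the correct statement and reproduces all four formulas of the lemma.
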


\begin{proof}
(1) and (2) follow immediately from the facts that $[Y_s,Y_s] v_{\lambda} = 2(Y_s)^{2} v_{\lambda}$, and that $Y_s
v_{\lambda}=0$ if and only if $\lambda(h_s)=0$.  For (3) and (4) suppose now that $\alpha_s$ is a non-isotropic root.
Then by Lemma~\ref{lm67}, we have $k=\lambda(h_s) \in 2^{p(s)}\mathbb{Z}_{\geq 0}$ since the module $L(\lambda)$ is
assumed to be integrable.  If $\alpha_s$ is a simple even root, then $\{X_s, Y_s, h_s\}$ is an $\mathfrak{sl}_{2}$-triple.  Set $v_j =
\frac{1}{j!}(Y_s)^{(j)} v_{\lambda}$. Then $h_s v_j = (k-2j) v_j$ and $X_s v_j = (k+1-j)v_{j-1}$. Thus $X_s v_{k+1}=0$,
and $X_i v_{k+1} = 0$ for all $i \in I \setminus \{s\}$. Since the module $L(\lambda)$ is irreducible this implies that
$v_{k+1}=0$. Thus $Y_s v_{k} = 0$ with $v_{k}\neq 0$.  Hence, $L(\lambda)$ is a highest weight module with respect to
$\mathfrak{n}'_{+}$, with highest weight vector $v_{k}=\frac{1}{k!}(Y_s)^{k} v_{\lambda}$ and the highest weight is
$\lambda-k\alpha_s$.

Finally, suppose that $\alpha_s$ is an odd non-isotropic root.  Set $v_{j} = (Y_s)^{(j)} v_{\lambda}$. Then $h_s v_{j} =
(2n-2j) v_{j}$, $X_s v_{2i} = (2i) v_{2i-1}$ and $X_s v_{2i-1} = (2n+2-2i) v_{2i-2}$. Thus, $X_s (Y_s)^{2n+1} v_{\lambda}
= 0$, and $X_i (Y_s)^{2n+1} v_{\lambda} = 0$ for $i\in I\setminus \{s\}$. Since the module $L(\lambda)$ is irreducible,
we conclude that $(Y_s)^{2n+1} v_{\lambda} = 0$. Hence, $Y_s v_{2n} = 0$ with $v_{2n}\neq 0$. Therefore, $L(\lambda)$ is
a highest weight module with respect to $\mathfrak{n}'_{+}$, with highest weight vector $(Y_s)^{2n} v_{\lambda}$ and
highest weight $\lambda-(2n)\alpha_s$. The fact that the module $L(\lambda)$ is integrable as a $\mathfrak{g}(A')$ module
follows from the fact that the real roots of $\mathfrak{g}(A)$ and  $\mathfrak{g}(A')$ coincide.
\end{proof}

\begin{lemma}
Let $\mathfrak{g}(A)$ be a Kac-Moody superalgebra, and let $L(\lambda)$ be a highest weight module.  Let $\alpha_s$ and
$\alpha_i$ be simple non-isotropic roots.  Let $r_s$ be the even reflection with respect to $\alpha_s$ (or $2\alpha_s$ if
$\alpha_s$ is odd).  Suppose $Y_{i} \in \mathfrak{g}(A)_{-\alpha_i}$ is locally nilpotent on $L(\lambda)$, then $Y'_i \in
\mathfrak{g}(A)_{-r_s(\alpha_i)}$ is locally nilpotent on $L(\lambda)$.
\end{lemma}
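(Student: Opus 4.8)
The plan is to realize $Y'_i$ as the image of $Y_i$ under an invertible operator on $L(\lambda)$ that lifts the reflection $r_s$, and then to transfer local nilpotency from $Y_i$ to $Y'_i$ by conjugation. Since $\alpha_s$ is non-isotropic, it carries an $\mathfrak{sl}_2$-triple $\{e_s,f_s,h\}$: with $e_s=X_s$, $f_s=Y_s$, $h=h_s$ when $\alpha_s$ is even, and $e_s=-\tfrac14[X_s,X_s]$, $f_s=\tfrac14[Y_s,Y_s]$, $h=\tfrac12 h_s$ when $\alpha_s$ is odd (as recorded in the discussion preceding Lemma~\ref{lm411}). I would define the reflection operator $\tau_s=(\exp e_s)(\exp(-f_s))(\exp e_s)$ on $L(\lambda)$. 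First I would record the elementary bookkeeping: $r_s(\alpha_i)=\alpha_i-a_{si}\alpha_s$ (the same formula in the odd case, where $r_s$ is the reflection in the even root $2\alpha_s$ with coroot $\tfrac12 h_s$), and $a_{si}\le 0$ by condition $(2)$ of Lemma~\ref{lm23}, so $-r_s(\alpha_i)=-\alpha_i+a_{si}\alpha_s$ is again a negative root and $Y'_i\in\mathfrak{n}_-$ is a genuine lowering operator.

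Next I would verify that $\tau_s$ is a well-defined invertible operator conjugating the $\mathfrak{g}(A)$-action through $\operatorname{Ad}(\tau_s)=(\exp\operatorname{ad} e_s)(\exp(-\operatorname{ad} f_s))(\exp\operatorname{ad} e_s)$. Two ingredients are needed: (i) $\operatorname{ad} e_s$ and $\operatorname{ad} f_s$ are locally nilpotent on $\mathfrak{g}(A)$, which is exactly Lemma~\ref{lm411}; and (ii) $e_s$ and $f_s$ are locally nilpotent on $L(\lambda)$. Granting (i) and (ii), the standard identity $(\exp z)\,\rho(x)\,(\exp z)^{-1}=\rho\big((\exp\operatorname{ad} z)\,x\big)$, applied with $z=e_s$ and $z=-f_s$, yields $\tau_s\,\rho(x)\,\tau_s^{-1}=\rho\big(\operatorname{Ad}(\tau_s)\,x\big)$ for every $x\in\mathfrak{g}(A)$. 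Since $\operatorname{Ad}(\tau_s)$ is a Lie algebra automorphism sending $\mathfrak{g}(A)_\alpha$ onto $\mathfrak{g}(A)_{r_s(\alpha)}$, it maps the one-dimensional space $\mathfrak{g}(A)_{-\alpha_i}$ isomorphically onto $\mathfrak{g}(A)_{-r_s(\alpha_i)}$; hence $\operatorname{Ad}(\tau_s)(Y_i)$ is a nonzero element of the latter, i.e. $\operatorname{Ad}(\tau_s)(Y_i)=c\,Y'_i$ for some $c\neq0$.

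The conclusion is then immediate. As operators on $L(\lambda)$ we have $Y'_i=c^{-1}\,\tau_s\,Y_i\,\tau_s^{-1}$, so $(Y'_i)^N=c^{-N}\,\tau_s\,(Y_i)^N\,\tau_s^{-1}$ for all $N$. Because $\tau_s$ is invertible, $(Y'_i)^N v=0$ is equivalent to $(Y_i)^N\big(\tau_s^{-1}v\big)=0$; since $Y_i$ is locally nilpotent, for each $v$ some $N$ works, and therefore $Y'_i$ is locally nilpotent on $L(\lambda)$.

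The main obstacle is precisely ingredient (ii). Local nilpotency of $e_s$ is free: $e_s\in\mathfrak{n}_+$, and on a highest weight module every weight lies in $\lambda-\sum_j\mathbb{Z}_{\ge0}\alpha_j$, so repeatedly raising a weight vector by the positive root of $e_s$ must eventually exit this set. The genuine content is local nilpotency of $f_s$, equivalently of $Y_s$, that is, integrability of $L(\lambda)$ in the $\alpha_s$-direction; by Lemma~\ref{lm67} this is the condition $\lambda_s\in 2^{p(s)}\mathbb{Z}_{\ge0}$, and it is exactly what makes $\tau_s$ available (without it, $\tau_s$ would involve the unbounded $\exp(-f_s)$ and the argument would not run). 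Accordingly I would carry out the proof under the standing hypothesis, implicit in the integrable setting of the surrounding results (cf. Lemma~\ref{lm222}), that $L(\lambda)$ is integrable along the non-isotropic root $\alpha_s$, so that $Y_s$ is locally nilpotent and the reflection operator $\tau_s$ acts on $L(\lambda)$.
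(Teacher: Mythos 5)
Your proposal is correct under the hypothesis you flag, but it takes a genuinely different route from the paper's. The paper argues through weights: it invokes the criterion of Lemma~\ref{lm67} (local nilpotency of $Y_i$ on $L(\lambda)$ is equivalent to $\lambda(h_i)\in 2^{p(i)}\mathbb{Z}_{\geq 0}$), reduces to the case where $\alpha_s$ and $\alpha_i$ are both even, passes to the reflected base via Lemma~\ref{lm222}, where the new highest weight is $\lambda'=\lambda-\lambda(h_s)\alpha_s$, and then verifies by direct computation that $\lambda'(h'_i)=(\lambda-\lambda(h_s)\alpha_s)(h_i-a_{is}h_s)=\lambda(h_i)$, so the nilpotency criterion is preserved. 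You instead integrate the $\mathfrak{sl}_2$-triple attached to $\alpha_s$ (or $2\alpha_s$) to the reflection operator $\tau_s=(\exp e_s)(\exp(-f_s))(\exp e_s)$ on $L(\lambda)$ and transfer local nilpotency by conjugation, using that $\operatorname{Ad}(\tau_s)$ carries $\mathfrak{g}(A)_{-\alpha_i}$ onto $\mathfrak{g}(A)_{-r_s(\alpha_i)}$. Your route buys uniformity (no reduction to the even case, no weight bookkeeping) and dispenses with irreducibility of $L(\lambda)$, which the paper's argument tacitly uses through Lemma~\ref{lm222}; the paper's route buys brevity, staying entirely within already-established lemmas and avoiding the need to verify that $\tau_s$ is well defined and that conjugation by it realizes $\operatorname{Ad}(\tau_s)$ on the module. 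It is worth stressing that the standing hypothesis you add --- integrability of $L(\lambda)$ along $\alpha_s$ --- is not merely a convenience of your method: it is silently present in the paper's proof as well, since the formula $\lambda'=\lambda-\lambda(h_s)\alpha_s$ from Lemma~\ref{lm222} presupposes $\lambda(h_s)\in 2^{p(s)}\mathbb{Z}_{\geq 0}$, i.e.\ that $Y_s$ is locally nilpotent; and without it the statement is actually false. Already for $\mathfrak{sl}_3$ with $\lambda(h_2)\in\mathbb{Z}_{\geq 0}$ but $\lambda(h_1)\notin\mathbb{Z}$, the element $Y'_2\in\mathfrak{g}_{-\alpha_1-\alpha_2}$ cannot be locally nilpotent on $L(\lambda)$: since $X_{\alpha_1+\alpha_2}$ is automatically locally nilpotent on a highest weight module, local nilpotency of $Y'_2$ would force $\lambda(h_1)+\lambda(h_2)\in\mathbb{Z}_{\geq 0}$. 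The hypothesis does hold wherever the paper applies the lemma (in the integrable setting of Corollary~\ref{cor2}), so your explicit flag matches the intended use and makes the argument more honest.
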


\begin{proof}
The reflection $r_s$ does not change the Cartan matrix, so $\mathfrak{g}(A')$ is again a Kac-Moody superalgebra. Thus, it
suffices to show that $Y'_i$ is nilpotent on the highest weight vector $v_{\lambda'}$ of $L(\lambda')$. It is sufficient
to consider the case when both $\alpha_s$ and $\alpha_i$ are even roots. Then this is equivalent to the condition
$\lambda'(h'_i) \in \mathbb{Z}_{\geq 0}$. Now $\lambda' = \lambda - \lambda(h_s)\alpha_s$. We have $\alpha'_s= -\alpha_s$
and $\alpha'_i = \alpha_i - a_{si} \alpha_s$. Also, $h'_s=-h_s$ and $h'_i=h_i - a_{is}h_s$. Then
\begin{equation*}
\lambda'(h'_i)= (\lambda - \lambda(h_s)\alpha_s)(h_i - a_{is}h_s) = \lambda(h_i).
\end{equation*}
Since $Y_i$ is locally nilpotent on $L(\lambda)$ we have $\lambda(h_i) \in \mathbb{Z}_{\geq 0}$. Hence $Y'_i$ is locally
nilpotent.
\end{proof}

\begin{corollary} \label{cor2}
Let $\mathfrak{g}(A)$ be a regular Kac-Moody superalgebra.  The irreducible highest weight module $L(\lambda)$ is an
integrable module if and only if the element $Y_{\alpha} \in \mathfrak{g}(A)_{-\alpha}$ is locally nilpotent on
$L(\lambda)$ for each principal root $\alpha$.
\end{corollary}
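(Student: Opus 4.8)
The plan is to prove both implications by tracking lowering operators through reflections. The forward implication is immediate: if $L(\lambda)$ is integrable then $X_\gamma$ is locally nilpotent for every real root $\gamma$, and since every principal root $\alpha$ is a positive even real root and the set of real roots is closed under negation (reflect $\alpha$ by its own even reflection), $-\alpha$ is real and $X_{-\alpha}=Y_\alpha$ is locally nilpotent. For the converse I would first reduce to lowering operators. On a highest weight module all weights lie in $\lambda-\sum_i\mathbb{Z}_{\geq0}\alpha_i$, so for any positive root $\gamma$ the vectors $X_\gamma^{N}v$ eventually leave this cone; hence $X_\gamma$ is automatically locally nilpotent. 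Splitting real roots into positive and negative, integrability of $L(\lambda)$ is therefore equivalent to $Y_\beta$ being locally nilpotent for every positive real root $\beta$.

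Since a real root is, up to the factor $\tfrac12$, a simple root in some base obtained from $\Pi$ by a sequence of even and odd reflections, I would prove by induction on the length of such a sequence the strengthened statement $P$: for the base $\Pi'$ reached, $L(\lambda)$ is an irreducible highest weight module with respect to $\Pi'$, and each non-isotropic simple root vector $Y_i'$ of $\Pi'$ is locally nilpotent on $L(\lambda)$. The base case $\Pi'=\Pi$ follows from the hypothesis together with Lemma~\ref{lm67}: an even non-isotropic simple root is principal, while for an odd non-isotropic simple root $\alpha_i$ the root $2\alpha_i$ is principal, and in both cases local nilpotency of the given principal lowering operator is equivalent via Lemma~\ref{lm67} to $\lambda(h_i)\in 2^{p(i)}\mathbb{Z}_{\geq0}$, hence to local nilpotency of $Y_i$. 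For the inductive step at an even reflection $r_s$, the hypothesis $P$ already supplies local nilpotency of $Y_s$, which is precisely the input the computation in the proof of Lemma~\ref{lm222}(3),(4) requires; this keeps $L(\lambda)$ a highest weight module in the new base, and the lemma immediately preceding this corollary then propagates local nilpotency from each old non-isotropic simple root to its image $r_s(\alpha_i)$ (the image $-\alpha_s$ of $\alpha_s$ itself is harmless, since its lowering operator is the raising operator $X_s$, already locally nilpotent). Thus the induction is not circular even though Lemma~\ref{lm222} is stated for integrable modules.

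The inductive step at an odd reflection is where the work lies, and I expect it to be the main obstacle. Here Lemma~\ref{lm222}(1),(2) keeps $L(\lambda)$ a highest weight module, with new highest weight $\lambda$ or $\lambda-\alpha_s$, without invoking integrability; the remaining task is to verify, for each new non-isotropic simple root $\alpha_i'=\alpha_i+\alpha_s$, that $\lambda'(h_i')\in 2^{p'(i)}\mathbb{Z}_{\geq0}$, so that Lemma~\ref{lm67} gives local nilpotency of $Y_i'$ (new isotropic simple roots again need nothing). I would carry this out using the explicit formulas for $h_i'$ and $a_{ij}'$ recorded after Lemma~\ref{lm1}, feeding in $\lambda(h_s),\lambda(h_i)\in 2^{p}\mathbb{Z}_{\geq0}$ from $P$ and the defining conditions of a generalized Cartan matrix (Lemma~\ref{lm23}). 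The cleanest way to organize the bookkeeping is to localize to the rank-two subalgebra $\mathfrak{g}(A_{\{s,i\}})$ (Lemma~\ref{lm20}): since $A$ is a generalized Cartan matrix and $\alpha_s$ is isotropic, this subdiagram is a two-vertex regular Kac-Moody diagram with an isotropic vertex, hence is $A(1,0)$ or $B(1,1)$ by the classification of Section~\ref{rkm23}, and in each of these two cases the transformation of the integrality condition under the odd reflection is a short explicit check. Once $P$ is established for all reflection sequences, the conclusion follows: given a positive real root $\beta$, choose a base in which $\beta$ or $\tfrac12\beta$ is simple; by $P$ and Lemma~\ref{lm67} the corresponding lowering operator is locally nilpotent (for $\beta=2\gamma$ with $\gamma$ odd non-isotropic one uses $Y_\beta\propto[Y_\gamma,Y_\gamma]$), so $X_{-\beta}$ is locally nilpotent, yielding integrability.
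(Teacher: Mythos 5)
Your scaffolding (reduce to lowering operators at positive real roots, induct over reflection sequences, route even reflections through the unlabeled lemma preceding the corollary, and note that Lemma~\ref{lm222}(1),(2) and the relevant parts of (3),(4) never actually use integrability, only the local nilpotency already in the inductive hypothesis) is exactly the architecture the paper intends, and your forward direction and the even-reflection step are fine. But the odd-reflection step, which you correctly identify as where the work lies, does not go through, and no computation can make it go through: the statement $P(\Pi')\Rightarrow P(r_s\Pi')$ is false. The integrality conditions in the reflected base are \emph{not} consequences of those in the current base; they are genuinely new constraints, because they involve the coordinate $\lambda(h_s)$ in the isotropic direction, on which $P$ imposes nothing (Lemma~\ref{lm67} says $Y_s$ is \emph{always} locally nilpotent when $\alpha_s$ is isotropic). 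Your phrase ``feeding in $\lambda(h_s),\lambda(h_i)\in 2^{p}\mathbb{Z}_{\geq0}$ from $P$'' is precisely the slip: $P$ supplies no such condition on $\lambda(h_s)$. Concretely, take a $B(1,1)$ pair with $\alpha_s$ isotropic, $\alpha_i$ odd non-isotropic, $a_{ss}=0$, $a_{si}=1$, $a_{is}=-2$, $a_{ii}=2$. Then $\alpha_i'=\alpha_i+\alpha_s$ is even, $h_i'=2h_s-h_i$ already satisfies $\alpha_i'(h_i')=2$, and for $\lambda(h_s)\neq 0$ Lemma~\ref{lm222}(2) gives $\lambda'=\lambda-\alpha_s$, so the new condition from Lemma~\ref{lm67} reads
\begin{equation*}
\lambda'(h_i')=2\lambda(h_s)-\lambda(h_i)-2\in\mathbb{Z}_{\geq 0},
\end{equation*}
which manifestly cannot be derived from the single old condition $\lambda(h_i)\in 2\,\mathbb{Z}_{\geq0}$ when $\lambda(h_s)$ is an arbitrary scalar (take $\lambda(h_s)=\pi$). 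The same phenomenon occurs for the other $B(1,1)$ configuration (even vertex attached to the isotropic one), where the new condition is $2\lambda(h_s)-\lambda(h_i)+1\in 2\,\mathbb{Z}_{\geq0}$; only the $A(1,0)$ pairs are harmless, since there the reflected simple root is again isotropic. You can see the footprint of these irreducibly new conditions in the paper's own results: the integrability condition $\lambda_2+\lambda_3-1\in\mathbb{Z}_{\geq0}$ for $S(1,2,\alpha)$ involves the isotropic coordinates and the shift coming from $\lambda'=\lambda-\alpha_s$, and is not implied by any condition at the initial base.

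The repair is to use the hypothesis instead of trying to derive it, and this is how the paper's (essentially unwritten) proof works. For a base $\Pi'$ obtained from $\Pi$ by a sequence of \emph{odd} reflections, every even simple root of $\Pi'$ is by definition a principal root, and every odd non-isotropic simple root $\beta$ of $\Pi'$ has $2\beta$ principal with $Y_{2\beta}\propto Y_\beta^2$; so the corollary's hypothesis hands you, via Lemma~\ref{lm67}, exactly the lm67-conditions in \emph{every} odd-reflected base, with no computation. The induction then only needs to establish that $L(\lambda)$ remains an irreducible highest weight module along the sequence (Lemma~\ref{lm222}(1),(2), which as you noted is integrability-free), after which the unlabeled lemma propagates local nilpotency through even reflections as in your even step. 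In other words: the quantifier ``for each principal root'' in the statement is doing real work — it is the union of the conditions over all odd-reflected bases, and these conditions are mutually independent — whereas your induction tacitly compresses it to the conditions at one base, which is strictly weaker and makes the converse implication you are proving false at the first odd reflection meeting a $B(1,1)$ pair.
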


We call a weight $\lambda$ {\em typical} if for any real isotropic root $\alpha$ we have $(\lambda+\rho)(h_\alpha)\neq 0$.

\subsection{Integrable modules of the Lie superalgebra $S(1,2,\alpha)$}

Now we describe the weights for integrable highest weight modules of the Lie superalgebra $S(1,2,\alpha)$. The Cartan
matrix for the superalgebra $S(1,2,\alpha)$ is
\begin{equation*}
B= \left(\begin{array}{ccc}
2 & -1 & -1 \\
-1+\alpha & 0 & 1 \\
-1-\alpha & 1 & 0 \\
\end{array} \right)
\end{equation*}
with $\alpha\neq 0$ and $\frac{1}{\alpha} \not\in\mathbb{Z}$.

\begin{lemma}
Let $L(\lambda)$ be an irreducible highest weight module for $S(1,2,\alpha)$. If $\lambda_2 \neq 0$ or $\lambda_3 \neq
0$, then $L(\lambda)$ is integrable if and only if
\begin{eqnarray*}
\lambda_1 &\in &\mathbb{Z}_{\geq 0} \\
\lambda_2+\lambda_3 -1 &\in &\mathbb{Z}_{\geq 0}.
\end{eqnarray*}
If $\lambda_2 = \lambda_3 = 0$, then $L(\lambda)$ is integrable if and only if $\lambda_1 \in \mathbb{Z}_{\geq 0}$.
\end{lemma}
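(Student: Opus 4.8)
The plan is to apply Corollary~\ref{cor2}, which reduces integrability of $L(\lambda)$ to the local nilpotency of $Y_\alpha$ for every principal root $\alpha$, so the first task is to pin down the set $\Pi_0$ of principal roots of $S(1,2,\alpha)$. The root $\alpha_1$ (with $a_{11}=2$) lies in the standard base, hence is principal. Performing the odd reflection $r_2$ at the isotropic root $\alpha_2$ (here $a_{23}=1\neq0$, so $\alpha_2$ is connected to $\alpha_3$), the root $\alpha_2+\alpha_3=r_2(\alpha_3)$ becomes a simple \emph{even} root of the reflected base; after the rescaling of Remark~\ref{remrescale} its coroot is $h_{\alpha_2+\alpha_3}=h_2+h_3$, normalized so that $(\alpha_2+\alpha_3)(h_2+h_3)=2$. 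Thus $\alpha_2+\alpha_3$ is principal, and the same root arises from $r_3$. I would then verify, by iterating the explicit odd-reflection formulas exactly as in the three-vertex analysis of $S(1,2,\alpha)$ in Section~\ref{rkm23}, that every base reachable by odd reflections has its unique even simple root equal to $\alpha_1$ or to $\alpha_2+\alpha_3$, and that every odd simple root appearing is isotropic (no odd non-isotropic vertex ever occurs in the reflected diagrams). Hence $\Pi_0=\{\alpha_1,\alpha_2+\alpha_3\}$ and there are no principal roots of the form $2\beta$. As a consistency check, the matrix $B$ of Lemma~\ref{lemma B} associated to $\Pi_0$ is $\left(\begin{smallmatrix}2&-2\\-2&2\end{smallmatrix}\right)$, the affine matrix $A_1^{(1)}$, which agrees with finite growth.

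With $\Pi_0$ known, both directions come down to computing coroot pairings. Local nilpotency of $Y_{\alpha_1}=Y_1$ is, by Lemma~\ref{lm67}, equivalent to $\lambda_1\in\mathbb{Z}_{\geq0}$. For $\alpha_2+\alpha_3$, which is not simple in the standard base, I would pass to a base in which it is simple. The crucial point is that cases (1) and (2) of Lemma~\ref{lm222} compute the new highest weight \emph{unconditionally}, since their proof uses only $Y_s v_\lambda=0\iff\lambda(h_s)=0$. If $\lambda_2\neq0$, then $r_2$ presents $L(\lambda)$ as a highest weight module over $\mathfrak{g}(r_2(A))$ with highest weight $\lambda'=\lambda-\alpha_2$, and a direct computation gives $\lambda'(h_2+h_3)=\lambda_2+\lambda_3-a_{22}-a_{32}=\lambda_2+\lambda_3-1$; symmetrically, if $\lambda_3\neq0$ one reflects by $r_3$ and again obtains $\lambda'(h_2+h_3)=\lambda_2+\lambda_3-1$. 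By Lemma~\ref{lm67} applied in the reflected base, $Y_{\alpha_2+\alpha_3}$ is locally nilpotent on $L(\lambda)$ if and only if this number lies in $\mathbb{Z}_{\geq0}$.

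This yields both implications. For necessity: integrability forces $Y_{\alpha_1}$ and $Y_{\alpha_2+\alpha_3}$ locally nilpotent, giving $\lambda_1\in\mathbb{Z}_{\geq0}$ and, when $\lambda_2\neq0$ or $\lambda_3\neq0$, $\lambda_2+\lambda_3-1\in\mathbb{Z}_{\geq0}$. For sufficiency, assuming these conditions: $Y_{\alpha_1}$ is locally nilpotent, and for $Y_{\alpha_2+\alpha_3}$ I split on whether $\lambda_2=\lambda_3=0$. If not, the computation above together with the hypothesis gives local nilpotency; if $\lambda_2=\lambda_3=0$, then case (1) of Lemma~\ref{lm222} gives $\lambda'=\lambda$ and $\lambda'(h_2+h_3)=\lambda_2+\lambda_3=0\in\mathbb{Z}_{\geq0}$, so $Y_{\alpha_2+\alpha_3}$ is automatically locally nilpotent and only $\lambda_1\in\mathbb{Z}_{\geq0}$ is needed. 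This is exactly the dichotomy in the statement, and Corollary~\ref{cor2} then delivers integrability.

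The main obstacle is the determination of $\Pi_0$: one must check that the orbit of the standard base under odd reflections produces no even simple root besides $\alpha_1$ and $\alpha_2+\alpha_3$ and no odd non-isotropic simple root, so that Corollary~\ref{cor2} reduces to these two conditions. The remaining, easily overlooked subtlety is that the highest weight drops by $-\alpha_s$ under an odd reflection precisely when $\lambda(h_s)\neq0$; this $-1$ in the pairing is what produces $\lambda_2+\lambda_3-1$ rather than $\lambda_2+\lambda_3$, and its disappearance when $\lambda_2=\lambda_3=0$ is the source of the two separate cases.
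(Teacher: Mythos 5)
Your proposal is correct and takes essentially the same route as the paper's own proof: Corollary~\ref{cor2} reduces integrability to the principal roots $\Pi_0=\{\alpha_1,\alpha_2+\alpha_3\}$, and the odd reflections $r_2$, $r_3$ combined with Lemmas~\ref{lm222} and~\ref{lm67} give exactly the pairing $\lambda'(h_2+h_3)=\lambda_2+\lambda_3-1$ when $\lambda_2\neq 0$ or $\lambda_3\neq 0$, and the vacuous condition $\lambda(h_2+h_3)=0$ when $\lambda_2=\lambda_3=0$. Your two elaborations---verifying $\Pi_0$ by iterating the odd-reflection formulas (the paper merely asserts the principal roots) and noting that cases (1)--(2) of Lemma~\ref{lm222} hold without the integrability hypothesis, which the sufficiency direction genuinely needs---only make explicit steps the paper leaves tacit.
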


\begin{proof}
By Corollary~\ref{cor2}, it suffices to find the conditions for the principal roots to be locally nilpotent on
$L(\lambda)$. The principal roots of $S(1,2,\alpha)$ are $\alpha_1$ and $\alpha_2+\alpha_3$.  We have that $h_{\alpha_2 + \alpha_3}=h_2+h_3$ by Lemma~\ref{lm1} and the formula above it, (after rescaling $h_{\alpha_2 + \alpha_3}$ so that $\alpha_{\alpha_2 + \alpha_3}(h_{\alpha_2 + \alpha_3})=2$; see Remark~\ref{remrescale}).  By Lemma~\ref{lm67}, $Y_1 \in \mathfrak{g}(A)_{-\alpha_1}$ is locally nilpotent on
$L(\lambda)$ if and only if $\lambda_1 \in \mathbb{Z}_{\geq 0}$. First suppose $\lambda_2 \neq 0$ and consider the odd
reflection $r_2$. We have that $r_2(\alpha_3) = \alpha_2 + \alpha_3$ and by Lemma~\ref{lm222} we have that
$\lambda'=\lambda-\alpha_2$. Then by Lemma~\ref{lm67}, $Y'_3 \in \mathfrak{g}(A)_{-(\alpha_2 + \alpha_3)}$ is locally
nilpotent on $L(\lambda)$ if and only if $\lambda'(h'_3) \in \mathbb{Z}_{\geq 0}$ where
\begin{equation*}
\lambda'(h'_3) = (\lambda-\alpha_2)(h_2+h_3)= \lambda_2 + \lambda_3 - 1.
\end{equation*}
The argument for $\lambda_3 \neq 0$ is similar and yields the same condition.  Finally, if $\lambda_2 = \lambda_3 = 0$
then the additional integrability condition is $\lambda(h_2+h_3) \in \mathbb{Z}_{\geq 0}$, which is vacuously satisfied.
\end{proof}

\subsection{Integrable modules of the Lie superalgebra $Q^{\pm}(m,n,t)$}

Now we describe the weights for integrable highest weight modules of the Lie superalgebra $Q^{\pm}(m,n,t)$. The Cartan
matrix for $Q^{\pm}(m,n,t)$ is
\begin{equation*}
\left(\begin{array}{ccc}
0 & 1 & a \\
b & 0 & 1 \\
1 & c & 0 \\
\end{array}\right)
\hspace{2cm}
\begin{array}{l}
1+a+\frac{1}{b} = m \\
1+b+\frac{1}{c} = n \\
1+c+\frac{1}{a} = t \\
\end{array}
\end{equation*}
with $m,n,t \in \mathbb{Z}_{\leq -1}$, not all equal to -1, and the simple roots are odd isotropic.  The principal even
roots are $\{\alpha_1+\alpha_2, \alpha_2+\alpha_3, \alpha_1+\alpha_3\}$.  One can check that for $i\neq j$,
\begin{equation*}
h_{\alpha_i+\alpha_j}=\frac{h_j}{a_{ji}}+\frac{h_i}{a_{ij}}
\end{equation*}
using the formula appearing before Lemma~\ref{lm1} and rescaling $h_{\alpha_i + \alpha_j}$ so that $(\alpha_i+\alpha_j)(h_{\alpha_i+\alpha_j})=2$.

\begin{lemma}
A highest weight module $V(\lambda)$ for the algebra $Q^{\pm}(m,n,t)$ is typical if and only if $\lambda_1, \lambda_2,
\lambda_3 \neq 0$.
\end{lemma}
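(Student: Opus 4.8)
The plan is to reduce typicality to a condition on the three simple roots. First I would record the structural facts specific to $Q^{\pm}(m,n,t)$: by Corollary~\ref{cor3} the Cartan matrix is nonsingular ($\det = 1+abc\neq 0$), so $\dim\mathfrak{h}=3$ and $h_1,h_2,h_3$ form a basis of $\mathfrak{h}$. Since every diagonal entry is zero, the Weyl vector satisfies $\rho(h_i)=\tfrac12 a_{ii}=0$ for $i=1,2,3$, and as the $h_i$ span $\mathfrak{h}$ this forces $\rho=0$. Consequently $V(\lambda)$ is typical exactly when $\lambda(h_\alpha)\neq 0$ for every real isotropic root $\alpha$. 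The three simple roots $\alpha_1,\alpha_2,\alpha_3$ are real isotropic with coroots $h_{\alpha_i}=h_i$, so typicality immediately gives $\lambda(h_i)=\lambda_i\neq 0$; this settles the ``only if'' direction.

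For the converse I would show that, up to sign and nonzero scalar, the coroots of all real isotropic roots are exhausted by $h_1,h_2,h_3$, i.e.\ that the real isotropic roots of $Q^{\pm}$ are precisely $\pm\alpha_1,\pm\alpha_2,\pm\alpha_3$. The decisive point is that in the standard base \emph{all} simple roots are isotropic, so the only reflections available from it are the odd reflections $r_1,r_2,r_3$. By the three-vertex classification the diagrams reachable by odd reflections form the finite collection $\mathcal{C}(\Gamma)$ consisting of the all-isotropic triangle and the three triangles with a single isotropic vertex. I would track the isotropic simple root and its coroot through these reflections using the explicit coroot formula preceding Lemma~\ref{lm1} (the cases $h_i'=(-1)^{p(\alpha_i)}(a_{ik}h_k+a_{ki}h_i)$, $h_i'=h_i$, and $h_k'=h_k$), verifying that in every diagram of $\mathcal{C}(\Gamma)$ the isotropic simple root lies in $\{\pm\alpha_1,\pm\alpha_2,\pm\alpha_3\}$ with coroot proportional to the corresponding $h_i$.

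Concretely, reflecting the all-isotropic base at $\alpha_k$ sends $\alpha_k\mapsto-\alpha_k$ (with coroot $h_k$) and makes the other two vertices even, so the unique isotropic simple root of $r_k(\Gamma)$ is $-\alpha_k$; moreover the only odd reflection available in $r_k(\Gamma)$ is the one returning to $\Gamma$. Thus the isotropic roots occurring as simple roots across $\mathcal{C}(\Gamma)$ are exactly $\{\pm\alpha_i\}$, each with coroot $\pm h_i$, and $\lambda(h_{\pm\alpha_i})=\pm\lambda_i$. The hypothesis $\lambda_1,\lambda_2,\lambda_3\neq 0$ then yields $\lambda(h_\alpha)\neq 0$ for every real isotropic $\alpha$, which is typicality; invariance of the real roots under the reflections (used in Lemma~\ref{lm222}) guarantees this description is independent of the chosen base.

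The hard part will be controlling the coroots of \emph{all} real isotropic roots, not merely those visible after a single odd reflection: a priori the definition of real root permits even reflections at the even simple vertices of the one-isotropic diagrams, and I must confirm that these do not produce an isotropic simple root whose coroot fails to be proportional to some $h_i$. I expect to handle this by exploiting the finiteness of $\mathcal{C}(\Gamma)$ together with the fact that each even reflection available in a one-isotropic diagram sends its isotropic vertex back into $\{\pm\alpha_i\}$, so that the verification reduces to a bounded computation with the coroot formula. The remaining care is purely bookkeeping of the rescalings from Remark~\ref{remrescale}, which I would carry along to ensure all proportionality constants between the reflected coroots and the $h_i$ remain nonzero.
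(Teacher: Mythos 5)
Your first two paragraphs reproduce the paper's (one-sentence) argument, made slightly more explicit: odd reflections of the all-isotropic triangle produce exactly one isotropic simple root $-\alpha_k$, the only odd reflection available from $r_k(\Pi)$ returns to $\Pi$, so the simple isotropic roots occurring across $\mathcal{C}(\Gamma)$ are exactly $\pm\alpha_1,\pm\alpha_2,\pm\alpha_3$ with coroots $\pm h_i$; since $\rho(h_i)=\tfrac12 a_{ii}=0$ and, by Corollary~\ref{cor3}, $h_1,h_2,h_3$ span $\mathfrak{h}$, in fact $\rho=0$, and the typicality conditions reduce to $\lambda_1,\lambda_2,\lambda_3\neq 0$. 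Up to that point you and the paper agree.

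The gap is in your third paragraph. The claim on which your converse rests --- that the real isotropic roots are \emph{precisely} $\pm\alpha_1,\pm\alpha_2,\pm\alpha_3$, to be checked by a ``bounded computation'' using the finiteness of $\mathcal{C}(\Gamma)$ --- is false. $\mathcal{C}(\Gamma)$ is finite only because it records odd reflections; the set of bases reachable by even \emph{and} odd reflections is infinite, since the even--even entries of the one-isotropic Cartan matrices are $m,n,t\in\mathbb{Z}_{\leq -1}$, not all $-1$, so the group generated by the even reflections is infinite (cf.\ the matrix $B$ in Lemma~\ref{Qnotfg}). Concretely, take $r_1(\Pi)=\{\alpha_1+\alpha_2,\,-\alpha_1,\,\alpha_1+\alpha_3\}$. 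The even reflection at $\beta=\alpha_1+\alpha_2$ indeed sends $-\alpha_1\mapsto\alpha_2$ (your one-step claim holds), but it simultaneously replaces $\alpha_1+\alpha_3$ by $\gamma=(\alpha_1+\alpha_3)+|m'|(\alpha_1+\alpha_2)$ with $|m'|\geq 1$; since even reflections preserve the Cartan matrix, the entry from the $\gamma$-vertex to the isotropic vertex is still $-1$, so the \emph{next} even reflection, at $\gamma$, sends $\alpha_2\mapsto\alpha_2+\gamma=(1+|m'|)(\alpha_1+\alpha_2)+\alpha_3$, an isotropic simple root of the resulting base that lies outside $\{\pm\alpha_i\}$ and whose coroot is not proportional to any $h_i$. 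Iterating yields infinitely many real isotropic roots, and $\lambda_1,\lambda_2,\lambda_3\neq 0$ does not force $(\lambda+\rho)(h_\alpha)\neq 0$ on this whole orbit; so under the literal definition of real root that you adopt, your program cannot be completed (indeed the ``if'' direction would fail for suitable $\lambda$). The paper's proof implicitly reads the typicality conditions off the simple isotropic roots of bases obtained by \emph{odd} reflections only, which is also exactly how typicality is consumed afterwards (Lemma~\ref{lm222}(2) inside the proof of Lemma~\ref{lm111}). Under that reading your first two paragraphs already constitute the whole proof and the third is unnecessary; what you flagged as bookkeeping with the rescalings of Remark~\ref{remrescale} is in fact the point at which the stronger claim breaks.
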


\begin{proof}
Since odd reflections of the diagram $\Gamma$ do not yield new simple odd roots, the only conditions for the module to be typical are $\lambda(h_1), \lambda(h_2), \lambda(h_3) \neq 0$.
\end{proof}

\begin{lemma}\label{lm111}
An irreducible highest weight module for $Q^{\pm}(m,n,t)$, with typical weight, is integrable if and only if
\begin{align*}
&\lambda_1+ \frac{1}{b}\lambda_2 -1 \\
&\lambda_2+ \frac{1}{c}\lambda_3 -1  \hspace{1cm} \in \mathbb{Z}_{\geq 0}.\\
&\lambda_3+ \frac{1}{a}\lambda_1 -1 \\
\end{align*}
\end{lemma}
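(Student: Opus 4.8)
The plan is to reduce the integrability of $L(\lambda)$ to a local-nilpotency condition on the principal roots via Corollary~\ref{cor2}, which says that $L(\lambda)$ is integrable if and only if $Y_\alpha$ is locally nilpotent on $L(\lambda)$for every principal root $\alpha$. As recorded just before the statement, the principal roots of $Q^{\pm}(m,n,t)$ are precisely $\alpha_1+\alpha_2$, $\alpha_2+\alpha_3$, and $\alpha_3+\alpha_1$. I would therefore treat these three roots one at a time and show that each contributes exactly one of the three stated inequalities. Since integrability is the conjunction of the three local-nilpotency conditions, proving each equivalence separately settles both directions at once.

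For a fixed principal root $\alpha_i+\alpha_j$ I would apply the odd reflection $r_i$ at the isotropic root $\alpha_i$. Because the weight is typical we have $\lambda_i=\lambda(h_i)\neq 0$, so case (2) of Lemma~\ref{lm222} applies: $Y_i v_\lambda$ is a highest weight vector for $\mathfrak{n}'_{+}=r_i(\mathfrak{n}_+)$, and $L(\lambda)$ is an irreducible highest weight module over $\mathfrak{g}(A')$ with highest weight $\lambda'=\lambda-\alpha_i$. Only this odd-isotropic case of Lemma~\ref{lm222} is used, and it relies solely on $Y_s v_\lambda=0 \Leftrightarrow \lambda(h_s)=0$, not on integrability, so there is no circularity in characterizing integrability this way. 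The reflection sends $\alpha_j$ to the simple root $r_i(\alpha_j)=\alpha_i+\alpha_j$ of $\mathfrak{g}(A')$, and the reflection formula for the diagonal entry gives $a'_{jj}=2a_{ij}a_{ji}\neq 0$, so this new simple root is even (non-isotropic) after the rescaling of Remark~\ref{remrescale}. As $\mathfrak{g}(A)_{-(\alpha_i+\alpha_j)}$ is one-dimensional, the reflected generator $Y'_j$ spans it and is proportional to $Y_{\alpha_i+\alpha_j}$; hence $Y_{\alpha_i+\alpha_j}$ is locally nilpotent on $L(\lambda)$ if and only if $Y'_j$ is.

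Now I would apply Lemma~\ref{lm67} in the new base: since $r_i(\alpha_j)$ is even, $Y'_j$ is locally nilpotent if and only if $\lambda'(h_{\alpha_i+\alpha_j})\in\mathbb{Z}_{\geq 0}$, where $h_{\alpha_i+\alpha_j}=\frac{h_j}{a_{ji}}+\frac{h_i}{a_{ij}}$ is the coroot normalized so that $(\alpha_i+\alpha_j)(h_{\alpha_i+\alpha_j})=2$. A direct evaluation using $\lambda'=\lambda-\alpha_i$ together with $\alpha_i(h_{\alpha_i+\alpha_j})=1$ gives
\begin{equation*}
\lambda'(h_{\alpha_i+\alpha_j})=\lambda_i+\frac{1}{a_{ji}}\lambda_j-1.
\end{equation*}
Taking $(i,j)=(1,2),(2,3),(3,1)$ and recalling $a_{21}=b$, $a_{32}=c$, $a_{13}=a$ reproduces exactly the three inequalities in the statement.

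The point requiring the most care is the bookkeeping rather than any deep difficulty: one must confirm that the reflected simple root $r_i(\alpha_j)$ is genuinely even, so that Lemma~\ref{lm67} contributes the condition $\in\mathbb{Z}_{\geq 0}$ (the factor $2^{p}$ being $1$) rather than $\in 2\mathbb{Z}_{\geq 0}$, and one must use the correctly normalized coroot $h_{\alpha_i+\alpha_j}$ from the formula preceding Lemma~\ref{lm111} instead of the un-normalized $h'_j$ coming from the raw reflection formulas (the two differ by the nonzero scalar $-a_{ij}a_{ji}$). Once the normalization is fixed, the remaining computation is the short linear evaluation displayed above.
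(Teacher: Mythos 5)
Your proposal is correct and takes essentially the same route as the paper's proof: reduce to the principal roots $\alpha_1+\alpha_2$, $\alpha_2+\alpha_3$, $\alpha_3+\alpha_1$ via Corollary~\ref{cor2}, use typicality ($\lambda_i\neq 0$) to apply the odd reflection $r_i$ and case (2) of Lemma~\ref{lm222} to get $\lambda'=\lambda-\alpha_i$, then apply Lemma~\ref{lm67} to the even simple root $\alpha_i+\alpha_j$ of the reflected base and compute $\lambda'(h_{\alpha_i+\alpha_j})=\frac{\lambda_j}{a_{ji}}+\frac{\lambda_i}{a_{ij}}-1$, which gives the three stated conditions since $a_{12}=a_{23}=a_{31}=1$. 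Your added remarks on the evenness of the reflected root, the coroot normalization, and the non-circularity of invoking Lemma~\ref{lm222} are sound and merely make explicit points the paper leaves implicit.
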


\begin{proof}
By Corollary~\ref{cor2}, it suffices to find the conditions for $Y_{\alpha} \in \mathfrak{g}(A)_{-\alpha}$ to be locally
nilpotent on $L(\lambda)$ when $\alpha$ is a principal root. Let $\alpha_i$ be a simple isotropic root and let $r_i$ be
the odd reflection with respect to $\alpha_i$.  Since the weight $\lambda$ is typical, $\lambda_i \neq 0$.  Then by
Lemma~\ref{lm222}, $\lambda'=\lambda-\alpha_i$.  Since the simple even roots of $\mathfrak{g}(A')$ are $\alpha_i +
\alpha_j$ for $i\neq j$, we have by Lemma~\ref{lm67} that the conditions of integrability are $\lambda'(h'_j) \in
\mathbb{Z}_{\geq 0}$, where
\begin{equation*}
\lambda'(h'_j) = \lambda'(h_{\alpha_i+\alpha_j}) = (\lambda-\alpha_i)\left(\frac{h_j}{a_{ji}}+\frac{h_i}{a_{ij}}\right)
=\frac{\lambda_j}{a_{ji}}+\frac{\lambda_i}{a_{ij}}-1.
\end{equation*}
\end{proof}

\begin{proposition}
The non-trivial irreducible integrable highest weight modules of $Q^{\pm}(m,n,t)$ are $L(\lambda)$ such that
\begin{equation*}
\left(\begin{array}{c} \lambda_1 \\ \lambda_2 \\ \lambda_3 \end{array}\right) = \left(\frac{1}{1+abc}\right)
\left(\begin{array}{ccc}
1 & \frac{-1}{b} & \frac{1}{bc} \\
\frac{1}{ac} & 1 & \frac{-1}{c} \\
\frac{-1}{a} & \frac{1}{ba} & 1 \\
\end{array}\right)
\left(\begin{array}{c} x \\ y \\ z \end{array}\right),
\end{equation*}
with $x,y,z \in \mathbb{Z}_{>0}$.  These weights are typical.
\end{proposition}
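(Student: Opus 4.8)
The plan is to prove both inclusions, treating typical weights through Lemma~\ref{lm111} and ruling out non-typical integrable weights by a separate sign/irrationality argument. Throughout I use that $a,b,c\in\mathbb{R}\setminus\mathbb{Q}$ (Lemma~\ref{lm333}) and that $m-1,n-1,t-1\le -2$.

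First I would record the linear algebra. Setting $x=\lambda_1+\frac{1}{b}\lambda_2$, $y=\lambda_2+\frac{1}{c}\lambda_3$, $z=\lambda_3+\frac{1}{a}\lambda_1$, the three integrability conditions of Lemma~\ref{lm111} say exactly that $x,y,z\in\mathbb{Z}_{>0}$ (subtracting $1$ turns $\mathbb{Z}_{\ge 0}$ into $\mathbb{Z}_{\ge 1}$). The assignment $(\lambda_1,\lambda_2,\lambda_3)\mapsto(x,y,z)$ is given by the matrix $M$ with rows $(1,\frac{1}{b},0)$, $(0,1,\frac{1}{c})$, $(\frac{1}{a},0,1)$, whose determinant is $1+\frac{1}{abc}$; this is nonzero because $1+abc\neq0$ by Corollary~\ref{cor3}. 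A direct inversion then expresses $(\lambda_1,\lambda_2,\lambda_3)$ through the displayed matrix applied to $(x,y,z)$, which is the stated closed form.

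For the backward direction I would fix $x,y,z\in\mathbb{Z}_{>0}$, define $\lambda$ by the formula, and verify typicality, i.e.\ $\lambda_1,\lambda_2,\lambda_3\neq 0$. Suppose $\lambda_1=0$; clearing denominators and substituting $\frac{1}{c}=(n-1)-b$ reduces $\lambda_1=0$ to $b(x-z)=y-z(n-1)$. If $x\neq z$ this forces $b\in\mathbb{Q}$, contradicting Lemma~\ref{lm333}; if $x=z$ it forces $y=z(n-1)\le -2z<0$, contradicting $y>0$. Hence $\lambda_1\neq 0$, and the cyclic symmetry $Q^{\pm}(m,n,t)\cong Q^{\pm}(n,t,m)$ (Remark~\ref{remisom}) gives $\lambda_2,\lambda_3\neq 0$. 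So $\lambda$ is typical, Lemma~\ref{lm111} applies, and since $x-1,y-1,z-1\in\mathbb{Z}_{\ge 0}$ the module $L(\lambda)$ is integrable; it is non-trivial since $(x,y,z)\neq 0$ and $M$ is invertible force $\lambda\neq 0$.

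For the forward direction let $L(\lambda)$ be non-trivial irreducible integrable. If $\lambda$ is typical, Lemma~\ref{lm111} yields $x,y,z\in\mathbb{Z}_{>0}$ and the formula. The real work is to show a non-typical integrable weight is $0$. By the cyclic symmetry it suffices to treat two patterns. When $\lambda_1=0\neq\lambda_2,\lambda_3$, the principal-root conditions (from Corollary~\ref{cor2}, computed by odd reflections; note that reflecting at a vertex with $\lambda_i=0$ produces \emph{no} shift by Lemma~\ref{lm222}(1), so the ``$-1$'' of Lemma~\ref{lm111} is absent there) read $\frac{\lambda_2}{b}\in\mathbb{Z}_{\ge 1}$, $\lambda_3\in\mathbb{Z}_{\ge 1}$, and $\lambda_2+\frac{\lambda_3}{c}\in\mathbb{Z}_{\ge 1}$; writing $\lambda_2=bk_2$, $\lambda_3=k_3$ and using $b=(n-1)-\frac{1}{c}$ forces $k_3=k_2$ (irrationality of $c$) and then $\lambda_2+\frac{\lambda_3}{c}=k_2(n-1)<0$, a contradiction. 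When $\lambda_1=\lambda_2=0\neq\lambda_3$, the $\alpha_2+\alpha_3$ condition gives $\lambda_3=ck\in c\,\mathbb{Z}_{\ge 1}$ while the $\alpha_3+\alpha_1$ condition gives $\lambda_3\in\mathbb{Z}_{\ge 1}$, incompatible since $c\notin\mathbb{Q}$. Thus every non-typical integrable weight is $0$, completing the characterization.

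The step I expect to be the main obstacle is the non-typical analysis: one must take care that the integrability condition attached to a principal root $\alpha_i+\alpha_j$ changes form depending on whether $\lambda_i$ or $\lambda_j$ vanishes, since an odd reflection at a vertex with vanishing weight shifts the highest weight by $0$ rather than by the reflected root. One therefore has to check (via Lemma~\ref{lm67}) that the two ways of reflecting to expose a given principal root give compatible conditions --- they do, because on the nonzero coordinate ``integer $\ge 0$'' together with ``$\neq 0$'' already means ``$\ge 1$.'' Once the correct conditions are in hand, the contradictions follow purely from the irrationality of $a,b,c$ and the sign of $n-1$, exactly as in the typicality argument of the backward direction.
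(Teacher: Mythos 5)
Your proposal is correct and follows the same skeleton as the paper's proof: for typical $\lambda$, rewrite the conditions of Lemma~\ref{lm111} as a linear system $M\lambda=(x,y,z)^{T}$ with $M$ having rows $(1,\tfrac1b,0),(0,1,\tfrac1c),(\tfrac1a,0,1)$, invert using $\det M=\tfrac{1+abc}{abc}\neq 0$ (Corollary~\ref{cor3}); for non-typical $\lambda$, use an odd reflection at a vertex with $\lambda_i=0$ (no shift, Lemma~\ref{lm222}(1)) plus Lemma~\ref{lm67} to force $\lambda=0$. Three points of comparison are worth recording. First, you prove directly that every formula weight is typical (via $b(x-z)=y-z(n-1)$, irrationality of $b$, and $n-1\le-2$), whereas the paper merely asserts ``these weights are typical'': its non-typical analysis shows only that a non-typical \emph{integrable} weight is zero, which does not by itself license applying the ``if'' direction of Lemma~\ref{lm111} to a formula weight; your check closes that small logical step, so your proof is in this respect more complete. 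Second, for the non-typical case the paper has a slicker uniform argument: with $\lambda_1=0$ one gets $\tfrac{\lambda_2}{b},\lambda_3\in\mathbb{Z}_{\ge0}$, and since $b,c<0$ (Lemma~\ref{lm333}) this already forces $\lambda_2+\tfrac{\lambda_3}{c}-1<0$, a pure sign contradiction covering both of your patterns at once; your irrationality computations ($k_3=k_2$, then $k_2(n-1)<1$, and $c=\lambda_3/k\in\mathbb{Q}$) are correct but heavier than needed. Your careful remark about the ``$-1$'' shift depending on which endpoint of $\alpha_i+\alpha_j$ one reflects at is accurate and is handled only implicitly in the paper. Third, a shared blemish: both you and the paper assert the inversion without exhibiting it, but since $\det M=\tfrac{1+abc}{abc}$, the actual inverse is $\tfrac{abc}{1+abc}$ times the displayed matrix, not $\tfrac{1}{1+abc}$ times it (e.g.\ $\lambda_1=\tfrac{abc\,x-ac\,y+a\,z}{1+abc}$); this scalar slip in the printed statement would have surfaced had the ``direct inversion'' been carried out, though it affects neither proof's structure nor the characterization of integrable weights.
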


\begin{proof}
The dimension of the Cartan subalgebra is $3$ by Corollary~\ref{cor3}, and hence $\lambda$ is determined by its values on
$h_1$, $h_2$ and $h_3$.  First we consider the case when $\lambda$ is typical. We can rewrite the conditions of
Lemma~\ref{lm111} using matrices:
\begin{equation*}
\left(\begin{array}{ccc}
1 & \frac{1}{b} & 0 \\
0 & 1 & \frac{1}{c} \\
\frac{1}{a} & 0 & 1 \\
\end{array}\right)
\left(\begin{array}{c}
\lambda_1 \\ \lambda_2 \\ \lambda_3 \\
\end{array}\right)
= \left(\begin{array}{c}
x \\ y \\ z \\
\end{array}\right)
\end{equation*}
with $x,y,z \in \mathbb{Z}_{>0}$.  The determinant of the left most matrix is $\frac{1+abc}{abc}$, which is nonzero by
Corollary~\ref{cor3}.  Hence, the matrix is invertible, and is calculated above.  Finally, suppose that $\lambda$ is not
typical.  Then without loss of generality suppose $\lambda_1=0$.  Consider the odd reflection $r_1$ with respect to
$\alpha_1$.  By Lemma~\ref{lm222}, $\lambda'=\lambda$.  Then by Lemma~\ref{lm67}, we have the integrability conditions
$x,z \in \mathbb{Z}_{\geq 0}$ with
\begin{align*}
&x=\lambda'(h_{\alpha_1+\alpha_2})= \lambda_1+\frac{1}{b}\lambda_2=\frac{1}{b}\lambda_2 \\
&z=\lambda'(h_{\alpha_1+\alpha_3})= \lambda_3+\frac{1}{a}\lambda_1=\lambda_3.
\end{align*}
If $\lambda_2$ or $\lambda_3$ is nonzero, then by a reflection at the corresponding simple root we obtain the
integrability condition $y\in \mathbb{Z}_{\geq 0}$ with
\begin{align*}
&y=\lambda_2+\frac{1}{c}\lambda_3-1.
\end{align*}
By Lemma~\ref{lm333}, $b,c < 0$.  But this together with $\frac{1}{b}\lambda_2, \lambda_3 \geq 0$ implies
$\lambda_2+\frac{1}{c}\lambda_3-1 < 0$, which is a contradiction.  Hence, if $\lambda$ is not typical then $\lambda=0$.
\end{proof}

\section{Extending regular Kac-Moody diagrams that are not of finite type}\label{s35}

In this section, we prove that a subfinite regular Kac-Moody diagram with an isotropic vertex, which is not of finite type, is not extendable. Hence, a regular Kac-Moody diagram with an isotropic vertex is subfinite.

\begin{lemma}
Suppose $\Gamma$ and $\Gamma' = \Gamma \cup \{v_{n+1}\}$ are connected regular Kac-Moody diagrams.  Let $\mathfrak{g}(A)$
(resp. $\mathfrak{g}(A')$) be the Kac-Moody superalgebra with diagram $\Gamma$ (resp. $\Gamma'$), and let $Y_{n+1} \in
\mathfrak{g}(A')_{-\alpha_{n+1}}$ be the generator corresponding to the vertex $v_{n+1}$. Then the submodule $M$ of
$\mathfrak{g}(A')$ generated by $\mathfrak{g}(A)$ acting on $Y_{n+1}$ is an integrable highest weight module over the
subalgebra $\mathfrak{g}(A)$.
\end{lemma}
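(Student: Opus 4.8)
The plan is to exhibit $Y_{n+1}$ as a highest weight vector for the adjoint action of $\mathfrak{g}(A)$, and then to deduce integrability from the fact that $\mathfrak{g}(A')$ is itself an integrable module under its own adjoint action. By Lemma~\ref{lm20} (with $J=\{1,\dots,n\}$), the elements $X_i,Y_i$ for $i\le n$ together with the Cartan generators span a copy of $\mathfrak{g}(A)$ inside $\mathfrak{g}(A')$ whose Cartan subalgebra is contained in the Cartan subalgebra $\mathfrak{h}$ of $\mathfrak{g}(A')$. In particular $\operatorname{ad}$ of the Cartan of $\mathfrak{g}(A)$ acts diagonalizably on $\mathfrak{g}(A')$, so the adjoint module is a weight module over $\mathfrak{g}(A)$ and $M=U(\mathfrak{g}(A))\cdot Y_{n+1}$ is a weight submodule, with $Y_{n+1}$ a weight vector of weight $\lambda=-\alpha_{n+1}$ restricted to the Cartan of $\mathfrak{g}(A)$.

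To see that $Y_{n+1}$ is in fact a highest weight vector, I would note that for any positive root $\alpha$ of $\mathfrak{g}(A)$, that is, a nonzero nonnegative integer combination of $\alpha_1,\dots,\alpha_n$, the bracket satisfies $[\mathfrak{g}(A')_\alpha,Y_{n+1}]\subseteq\mathfrak{g}(A')_{\alpha-\alpha_{n+1}}$. The weight $\alpha-\alpha_{n+1}$ has coefficient $-1$ on $\alpha_{n+1}$ and nonnegative coefficients on $\alpha_1,\dots,\alpha_n$, at least one of which is positive; being of mixed sign it is not a root of $\mathfrak{g}(A')$, so this weight space vanishes. Hence $\mathfrak{n}_+\cdot Y_{n+1}=0$, and since $M=U(\mathfrak{g}(A))\cdot Y_{n+1}$ by construction, $M$ is a highest weight module of highest weight $\lambda$.

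For integrability I must check that for every real root $\alpha$ of $\mathfrak{g}(A)$ the operator $X_\alpha=\operatorname{ad}_{X_\alpha}$ is locally nilpotent on $M$. Since $M\subseteq\mathfrak{g}(A')$ and the action is the adjoint action, it suffices to show that $\operatorname{ad}_{X_\alpha}$ is locally nilpotent on all of $\mathfrak{g}(A')$, for which I would invoke Lemma~\ref{lm411}, provided $\alpha$ is also a real root of $\mathfrak{g}(A')$. This last point is the crux: a sequence of even and odd reflections $r_{k_1}\cdots r_{k_m}$ with each $k_j\le n$ that makes $\alpha$ (or $\tfrac12\alpha$) simple in some base of $\mathfrak{g}(A)$ can be carried out verbatim in $\mathfrak{g}(A')$. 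Because both the reflection formula for $r_k(\alpha_i)$ and the update rule for the entries $a_{ij}'$ involve only indices drawn from $\{i,j,k\}\subseteq\{1,\dots,n\}$, where $A$ is the principal submatrix of $A'$, the reflected base of $\mathfrak{g}(A')$ agrees with that of $\mathfrak{g}(A)$ on the first $n$ roots. Hence $\alpha$ (or $\tfrac12\alpha$) is simple in a base of $\mathfrak{g}(A')$ as well, so $\alpha$ is real there, and Lemma~\ref{lm411} yields local nilpotency of $\operatorname{ad}_{X_\alpha}$ on $\mathfrak{g}(A')$, hence on $M$.

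I expect the reflection-compatibility claim of the previous paragraph to be the main obstacle: one must confirm that reflecting inside the larger algebra $\mathfrak{g}(A')$ leaves the sub-base on $\alpha_1,\dots,\alpha_n$ unchanged relative to the reflections performed in $\mathfrak{g}(A)$. This follows from the locality of the reflection and matrix-update formulas, but it should be argued with care, since odd reflections alter both the generators and the Cartan matrix.
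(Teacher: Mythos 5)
Your proposal is correct and follows essentially the same route as the paper: exhibit $Y_{n+1}$ as a highest weight vector of weight $-\alpha_{n+1}$ (the paper gets $\mathfrak{n}_+Y_{n+1}=0$ directly from the relations $[X_i,Y_{n+1}]=\delta_{i,n+1}h_{n+1}$, where your mixed-sign weight argument is an equivalent variant), then deduce integrability from Lemma~\ref{lm411} via the fact that real roots of $\mathfrak{g}(A)$ remain real in $\mathfrak{g}(A')$. The only difference is that you spell out the locality of the reflection and matrix-update formulas to justify that last fact, which the paper asserts in one sentence without proof; note that the legality of each odd reflection inside $\mathfrak{g}(A')$ (regularity of the isotropic simple root at every stage) is guaranteed because $\Gamma'$ is regular Kac-Moody.
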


\begin{proof}
The fact that $M$ is a highest weight module follows immediately from $[X_i,Y_{n+1}]=0$ for all $i=1,\dots,n$. The module
$M$ has highest weight $-\alpha_{n+1}$.  A real root $\alpha$ of the subalgebra $\mathfrak{g}(A)$ is also a real root of
$\mathfrak{g}(A')$.  By Lemma~\ref{lm411}, the adjoint module of $\mathfrak{g}(A')$ is integrable.  Thus for each real
root $\alpha$ of $\mathfrak{g}(A)$ we have that $Y_{\alpha} \in \mathfrak{g}(A)_{-\alpha}$  acts locally nilpotently on
the submodule $M$ of $\mathfrak{g}(A')$.  Hence the submodule $M$ is an integrable highest weight module over the
subalgebra $\mathfrak{g}(A)$.
\end{proof}

\begin{corollary}
If $\Gamma$ is a diagram for a regular Kac-Moody superalgebra $\mathfrak{g}(A)$ that does not have non-trivial
irreducible integrable highest weight modules, then $\Gamma$ is not extendable.
\end{corollary}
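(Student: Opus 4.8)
The plan is to argue by contradiction: I assume the connected regular Kac-Moody diagram $\Gamma$ is extendable and manufacture from this a non-trivial irreducible integrable highest weight module over $\mathfrak{g}(A)$, contradicting the hypothesis. The entire argument is a packaging of the preceding lemma together with the definition of extendability, plus one genuinely analytic point about non-triviality.

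First I would reduce the extension to a single additional vertex. By definition $\Gamma$ is a proper subdiagram of some connected regular Kac-Moody diagram $\widetilde{\Gamma}$. Since $\widetilde{\Gamma}$ is connected and the vertices of $\Gamma$ form a proper nonempty subset, there is a vertex $v_{n+1}\in\widetilde{\Gamma}\setminus\Gamma$ joined to some vertex of $\Gamma$. Setting $\Gamma':=\Gamma\cup\{v_{n+1}\}$, viewed as a full subdiagram of $\widetilde{\Gamma}$, this $\Gamma'$ is connected; and because any subdiagram of a regular Kac-Moody diagram is again regular Kac-Moody, $\Gamma'$ is a connected regular Kac-Moody diagram containing $\Gamma$ exactly in the situation of the preceding lemma. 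I then invoke that lemma: the submodule $M\subset\mathfrak{g}(A')$ generated by $\mathfrak{g}(A)$ acting on $Y_{n+1}$ is an integrable highest weight module over $\mathfrak{g}(A)$ with highest weight $\mu=-\alpha_{n+1}$ and highest weight vector $Y_{n+1}$. Passing to the unique irreducible quotient $L(\mu)$ of the highest weight module $M$ preserves integrability, since local nilpotency of each $X_\alpha$ descends to quotients; thus $L(\mu)$ is an irreducible integrable highest weight $\mathfrak{g}(A)$-module.

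It remains to verify that $L(\mu)$ is non-trivial, which is the one step requiring care. I choose a vertex $v_i$ of $\Gamma$ joined to $v_{n+1}$, so $a_{i,n+1}\neq 0$ or $a_{n+1,i}\neq 0$; since $\Gamma'$ is a generalized Cartan matrix it satisfies condition $(3')$, forcing both to be nonzero. Hence the highest weight satisfies $\mu(h_i)=-\alpha_{n+1}(h_i)=-a_{i,n+1}\neq 0$. I would then show $Y_i v_\mu\neq 0$ in $L(\mu)$: restricting to the rank-one subalgebra generated by $X_i,Y_i,h_i$ (of type $\mathfrak{sl}_2$, $\mathfrak{osp}(1|2)$, or $\mathfrak{sl}(1|1)$ according to the node type of $v_i$), a weight argument in the Verma module shows that $Y_i v_\mu$ lands in the maximal proper submodule precisely when $\mu(h_i)=0$. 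Consequently $\dim L(\mu)\geq 2$, so $L(\mu)$ is non-trivial, contradicting the assumption that $\mathfrak{g}(A)$ has no non-trivial irreducible integrable highest weight modules.

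The main obstacle is exactly this last non-triviality of the irreducible quotient: one must control when the generator $Y_i$ annihilates the highest weight vector of $L(\mu)$ rather than merely of the Verma module, which is why I route through condition $(3')$ to guarantee $\mu(h_i)\neq 0$ and through the rank-one analysis to convert that into $Y_i v_\mu\neq 0$. Every other step—the reduction to a one-vertex extension, the application of the preceding lemma, and the descent of integrability to the irreducible quotient—is formal.
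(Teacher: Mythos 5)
Your proof is correct and takes essentially the same route as the paper's: both feed the module $M$ from the preceding lemma, with highest weight $-\alpha_{n+1}$, into the hypothesis, and both use condition $(3')$ of the generalized Cartan matrix together with connectedness of $\Gamma'$ to reach the contradiction. Your extra steps---reducing to a one-vertex extension, passing explicitly to the irreducible quotient $L(-\alpha_{n+1})$, and certifying non-triviality via $\mu(h_i)=-a_{i,n+1}\neq 0$ for a vertex $v_i$ adjacent to $v_{n+1}$---simply make explicit what the paper leaves implicit.
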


\begin{proof}
If $\mathfrak{g}(A)$ has only trivial irreducible integrable highest weight modules, then the highest weight of the
module $M$ is $0$.  Hence $-\alpha_{n+1}=0$, which implies $a_{i,n+1}=0$ for $i=1,\dots,n$.  Since we assumed that the
matrix $A$ is a generalized Cartan matrix, this implies $a_{n+1,j}=0$ for $j=1,\dots,n$.  This is not possible with
$\Gamma'$ being a connected diagram.
\end{proof}

\begin{corollary}\label{cor4}
Suppose that $\Gamma$ is a subfinite regular Kac-Moody diagram for $\mathfrak{g}(A)$ which is not of finite type and not
one of the algebras: $A(0,m)^{(1)}$, $C(n)^{(1)}$, $S(1,2,\alpha)$, and $Q^{\pm}(m,n,t)$. Then the diagram $\Gamma$ is
not extendable.
\end{corollary}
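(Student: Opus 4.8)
The plan is to chain together the two immediately preceding results, namely Lemma~\ref{lm123} and the corollary directly above this statement. The hypotheses imposed on $\Gamma$ here---that its diagram is subfinite regular Kac-Moody, not of finite type, and not one of the algebras $A(0,m)^{(1)}$, $C(n)^{(1)}$, $S(1,2,\alpha)$, $Q^{\pm}(m,n,t)$---are \emph{verbatim} the hypotheses of Lemma~\ref{lm123}. So the first step is simply to invoke that lemma and conclude that every irreducible integrable highest weight module of $\mathfrak{g}(A)$ is trivial.

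Having established this, the second step is to feed the conclusion into the preceding corollary, whose hypothesis is precisely that $\mathfrak{g}(A)$ admits no non-trivial irreducible integrable highest weight module. That corollary then yields at once that $\Gamma$ is not extendable. It is worth recalling the mechanism behind it, since that is where the genuine geometric content sits: if $\Gamma' = \Gamma \cup \{v_{n+1}\}$ were a connected regular Kac-Moody extension, then the submodule $M$ of $\mathfrak{g}(A')$ generated by the action of $\mathfrak{g}(A)$ on $Y_{n+1}$ would be an integrable highest weight $\mathfrak{g}(A)$-module of highest weight $-\alpha_{n+1}$. Triviality of $M$ forces $-\alpha_{n+1}=0$, hence $a_{i,n+1}=0$ for all $i$, and---because $A'$ is a generalized Cartan matrix, so $a_{ij}=0 \Rightarrow a_{ji}=0$---also $a_{n+1,j}=0$ for all $j$, contradicting the connectedness of $\Gamma'$.

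I expect essentially no obstacle here: the statement is a formal consequence of the two results just proved, and the only thing to verify is that the hypotheses align, which they do exactly. All of the real work has already been expended upstream---in Lemma~\ref{lm123}, which rests on the classification Propositions~\ref{p1} and~\ref{p2} of integrable modules over (twisted and non-twisted) affine Lie superalgebras, and in the explicit construction of the module $M$ in the corollary above. Accordingly, the write-up should be a single short paragraph citing Lemma~\ref{lm123} and then the corollary, with no further computation.
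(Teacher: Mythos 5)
Your proposal is correct and is exactly the paper's argument: the published proof reads ``This follows immediately from Lemma~\ref{lm123},'' implicitly combined with the preceding corollary on superalgebras without non-trivial irreducible integrable highest weight modules, just as you chain them. Your recap of the mechanism behind that corollary (the module $M$ of highest weight $-\alpha_{n+1}$ and the generalized Cartan matrix condition forcing $a_{n+1,j}=0$) also matches the paper.
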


\begin{proof} This follows immediately from Lemma~\ref{lm123}.
\end{proof}

From the classification of 3-vertex regular Kac-Moody diagrams we obtain:

\begin{lemma}\label{3rm}
If $\Gamma$ is a 3-vertex regular Kac-Moody diagram which is not a diagram for $Q^{\pm}(m,n,t)$, $S(1,2,\alpha)$ or $D(2,1,\alpha)$, then the ratio of an isotropic vertex in $\Gamma$ is rational and negative.
The ratio of an isotropic vertex for $Q^{\pm}(m,n,t)$ is real, irrational and negative.  For $S(1,2,\alpha)$ and $D(2,1,\alpha)$, if the ratios are rational then at most one of them is positive.
\end{lemma}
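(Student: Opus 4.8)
The plan is to deduce the statement from the explicit list of $3$-vertex regular Kac-Moody diagrams produced in Section~\ref{rkm23}, handling the three continuous families $D(2,1,\alpha)$, $S(1,2,\alpha)$, $Q^{\pm}(m,n,t)$ by separate arguments. First I would record the general observation that the ratio $a_{ji}/a_{jk}$ of a degree-two isotropic vertex $v_j$ is unchanged when row $j$ is rescaled, and that the odd-reflection formulas for the matrix entries (displayed after Lemma~\ref{lm1}) are rational functions of the entries; hence rationality of every isotropic ratio is preserved along each reflection class $C(\Gamma)$. Consequently, for the diagrams other than the three continuous families it suffices to read the ratios off the finitely many normal forms and reflected forms exhibited in Section~\ref{rkm23}: one finds they are negative rationals, namely $-1$ for $A(0,2)$, $A(1,1)$, $A(0,1)^{(1)}$, $q(3)^{(2)}$, $A(2,2)^{(4)}$ and $D(2,1)^{(2)}$, and negative rationals (such as $-\tfrac12$ and $-\tfrac13$) for the remaining $B$-, $C$- and $G$-type diagrams.

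For $Q^{\pm}(m,n,t)$ I would quote Lemma~\ref{lm333} directly. Presenting the Cartan matrix as in Section~\ref{s2}, each of the three (isotropic) vertices has as ratio one of $a$, $b$, $c$ or its reciprocal; Lemma~\ref{lm333} asserts $a,b,c\in\mathbb{R}\setminus\mathbb{Q}$ with either $a,b,c<-1$ or $-1<a,b,c<0$, so every ratio is real, irrational and negative.

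For $D(2,1,\alpha)$ the argument is a sign computation that is stable under reflection. In the base with all three roots isotropic the ratios may be taken to be $a,b,c$ satisfying the cyclic system $a+\tfrac1b=b+\tfrac1c=c+\tfrac1a=-1$ (Equation~(\ref{e3}) with $m=n=t=0$), and odd reflection maps $D(2,1,\alpha)$ to another such diagram obeying the same system. Since each unordered pair from $\{a,b,c\}$ appears together in exactly one of these relations, two simultaneously positive values would make the corresponding left-hand side positive, contradicting its value $-1$; hence at most one of $a,b,c$ is positive. (After an odd reflection that turns two of the vertices even, only one isotropic ratio survives, so the conclusion is immediate there.) For $S(1,2,\alpha)$ I would use the Cartan matrix $B$ of Section~\ref{s3}, from which the two isotropic ratios are $\alpha-1$ and $-1-\alpha$, summing to $-2$; two positive rational ratios would give a positive sum, which is impossible, so at most one is positive. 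The same conclusion for the other shapes in $C(\Gamma)$ follows from a direct computation, since in each the regions of the parameter where the two ratios are positive are disjoint.

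The main obstacle is not any individual computation but ensuring the assertions hold throughout each reflection class rather than only for a chosen representative. I would address this by isolating two reflection-stable features. Rationality (hence rationality of the ratios) is immediate from the rational reflection formulas, so it transfers to every form in $C(\Gamma)$. The sign statements rest on the algebraic data characterizing the families: the cyclic relations $a+\tfrac1b=-1$ for $D(2,1,\alpha)$, which are preserved verbatim by odd reflection, and the irrationality-with-fixed-sign of Lemma~\ref{lm333} for $Q^{\pm}(m,n,t)$. For $S(1,2,\alpha)$ and for the negativity of the finite- and affine-type diagrams these invariants do not by themselves suffice, and I would complete the proof by the finite inspection of the explicit forms already assembled in Section~\ref{rkm23}.
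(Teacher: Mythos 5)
Your proposal is correct and matches the paper's route: the paper derives this lemma purely by inspection of the $3$-vertex classification of Section~\ref{rkm23} (it offers no written proof beyond ``From the classification \dots we obtain''), with the $Q^{\pm}(m,n,t)$ case resting on Lemma~\ref{lm333} exactly as you quote it. Your explicit sign arguments for $D(2,1,\alpha)$ (via the cyclic relations $a+\tfrac{1}{b}=b+\tfrac{1}{c}=c+\tfrac{1}{a}=-1$) and for $S(1,2,\alpha)$ (the two isotropic ratios summing to $-2$, plus the finite check of the reflected triangle forms) simply carry out in detail the inspection the paper leaves implicit, and they check out.
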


\begin{theorem}\label{thm2}\label{p3}
A connected regular Kac-Moody diagram containing an isotropic vertex is subfinite.  In particular, if
$\Gamma$ is a subfinite regular Kac-Moody diagram which is not of finite type, then $\Gamma$ is not extendable.
\end{theorem}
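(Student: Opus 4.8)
The plan is to prove the two assertions in the opposite order to which they are stated: I would first establish the ``In particular'' clause (that a subfinite, non-finite-type diagram is not extendable), since this is what the classification of Section~\ref{s1} is built to feed, and then deduce the main statement by induction on the number of vertices. For the non-extendability clause, let $\Gamma$ be subfinite regular Kac-Moody and not of finite type. By the classification of connected subfinite regular Kac-Moody diagrams carried out in Section~\ref{s1}, such a $\Gamma$ is either of affine type or one of the families $A(0,m)^{(1)}$, $C(n)^{(1)}$, $S(1,2,\alpha)$, $Q^{\pm}(m,n,t)$. Corollary~\ref{cor4} already shows that every such diagram \emph{except} these four families is not extendable, so the whole of this clause reduces to treating the four exceptional families by direct computation.

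For each of the four families I would argue by contradiction: suppose $\Gamma'=\Gamma\cup\{v_{n+1}\}$ is a connected regular Kac-Moody extension of $\Gamma=\mathfrak{g}(A)$. By the lemma opening Section~\ref{s35}, the submodule $M$ generated by $\mathfrak{g}(A)$ acting on $Y_{n+1}$ is an integrable highest weight $\mathfrak{g}(A)$-module of highest weight $-\alpha_{n+1}$, and connectedness of $\Gamma'$ forces $a_{i,n+1}\neq 0$ for some $i$, so $-\alpha_{n+1}$ restricts nontrivially to $\mathfrak{h}$ and $M$ is nontrivial. I would then feed in the explicit descriptions of the nontrivial integrable highest weight modules obtained earlier: the weight conditions proved for $S(1,2,\alpha)$ and for $Q^{\pm}(m,n,t)$ in Section~\ref{s3}, and the results of \cite{KW01} for $A(0,m)^{(1)}$ and $C(n)^{(1)}$. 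Since $\lambda_i=-\alpha_{n+1}(h_i)=-a_{i,n+1}$, and since the extended matrix together with all of its odd reflections must remain a generalized Cartan matrix, the integrability constraints on $-\alpha_{n+1}$ and the generalized Cartan matrix constraints on the $a_{i,n+1}$ turn out to be mutually exclusive in every case. For $Q^{\pm}(m,n,t)$ in particular, every nontrivial integrable weight is typical, which would force $v_{n+1}$ to meet all three isotropic vertices, and this is incompatible with the matrix entries being admissible given the irrationality of $a,b,c$ from Lemma~\ref{lm333}; for the other three families one checks the finitely many admissible attachment types directly. This establishes that none of the four families is extendable, completing the non-extendability clause.

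I would then prove the main statement by strong induction on the number of vertices $n$ of a connected regular Kac-Moody diagram $\Gamma$ with an isotropic vertex. The base cases $n\le 3$ are immediate from the $2$- and $3$-vertex analysis of Section~\ref{rkm23}: every proper connected subdiagram containing an isotropic vertex then has at most two vertices and is therefore of finite type, so $\Gamma$ is automatically subfinite. For the inductive step, let $\Gamma^{r}$ be any diagram obtained from $\Gamma$ by a sequence of odd reflections (again connected regular Kac-Moody, with an isotropic vertex and $n$ vertices), and let $\Gamma'$ be any connected proper subdiagram of $\Gamma^{r}$ containing an isotropic vertex. Since a subdiagram of a regular Kac-Moody diagram is again regular Kac-Moody and $\Gamma'$ has fewer than $n$ vertices, the induction hypothesis gives that $\Gamma'$ is subfinite. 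If $\Gamma'$ were not of finite type, then the non-extendability clause already proved would force $\Gamma'$ not to be extendable; but $\Gamma'$ is a proper subdiagram of the connected regular Kac-Moody diagram $\Gamma^{r}$, hence extendable, a contradiction. Therefore $\Gamma'$ is of finite type, and as $\Gamma^{r}$ and $\Gamma'$ were arbitrary, $\Gamma$ is subfinite. Note there is no circularity, because the non-extendability clause rests only on the classification of Section~\ref{s1} and on Corollary~\ref{cor4}, neither of which presupposes that arbitrary such diagrams are subfinite.

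The genuinely hard part is the direct computation in the second paragraph, i.e.\ ruling out extensions of the four exceptional families; the inductive argument itself is pure bookkeeping once that is in place. Among the four families the delicate case is $Q^{\pm}(m,n,t)$: one must simultaneously control the typicality of the weight $-\alpha_{n+1}$, the irrationality of the Cartan entries supplied by Lemma~\ref{lm333}, and the way the generalized Cartan matrix conditions transform under the odd reflections needed to expose each possible attachment of $v_{n+1}$.
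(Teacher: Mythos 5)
Your top-level architecture matches the paper's: induction on the number of vertices, the reduction via Corollary~\ref{cor4} to the four families $A(0,m)^{(1)}$, $C(n)^{(1)}$, $S(1,2,\alpha)$, $Q^{\pm}(m,n,t)$, and the inductive bookkeeping (proper subdiagrams with an isotropic vertex are subfinite by hypothesis, hence finite type since non-finite-type subfinite diagrams are not extendable) is exactly the paper's frame. Your typicality observation for $Q^{\pm}(m,n,t)$ --- a nontrivial integrable highest weight is typical, so the new vertex must be joined to all three vertices --- is correct and genuinely slicker than the paper's Cases 1 and 2, which dispose of degree-one and degree-two attachments by rationality-of-ratio arguments.

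The genuine gap is the sentence asserting that ``the integrability constraints on $-\alpha_{n+1}$ and the generalized Cartan matrix constraints on the $a_{i,n+1}$ turn out to be mutually exclusive in every case.'' This is false at the level of the extended matrix itself, so the mechanism you offer cannot carry the proof. Concretely, for $S(1,2,\alpha)$ attach an even vertex $v_4$ to one isotropic vertex only, with column entry $a_{24}\in\mathbb{Z}_{<0}$ and row entry $a_{42}\in\mathbb{Z}_{<0}$: then $\lambda_1=0\in\mathbb{Z}_{\geq 0}$ and $\lambda_2+\lambda_3-1=-a_{24}-1\in\mathbb{Z}_{\geq 0}$, so $-\alpha_4$ satisfies the integrability conditions of Section 3.3, while the extended matrix is a perfectly good generalized Cartan matrix (isotropic rows carry no integrality constraints, and the new even row satisfies its own). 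The obstruction to such attachments appears only after applying sequences of odd reflections to the extended diagram and checking the reflected matrices --- precisely the several pages of reflection-and-ratio computations (the $\Gamma'$, $\Gamma''$, $\Gamma'''$ chains, Lemma~\ref{3rm}, and membership of $3$- and $4$-vertex subdiagrams in the classified lists) that constitute the paper's four lemmas, and which your proposal compresses into ``turn out to be mutually exclusive'' and ``one checks the finitely many admissible attachment types directly.'' The same issue arises for $Q^{\pm}(m,n,t)$ in the forced degree-three case: Lemma~\ref{lm111}'s conditions are satisfiable (Section 3.4 exhibits a whole cone of integrable weights), so irrationality of $a,b,c$ alone does not finish it --- the paper's Case 3 must solve the $D(2,1,\alpha)$/$S(1,2,\alpha)$ subdiagram equations and extract a sign contradiction after a further reflection. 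Finally, for $A(0,m)^{(1)}$ with $m\geq 3$ there is no a priori finite list of attachment types; the paper needs the chain-reduction Lemma~\ref{lm12.2} and the $5$-vertex classification to localize the attachment before any finite check is possible. So your skeleton is right and your typicality shortcut is a real improvement for part of one family, but the actual content of the theorem --- the non-extendability computations --- is asserted rather than proved, and the substitute mechanism you propose demonstrably does not suffice on its own.
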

\begin{proof}
We prove this by induction on the number of vertices.   Suppose that the claim is true for all diagrams with less than $n$ vertices, and let $\Gamma$ be a connected $n$-vertex diagram which is regular Kac-Moody and contains an isotropic vertex.  If $\Gamma'$ is a connected proper subdiagram of $\Gamma$ containing an isotropic vertex, then $\Gamma'$ is regular Kac-Moody and so by the induction hypothesis $\Gamma'$ is subfinite.

Let $\mathcal{S}$ denote the set of connected subfinite regular Kac-Moody
which are extendable. By Corollary~\ref{cor4},
$\Gamma'\in\mathcal{S}$ is either of finite type or can be a diagram for $A(0,m)^{(1)}$, $C(n)^{(1)}$, $S(1,2,\alpha)$, $Q^{\pm}(m,n,t)$. In the following lemmas, we prove that if $\Gamma'$ is a diagram for: $A(0,m)^{(1)}$, $C(n)^{(1)}$, $S(1,2,\alpha)$, or $Q^{\pm}(m,n,t)$, then $\Gamma'$ is not a proper subdiagram of a connected regular
Kac-Moody diagram $\Gamma$ which satisfies the condition for all reflected diagrams: all proper connected regular Kac-Moody diagrams containing an isotropic vertex are in $\mathcal{S}$.

\begin{lemma} $Q^{\pm}(m,n,t)$ is not extendable and hence $Q^{\pm}(m,n,t)\not\in\mathcal{S}$. \end{lemma}

\begin{proof}We consider each case for attaching a vertex to a $Q^{\pm}(m,n,t)$ diagram.  Let $\Gamma$ denote the extended diagram.  Recall that $a,b,c \in \left(\mathbb{R}\setminus\mathbb{Q}\right)_{<0}$ satisfy: \begin{equation}\label{eqn11}
\begin{array}{l}
1+a+\frac{1}{b}=m \\
1+b+\frac{1}{c}=n \\
1+c+\frac{1}{a}=t \\
\end{array}
\in \mathbb{Z}_{<0} \text{, or all equal to zero.}
\end{equation}

\noindent\textbf{Case 1:}
\begin{equation*}
\xymatrix{\OV_4 \AW[r]^{e}_{d}& \OX_2 \AW[ldd]^{1}_{c} \AW[rdd]^{b}_{1} &\\ & & \\
  \OX_1 \AW[rr]^{1}_{a} &  & \OX_3  \\}\hspace{.5cm}d,e\neq 0
\end{equation*}
Now $\Gamma_1,\Gamma_3\in\mathcal{S}$ implies $d, \frac{b}{d} \in \mathbb{Q}$.
But then $b \in \mathbb{Q}$, which is a contradiction.

\noindent\textbf{Case 2:}
\begin{equation*}
\xymatrix{ & &  \OX_2 \AW[llddd]^{1}_{c} \AW[rrddd]^{b}_{1} & & \\ & & & & \\
& &  \OV_4 \AW[lld]^{e}_{d} \AW[rrd]^{f}_{g} & & \\ \OX_1 \AW[rrrr]^{1}_{a} & &  & & \OX_3 \\}\hspace{.5cm}d,e,f,g\neq 0
\end{equation*}
Now $\Gamma_1\in\mathcal{S}$ implies $g<0$, and $\Gamma_3\in\mathcal{S}$ implies $\frac{c}{d}<0$. Since $a,b,c < 0$, this implies $d, \frac{g}{a} >0$.  By Lemma~\ref{3rm}, this implies $\Gamma_2\not\in\mathcal{S}$, which is a contradiction.

\noindent\textbf{Case 3:}
\begin{equation*}
\xymatrix{ & &  \OX_2 \AW[llddd]^{1}_{c} \AW[rrddd]^{b}_{1} & & \\ & & & & \\
& &  \OV_4 \AW[uu]^{d}_{e} \AW[lld]^{k}_{h} \AW[rrd]^{f}_{g} & & \\ \OX_1 \AW[rrrr]^{1}_{a} & &  & & \OX_3  \\}\hspace{.5cm}d,e,f,g,h,k\neq 0
\end{equation*}
The ratios of $v_1,v_2,v_3$ in $\Gamma_4$ are $a,b,c\in(\mathbb{R}\setminus\mathbb{Q})_{<0}$. If $\Gamma_1$ is a subdiagram such that the ratios of $v_2$ and $v_3$ in $\Gamma_1$ are real negative numbers, then the ratio of $v_2$ in $\Gamma_3$ and of $v_3$ in $\Gamma_2$ are real positive numbers. But then by Lemma~\ref{3rm} all of the ratios of $v_1$ are real negative numbers, which is a contradiction. Hence, $\Gamma_1,\Gamma_2,\Gamma_3$ are diagrams for $D(2,1,\alpha)$ or $S(1,2,\alpha)$.

\noindent\D If $v_4$ is $\OX$ and $\Gamma_1,\Gamma_2,\Gamma_3$ are diagrams for $D(2,1,\alpha)$,
then by (\ref{e3}) the we have:
\begin{align}
&h+\frac{g}{a}=-1 &e+\frac{h}{c}=-1 &&g+\frac{e}{b}=-1 \label{row12} \\
&\frac{a}{g}+\frac{k}{f}=-1 &\frac{k}{d}+\frac{1}{e}=-1 &&\frac{b}{e}+\frac{f}{d}=-1 \\
&\frac{f}{k}+\frac{1}{h}=-1 &\frac{c}{h}+\frac{d}{k}=-1  &&\frac{d}{f}+\frac{1}{g}=-1.
\end{align}
By solving (\ref{row12}) for $h$ we find that $h=\frac{bc-c-abc}{1+abc}\in\mathbb{R}$, since $abc\neq -1$ by Lemma~\ref{lm333}. Similarly, $d,e,f,g,k\in\mathbb{R}$ and all vertex ratios are real.  Now at least one ratio at each vertex $v_1,v_2,v_3$ must be positive, and the diagrams $\Gamma_1,\Gamma_2,\Gamma_3$ are $D(2,1,\alpha)$ diagrams and so they have at most one positive vertex ratio.  This implies that all of the ratios at $v_4$ are negative, which is a contradiction.

\noindent\D If $v_4$ is $\O$ and $\Gamma_1,\Gamma_2,\Gamma_3$ are diagrams for $S(1,2,\alpha)$, then
$d,f,k=-1$, $h+\frac{g}{a}=-2$, $e+\frac{h}{c}=-2$ and $g+\frac{e}{b}=-2$.  Solving for $e$ we find that $e=\frac{2(ab-b-abc)}{1+abc}\in\mathbb{R}$ since $abc\neq -1$ by Lemma~\ref{lm333}.  Similarly, $h,g\in\mathbb{R}$. Now by reflecting at $v_{2}$ we have that $\Gamma'$ is
\begin{equation*}\begin{array}{c}
\xymatrix{ & &  \OX_2 \AW[llddd]^{1}_{-1} \AW[rrddd]^{b}_{-1} & & \\ & & & & \\
& &  \OX_4 \AW[uu]^{e}_{e} \AW[lld]^{S}_{-1} \AW[rrd]^{R}_{-1} & & \\ \O_1
\AW[rrrr]^{P}_{Q} & &  & & \O_3 },\end{array}\text{  }\begin{array}{l}
P=b+\frac{1}{c}+1\\ Q=a+\frac{1}{b}+1 \\R =-b-2e \\ S=-1-2e
\end{array}
\end{equation*}
which implies that $\Gamma'_2$ is a diagram $Q^{\pm}(m,n,t)$, and the ratio of $v_4$ in $\Gamma'_2$ is $\frac{-b-2e}{-1-2e} \in (\mathbb{R}\setminus\mathbb{Q})_{<0}$.  But by substituting $e=\frac{2(ab-b-abc)}{1+abc}$ we have
$$\frac{-b-2e}{-1-2e}=\frac{4abc+3b-4ab-ab^{2}c}{3abc+4b-4ab-1}>0,$$
which is a contradiction.

We conclude that $Q^{\pm}(m,n,t)$ is not extendable.
\end{proof}

\begin{lemma}
$S(1,2,\alpha)$ is not extendable and hence $S(1,2,\alpha)\not\in\mathcal{S}$.
\end{lemma}

\begin{proof}
 Note that an odd reflection of a $S(1,2,\alpha)$ diagram is again a $S(1,2,\alpha)$ diagram, but with a different $\alpha$.  Let $a=\frac{1}{\alpha}$.  Then $a\not\in\mathbb{Z}$.

\noindent\textbf{Case 1:}
\begin{equation*}
\xymatrix{ & & \O_2 \AW[ldd]^{-1}_{a-1} \AW[rdd]^{-1}_{a+1\!\!} & \\ & & &  \\ \OV_4 \AW[r]^{c}_{b} &  \OX_1
\AW[rr]^{-a}_{-a} & & \OX_3 }\hspace{.5cm}b,c\neq 0
\end{equation*}

\noindent\D If $v_4$ is $\OX$, then $\Gamma_2\in\mathcal{S}$ implies $c=1$ and $b=a$.  By reflecting at $v_4$
we have
\begin{equation*}
\xymatrix{ & & \O_2  \AW[ldd]^{\!\! -1}_{1-\frac{1}{a}} \AW[rdd]^{-1}_{a+1\!} & \\ & & &  \\ \OX_4 \AW[r]^{1}_{-1} &  \O_1 \AW[rr]^{-1}_{-a} & & \OX_3 }.
\end{equation*}
But $\Gamma'_4\not\in\mathcal{S}$.

\noindent\D If $v_4$ is $\OD$ and $c=-2$, then $\Gamma_2\in\mathcal{S}$ implies $b=a$. Then $\Gamma_3\in\mathcal{S}$ implies $\frac{a-1}{a} \in \{-1,\frac{-3}{2}\}$, and so $a\in\{\frac{1}{2},\frac{2}{5}\}$.  By reflecting at $v_3$ and then at $v_2$, we obtain $\Gamma'':=r_2(r_3(\Gamma))$
\begin{equation*}
\overrightarrow{r_3} \xymatrix{ & & \OX_2 \ar@{-^>}@<.5ex>[ldd]^(.6){\!\! -a-2} \ar@{_<-}@<-.5ex>[ldd]_{-1} \AW[rdd]^{a+1}_{a+1\!\!}
& \\ & & &  \\ \OD_4 \AW[r]^{-2}_{-1} &  \O_1 \AW[rr]^{-1}_{-a} & & \OX_3 } \overrightarrow{r_2} \xymatrix{ & & \OX_2
 \ar@{-^>}@<.5ex>[ldd]^(.6){\!\! -a-2} \ar@{_<-}@<-.5ex>[ldd]_{-a-2}\AW[rdd]^{a+1}_{-1\!} & \\ & & &  \\ \OD_4 \AW[r]^{-2}_{a+2} &
\OX_1 \AW[rr]^{a+3}_{-1} & & \O_3 }.
\end{equation*}
Then $\Gamma''_2\in\mathcal{S}$ implies $\frac{a+3}{a+2} \in \{-1,\frac{-3}{2}\}$, which is a contradiction.

\noindent\D If $v_4$ is $\O$ and $c=-1$, then $\Gamma_2\in\mathcal{S}$ implies $b=ka$ with $k\in\{1,2,3\}$.
Substituting $b=ka$ and reflecting at $v_1$ yields  $\Gamma'$
\begin{equation*}\begin{array}{c}
\xymatrix{ & & \OX_2 \AW[llddd]^{a-1}_{a-1} \AW[rrddd]^{-a+2}_{-1} & & \\ & & & & \\
& & \OX_4  \ar@{-^>}@<.5ex>[uu]^(.35){P} \ar@{_<-}@<-.5ex>[uu]_(.35){P} \AW[lld]^{ka}_{ka}
\ar@{-^>}@<.5ex>[rrd]^(.35){R} \ar@{_<-}@<-.5ex>[rrd]_(.35){Q} & & \\
\OX_1 \AW[rrrr]^{-a}_{-1}  & & & & \O_3 }\end{array}\text{  }
\begin{array}{l}
P=1-(k+1)a\\
Q=1-k\\
R=(1-k)a.
\end{array}
\end{equation*}
First suppose that $P=1-(k+1)a=0$. Then $\Gamma'_1\in\mathcal{S}$ implies $Q\in\{0,-1\}$ and so $k \in \{1,2\}$.  If $k=1$, then we are reduced to a previous case.  If $k=2$, then $P=0$ implies that $a=\frac{1}{3}$.  Reflecting at $v_4$ of $\Gamma'$ then yields $\Gamma''$
\begin{equation*}
\xymatrix{ & & \OX_2 \AW[lld]^{-2}_{-1} \AW[rrd]^{5}_{1} & & \\ \O_1 & & & & \OX_3 \\
& & \OX_4 \AW[llu]^{2}_{-1} \AW[rru]^{-1}_{-1} & & }.
\end{equation*}
But, $\Gamma''_4\not\in\mathcal{S}$.

Now we assume that $P\neq 0$.  Then reflecting at $v_2$ of $\Gamma'$ yields $\Gamma''$
\begin{equation*}\begin{array}{c}
\xymatrix{ & & \OX_2 \AW[llddd]^{\! a-1}_{-1} \AW[rrddd]^{T}_{T} & & \\ & & & & \\
& & \O_4  \ar@{-^>}@<.5ex>[uu]^(.35){-1} \ar@{_<-}@<-.5ex>[uu]_(.35){P}
\ar@{-^>}@<.5ex>[rrd]^(.35){V} \ar@{_<-}@<-.5ex>[rrd]_(.35){U} & & \\
\O_1 \AW[rrrr]^{-1}_{a-3}  & & & & \OX_3 }\end{array}\text{  }
\begin{array}{l}
P=1-(k+1)a\\
T=2-a\\
U=(2k+1)(a-1)\\
V=\frac{3-(2k+1)a}{1-(k+1)a}.
\end{array}
\end{equation*}
Now $\Gamma''_2\in\mathcal{S}$ implies $V\in \{0,-1,-2\}$.   If $V=0$, then $U=(2k+1)(a-1)=0$ and so $a=1$, which contradicts  $a\not\in\mathbb{Z}$.
If $V=-2$, then $\Gamma''_1\in\mathcal{S}$ implies $P/T=-1/3$, and so $k=1$, $a=\frac{5}{7}$.
But then $\Gamma''_2\not\in\mathcal{S}$.
If $V=-1$, then $a=\frac{4}{3k+2}$.  Now $\Gamma''_1\in\mathcal{S}$ implies that either $\Gamma''$ is $S(1,2,\beta)$ and $P+U=-2T$, or  $\Gamma''$ is $C(3)$ and $P/T=-1/2$. If $P+U=-2T$, then $a=2$ which contradicts  $a\not\in\mathbb{Z}$. If $P/T=-1/2$, then $k=1$, $a=\frac{4}{5}$. Reflecting at $v_3$ of $\Gamma''$ yields $\Gamma'''$
\begin{equation*}
\xymatrix{ & & \O_2 \AW[llddd]^{-1}_{\frac{16}{5}} \AW[rrddd]^{-1}_{\frac{6}{5}} & & \\ & & & & \\
& & \OX_4
\ar@{-^>}@<.5ex>[lld]^(.35){\!\!\frac{14}{5}} \ar@{_<-}@<-.5ex>[lld]_(.5){\frac{14}{5}\!\!}
\ar@{-^>}@<.5ex>[rrd]^(.5){\!\!\!\! -\frac{3}{5}} \ar@{_<-}@<-.5ex>[rrd]_(.35){-\frac{3}{5}\!\!} & & \\
\OX_1 \AW[rrrr]^{-\frac{11}{5}}_{-\frac{11}{5}} & & & & \OX_3 }.
\end{equation*}
But then $\Gamma'''_3\not\in\mathcal{S}$.

\noindent\textbf{Case 2:}
\begin{equation*}
\xymatrix{ & \OV \AW[r]^{c}_{b}  & \O_2 \AW[ldd]^{-1}_{a-1} \AW[rdd]^{-1}_{a+1} & \\ & & &  \\ &  \OX_1 \AW[rr]^{-a}_{-a}
& & \OX_3 }\hspace{.5cm}b,c\neq 0
\end{equation*}
By reflecting at $v_1$, we are reduced to Case 1.

\noindent\textbf{Case 3:}
\begin{equation*}
\xymatrix{ & & \O_2 \AW[llddd]^{-1}_{a-1} \AW[rrddd]^{-1}_{a+1} & & \\ & & & & \\
& & \OV_4 \AW[lld]^{b}_{c} \AW[rrd]^{d}_{e} & & \\
\OX_1 \AW[rrrr]^{-a}_{-a} & & & & \OX_3 }\hspace{.5cm}b,c,d,e\neq 0
\end{equation*}
Now $\Gamma_2\in\mathcal{S}$ implies $v_4$ is either $\OX$ or $\O$.

\noindent\D If $v_4$ is $\OX$, then $\Gamma_2\in\mathcal{S}$ implies $b=c$, $d=e$, and $e=a-c$.  Also, $\Gamma_1,\Gamma_3\in\mathcal{S}$ implies $\frac{a-1}{c}, \frac{a+1}{a-c} \in \{-1,-2,-3\}$.
If $\frac{a-1}{c}=\frac{a+1}{a-c}$, then $c=\frac{a-1}{2}$ or $\frac{a-1}{c}=2$, which is a contradiction.  If either fraction equals $-1$, then a reflection at the corresponding vertex returns us to Case 1.  So without loss of generality by symmetry we have that $\frac{a-1}{c}=-2$ and $\frac{a+1}{a-c}=-3$. Then $a=\frac{1}{11}$ and $c=\frac{5}{11}$. By substituting and then reflecting at $v_3$, we obtain $\Gamma'$
\begin{equation*}
\xymatrix{ & & \OX_2 \AW[llddd]^{\!\! -\frac{23}{11}}_{-1} \AW[rrddd]^{\frac{12}{11}}_{\frac{12}{11}} & & \\ & & & & \\
& & \O_4 \ar@{-^>}@<.5ex>[uu]^(.35){-2} \ar@{_<-}@<-.5ex>[uu]_(.35){-\frac{8}{11}}
\AW[rrd]^{\!\! -1}_{-\frac{4}{11}} & & \\
\O_1 \AW[rrrr]^{-1}_{-\frac{1}{11}} & & & & \OX_3 }.
\end{equation*}
But, $\Gamma'_3\not\in\mathcal{S}$.

\noindent\D If $v_4$ is $\O$, then $\Gamma_2\in\mathcal{S}$ implies that either $b,d=-1$ or $b=-1$, $d=-2$, without loss of generality by symmetry.

If $b=-1$, $d=-2$, then $\Gamma_2$ is a $G(3)$ diagram and $c=\frac{a}{3}$,
$e=\frac{2a}{3}$.  Then $\Gamma_1\in\mathcal{S}$ implies $e=-a-1$. Thus $a=-\frac{3}{5}$.  By substituting and then reflecting at $v_1$, we obtain $\Gamma'$
\begin{equation*}
\xymatrix{ & & \OX_2 \AW[llddd]^{\!\!-\frac{8}{5}}_{-\frac{8}{5}} \AW[rrddd]^{\frac{13}{5}}_{-1} & & \\ & & & & \\
& & \OX_4 \ar@{-^>}@<.5ex>[uu]^(.35){\frac{9}{5}} \ar@{_<-}@<-.5ex>[uu]_(.35){\frac{9}{5}}
\AW[lld]^{\!\!\! -\frac{1}{5}}_{-\frac{1}{5}\!\!} & & \\
\OX_1 \AW[rrrr]^{\frac{3}{5}}_{-1} & & & & \O_3 }.
\end{equation*} But then $\Gamma'_1\not\in\mathcal{S}$.

If $b,d=-1$, then $\Gamma_2\in\mathcal{S}$ implies that either $\Gamma_2$ is a $C(3)$ diagram and $-\frac{c}{a}=-\frac{e}{a}=-\frac{1}{2}$, or $\Gamma_2$ is a $S(1,2,\beta)$ diagram and $c+e=2a$.
Now by reflecting $\Gamma$ at $v_1$ we obtain $\Gamma'$
\begin{equation*}\begin{array}{c}
\xymatrix{ & & \OX_2 \AW[llddd]^{a-1}_{a-1} \AW[rrddd]^{N}_{-1} & & \\ & & & & \\
& & \OX_4 \ar@{-^>}@<.5ex>[uu]^(.4){P} \ar@{_<-}@<-.5ex>[uu]_(.4){P}
\AW[lld]^{c}_{c} \AW[rrd]^{R}_{Q} & & \\
\OX_1 \AW[rrrr]^{-a}_{-1} & & & & \O_3 }\end{array}\text{  }
\begin{array}{l}
N=2-a\\
P=1-a-c\\
Q=\frac{a-e-c}{a}\\
R=a-2c.
\end{array}
\end{equation*}
If $\Gamma_2$ is a $C(3)$ diagram with $-\frac{c}{a}=-\frac{e}{a}=-\frac{1}{2}$, then $R,Q=0$ and $\Gamma'$ reduces to the previous subcase. Thus $\Gamma_2$ is a $S(1,2,\beta)$ diagram with $e=2a-c$ and $Q=-1$.
Then by reflecting $\Gamma$ at $v_3$ we obtain $\Gamma''$
\begin{equation*}\begin{array}{c}
\xymatrix{ & & \OX_2 \AW[llddd]^{U}_{-1} \AW[rrddd]^{a+1}_{a+1} & & \\ & & & & \\
& & \OX_4 \ar@{-^>}@<.5ex>[uu]^(.35){T} \ar@{_<-}@<-.5ex>[uu]_(.35){T}
\AW[lld]^{W}_{-1} \AW[rrd]^{V}_{V} & & \\
\O_1 \AW[rrrr]^{-1}_{-a} & & & & \OX_3 }\end{array}\text{  }
\begin{array}{l}
T=c-1-3a\\
U=-a-2\\
V=2a-c\\
W=2c-3a.
\end{array}
\end{equation*}

If $P\neq 0$, then $\Gamma'_1\in\mathcal{S}$ implies that either $N=R=-P/2$ or $N+R=-2P$.  But if $N=R=-P/2$, then by reflecting $\Gamma'$ at $v_4$ we reduce to Case 1. Similarly, if $T\neq 0$, then $\Gamma''_3\in\mathcal{S}$ implies that either $U=W=-T/2$ or $U+W=-2T$. But if   $U=W=-T/2$, then by reflecting $\Gamma''$ at $v_4$ we reduce to Case 1.
\begin{itemize}
  \item If $P,T=0$, then $1-a=c=1+3a$ implies $a=0$, contradicting $a\not\in\mathbb{Z}$.
  \item If $P=0$, $T\neq 0$, then $c=1-a$. Then by substitution $\Gamma''_3\in\mathcal{S}$ implies that $-6a=8a$, which contradicts $a\not\in\mathbb{Z}$.
  \item If  $P\neq 0$, $T=0$, then $c=1+3a$. Then by substitution $\Gamma'_1\in\mathcal{S}$ implies that $-6a=8$, which contradicts $a\not\in\mathbb{Z}$.
  \item If $P,T\neq 0$, then $a=2+2c$ and $5a=2+2c$, contradicting $a\not\in\mathbb{Z}$.
\end{itemize}

\noindent\textbf{Case 4:}
\begin{equation*}
\xymatrix{ & & \O_2 \AW[llddd]^{-1}_{a-1} \AW[rrddd]^{-1}_{a+1\!\!} & & \\ & & & & \\
& & \OV_4 \AW[uu]^{b}_{c} \AW[rrd]^{d}_{e} & & \\
\OX_1 \AW[rrrr]^{-a}_{-a} & & & & \OX_3 }\hspace{.5cm}\ b,c,d,e\neq 0
\end{equation*}
Now $\Gamma_1\in\mathcal{S}$ implies $v_4$ is $\OX$ and $\Gamma_2\in\mathcal{S}$ implies $e=a$.  By reflecting at $v_3$, we are reduced to Case 3.

\noindent\textbf{Case 5:}
\begin{equation*}
\xymatrix{ & & \O_2 \AW[llddd]^{-1}_{a-1} \AW[rrddd]^{-1}_{a+1} & & \\ & & & & \\
& & \OV_4 \AW[uu]^{b}_{c} \AW[lld]^{d}_{e} \AW[rrd]^{f}_{g} & & \\
\OX_1 \AW[rrrr]^{-a}_{-a} & & & & \OX_3 }\hspace{.5cm}\ b,c,d,e,f,g\neq 0
\end{equation*}
If $v_4$ is $\O$, then $\Gamma_1\not\in\mathcal{S}$.
If $v_4$ is $\OX$ then $\Gamma_2$ is $D(2,1,\alpha)$.  But then a reflection at $v_1$ reduces to a previous case.

We conclude that $S(1,2,\alpha)$ is not extendable.
\end{proof}

\begin{lemma}
$C(n)^{(1)}$ is not extendable and hence $C(n)^{(1)}\not\in\mathcal{S}$.
\begin{equation*}
\xymatrix{ & & & \OX_{n+1\!\!\!\!\!\!} \AW[ldd]^{1}_{-1} \AW[rdd]^{-2}_{-2} & \\ & & & & \\
    \O_1 \AW[r]^{-1}_{-2} & \O_2 \ar@{--}[r] & \O_{n-1} \AW[rr]^{-1}_{1} & & \OX_{n} }
\end{equation*}
\end{lemma}
\begin{proof}Let $v_{n+2}$ denote the additional vertex. Now $\Gamma_1\in\mathcal{S}$ implies $a_{n+2,n+1},a_{n+2,n}=0$, and $\Gamma_{n+1}\in\mathcal{S}$ implies $a_{n+2,j}=0$ for $j=1,\dots,n-1$.  Hence, $C(n)^{(1)}$ is not extendable.
\end{proof}

\begin{lemma}
$A(0,m)^{(1)}$ is not extendable and hence $A(0,m)^{(1)}\not\in\mathcal{S}$.
\end{lemma}
\begin{proof}First we show that $A(0,1)^{(1)}$ is not extendable.

\noindent\textbf{Case 1:}
\begin{equation*}
\xymatrix{ & & \O_2 \AW[llddd]^{-1}_{-1} \AW[rrddd]^{-1}_{-1} & & \\ & & & & \\
& & \OV_4  \AW[lld]^{b}_{c} \AW[rrd]^{d}_{e} & & \\ \OX_1 \AW[rrrr]^{1}_{1} & & & & \OX_3 }\hspace{.5cm}\ b,c\neq 0
\end{equation*}
\D If $v_4$ is $\OX$, then $\Gamma_3\in\mathcal{S}$ implies $c>0$. But then the ratio of $v_1$ in $\Gamma_2$ is positive, so $\Gamma_2$ is a $D(2,1,\alpha)$ diagram and $d,e\neq 0$.  Then  $\Gamma_1\in\mathcal{S}$ implies $e>0$. But then the ratio of $v_3$ in  $\Gamma_2$ is also positive, so $\Gamma_2\not\in\mathcal{S}$ by Lemma~\ref{3rm}.

\noindent\D If $v_4$ is $\O$, then $\Gamma_2\in\mathcal{S}$ implies $c<0$.  Then $\Gamma_3\in\mathcal{S}$ implies $b=-1$, since the ratio at $v_1$ is positive.  By reflecting at $v_1$ we obtain $\Gamma'$
\begin{equation*}\begin{array}{c}
\xymatrix{ & & \OX_2 \AW[llddd]^{-1}_{-1} \AW[rrddd]^{1}_{-1} & & \\ & & & & \\
& & \OX_4 \AW[uu]^{P}_{P}  \AW[lld]^{c}_{c} \ar@{-^>}@<.5ex>[rrd]^(.35){Q} \ar@{_<-}@<-.5ex>[rrd]_(.35){R}
 & & \\ \OX_1 \AW[rrrr]^{1}_{-1} & & & & \O_3 }\end{array}\text{  }
 \begin{array}{l}
 P=1-c\\
 Q=cd-c-1\\
 R=c+e+1.
 \end{array}
\end{equation*}
But $\Gamma'_1\not\in\mathcal{S}$ since the ratio at $v_2$ is positive, namely $P=1-c>0$.

\noindent\textbf{Case 2:}
\begin{equation*}
\xymatrix{ & & \O_2 \AW[llddd]^{-1}_{-1} \AW[rrddd]^{-1}_{-1} & & \\ & & & & \\
& & \OV_4 \AW[uu]^{b}_{c}  & & \\ \OX_1 \AW[rrrr]^{1}_{1} & & & & \OX_3 }\hspace{.5cm}\ b,c\neq 0
\end{equation*}
By reflecting at $v_1$ we return to Case 1.

\noindent\textbf{Case 3:}
\begin{equation*}
\xymatrix{ & & \O_2 \AW[llddd]^{-1}_{-1} \AW[rrddd]^{-1}_{-1} & & \\ & & & & \\
& & \OV_4 \AW[uu]^{b}_{c} \AW[lld]^{d}_{e}  & & \\ \OX_1 \AW[rrrr]^{1}_{1} & & & & \OX_3 }\hspace{.5cm}\ b,c,d,e\neq 0
\end{equation*}
Now $\Gamma_2\in\mathcal{S}$ implies $e<0$. Then $\Gamma_3\not\in\mathcal{S}$ since the ratio at $v_1$ is positive.

\noindent\textbf{Case 4:}
\begin{equation*}
\xymatrix{ & & \O_2 \AW[llddd]^{-1}_{-1} \AW[rrddd]^{-1}_{-1} & & \\ & & & & \\
& & \OV_4 \AW[uu]^{f}_{g} \AW[lld]^{b}_{c} \AW[rrd]^{d}_{e} & & \\ \OX_1 \AW[rrrr]^{1}_{1} & & & & \OX_3 }\hspace{.5cm}\ b,c,d,e,f,g\neq 0
\end{equation*}
Now $\Gamma_1,\Gamma_3\in\mathcal{S}$ implies $v_4$ is $\OX$ and $c,e>0$.  But then the vertex ratios of $v_1$ and $v_3$ in $\Gamma_2$ are positive.  Hence $\Gamma_2\not\in\mathcal{S}$ by Lemma~\ref{3rm}.  Therefore, $A(0,1)^{(1)}$ is not extendable.

Now we show that $A(0,2)^{(1)}$ is not extendable.
\begin{equation*}
\xymatrix{ \OX_2 \AW[dd]^{1}_{1} \AW[rr]^{-1}_{-1} & & \O_3 \AW[dd]^{-1}_{-1} \\
    & \OV_5 \ar@{--}[ld] \ar@{--}[lu] \ar@{--}[ru] \ar@{--}[rd] & \\ \OX_1 \AW[rr]^{-1}_{-1} & & \O_4 }
\end{equation*}
First suppose that $a_{15}\neq 0$.

\noindent\D If $v_5$ is $\OX$ then $\Gamma_{2,3}\in\mathcal{S}$ implies $a_{15}>0$. Then
$\Gamma_{3,4}$ is $D(2,1,\alpha)$, and so $a_{2,5}\neq 0$.  Then $\Gamma_{1,4}\in\mathcal{S}$ implies $a_{25}>0$.  So $\Gamma_{3,4}$ has two isotropic vertices with a positive ratio, which contradicts Lemma~\ref{3rm}.

\noindent\D If $v_5$ is $\O$ then  $\Gamma_{3,4}\in\mathcal{S}$ implies $a_{15}<0$. Then
then $\Gamma_{2,3}$ is $D(2,1,\alpha)$, and so $a_{51}=-1$. Then by reflecting at $v_1$
we return a previous case.

Therefore, $a_{15}=0$ and by symmetry $a_{25}=0$.  Now without loss of generality suppose that $a_{45} \neq 0$.  Then by reflecting at $v_1$ we have $v'_4$ is $\OX$ and $a'_{45} \neq 0$, but this is the previous case. Hence, $A(0,2)^{(1)}$ is not extendable.  \\

Finally we show that $A(0,m)^{(1)}$ is not extendable, for $m\geq 3$. Let $\Gamma$ be a diagram for $A(0,m)^{(1)}$.  Then $\Gamma$ has $m+2 \geq 5$ vertices.  Let $v_1$ denote the vertex being added to the diagram. First suppose $v_1$ is connected to an isotropic vertex, which we denote $v_2$. Let $v_3$ and $v_4$ denote the vertices adjacent to $v_2$ in $\Gamma'$.  Since $v_2$ is isotropic with degree 3, it must be contained in a
$D(2,1,\alpha)$ subdiagram.  Since the subdiagram $\{v_3,v_2,v_4\}$ is not $D(2,1,\alpha)$, we have without loss of
generality that the subdiagram $\{v_1,v_2,v_3\}$ is $D(2,1,\alpha)$.  Then $v_1$ is either $\OX$ or $\O$.  If $v_1$ is
$\OX$ then we have the following subdiagram:
\begin{equation*}
    \xymatrix{ \OV \AW[r]^{}_{} & \OX_2 \AW[rr]^{}_{} \AW[rd]^{}_{} & &
    \OX_3 \AW[ld]^{}_{} \AW[r]^{}_{} & \OV\\
    & & \OX_1 \ar@{..}[llu] \ar@{..}[rru] & & \\},
    \end{equation*}
where the double lines are necessarily connected, and the dotted lines are possibly connected.  By the
$5$-vertex classification, this diagram is not subfinite regular Kac-Moody.  If $v_1$ is $\O$,
then by reflecting at $v_2$ we return to the this case.

Next suppose $v_1$ is not connected to an isotropic vertex.  Then let $\Gamma''$ be a minimal subdiagram containing $v_1$ and an isotropic vertex, which we denote $v_n$.  By minimality of $\Gamma''$ it is a chain such that $v_i$ is not
isotropic for $1<i<n$.  Thus by Lemma~\ref{lm12.2}, there is a sequence of odd reflections $R$ such that $R(v_2)$ is
isotropic and connected to $v_1$.  This reduces us to the previous case.
\end{proof}
This concludes the proof of Theorem~\ref{thm2}.
\end{proof}

\bibliographystyle{amsplain}

\begin{thebibliography}{}



\bibitem{EF03} S. Eswara Rao and V. Futorny, {\it Irreducible modules for affine Lie superalgebras},
e-print: math/0311207 (2003).

\bibitem{H07} C. Hoyt, {\it Kac-Moody superalgebras of finite growth}, Ph.D.
  thesis, University of California, Berkeley (2007).

\bibitem{HS07} C. Hoyt and V. Serganova, {\it Classification of finite-growth general Kac-
Moody superalgebras}, Communications in Algebra \textbf{35 no. 3} (2007), 851-874.

\bibitem{K77} V.G. Kac, {\it Lie superalgebras}, Advances in Math {\bf 26} (1977), 8-96.

\bibitem{K78} V.G. Kac, {\it Infinite-dimensional algebras, Dedekind's $\eta$-function, classical M\"{o}bius function and the very strange formula}, Advances in Math {\bf 30} (1978), 85-136.

\bibitem{K90} V.G. Kac, {\it Infinite dimensional Lie algebras, 3rd ed.}, Cambridge University Press, 1990.

\bibitem{KL89} V.G. Kac and J.W. van de Leur, {\it On classification of superconformal algebras}, Strings \textbf{88}, World Sci. (1989), 77-106.

\bibitem{KW94} V. G. Kac and M. Wakimoto, Integrable highest weight modules
over affine superalgebras and number theory, Lie Theory and Geometry, Progress in Math. 123, (1994),
415-456.

\bibitem{KW01} V.G. Kac and M. Wakimoto, {\it Integrable highest weight modules over affine superalgebras and Appell's function}, Comm. Math. Phys. {\bf 215} (2001), no. 3, 631-682.

\bibitem{LSS86} D. Leites, M. Savel'ev and V. Serganova, {\it Embeddings of Lie superalgebra $osp(1|2)$ and nonlinear
supersymmetric equations}, Group Theoretical Methods in Physics {\bf 1} (1986), 377-394.

\bibitem{S08} V. Serganova, {\it Kac-Moody superalgebras and integrability}, Perspectives and trends in infinite-dimensional Lie theory.

\bibitem{L86} J.W. van de Leur, {\it Contragredient Lie superalgebras of finite growth}, Thesis Utrecht University
(1986).

\bibitem{L89} J.W. van de Leur, {\it A classification of contragredient Lie superalgebras of finite growth},
Communications in Algebra {\bf 17} (1989), 1815-1841.


\end{thebibliography}

\end{document}